\let\vecc\vec

\RequirePackage{fix-cm}
\documentclass[nospthms]{svjour3}                     %
\smartqed  %
\usepackage{graphicx}
\usepackage{subcaption}
\captionsetup{compatibility=false}

\usepackage{amsmath,amssymb}
\usepackage{amsthm}
\usepackage{graphicx,color}
\usepackage[hyphens]{url}
\usepackage{dsfont}
\usepackage{booktabs}
\usepackage[square,sort,numbers]{natbib}
\usepackage[sort,nocompress,noadjust]{cite}
\usepackage[normalem]{ulem}
\usepackage{mathtools}
\usepackage{multirow}
\usepackage{algorithm}
\usepackage[noend]{algpseudocode}
\usepackage{xspace}
\usepackage{tikz}

\usepackage{tikz-cd}

\usepackage[font={small}]{caption}

\numberwithin{equation}{section}
\newtheorem{theorem}{Theorem}[section]
\newtheorem{cor}[theorem]{Corollary}
\newtheorem{lemma}[theorem]{Lemma}
\newtheorem{claim}[theorem]{Claim}
\newtheorem{remark}[theorem]{Remark}

\newtheorem{obs}[theorem]{Observation}

\newtheorem{example}[theorem]{Example}

\newtheorem{defin}[theorem]{Definition}

\newcommand{\cM}{\mathcal{M}}

\newcommand{\cS}{\mathcal{S}}

\newcommand{\R}{\mathbb{R}}

\newcommand{\N}{\mathbb{N}}

\newcommand{\Rn}{\R^n}

\newcommand{\Rp}{\R_{\geq 0}}

\newcommand{\Rpn}{\R_{\geq 0}^n}

\newcommand{\Rntk}{(\R^n)^{\otimes k}}
\newcommand{\Rpntk}{(\R_{\geq 0}^n)^{\otimes k}}

\newcommand{\Prob}{\mathbb{P}}	
\newcommand*{\E}{\mathbb{E}}

\newcommand{\bone}{\mathbf{1}}

\newcommand*{\poly}{\mathrm{poly}}
\newcommand*{\polylog}{\mathrm{polylog}}

\newcommand*{\eps}{\varepsilon}
\newcommand*{\epst}{\tilde{\eps}}

\newcommand{\plusminus}{\raisebox{.2ex}{$\scriptstyle\pm$}}
\DeclareMathOperator*{\argmin}{argmin}
\DeclareMathOperator*{\argmax}{argmax}
\renewcommand{\leq}{\leqslant}
\renewcommand{\geq}{\geqslant}
\DeclareMathOperator{\half}{\frac{1}{2}}

\providecommand{\abs}[1]{\left\lvert#1\right\rvert}

\newcommand{\OT}{\textsf{OT}}
\newcommand{\MOT}{\textsf{MOT}}
\newcommand{\RMOT}{\textsf{RMOT}}

\newcommand{\MinO}{\textsf{MIN}}
\newcommand{\AMinO}{\textsf{AMIN}}
\newcommand{\SMinO}{\textsf{SMIN}}

\newcommand{\MinOCS}{\textsf{MIN}_{C,S}}
\newcommand{\AMinOCS}{\textsf{AMIN}_{C,S}}
\newcommand{\SMinOCS}{\textsf{SMIN}_{C,S}}

\newcommand{\ArgMinO}{\textsf{ARGMIN}}
\newcommand{\ArgAMinO}{\textsf{ARGAMIN}}
\newcommand{\MARG}{\textsf{MARG}}

\newcommand{\MWUoracle}{\textsf{MWU\_BOTTLENECK}}

\newcommand{\Sink}{\texttt{SINKHORN}}
\newcommand{\MWU}{\texttt{MWU}}
\newcommand{\ELLIP}{\texttt{ELLIPSOID}}
\newcommand{\COLGEN}{\texttt{COLGEN}}

\DeclareMathOperator*{\smin}{smin}
\DeclareMathOperator*{\smineta}{{\textstyle{smin_\eta}}}
\DeclareMathOperator*{\smax}{smax}

\newcommand{\Cmax}{C_{\max}}
\newcommand{\Rmax}{R_{\max}}

\newcommand{\Coup}{\cM(\mu_1,\dots,\mu_k)}

\newcommand{\jvec}{\vec{j}}

\DeclareMathOperator{\Ber}{Ber}

\newcommand{\pd}[2]{\frac{\partial #1}{\partial #2}} 

\let\baraccent=\= %
\renewcommand{\=}[1]{\stackrel{#1}{=}} %

\providecommand{\RR}{\mathbb{R}}

\providecommand{\cM}{\mathcal{M}}

\providecommand{\eps}{\varepsilon}

\providecommand{\N}{\mathbb{N}}

\renewcommand{\P}{\mathsf{P}}
\providecommand{\NP}{\mathsf{NP}}

\mathchardef\mhyphen="2D %

\newcommand{\interior}[1]{%
	{\kern0pt#1}^{\mathrm{o}}%
}

\usepackage{fancybox}
\newenvironment{fminipage}{\begin{Sbox}\begin{minipage}}{\end{minipage}\end{Sbox}\fbox{\TheSbox}}

\newcommand{\ppind}{\rho^{\mathrm{ind}}}
\newcommand{\ppmin}{\rho^{\min}}
\newcommand{\ppmax}{\rho^{\max}}

\newcommand{\Rbar}{\bar{\mathbb{R}}}
\let\vec\vecc
\usepackage[margin=1in]{geometry}
	
\makeatletter
\def\blfootnote{\gdef\@thefnmark{}\@footnotetext}
\makeatother

\begin{document}

	\title{Polynomial-time algorithms for Multimarginal Optimal Transport problems with structure\thanks{Work partially supported by NSF GRFP 1122374, a TwoSigma PhD fellowship, and a Siebel Scholarship.}%
}

\institute{J.M. Altschuler \and E. Boix-Adser\`a
\at
              Laboratory for Information and Decision Systems (LIDS), Massachusetts Institute of Technology, Cambridge MA 02139.  \\
              \email{jasonalt@mit.edu}           \\
			   \email{eboix@mit.edu} %
}
%\institute{
%	Corresponding author: J.M. Altschuler
%	\at
%	Affiliation: Laboratory for Information and Decision Systems (LIDS), MIT, Cambridge MA 02139.
%	\\ \email{jasonalt@mit.edu}
%}

	\author{Jason M. Altschuler \and Enric Boix-Adser\`a}
	\date{}
	\maketitle

\maketitle

\begin{abstract}
	Multimarginal Optimal Transport (MOT) has attracted significant interest due to applications in machine learning, statistics, and the sciences. However, in most applications, the success of MOT is severely limited by a lack of efficient algorithms. Indeed, MOT in general requires exponential time in the number of marginals $k$ and their support sizes $n$. This paper develops a general theory about what ``structure'' makes MOT solvable in $\poly(n,k)$ time. 
	
	We develop a unified algorithmic framework for solving MOT in $\poly(n,k)$ time by characterizing the structure that different algorithms require in terms of simple variants of the dual feasibility oracle. This framework has several benefits. First, it enables us to show that the Sinkhorn algorithm, which is currently the most popular MOT algorithm, requires strictly more structure than other algorithms do to solve MOT in $\poly(n,k)$ time. Second, our framework makes it much simpler to develop $\poly(n,k)$ time algorithms for a given MOT problem. In particular, it is necessary and sufficient to (approximately) solve the dual feasibility oracle---which is much more amenable to standard algorithmic techniques. 
	
	We illustrate this ease-of-use by developing $\poly(n,k)$-time algorithms for three general classes of MOT cost structures: (1) graphical structure; (2) set-optimization structure; and (3) low-rank plus sparse structure. For structure (1), we recover the known result that Sinkhorn has $\poly(n,k)$ runtime~\citep{h20gm,teh2002unified}; moreover, we provide the first $\poly(n,k)$ time algorithms for computing solutions that are exact and sparse. For structures (2)-(3), we give the first $\poly(n,k)$ time algorithms, even for approximate computation. Together, these three structures encompass many---if not most---current applications of MOT.

	\keywords{Multimarginal optimal transport \and polynomial-time algorithms \and implicit linear programming \and structured linear programs}
	\subclass{90C08 \and 90C06}
\end{abstract}

\setcounter{tocdepth}{2}
\tableofcontents

\section{Introduction}\label{sec:intro}

Multimarginal Optimal Transport ($\MOT$) is the problem of linear programming over joint probability distributions with fixed marginal distributions. In this way, $\MOT$ generalizes the classical Kantorovich formulation of Optimal Transport from $2$ marginal distributions to an arbitrary number $k \geq 2$ of them. 

\par More precisely, an $\MOT$ problem is specified by a cost tensor $C$ in the $k$-fold tensor product space $\Rntk = \R^n \otimes \cdots \otimes \R^n$, and $k$ marginal distributions $\mu_1, \dots, \mu_k$ in the simplex $\Delta_n = \{v \in \R_{\geq 0}^n : \sum_{i=1}^n v_i = 1 \}$.\footnote{For simplicity, all $\mu_i$ are assumed to have the same support size $n$. Everything in this paper extends in a straightforward way to non-uniform sizes $n_i$ where $n^k$ is replaced by $\prod_{i=1}^k n_i$, and $\poly(n,k)$ is replaced by $\poly(\max_i n_i, k)$.} The $\MOT$ problem is to compute
\begin{align}
	\min_{P \in \Coup} \langle P, C \rangle
	\label{eq:MOT-intro}
	\tag{MOT}
\end{align}
where $\Coup$ is the ``transportation polytope'' consisting of all entrywise non-negative tensors $P \in (\R^{n})^{\otimes k}$ 
satisfying the marginal constraints $\sum_{j_1,\dots,j_{i-1}, j_{i+1}, \dots, j_{k}} P_{j_1, \dots, j_{i-1}, j, j_{i+1}, \dots, j_k} = [\mu_i]_j$ for all $i \in \{1, \dots, k\}$ and $j \in \{1, \dots, n\}$.

\par This $\MOT$ problem has many applications throughout machine learning, computer science, and the natural sciences since it arises in tasks that require ``stitching'' together aggregate measurements. For instance, applications of $\MOT$ include
inference from collective dynamics~\citep{h20gm,h20partial}, 
information fusion for Bayesian learning~\citep{srivastava2018scalable},
averaging point clouds~\citep{AguCar11,CutDou14},
the $n$-coupling problem~\citep{ruschendorf2002n},
quantile aggregation~\citep{makarov1982estimates,ruschendorf1982random}, matching for teams~\citep{ChiMccNes10,CarEke10}, image processing~\citep{RabPeyDel11,solomon2015convolutional}, random combinatorial optimization~\citep{zemel1982polynomial,weiss1986stochastic,Nat21extremal,Agr12,MeiNad79,nadas1979probabilistic,Han86}, Distributionally Robust Optimization~\citep{Nat18dro,Nat09,MisNat14}, simulation of incompressible fluids~\citep{Bre08,BenCarNen19}, and Density Functional Theory~\citep{cotar2013density,buttazzo2012optimal,BenCarNen16}.

\par However, in most applications, the success of $\MOT$ is severely limited by the lack of efficient algorithms. Indeed, in general, $\MOT$ requires \emph{exponential time} in the number of marginals $k$ and their support sizes $n$. For instance, applying a linear program solver out-of-the-box takes $n^{\Theta(k)}$ time because $\MOT$ is a linear program with $n^k$ variables, $n^k$ non-negativity constraints, and $nk$ equality constraints. Specialized algorithms in the literature such as the Sinkhorn algorithm yield similar $n^{\Theta(k)}$ runtimes. Such runtimes currently limit the applicability of $\MOT$ to tiny-scale problems (e.g., $n=k=10$).

\paragraph{Polynomial-time algorithms for $\MOT$.} This paper develops \emph{polynomial-time} algorithms for $\MOT$, where here and henceforth ``polynomial'' means in the number of marginals $k$ and their support sizes $n$---and possibly also $\Cmax/\eps$ for $\eps$-additive approximation, where $\Cmax$ is a bound on the entries of $C$.

\par At first glance, this may seem impossible for at least two ``trivial'' reasons. One is that it takes exponential time to read the input cost $C$ since it has $n^k$ entries. We circumvent this issue by considering costs $C$ with $\poly(n,k)$-size implicit representations, which encompasses essentially all $\MOT$ applications.\footnote{E.g., in the $\MOT$ problems of Wasserstein barycenters, generalized Euler flows, and Density Functional Theory, $C$ has entries $C_{j_1, \dots, j_k} = \sum_{i,i'=1}^k g_{i,i'}(j_i,j_{i'})$ and thus can be implicitly input via the $k^2$ functions $g_{i,i'} : \{1, \dots, n\}^2 \to \R$.} A second obvious issue is that it takes exponential time to write the output variable $P$ since it has $n^k$ entries. We circumvent this issue by returning solutions $P$ with $\poly(n,k)$-size implicit representations, for instance sparse solutions.

But, of course, circumventing these issues of input/output size is not enough to actually solve $\MOT$ in polynomial time. See~\citep{AltBoi20hard} for examples of $\NP$-hard $\MOT$ problems with costs that have $\poly(n,k)$-size implicit representations. 

\par Remarkably, for several $\MOT$ problems, there are specially-tailored algorithms that run in polynomial time---notably, for $\MOT$ problems with graphically-structured costs of constant treewidth~\citep{h20tree,h20gm,teh2002unified}, variational mean-field games~\citep{BenCarDi18}, computing generalized Euler flows~\citep{BenCarCut15}, computing low-dimensional Wasserstein barycenters~\citep{CarObeOud15,BenCarCut15, AltBoi20bary}, and filtering and estimation tasks based on target tracking~\citep{h20tree,h20gm,h20incremental,h19hmc,h20partial}. However, the number of $\MOT$ problems that are known to be solvable in polynomial time is small, and it is unknown if these techniques can be extended to the many other $\MOT$ problems arising in applications. This motivates the central question driving this paper:
\begin{align*}
	\text{\emph{Are there general ``structural properties'' that make }} \MOT \text{\emph{ solvable in }} \poly(n,k) 	\text{\emph{ time?}}
\end{align*}

This paper is conceptually divided into two parts. In the first part of the paper, we develop a unified algorithmic framework for $\MOT$ that characterizes the structure required for different algorithms to solve $\MOT$ in $\poly(n,k)$ time, in terms of simple variants of the dual feasibility oracle. This enables us to prove that some algorithms can solve $\MOT$ problems in polynomial time whenever any algorithm can; whereas the popular $\Sink$ algorithm cannot. Moreover, this algorithmic framework makes it significantly easier to design a $\poly(n,k)$ time algorithm for a given $\MOT$ problem (when possible) because it now suffices to solve the dual feasibility oracle---and this is much more amenable to standard algorithmic techniques. In the second part of the paper, we demonstrate the ease-of-use of our algorithmic framework by applying it to three general classes of $\MOT$ cost structures. 

\par Below, we detail these two parts of the paper in \S\ref{ssec:intro:cont-alg} and \S\ref{ssec:intro:cont-app}, respectively.

\subsection{Contribution $1$: unified algorithmic framework for $\MOT$}\label{ssec:intro:cont-alg}

In order to understand what structural properties make $\MOT$ solvable in polynomial time, we first lay a more general groundwork. The purpose of this is to understand the following fundamental questions:
\begin{itemize}
	\item[Q1] What are reasonable candidate algorithms for solving structured $\MOT$ problems in polynomial time?
	\item[Q2] What structure must an $\MOT$ problem have for these algorithms to have polynomial runtimes?
	\item[Q3] Is the structure required by a given algorithm more restrictive than the structure required by a different algorithm (or \emph{any} algorithm)?
	\item[Q4] How to check if this structure occurs for a given $\MOT$ problem?
\end{itemize} 
We detail our answers to these four questions below in \S\ref{sssec:intro-q1}
to
\S\ref{sssec:intro-q4}, and then briefly discuss practical tradeoffs beyond polynomial-time solvability in \S\ref{sssec:intro-tradeoff}; see Table~\ref{tab:oracles} for a summary. We expect that this general groundwork will prove useful in future investigations of tractable $\MOT$ problems.

\begin{table}
	\centering
	\begin{tabular}{|c|c|c|c|c|c|c|}
		\hline
		\textbf{Algorithm} & \textbf{Oracle} &  \textbf{Runtime} & 
		\textbf{Always applicable?} & \textbf{Exact solution?} & \textbf{Sparse solution?}  & \textbf{Practical?}\\ \hline
		$\ELLIP$ & $\MinO$  & Theorem~\ref{thm:ellip} & Yes & Yes & Yes & No \\ \hline
		$\MWU$ & $\AMinO$   & Theorem~\ref{thm:mwumotp} & Yes & No & Yes & Yes\\ \hline
		$\Sink$ & $\SMinO$  & Theorem~\ref{thm:sink-mot:smin} & No & No & No & Yes \\ \hline                 
	\end{tabular}
	\caption{These $\MOT$ algorithms have polynomial runtime except for a bottleneck ``oracle''. Each oracle is a simple variant of the dual feasibility oracle for $\MOT$. The number of oracle computations is $\poly(n,k)$ for $\ELLIP$, and $\poly(n,k,\Cmax/\eps)$ for both $\MWU$ and $\Sink$. From a theoretical perspective, the most important aspect of an algorithm is whether it can solve $\MOT$ in polynomial time if and only if any algorithm can. We show that $\ELLIP$ and $\MWU$ satisfy this (Theorem~\ref{thm-intro:generality}), but $\Sink$ does not (Theorem~\ref{thm-intro:sink-generality}). From a practical perspective, $\Sink$ is the most scalable when applicable.\protect\footnotemark}
	\label{tab:oracles}
\end{table}\footnotetext{Code for implementing these algorithms and reproducing all numerical simulations in this paper is provided at \url{https://github.com/eboix/mot}.}

\subsubsection{Answer to Q1: candidate $\poly(n,k)$-time algorithms}\label{sssec:intro-q1}

We consider three algorithms for $\MOT$ whose exponential runtimes can be isolated into a single bottleneck---and thus can be implemented in polynomial time whenever that bottleneck can. These algorithms are the Ellipsoid algorithm \ELLIP~\citep{GLSbook}, the Multiplicative Weights Update algorithm \MWU~\citep{young2001sequential}, and the natural multidimensional analog of Sinkhorn's scaling algorithm \Sink~\citep{BenCarCut15,PeyCut17}. $\Sink$ is specially tailored to $\MOT$ and is currently the predominant algorithm for it. 
To foreshadow our answer to Q3, the reason that we restrict to these candidate algorithms is: we show that $\ELLIP$ and $\MWU$ can solve an $\MOT$ problem in polynomial time if and only if any algorithm can.

\subsubsection{Answer to Q2: structure necessary to run candidate algorithms}\label{sssec:intro-q2}

These three algorithms only access the cost tensor $C$ through polynomially many calls of their respective bottlenecks. Thus the structure required to implement these candidate algorithms in polynomial time is equivalent to the structure required to implement their respective bottlenecks in polynomial time.

In \S\ref{sec:algs}, we show that the bottlenecks of these three algorithms are polynomial-time equivalent to natural analogs of the feasibility oracle for the dual LP to $\MOT$. Namely, given weights $p_1, \dots, p_k \in \R^n$, compute
\begin{align}
	\min_{(j_1,\dots,j_k) \in \{1, \dots, n\}^k} C_{j_1,\dots,j_k}- \sum_{i=1}^k [p_i]_{j_i}
	\label{eq:intro-min}
\end{align}
either exactly for $\ELLIP$, approximately for $\MWU$, or with the ``min'' replaced by a ``softmin'' for $\Sink$. We call these three tasks the $\MinO$, $\AMinO$, and $\SMinO$ oracles, respectively. See Remark~\ref{rem:oracles-feas} for the interpretation of these oracles as variants of the dual feasibility oracle.

These three oracles take $n^k$ time to implement in general. However, for a wide range of structured cost tensors $C$ they can be implemented in $\poly(n,k)$ time, see \S\ref{ssec:intro:cont-app} below. For such structured costs $C$, our oracle abstraction immediately implies that the $\MOT$ problem with cost $C$ and any input marginals $\mu_1, \dots, \mu_k$ can be (approximately) solved in polynomial time by any of the three respective algorithms.

\par Our characterization of the algorithms' bottlenecks as variations of the dual feasibility oracle has two key benefits---which are the answers to Q3 and Q4, described below.

\subsubsection{Answer to Q3: characterizing what $\MOT$ problems each algorithm can solve}\label{sssec:intro-q3}

A key benefit of our characterization of the algorithms' bottlenecks as variations of the dual feasibility oracles is that it enables us to establish whether the structure required by a given $\MOT$ algorithm is more restrictive than the structure required by a different algorithm (or by \emph{any} algorithm).

\par In particular, this enables us to answer the natural question: why restrict to just the three algorithms described above? Can other algorithms solve $\MOT$ in $\poly(n,k)$ time in situations when these algorithms cannot? Critically, the answer is no: restricting ourselves to the $\ELLIP$ and $\MWU$ algorithms is at no loss of generality.

\begin{theorem}[Informal statement of part of Theorems~\ref{thm:ellip} and~\ref{thm:mwumotp}]
	\label{thm-intro:generality}
	For any family of costs $C \in \Rntk$:
	\begin{itemize}
		\item $\ELLIP$ computes an exact solution for $\MOT$ in $\poly(n,k)$ time if and only if any algorithm can.
		\item $\MWU$ computes an $\eps$-approximate solution for $\MOT$ in $\poly(n,k,\Cmax/\eps)$ time if and only if any algorithm can.
	\end{itemize}
\end{theorem}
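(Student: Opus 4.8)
The plan is to prove each direction by establishing a polynomial-time equivalence between three things: (i) solving $\MOT$ for the given cost family, (ii) running the candidate algorithm ($\ELLIP$ or $\MWU$) on that family, and (iii) solving the corresponding oracle ($\MinO$ or $\AMinO$). The ``if'' direction (oracle implies algorithm implies $\MOT$) is the constructive half: it follows from the analysis of $\ELLIP$ and $\MWU$ already set up in \S\ref{sec:algs}, namely that each algorithm makes only $\poly(n,k)$ (resp.\ $\poly(n,k,\Cmax/\eps)$) calls to its bottleneck oracle and otherwise runs in polynomial time, together with the reductions showing the bottleneck of each algorithm is polynomial-time equivalent to the respective dual-feasibility variant. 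So the real content is the ``only if'' direction: if \emph{some} (arbitrary) algorithm solves $\MOT$ in polynomial time for this cost family, then $\ELLIP$ (resp.\ $\MWU$) also does. I would prove this by showing that a polynomial-time $\MOT$ solver can be used as a subroutine to implement $\MinO$ (resp.\ approximately implement $\AMinO$) in polynomial time; once the oracle is available, the ``if'' direction closes the loop.

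The key step is therefore: \emph{reduce the dual feasibility oracle to $\MOT$ itself}. Given weights $p_1,\dots,p_k \in \R^n$, I want to compute $\min_{(j_1,\dots,j_k)} C_{j_1,\dots,j_k} - \sum_i [p_i]_{j_i}$. The natural idea is LP duality: the quantity \eqref{eq:intro-min} is exactly the condition governing dual feasibility of the vector $(p_1,\dots,p_k)$ for the dual of \eqref{eq:MOT-intro}, and by strong duality the optimal $\MOT$ value equals $\max \sum_i \langle p_i,\mu_i\rangle$ over dual-feasible $(p_i)$. To turn the \emph{optimization} oracle for $\MOT$ into the \emph{feasibility/violation} oracle \eqref{eq:intro-min}, I would run the $\MOT$ solver on cleverly chosen marginals: the transportation polytope $\Coup$ and its vertices are independent of $C$, and for a vertex (a sparse coupling supported on $\poly(n,k)$ tuples) the objective $\langle P,C\rangle$ is a known linear combination of a few entries of $C$. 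Concretely, solving $\MOT$ with cost $C$ and with cost $C - \bigoplus_i p_i$ (a shifted cost, still in the same ``implicitly representable'' family up to an additive rank-one-per-coordinate term, which only shifts the optimal value by $\sum_i \langle p_i,\mu_i\rangle$ and does not change which tuples are optimal) yields, via the returned sparse optimal $P$, a tuple $(j_1,\dots,j_k)$ in its support; this tuple nearly achieves the minimum in \eqref{eq:intro-min}. One must argue that by varying the input marginals $\mu_i$ (e.g.\ toward point masses $e_{j_i}$) one can read off the minimizing tuple, or more cleanly: the $\MOT$ value with marginals equal to point masses $\delta_{j_1},\dots,\delta_{j_k}$ is just $C_{j_1,\dots,j_k}$, but that single query is not enough — instead one uses that with uniform-ish marginals the optimal coupling concentrates on (near-)minimizing tuples of the shifted cost, and the sparse representation of the returned $P$ exposes such a tuple.

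The main obstacle I expect is the \emph{approximate} case for $\MWU$: there one only has an $\eps$-additive $\MOT$ solver and must produce an $\eps'$-additive solution to $\AMinO$ (the oracle that computes \eqref{eq:intro-min} up to additive error). Additive error in the $\MOT$ \emph{value} does not obviously translate to additive error in identifying a near-minimizing \emph{tuple}, because an $\eps$-suboptimal coupling could put all its mass on tuples whose shifted cost exceeds the true minimum by much more than $\eps$ in the worst arrangement — one needs a careful choice of marginals (spreading mass so that any $\eps$-good coupling must place nonneglible weight on an $O(\eps)$-good tuple, e.g.\ via a pigeonhole/averaging argument over the $n$ values of some coordinate) and then a rounding/extraction step. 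Also, a minor technical point to handle carefully: the shifted cost $C - \bigoplus_i p_i$ must lie in the same implicitly-represented family the hypothesis is about, or one must phrase the hypothesis as ``solves $\MOT$ for the family closed under such shifts,'' which is harmless since $\bigoplus_i p_i$ is trivially polynomial-size implicit; and the entries of the shifted cost may need rescaling so $\Cmax$ stays polynomially bounded, which is where the $\Cmax/\eps$ dependence enters. Once these bookkeeping issues are dispatched, the equivalence chain $\MOT \Leftrightarrow \MinO/\AMinO \Leftrightarrow \ELLIP/\MWU$ closes and the theorem follows.
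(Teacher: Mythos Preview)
Your three-way equivalence framework---$\ELLIP$/$\MWU$ runs in poly time $\Leftrightarrow$ some algorithm solves $\MOT$ in poly time $\Leftrightarrow$ $\MinO$/$\AMinO$ is poly-time solvable---matches the paper's structure exactly (this is the content of Theorems~\ref{thm:ellip} and~\ref{thm:mwumotp}). You are also right that the implication ``(iii) oracle $\Rightarrow$ (i) algorithm'' is what is handled in \S\ref{sec:algs} via the bottleneck-to-oracle reductions (Lemma~\ref{lem:minargmin} for $\ELLIP$, Lemmas~\ref{lem:mwuargamin}--\ref{lem:argaminequalsamino} for $\MWU$). However, your allocation of effort is inverted relative to the paper: the implication ``(ii) $\MOT$ solvable $\Rightarrow$ (iii) oracle solvable,'' which you treat as the crux and attempt to prove from scratch, is simply \emph{cited} to~\citep{AltBoi20hard} in the paper's proofs of both theorems, while the reductions you defer to as ``already set up'' are in fact the paper's technical contribution here.

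More importantly, your sketch of (ii)$\Rightarrow$(iii) has a genuine gap. You propose solving $\MOT$ with the shifted cost $C - \bigoplus_i p_i$ and reading off a tuple from the support of the returned sparse optimum $P$. But as you yourself note, this shift ``does not change which tuples are optimal'': over $\Coup$ one has $\langle P, \bigoplus_i p_i\rangle = \sum_i \langle \mu_i, p_i\rangle$, a constant, so the optimal $P$ is \emph{independent of $p$}. The extracted tuple therefore cannot depend on $p$ and cannot be the minimizer of~\eqref{eq:intro-min} for general $p$; complementary slackness ties $\operatorname{supp}(P^*)$ to the \emph{optimal} dual variables, not to an arbitrary input $p$. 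Your fallback ideas (point-mass marginals, averaging over spread marginals) are in a plausible direction but are not developed into a reduction, and your worry about closure of the cost family under shifts by $\bigoplus_i p_i$ is not ``minor'': the hypothesis (ii) concerns a fixed family, and the reduction in~\citep{AltBoi20hard} must and does work without enlarging it.
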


The statement for $\ELLIP$ is implicit from classical results about LP~\citep{GLSbook} combined with arguments from~\citep{AltBoi20bary}, see the previous work section \S\ref{ssec:intro:prev}. The statement for $\MWU$ is new to this paper.

The oracle abstraction helps us show Theorem~\ref{thm-intro:generality} because it reduces this question of what structure is needed for the algorithms to solve $\MOT$ in polynomial time, to the question of what structure is needed to solve their respective bottlenecks in polynomial time. Thus Theorem~\ref{thm-intro:generality} is a consequence of the following result. (The ``if'' part of this result is a contribution of this paper; the ``only if'' part was shown in~\citep{AltBoi20hard}.)

\begin{theorem}[Informal statement of part of Theorems~\ref{thm:ellip} and~\ref{thm:mwumotp}]
	For any family of costs $C \in \Rntk$:
	\begin{itemize}
		\item $\MOT$ can be exactly solved in $\poly(n,k)$ time if and only if $\MinO$ can.
		\item $\MOT$ can be $\eps$-approximately solved in $\poly(n,k,\Cmax/\eps)$ time if and only if $\AMinO$ can.
	\end{itemize}
\end{theorem}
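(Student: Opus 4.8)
The plan is to prove both equivalences through linear programming duality, by identifying the oracles in \eqref{eq:intro-min} with (approximate) separation oracles for the dual of \eqref{eq:MOT-intro}. That dual program is
\[
	\max_{p_1,\dots,p_k \in \R^n}\ \textstyle\sum_{i=1}^k \langle p_i,\mu_i\rangle
	\qquad \text{s.t.}\qquad
	\textstyle\sum_{i=1}^k [p_i]_{j_i} \le C_{j_1,\dots,j_k}\ \ \text{ for all } (j_1,\dots,j_k),
\]
which has only $nk$ variables but $n^k$ constraints. The elementary but crucial observation is that $\MinO$ \emph{is} the separation oracle for the dual feasible region: a minimizing tuple in \eqref{eq:intro-min} indexes a most-violated constraint, and all constraints hold exactly when that minimum is nonnegative; likewise $\AMinO$ is an approximate separation oracle. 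I would also record at the outset the standard well-posedness facts for $\MOT$: the polytope $\Coup$ is nonempty (it contains the product coupling $\mu_1 \otimes \cdots \otimes \mu_k$) and bounded, its vertices have polynomial bit-complexity in that of the inputs, and strong duality holds with the primal optimum attained.

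\textbf{The ``if'' direction, exact case.} Given a $\poly(n,k)$-time $\MinO$, the equivalence of separation and optimization for rational polytopes \citep{GLSbook} lets $\ELLIP$ solve the dual in $\poly(n,k)$ time with $\poly(n,k)$ oracle calls and certify optimality. To extract a primal solution that is both exact and $\poly(n,k)$-sparse I would use column generation: let $S$ be the (polynomially many) tuples returned by $\MinO$ over the run, and solve \eqref{eq:MOT-intro} restricted to the coordinates $\{P_{\vec j} : \vec j \in S\}$, which is a polynomial-size LP solvable exactly in $\poly(n,k)$ time. A short duality argument shows this restricted primal has optimal value equal to that of \eqref{eq:MOT-intro}: the constraints indexed by $S$ already suffice for $\ELLIP$ to refute every dual objective exceeding the optimum, so the restricted dual is bounded with that same optimal value, whence by LP duality the restricted primal is feasible with that value too. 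Padding the restricted optimizer with zeros yields the desired exact sparse solution.

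\textbf{The ``if'' direction, approximate case, and the converses.} Given a $\poly(n,k,\Cmax/\eps)$-time $\AMinO$, I would cast $\eps$-approximate \eqref{eq:MOT-intro} as LP feasibility over the simplex $\Delta_{n^k}$ --- the $nk$ marginal equalities plus a target constraint $\langle P, C\rangle \le \tau$ --- and run $\MWU$ on this system, with an outer binary search over $\tau$. Each iteration maintains weights on the $O(nk)$ constraints and needs a best response, namely the minimization over $\Delta_{n^k}$ of a fixed nonnegative combination of $C_{\vec j}$ and of terms $\sum_i [\lambda_i]_{j_i}$; this is a linear functional on the simplex, minimized at a vertex, hence a rescaled instance of \eqref{eq:intro-min}, for which $\AMinO$ suffices within the error budget. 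The standard width-based analysis bounds the iteration count by $\poly(n,k,\Cmax/\eps)$; each iterate adds one simplex vertex (one tuple), so the returned near-feasible iterate is automatically $\poly(n,k,\Cmax/\eps)$-sparse, and a final rounding onto $\Coup$ changing the objective by $O(\eps\Cmax)$ makes it genuinely feasible. For the two ``only if'' directions I would recall the reduction of \citep{AltBoi20hard}: the value in \eqref{eq:intro-min} equals the minimum over the marginals of an $\MOT$ value for the \emph{same} cost $C$ minus a linear term (since $\sum_i[p_i]_{j_i}$ contributes only $\sum_i\langle p_i,\mu_i\rangle$ on each transportation polytope), and this convex minimization over the marginal polytope, fed by an $\MOT$ solver together with a dual optimizer read off from its output, computes $\MinO$ exactly (resp. $\AMinO$ approximately) in polynomial time.

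\textbf{Main obstacle.} I expect the delicate step to be the exact primal recovery: arguing rigorously that the polynomially many columns $\ELLIP$ generates support a primal optimum --- so the restricted LP is exactly optimal, not merely a lower bound --- together with the housekeeping needed for $\ELLIP$ to be genuinely polynomial on the dual (boundedness, vertex bit-size, lower-dimensionality of $\Coup$). In the approximate case the analogous subtlety is combining the two error sources, $\AMinO$'s additive slack and $\MWU$'s tolerance, so that the rounded output is $\eps$-optimal with exactly the stated $\poly(n,k,\Cmax/\eps)$ dependence.
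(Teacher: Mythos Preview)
Your high-level strategy matches the paper's: Ellipsoid on the dual for the exact equivalence, $\MWU$ for the approximate one, and defer to \citep{AltBoi20hard} for the converses. The column-generation primal recovery and the packing-covering formulation for $\MWU$ are also essentially what the paper does.

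However, you gloss over a subtlety the paper treats as its main technical contribution in this part. The oracles $\MinO$ and $\AMinO$ as defined (Definitions~\ref{def:oracle-min} and~\ref{def:oracle-amin}) return only the optimal \emph{value}, not a minimizing tuple. Yet both $\ELLIP$ (which needs a violated constraint, hence the tuple) and $\MWU$ (which needs a best-response vertex of $\Delta_{n^k}$, hence the tuple) require the \emph{argmin}. You conflate these when you write ``a minimizing tuple in \eqref{eq:intro-min} indexes a most-violated constraint'' and ``minimized at a vertex \ldots\ for which $\AMinO$ suffices.'' The general equivalence of feasibility and violation oracles does hold via \citep{GLSbook}, so your argument can be completed, but that reduction is indirect (back-and-forth Ellipsoid runs) and wholly impractical; moreover, in the approximate regime the general theory does not immediately give the polynomial-in-$1/\eps$ equivalence you need. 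The paper instead proves direct, elementary reductions specific to $\MOT$ (Lemmas~\ref{lem:minargmin} and~\ref{lem:argaminequalsamino}): one recovers the argmin coordinate-by-coordinate with $nk$ calls to the value oracle, forcing each coordinate in turn by replacing all but one entry of the corresponding $p_i$ with a large negative constant. This direct reduction is what makes the practical $\COLGEN$ variant possible and is the step you would need to supply to close the gap.
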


Interestingly, a further consequence of our unified algorithm-to-oracle abstraction is that it enables us to show that $\Sink$---which is currently the most popular algorithm for $\MOT$ by far---requires strictly more structure to solve an $\MOT$ problem than other algorithms require. This is in sharp contrast to the complete generality of the other two algorithms shown in Theorem~\ref{thm-intro:generality}.

\begin{theorem}[Informal statement of Theorem~\ref{thm:sink-separation}]\label{thm-intro:sink-generality}
	Under standard complexity-theoretic assumptions, there exists a family of $\MOT$ problems that can be solved exactly in $\poly(n,k)$ time using $\ELLIP$, however it is impossible to implement a single iteration of $\Sink$ (even approximately) in $\poly(n,k)$ time.
\end{theorem}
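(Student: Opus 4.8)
The plan is to play the two algorithm-to-oracle reductions against each other. By Theorem~\ref{thm:ellip}, to make $\ELLIP$ solve $\MOT$ exactly in $\poly(n,k)$ time for a family of costs it suffices that $\MinO$ be solvable in $\poly(n,k)$ time for that family. By the algorithm-to-oracle reduction for $\Sink$ (Theorem~\ref{thm:sink-mot:smin}), a single iteration of $\Sink$ is, up to $\poly(n,k)$ overhead, equivalent to the oracle $\SMinO$---equivalently, to computing the one-dimensional marginals of the Gibbs tensor $K=\exp(-\eta C)$---so, to prove the impossibility, it suffices to exhibit one family for which these marginals cannot be approximated in $\poly(n,k)$ time under a standard hardness assumption. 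Thus the whole statement reduces to building a single cost family that is \emph{easy to minimize over but hard to (approximately) sum over}.

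\textbf{The cost family.} I would realize this with the set-optimization structure. Fix a bipartite graph $G=(L\cup R,E)$ on $m$ vertices, set $k=m$ and $n=2$, identify coordinate $j_i\in\{0,1\}$ with ``vertex $i$ is out/in'', and let $S\subseteq\{0,1\}^{m}$ be the set of indicator vectors of independent sets of $G$. Define $C_{j_1,\dots,j_k}=0$ if $(j_1,\dots,j_k)\in S$ and $C_{j_1,\dots,j_k}=T$ otherwise, where $T=\Theta(m)$ is a ``soft infinity'' whose hidden constant is large enough that the truncation error $n^{k}e^{-\eta T}$ appearing below is $2^{-\Omega(m)}$; this cost has an $O(|E|)$-size implicit representation and its $\MOT$ instance has $n,k=\poly(m)$. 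For this family, $\MinO$ splits into (a) the optimum of $-\sum_{i}[p_i]_{j_i}$ over $(j_1,\dots,j_k)\in S$, which is---up to an additive constant and a sign---a maximum-weight independent set of $G$ under the arbitrary, sign-unconstrained vertex weights $[p_i]_{1}-[p_i]_{0}$; and (b) the optimum over the complement of $S$, which is a straightforward $O(|E|)$-term maximum. Part (a) is a linear program, because the fractional independent-set polytope of a bipartite graph is integral (its constraint matrix is totally unimodular), hence solvable in $\poly(m)=\poly(n,k)$ time, e.g.\ by a single min-cut computation; part (b) is trivially polynomial. So $\MinO$ is polynomial for this family, and Theorem~\ref{thm:ellip} shows $\ELLIP$ solves $\MOT$ here exactly in $\poly(n,k)$ time.

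\textbf{Hardness for $\Sink$.} Consider the first iteration of $\Sink$ from all-ones scalings, say the step updating the first coordinate. It must compute the first one-dimensional marginal of $K=\exp(-\eta C)$, whose value at $1$ equals $N:=\#\{\text{independent sets of }G\text{ containing vertex }1\}$ up to an additive error $2^{-\Omega(m)}$, and whose value at $0$ equals $N':=\#\{\text{independent sets of }G\text{ avoiding vertex }1\}\geq 1$ up to the same error; moreover $N+N'=\#\mathsf{IS}(G)$. Hence a procedure implementing this iteration to within a $(1\pm\eps)$ multiplicative factor in $\poly(n,k,1/\eps)$ time would, for every inverse-polynomial $\eps$, output a $(1\pm O(\eps))$-multiplicative approximation of $\#\mathsf{IS}(G)$ in $\poly(m,1/\eps)$ time---an FPRAS for $\#\mathsf{BIS}$. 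Under the standard assumption that $\#\mathsf{BIS}$ has no FPRAS, no such procedure exists, which is exactly the claimed impossibility, and it already fails for a \emph{single} iteration, as the theorem asserts. (Bipartite independent sets can be replaced by any set-optimization structure exhibiting the same optimize-easy / approximately-count-hard dichotomy, should a different complexity assumption be preferred.)

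\textbf{Main obstacle.} The crux is conceptual: one must find a cost structure for which $\MinO$---a \emph{minimization} over $\{1,\dots,n\}^{k}$ against arbitrary, sign-unconstrained potentials---is polynomial, while the associated entrywise-nonnegative partition function is hard to approximate. Most families that make the sum hard (hard-core or antiferromagnetic spin systems on general graphs, etc.) also make the min hard; the bipartite-independent-set gadget is exactly what threads the needle, since K\"onig's theorem / total unimodularity keeps the optimization polynomial while approximate counting stays intractable. Everything else is routine: bounding the truncation error from the finite ``soft infinity'' $T$, converting the multiplicative accuracy of an ``approximate $\Sink$ iteration'' into FPRAS accuracy, and checking that a size-$m$ combinatorial instance maps to an $\MOT$ instance of size $\poly(m)$.
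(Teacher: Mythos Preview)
Your argument is correct and follows the same high-level strategy as the paper---exhibit a cost family that is easy to \emph{minimize} but hard to \emph{sum} over---yet it uses a genuinely different gadget. The paper (Lemma~\ref{lem:smin-separation}) takes $n=2$ and costs of the form $C_{\jvec}=-\langle\jvec,A\jvec\rangle-\langle b,\jvec\rangle$ with $A\in\R_{\geq 0}^{k\times k}$: the $\MinO$ oracle is then a submodular minimization (the Hessian $-A$ has nonpositive off-diagonals), hence polynomial, while $\SMinO$ is exactly the log-partition function of a ferromagnetic Ising model with inconsistent external fields, which is \#BIS-hard by Goldberg--Jerrum. Hardness of a single $\Sink$ iteration then follows via the polynomial-time equivalence of $\MARG$ and $\SMinO$ (Lemma~\ref{lem:marg-smin}).

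Your construction instead uses a two-valued cost $C_{\jvec}\in\{0,T\}$ encoding bipartite independent sets; $\MinO$ is handled by total unimodularity / min-cut for the $0$-level plus a trivial enumeration over edges for the $T$-level, and hardness of the first $\MARG$ call follows directly from \#BIS-completeness of counting bipartite independent sets (with the soft-infinity $T$ controlling truncation). Both routes yield the same conclusion under the same assumption. The paper's version is slightly more self-contained (one clean cost formula, one citation for hardness, and the general $\MARG\Leftrightarrow\SMinO$ lemma does the rest), while yours is more elementary on the optimization side (a single min-cut instead of generic submodular minimization) and makes the \#BIS connection maximally direct. Your two-case split of $\MinO$ over $S$ and $S^c$ is an extra wrinkle the paper's quadratic cost avoids, but your handling of it is fine.
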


The reason that our unified algorithm-to-oracle abstraction helps us show Theorem~\ref{thm-intro:sink-generality} is that it puts $\Sink$ on equal footing with the other two classical algorithms in terms of their reliance on variants of the dual feasibility oracle. This reduces proving Theorem~\ref{thm-intro:sink-generality} to showing the following separation between the $\SMinO$ oracle and the other two oracles.

\begin{theorem}[Informal statement of Lemma~\ref{lem:smin-separation}]\label{thm-intro:sink-smin}
	Under standard complexity-theoretic assumptions,
	there exists a family of cost tensors $C \in \Rntk$ such that there are $\poly(n,k)$-time algorithms for $\MinO$ and $\AMinO$, however it is impossible to solve $\SMinO$ (even approximately) in $\poly(n,k)$ time.
\end{theorem}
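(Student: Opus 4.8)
The plan is to exhibit a single family of cost tensors $C \in \Rntk$ for which the exact and approximate min-oracles $\MinO$ and $\AMinO$ are easy, but the softmin-oracle $\SMinO$ encodes a $\sharpP$-hard counting problem (or some other problem believed to be intractable, e.g.\ requiring the assumption $\P \neq \sharpP$ or the non-collapse of a suitable complexity class). The key observation driving the construction is that the softmin over $(j_1,\dots,j_k) \in \{1,\dots,n\}^k$ of $C_{j_1,\dots,j_k} - \sum_i [p_i]_{j_i}$ is, up to the additive $-\tfrac1\eta \log(n^k)$ normalization and monotone rescaling, the logarithm of a \emph{partition function} $\sum_{j_1,\dots,j_k} \exp(-\eta(C_{j_1,\dots,j_k} - \sum_i [p_i]_{j_i}))$; whereas $\MinO$ and $\AMinO$ only ask for the \emph{optimum} of the exponent (a combinatorial optimization problem) rather than a weighted count. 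So the strategy is: choose $C$ so that (i) minimizing the exponent is a polynomial-time-solvable optimization problem, while (ii) the associated weighted sum is $\sharpP$-hard even to approximate multiplicatively.

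Concretely, I would look for a combinatorial structure whose optimization version is in $\P$ but whose counting version is $\sharpP$-complete, and encode it as an MOT cost. The canonical candidate is the permanent / perfect matchings in bipartite graphs: deciding whether a bipartite graph has a perfect matching is polynomial (Hungarian algorithm), but counting perfect matchings is $\sharpP$-complete (Valiant), and even approximating the $0/1$ permanent within an exponential factor is hard unless $\NP = \mathsf{RP}$. I would set $k$ and $n$ so that a tuple $(j_1,\dots,j_k)$ ranging over $\{1,\dots,n\}^k$ naturally indexes the possible ``assignments'' (e.g.\ $j_i$ is the vertex matched to $i$), define $C_{j_1,\dots,j_k} = 0$ if the tuple forms a valid perfect matching in a given graph $G$ and $C_{j_1,\dots,j_k} = 1$ (or some large value $M$) otherwise, and pick the query weights $p_i = 0$. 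Then $\MinO$ returns $0$ iff $G$ has a perfect matching—polynomial time—and $\AMinO$ is likewise easy since the optimal value is $0$ and the approximation tolerance is additive. But $\SMineta$ with these weights recovers $-\tfrac1\eta \log\bigl(\#\text{matchings}(G) + (\text{small contamination from the }1\text{-entries})\bigr)$; by choosing $M$ large (it is allowed to depend polynomially on the problem size, since $\Cmax$ enters the runtime only through $\Cmax/\eps$), the contamination term is negligible, so computing $\SMineta$ even approximately would let us approximate $\#\text{matchings}(G)$, contradicting the hardness assumption. A mild technical point is that one must ensure the cost family genuinely has a $\poly(n,k)$-size implicit representation; it does, since $C_{j_1,\dots,j_k}$ is a simple predicate about the tuple given the (polynomial-size) description of $G$.

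The main obstacle I anticipate is matching the precise quantitative definitions used earlier in the paper: $\SMinO$ presumably means computing $\smineta$ to additive error $\eps$ for a \emph{fixed} $\eta$ tied to the Sinkhorn analysis (something like $\eta = \Theta((k \log n)/\eps)$), so I need to verify that the hardness survives at that particular temperature—i.e.\ that an additive-$\eps$ estimate of $\tfrac1\eta \log Z$ still pins down $Z$ to within a $\poly$-bounded multiplicative factor, which it does precisely when $\eta \eps = O(\log(\text{size}))$, consistent with the parameter regime. A secondary subtlety is separating ``hardness of $\SMinO$'' from ``hardness of a single Sinkhorn iteration'': a Sinkhorn iteration updates one block of dual weights and internally requires marginalizing the Gibbs kernel, which is exactly a family of softmin computations, so the reduction from $\SMinO$ to a Sinkhorn step should be direct once one checks the marginalization identity; I would handle this via Remark~\ref{rem:oracles-feas} and the algorithm-to-oracle equivalences established in \S\ref{sec:algs}. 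Finally, I would remark that for the upgraded Theorem~\ref{thm-intro:sink-generality} one wants the \emph{same} cost family to be exactly solvable by $\ELLIP$—which follows immediately from the ``if'' direction of the $\MinO \Leftrightarrow \MOT$ equivalence, since we have already shown $\MinO$ is polynomial for this family.
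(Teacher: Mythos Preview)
Your high-level strategy---encode a problem whose optimization version is easy but whose counting version is hard, so that $\MinO$ is tractable while $\SMinO$ (a log-partition-function) is not---matches the paper's. But your specific instantiation via bipartite perfect matchings has a genuine gap for the ``(even approximately)'' clause: the $0/1$ permanent admits an FPRAS (Jerrum--Sinclair--Vigoda), so approximating $\SMinO_C(0,\eta)$ to additive $\eps$ is \emph{not} hard for your cost family. Your assertion that ``even approximating the $0/1$ permanent within an exponential factor is hard unless $\NP=\mathsf{RP}$'' is incorrect. Thus your construction establishes only exact hardness of $\SMinO$, not approximate hardness.

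The paper threads this needle by taking $n=2$ and $C_{\jvec} = -\langle \jvec, A\jvec\rangle - \langle b,\jvec\rangle$ for $\jvec\in\{\pm1\}^k$ with $A\in\R_{\geq 0}^{k\times k}$. Then $\MinO_C(p)$ for \emph{arbitrary} $p$ reduces (after absorbing $p$ into the linear term) to minimizing a quadratic with nonpositive off-diagonal Hessian over the hypercube, i.e.\ submodular minimization---polynomial. Meanwhile $\SMinO_C(p,\eta)$ is exactly the log-partition-function of a ferromagnetic Ising model with inconsistent external fields, which Goldberg--Jerrum show is $\#$BIS-hard, and this hardness extends to approximation under the standard assumption that $\#$BIS has no FPRAS. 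A secondary gap in your write-up: you only verify $\MinO_C$ at $p=0$, but the oracle must be polynomial for \emph{all} $p$; for your matching cost this is still doable (min-weight perfect matching for the ``matching'' branch, plus enumerating a single violated constraint for the ``non-matching'' branch), but you should state it.
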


\subsubsection{Answer to Q4: ease-of-use for checking if $\MOT$ is solvable in polynomial time}\label{sssec:intro-q4}

The second key benefit of this oracle abstraction is that it is helpful for showing that a given $\MOT$ problem (whose cost $C$ is input implicitly through some concise representation) is solvable in polynomial time as it without loss of generality reduces $\MOT$ to solving any of the three corresponding oracles in polynomial time. The upshot is that these oracles are more directly amenable to standard algorithmic techniques since they are phrased as more conventional combinatorial-optimization problems. In the second part of the paper, we illustrate this ease-of-use via applications to three general classes of structured $\MOT$ problems; for an overview see \S\ref{ssec:intro:cont-app}.

\subsubsection{Practical algorithmic tradeoffs beyond polynomial-time solvability}\label{sssec:intro-tradeoff}

From a theoretical perspective, the most important aspect of an algorithm is whether it can solve $\MOT$ in polynomial time if and only if any algorithm can. As we have discussed, this is true for $\ELLIP$ and $\MWU$ (Theorem~\ref{thm-intro:generality}) but not for $\Sink$ (Theorem~\ref{thm-intro:sink-generality}). Nevertheless, for a wide range of $\MOT$ cost structures, all three oracles can be implemented in polynomial time, which means that all three algorithms $\ELLIP$, $\MWU$, and $\Sink$ can be implemented in polynomial time. Which algorithm is best in practice depends on the relative importance of the following considerations for the particular application.
\begin{itemize}
	\item \textit{Error.} $\ELLIP$ computes exact solutions, whereas $\MWU$ and $\Sink$ only compute low-precision solutions due to $\poly(1/\eps)$ runtime dependence.
	
	\item \textit{Solution sparsity.} $\ELLIP$ and $\MWU$ output solutions with polynomially many non-zero entries (roughly $nk$), whereas $\Sink$ outputs fully dense solutions with $n^k$ non-zero entries (through a polynomial-size implicit representation, see \S\ref{ssec:algs:sink}). Solution sparsity enables interpretability, visualization, and efficient downstream computation---benefits which are helpful in diverse applications, for example ranging from computer graphics~\citep{solomon2015convolutional,blondel2018smooth, pitie2007automated} to facility location problems~\citep{anderes2016discrete} to machine learning~\citep{AltBoi20bary,flamary2016optimal} to ecological inference~\citep{muzellec2017tsallis} to fluid dynamics (see \S\ref{ssec:graphical:fluid}), and more. Furthermore, in \S\ref{ssec:lr:proj}, we show that sparse solutions for $\MOT$ (a.k.a. linear optimization over the transportation polytope) enable efficiently solving certain non-linear optimization problems over the transportation polytope.
	
	\item \textit{Practical runtime.} Although all three algorithms enjoy polynomial runtime guarantees, the polynomials are smaller for some algorithms than for others. In particular, $\Sink$ has remarkably good scalability in practice as long the error $\eps$ is not too small and its bottleneck oracle $\SMinO$ is practically implementable. By Theorems~\ref{thm-intro:generality} and~\ref{thm-intro:sink-generality}, $\MWU$ can solve strictly more $\MOT$ problems in polynomial time than $\Sink$; however, it is less scalable in practice when both $\MWU$ and $\Sink$ can be implemented. $\ELLIP$ is not practical and is used solely as a proof of concept that problems are tractable to solve exactly; in practice, we use Column Generation (see, e.g.,~\citep[\S6.1]{BerTsi97}) rather than $\ELLIP$ as it has better empirical performance, yet still has the same bottleneck oracle $\MinO$, see \S\ref{sssec:ellip:cg}. Column Generation is not as practically scalable as $\Sink$ in $n$ and $k$ but has the benefit of computing exact, sparse solutions. 
\end{itemize}
To summarize: which algorithm is best in practice depends on the application. For example, Column Generation
produces the qualitatively best solutions for the fluid dynamics application in \S\ref{ssec:graphical:fluid}, $\Sink$ is the most scalable for the risk estimation application in \S\ref{ssec:lr:risk}, and $\MWU$ is the most scalable for the network reliability application in \S\ref{ssec:binary:rel} (for that application there is no known implementation of $\Sink$ that is practically efficient).

\subsection{Contribution $2$: applications to general classes of structured $\MOT$ problems}\label{ssec:intro:cont-app}

In the second part of the paper, we illustrate the algorithmic framework developed in the first part of the paper by applying it to three general classes of MOT cost structures:
\begin{enumerate}
	\item Graphical structure (in \S\ref{sec:graphical}).
	\item Set-optimization structure (in \S\ref{sec:binary}).
	\item Low-rank plus sparse structure (in \S\ref{sec:lr}).
\end{enumerate} 
Specifically, if the cost $C$ is structured in any of these three ways, then $\MOT$ can be (approximately) solved in $\poly(n,k)$ time for any input marginals $\mu_1, \dots, \mu_k$. 

\par Previously, it was known how to solve $\MOT$ problems with structure (1) using $\Sink$~\citep{h20gm,teh2002unified}, but this only computes solutions that are dense (with $n^k$ non-zero entries) and low-precision (due to $\poly(1/\eps)$ runtime dependence). We therefore provide the first solutions that are sparse and exact for structure (1). For structures (2) and (3), we provide the first polynomial-time algorithms, even for approximate computation. These three structures are incomparable: it is in general not possible to model a problem falling under any of the three structures in a non-trivial way using any of the others, for details see Remarks~\ref{rem:binary-incomparable} and~\ref{rem:lr-incomparable}. This means that the new structures (2) and (3) enable capturing a wide range of new applications. 
\par Below, we detail these structures individually in 
\S\ref{sssec:intro:cont:graphical}, \S\ref{sssec:intro:cont:bin}, and \S\ref{sssec:intro:cont:lr}. See Table~\ref{tab:apps} for a summary.

\begin{table}
	\centering
	{\renewcommand{\arraystretch}{1.5}
	\begin{tabular}{|c|c|c|c|c|}
		\hline
	\multirow{ 2}{*}{\textbf{Structure}}
		 & \multirow{ 2}{*}{\textbf{Definition}} & \multirow{ 2}{*}{\textbf{Complexity measure}} &  \multicolumn{2}{c|}{\textbf{Polynomial-time algorithm?}}\\
		\cline{4-5}
		& & & \textbf{Approximate} & \textbf{Exact} \\
		\hline
		Graphical  (\S\ref{sec:graphical})              &
		$C_{\jvec} = \sum_{S \in \cS} f_S(\jvec_S)$
		& treewidth 
		& Known \citep{teh2002unified,h20gm}
		& Corollary~\ref{cor:graphical-algs}
		\\ \hline
		Set-optimization (\S\ref{sec:binary})             &
		$C_{\jvec} = \mathds{1}[\jvec \notin S]$
		& optimization oracle over S 
		& Corollary~\ref{cor:binary-algs}    
		& Corollary~\ref{cor:binary-algs}
		\\ \hline
		Low-rank + sparse  (\S\ref{sec:lr})        & 
		$C = R + S$
		& rank of $R$, sparsity of $S$
		& Corollary~\ref{cor:lr-algs}
		& Unknown    
		\\ \hline
	\end{tabular}}
	\caption{
		In the second part of the paper, 
		we illustrate the ease-of-use of our algorithmic framework 
		by applying it to three general classes of $\MOT$ cost structures. 
		These structures encompass many---if not most---current applications of \MOT.
	}
	\label{tab:apps}
\end{table}

\subsubsection{Graphical structure}\label{sssec:intro:cont:graphical}

In \S\ref{sec:graphical}, we apply our algorithmic framework to $\MOT$ problems with graphical structure, a broad class of $\MOT$ problems that have been previously studied~\citep{h20gm,h20tree,teh2002unified}. Briefly, an $\MOT$ problem has graphical structure if its cost tensor $C$ decomposes as 
\[
	C_{j_1, \dots, j_k}
	=
	\sum_{S \in \cS} f_S(\jvec_S),
\]
where $f_S(\jvec_S)$ are arbitrary ``local interactions'' that depend only on tuples $\jvec_S := \{j_i\}_{i \in S}$ of the $k$ variables. 

\par In order to derive efficient algorithms, it is necessary to restrict how local the interactions are because otherwise $\MOT$ is $\NP$-hard (even if all interaction sets $S \in \cS$ have size $2$)~\citep{AltBoi20hard}. We measure the locality of the interactions via the standard complexity measure of the ``treewidth'' of the associated graphical model. See \S\ref{ssec:graphical:structure} for formal definitions. While the runtimes of our algorithms (and all previous algorithms) depend exponentially on the treewidth, we emphasize that the treewidth is a very small constant (either $1$ or $2$) in all current applications of $\MOT$ falling under this framework; see the related work section.

\par We show that for $\MOT$ cost tensors that have graphical structure of constant treewidth, all three oracles can be implemented in $\poly(n,k)$ time. We accomplish this by leveraging the known connection between graphically structured $\MOT$ and graphical models shown in~\citep{h20gm}. In particular, the $\MinO$, $\AMinO$, and $\SMinO$ oracles are respectively equivalent to the mode, approximate mode, and log-partition function of an associated graphical model. Thus we can implement all oracles in $\poly(n,k)$ time by simply applying classical algorithms from the graphical models community~\citep{KolFri09,wainwright2008graphical}.

\begin{theorem}[Informal statement of Theorem~\ref{thm:graphical-oracles}]\label{thm-intro:graphical-oracles}
	Let $C \in \Rntk$ have graphical structure of constant treewidth. Then the $\MinO$, $\AMinO$, and $\SMinO$ oracles can be computed in $\poly(n,k)$ time.
\end{theorem}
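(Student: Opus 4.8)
The plan is to reduce each of the three oracles, for a cost $C$ with graphical structure of treewidth $w=O(1)$, to a standard inference computation on a discrete graphical model of treewidth $w$, and then invoke classical junction-tree algorithms. The first step is to observe that subtracting the linear term $\sum_{i=1}^k [p_i]_{j_i}$ preserves graphical structure: every index $i\in\{1,\dots,k\}$ either already lies in some interaction set, or we insert the singleton $\{i\}$ into $\cS$ (which does not change the treewidth of the associated graph), and then fold the term $-[p_i]_{j_i}$ into that local interaction. Hence $\jvec\mapsto C_{\jvec}-\sum_i[p_i]_{j_i}$ can be written as $F(\jvec)=\sum_{S\in\cS'}g_S(\jvec_S)$ with the same underlying interaction graph up to singletons, so its treewidth is still $w=O(1)$, it has $k$ variables, $|\cS'|=|\cS|+k$ factors, and a $\poly(n,k)$-size description. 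It therefore suffices to compute, for such an $F$: (i) $\min_{\jvec\in\{1,\dots,n\}^k}F(\jvec)$, for $\MinO$; (ii) an $\eps$-additive approximation thereof, for $\AMinO$; and (iii) $\smineta_{\jvec}F(\jvec)=-\tfrac1\eta\log\sum_{\jvec}e^{-\eta F(\jvec)}$, for $\SMinO$.

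Next I would set up the graphical-model dictionary, following~\citep{h20gm}: associate to $F$ the Gibbs distribution $\Prob(\jvec)\propto\prod_{S\in\cS'}e^{-\eta g_S(\jvec_S)}$ on $\{1,\dots,n\}^k$, a Markov random field whose factor graph is exactly the interaction graph of $F$. Then $\min_{\jvec}F(\jvec)$ is the energy of a MAP (mode) configuration, an approximate minimizer is an approximate mode, and $\smineta_{\jvec}F(\jvec)$ equals $-\tfrac1\eta$ times the log-partition function $\log\sum_{\jvec}\prod_{S}e^{-\eta g_S(\jvec_S)}$ of this MRF.

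The third step is the classical inference machinery. Since the interaction graph has treewidth $w=O(1)$, compute (or, if provided with the structure, use) a tree decomposition of width $w$ with $O(k)$ bags. Running the sum-product message-passing pass over this tree decomposition yields the log-partition function, hence $\SMinO$; running the max-sum (tropical-semiring) pass yields a MAP configuration together with its energy, hence $\MinO$ and the exact version of $\AMinO$. Each pass costs $O(|\cS'|\cdot n^{w+1})+n^{O(w)}\poly(k)$, which is $\poly(n,k)$ because $w=O(1)$ and $|\cS'|=\poly(n,k)$; this is precisely the standard complexity of inference in bounded-treewidth graphical models~\citep{KolFri09,wainwright2008graphical}. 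For $\SMinO$ one carries the log-sum-exp computations in a numerically stable way (subtracting the running maximum of each message), which only incurs polylogarithmic overhead in the bit-complexity of the $g_S$, equivalently of $C$ and the weights $p_i$.

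I expect the main obstacle to be the bookkeeping at the two ends of this pipeline rather than the dynamic program itself: precisely defining ``treewidth of $C$'' as the treewidth of the primal graph obtained by turning each $S\in\cS$ into a clique (cf.\ \S\ref{ssec:graphical:structure}), verifying that inserting singleton factors for the linear term leaves this unchanged, and bounding the running time uniformly in $n$, $k$, and $w$ while accounting for the bit-complexity of the potentials, which now depend on the arbitrary weights $p_i$. Once a bounded-width tree decomposition is fixed, the sum-product and max-sum passes are entirely standard; the real content is the clean reduction to graphical-model inference and the runtime accounting.
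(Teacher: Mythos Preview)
Your proposal is correct and follows essentially the same approach as the paper: reduce $\MinO$ and $\SMinO$ to computing the mode and log-partition function, respectively, of a graphical model obtained by augmenting $C$'s factorization with the singleton (vertex) potentials $-[p_i]_{j_i}$, then invoke the classical max-product and sum-product algorithms for bounded-treewidth models, with $\AMinO$ following immediately from $\MinO$. The paper's proof is terser on the bookkeeping you flag (treewidth invariance under adding singletons, bit-complexity), but the argument is the same.
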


It is an immediate corollary of Theorem~\ref{thm-intro:graphical-oracles} and our algorithms-to-oracles reduction described in \S\ref{ssec:intro:cont-alg} that one can implement $\ELLIP$, $\MWU$, and $\Sink$ in polynomial time. Below, we record the theoretical guarantee of $\ELLIP$ since it is the best of the three algorithms as it computes exact, sparse solutions.

\begin{theorem}[Informal statement of Corollary~\ref{cor:graphical-algs}]\label{thm-intro:graphical}
	Let $C \in \Rntk$ have graphical structure of constant treewidth. Then an exact, sparse solution for $\MOT$ can be computed in $\poly(n,k)$ time.
\end{theorem}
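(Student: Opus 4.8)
The strategy is to deduce the theorem from two ingredients established elsewhere in the paper: the reduction of $\MOT$ to its dual feasibility oracle (the guarantee for $\ELLIP$ in Theorem~\ref{thm:ellip}), and the tractability of that oracle under graphical structure (Theorem~\ref{thm:graphical-oracles}). Concretely, I would (1) implement the $\MinO$ oracle for $C$ in $\poly(n,k)$ time, and then (2) feed it to $\ELLIP$, which by Theorem~\ref{thm:ellip} returns an \emph{exact} and \emph{sparse} optimal $P$ after polynomially many oracle calls; since $\MinO$ never looks at the marginals, this works for every input $\mu_1,\dots,\mu_k$ simultaneously.

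\emph{Step 1: the oracle.} Given dual weights $p_1,\dots,p_k\in\R^n$, $\MinO$ must return
\[
\min_{\jvec\in\{1,\dots,n\}^k}\ \Big(\sum_{S\in\cS} f_S(\jvec_S)\ -\ \sum_{i=1}^k [p_i]_{j_i}\Big),
\]
which is the minimization of a sum of functions, each depending on only a few of the coordinates $j_1,\dots,j_k$. This is precisely a MAP/mode query in the discrete graphical model on variables $j_1,\dots,j_k\in\{1,\dots,n\}$ whose factors are $\{f_S\}_{S\in\cS}$ together with the $k$ unary factors $j_i\mapsto -[p_i]_{j_i}$. Since unary factors do not enlarge the interaction hypergraph, this model has the same (constant) treewidth as $C$; hence variable elimination along a fixed tree decomposition---equivalently the junction-tree algorithm---computes the mode in $\poly(n,k)$ time, with the degree in $n$ governed solely by the (constant) treewidth. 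This is the content of Theorem~\ref{thm:graphical-oracles}; only $\MinO$ (not $\AMinO$ or $\SMinO$) is needed here.

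\emph{Step 2: from the oracle to an exact, sparse solution.} Dualizing $\MOT$, the dual program has the $nk$ variables $\{[p_i]_j\}$ and one constraint $\sum_i [p_i]_{j_i}\le C_{\jvec}$ per tuple $\jvec$; deciding feasibility of a point is exactly one call to $\MinO$ (check whether the minimum above is negative), so Step 1 supplies a $\poly(n,k)$-time separation oracle for the dual feasible set. By the equivalence of separation and optimization for linear programs~\citep{GLSbook} (and its refinement in Theorem~\ref{thm:ellip}), $\ELLIP$ then solves $\MOT$ exactly in $\poly(n,k)$ time while touching only $\poly(n,k)$ distinct tuples $\jvec$; restricting the primal LP to the columns indexed by those tuples yields an explicit polynomial-size LP with the same optimal value, and solving it for a basic feasible solution produces a primal optimum $P$ supported on those tuples. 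Sparsity is then automatic: $\Coup$ is cut out by only $nk$ equality constraints, of which $nk-k+1$ are independent (all marginals carry the same total mass), so any vertex of $\Coup$---in particular the returned one---has at most $nk-k+1$ nonzero entries. In practice one replaces $\ELLIP$ by column generation driven by the same oracle $\MinO$, cf.\ \S\ref{sssec:intro-tradeoff}, without changing this guarantee.

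\emph{Main obstacle.} The conceptual heavy lifting lives in the two results invoked above; within this corollary the delicate points are purely structural bookkeeping: (a) checking that the graphical-model encoding of $C-\sum_i[p_i]_{j_i}$ is faithful and treewidth-preserving, so that the ``constant treewidth'' hypothesis genuinely transfers to every $\MinO$ instance independently of the $p_i$; and (b) confirming that the ellipsoid run can be post-processed in polynomial time into an honest vertex of $\Coup$, so that one obtains true sparsity rather than merely a polynomial-size implicit description of $P$. Both are standard, so the substance of the theorem is inherited from Theorems~\ref{thm:graphical-oracles} and~\ref{thm:ellip}.
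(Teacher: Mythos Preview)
Your proposal is correct and follows essentially the same approach as the paper: invoke Theorem~\ref{thm:graphical-oracles} to implement $\MinO_C$ in $\poly(n,k)$ time via mode computation in the associated constant-treewidth graphical model, then plug this into Theorem~\ref{thm:ellip} so that $\ELLIP$ returns an exact vertex solution with at most $nk-k+1$ non-zeros. The paper's own proof of Corollary~\ref{cor:graphical-algs} is a one-liner doing exactly this combination; your additional remarks on why the unary potentials $-[p_i]_{j_i}$ preserve treewidth and how the dual-to-primal conversion yields sparsity are correct elaborations of details the paper handles inside the referenced theorems.
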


Previously, it was known how to solve such $\MOT$ problems~\citep{teh2002unified,h20gm} using $\Sink$, but this only computes a solution that is fully dense (with $n^k$ non-zero entries) and low-precision (due to $\poly(1/\eps)$ runtime dependence). Details in the related work section. Our result improves over this state-of-the-art algorithm by producing solutions that are \emph{exact} and \emph{sparse} in $\poly(n,k)$ time.

\par In \S\ref{ssec:graphical:fluid}, we demonstrate the benefit of Theorem~\ref{thm-intro:graphical} on the application of computing generalized Euler flows, which was historically the motivation of $\MOT$ and has received significant attention, e.g.,~\citep{BenCarCut15,BenCarNen19,Bre08,Bre89,Bre93,Bre99}. While there is a specially-tailored version of the $\Sink$ algorithm for this problem that runs in polynomial time~\citep{BenCarCut15,BenCarNen19}, it produces solutions that are  approximate and fully dense. Our algorithm produces exact, sparse solutions which lead to sharp visualizations rather than blurry ones (see Figure~\ref{fig:fluids}).

\subsubsection{Set-optimization structure}\label{sssec:intro:cont:bin}

In \S\ref{sec:binary}, we apply our algorithmic framework to $\MOT$ problems whose cost tensors $C$ take value $0$ or $1$ in each entry. That, is costs $C$ of the form
\[
	C_{j_1,\dots,j_k} = \mathds{1}[(j_1,\dots,j_k) \notin S],
\]
for some subset $S \subseteq [n]^k$. Such $\MOT$ problems arise naturally in applications where one seeks to minimize the probability that some event $S$ occurs, given marginal probabilities on each variable $j_i$, see Example~\ref{ex:binary}. 
 
\par In order to derive efficient algorithms, it is necessary to restrict the (otherwise arbitrary) set $S$. We parametrize the complexity of such $\MOT$ problems via the complexity of finding the minimum-weight object in $S$. This opens the door to combinatorial applications of $\MOT$ because finding the minimum-weight object in $S$ is well-known to be polynomial-time solvable for many ``combinatorially-structured'' sets $S$ of interest---e.g., the set $S$ of cuts in a graph, or the set $S$ of independent sets in a matroid. 
\par We show that for $\MOT$ cost tensors with this structure, all three oracles can be implemented efficiently.

\begin{theorem}[Informal statement of Theorem~\ref{thm:binary-oracles}]\label{thm-intro:binary-oracles}
	Let $C \in \Rntk$ have set-optimization structure. Then the $\MinO$, $\AMinO$, and $\SMinO$ oracles can be computed in $\poly(n,k)$ time. 
\end{theorem}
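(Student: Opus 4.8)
The plan is to show that the three oracles $\MinO$, $\AMinO$, and $\SMinO$ all reduce, in $\poly(n,k)$ time, to a single combinatorial primitive: the minimum-weight oracle over the set $S$. Recall that for the set-optimization structure the cost is $C_{j_1,\dots,j_k} = \mathds{1}[(j_1,\dots,j_k) \notin S]$ for some $S \subseteq [n]^k$, and the assumed structure is that, given weights $w_1,\dots,w_k \in \R^n$, one can compute $\min_{(j_1,\dots,j_k) \in S} \sum_{i=1}^k [w_i]_{j_i}$ (and a minimizer) in $\poly(n,k)$ time. So first I would write down explicitly what each oracle must compute for this particular $C$. For given dual weights $p_1,\dots,p_k$, the quantity $C_{j_1,\dots,j_k} - \sum_i [p_i]_{j_i}$ equals $-\sum_i [p_i]_{j_i}$ when $(j_1,\dots,j_k) \in S$ and equals $1 - \sum_i [p_i]_{j_i}$ when $(j_1,\dots,j_k) \notin S$. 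Hence the unconstrained minimum over all of $[n]^k$ is the smaller of two numbers:
\begin{align*}
A &:= \min_{(j_1,\dots,j_k) \in S} \Big(-\textstyle\sum_{i=1}^k [p_i]_{j_i}\Big) = -\max_{(j_1,\dots,j_k) \in S} \textstyle\sum_{i=1}^k [p_i]_{j_i},\\
B &:= 1 + \min_{(j_1,\dots,j_k) \in [n]^k} \Big(-\textstyle\sum_{i=1}^k [p_i]_{j_i}\Big) = 1 - \textstyle\sum_{i=1}^k \max_{j \in [n]} [p_i]_{j_i}.
\end{align*}
The second term $B$ is trivially computable in $O(nk)$ time; the first term $A$ is exactly a call to the minimum-weight oracle over $S$ with weights $w_i = -p_i$ (and if $S$'s optimization oracle also returns a minimizing tuple, we can return the argmin for $\MinO$). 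This handles $\MinO$ exactly, which by the reduction in \S\ref{sec:algs} gives $\ELLIP$ in polynomial time; and since $\AMinO$ only asks for an additive-$\eps$ approximation, the same exact computation trivially solves $\AMinO$ as well.

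The remaining—and main—step is $\SMinO$, which asks for the softmin $-\eta^{-1}\log \sum_{(j_1,\dots,j_k)} \exp(-\eta(C_{j_1,\dots,j_k} - \sum_i [p_i]_{j_i}))$, equivalently (up to the known relation between softmin and the log-partition function) the sum
\begin{align*}
Z := \sum_{(j_1,\dots,j_k) \in [n]^k} \exp\!\Big(\textstyle\sum_{i=1}^k \eta [p_i]_{j_i}\Big)\exp\big(-\eta\, \mathds{1}[(j_1,\dots,j_k)\notin S]\big).
\end{align*}
The key trick is to split $Z$ into the contribution from tuples in $S$ and tuples not in $S$. Writing $q_i := \exp(\eta p_i) \in \R^n_{>0}$ entrywise, the full sum over all of $[n]^k$ factorizes as $\prod_{i=1}^k (\sum_{j} [q_i]_j)$, which is computable in $O(nk)$ time. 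Therefore
\begin{align*}
Z = e^{-\eta}\!\!\prod_{i=1}^k\Big(\sum_{j}[q_i]_j\Big) + (1 - e^{-\eta})\!\!\sum_{(j_1,\dots,j_k)\in S}\prod_{i=1}^k [q_i]_{j_i},
\end{align*}
since each tuple in $S$ gets weight $1$ instead of $e^{-\eta}$, a correction of $(1-e^{-\eta})$. So $\SMinO$ reduces to computing the \emph{weighted count} $W(S) := \sum_{(j_1,\dots,j_k)\in S}\prod_{i=1}^k [q_i]_{j_i}$ — a counting/partition-function problem over $S$ rather than an optimization problem over $S$. The hard part is that a minimum-weight oracle over $S$ does not in general give us $W(S)$; counting is typically harder than optimization (indeed this is precisely the flavor of obstruction that Theorem~\ref{thm-intro:sink-smin} exploits). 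So for the full Theorem~\ref{thm-intro:binary-oracles} as stated — which claims \emph{all three} oracles are polynomial-time computable — I expect the actual hypothesis in \S\ref{sec:binary} to be not merely a min-weight oracle but something stronger for the $\SMinO$ part (for instance that $S$ has a polynomial-size description supporting a counting oracle, or that one works with a relaxed/approximate notion); the cleanest route is to state $\SMinO$'s requirement separately as "$W(S)$ is $\poly(n,k)$-time computable for positive weights" and observe that this holds for the combinatorial families of interest (e.g. via transfer-matrix or matroid-polynomial methods for cuts, independent sets, spanning structures), exactly paralleling how the graphical case reduces $\SMinO$ to a tractable log-partition function.

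To assemble the final proof I would (i) record the factorization identities above as a short lemma, (ii) invoke the assumed min-weight oracle for $S$ to get $A$, hence $\MinO$ and $\AMinO$, (iii) invoke the assumed weighted-counting oracle for $S$ to get $W(S)$, hence $Z$ and thus $\SMinO$ via the softmin/log-partition correspondence, and (iv) note that all arithmetic is on numbers of $\poly(n,k,\log(1/\eps),\eta,\Cmax)$ bit-length so the whole reduction is genuinely $\poly(n,k)$. The only subtlety to flag carefully is numerical: the $e^{-\eta}$ and $(1-e^{-\eta})$ factors and the products $\prod_i(\sum_j [q_i]_j)$ can be exponentially large or small in $\eta$, so the computation should be carried out in the log-domain (working with $\smineta$ directly) rather than naively exponentiating — but this is a routine stability argument of the same kind already used for $\Sink$ in \S\ref{ssec:algs:sink}, not a genuine obstacle.
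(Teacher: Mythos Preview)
Your proposal is correct and matches the paper's approach essentially line-for-line: the paper's Theorem~\ref{thm:binary-oracles} splits the hypothesis into \emph{exact}, \emph{approximate}, and \emph{soft} set-optimization structure (Definitions~\ref{def:oracle-min-S}--\ref{def:oracle-smin-S} and~\ref{def:binary}), assuming respectively a $\MinOCS$, $\AMinOCS$, or $\SMinOCS$ oracle over $S$, and then proves exactly your identities---$\MinO_C(p) = \min(A,B)$ via Algorithm~\ref{alg:binary-min}, and $\SMinO_C(p,\eta) = -\eta^{-1}\log\big(e^{-\eta}\prod_i\sum_j e^{\eta[p_i]_j} + (1-e^{-\eta})W(S)\big)$ via Algorithm~\ref{alg:binary-smin}. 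Your anticipation that the $\SMinO$ case requires a separate counting-type assumption rather than just a min-weight oracle is exactly how the paper handles it; the informal statement you were given simply suppressed this distinction.
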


It is an immediate corollary of Theorem~\ref{thm-intro:binary-oracles} and our algorithms-to-oracles reduction described in \S\ref{ssec:intro:cont-alg} that one can implement $\ELLIP$, $\MWU$, and $\Sink$ in polynomial time. Below, we record the theoretical guarantee for $\ELLIP$ since it is the best of these three algorithms as it computes exact, sparse solutions.

\begin{theorem}[Informal statement of Corollary~\ref{cor:binary-algs}]\label{thm-intro:binary}
	Let $C \in \Rntk$ have set-optimization structure. Then an exact, sparse solution for $\MOT$ can be computed in $\poly(n,k)$ time.
\end{theorem}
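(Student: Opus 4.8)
The plan is to reduce the theorem to the algorithm-to-oracle machinery of \S\ref{sec:algs}. By Theorem~\ref{thm:ellip}, it suffices to implement the $\MinO$ oracle---i.e., to compute~\eqref{eq:intro-min} exactly---in $\poly(n,k)$ time for every cost of the form $C_{\jvec}=\mathds{1}[\jvec\notin S]$: then \ELLIP{} solves $\MOT$ using $\poly(n,k)$ oracle calls and returns an exact optimal solution that is a vertex of the transportation polytope $\Coup$, hence has $O(nk)$ support. Crucially, $\MinO$ depends only on $C$ and the dual weights $p_1,\dots,p_k$, not on the marginals, so a single $\poly(n,k)$-time implementation serves all input marginals at once. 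Thus the whole task is the $\MinO$ implementation, which is the content of Theorem~\ref{thm:binary-oracles}; I now sketch how I would prove that piece.

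Fix $p_1,\dots,p_k\in\R^n$ and split the minimization over $\jvec\in[n]^k$ according to whether $\jvec\in S$ (where $C_{\jvec}=0$) or $\jvec\notin S$ (where $C_{\jvec}=1$):
\begin{align*}
	\min_{\jvec\in[n]^k}\bigl(C_{\jvec}-\sum_{i=1}^{k}[p_i]_{j_i}\bigr)
	\;=\;
	\min\bigl(-\max_{\jvec\in S}\sum_{i}[p_i]_{j_i}\,,\;\;1-\max_{\jvec\notin S}\sum_{i}[p_i]_{j_i}\bigr).
\end{align*}
Write $W:=\max_{\jvec\in S}\sum_i[p_i]_{j_i}$; the first term is $-W$, and $W$ together with a maximizing tuple is exactly what the optimization oracle over $S$ returns when run with weights $-p_1,\dots,-p_k$. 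The second term looks problematic, since we are only promised an optimization oracle over $S$, not over its complement $[n]^k\setminus S$, and optimizing over the complement of a combinatorially nice set can be intractable. The $0/1$ structure of $C$ lets us avoid this: let $M:=\sum_{i=1}^{k}\max_j[p_i]_j$ be the unconstrained maximum of $\sum_i[p_i]_{j_i}$, computable in $O(nk)$ time and attained by the greedy tuple $\jvec^\star$. Because $W$ and $\max_{\jvec\notin S}\sum_i[p_i]_{j_i}$ have maximum equal to $M$, a short case check shows the displayed quantity is always $\min(-W,\,1-M)$: if $W=M$ then $-W=-M\le1-M$; if $W<M$ then $\jvec^\star\notin S$, so $\max_{\jvec\notin S}\sum_i[p_i]_{j_i}=M$ and the second term equals $1-M$. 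Hence $\MinO$ costs one call to the optimization oracle over $S$ plus $O(nk)$ overhead, and a minimizing tuple is recovered as the oracle's maximizer when $-W\le1-M$ and as $\jvec^\star$ otherwise.

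I expect the main obstacle to be precisely this complement issue: isolating a notion of ``set-optimization structure'' that is general enough to capture the intended combinatorial families (e.g., graph cuts and matroid independent sets, as in Example~\ref{ex:binary}) while still permitting the $+1$-penalty trick above to go through cleanly. The remaining ingredients are routine: the $\MinO\Rightarrow\MOT$ reduction and the $O(nk)$-sparsity of vertices of $\Coup$ are already available from \S\ref{sec:algs} and standard polyhedral theory; replacing the exact optimization oracle over $S$ by its approximate analogue turns the same computation into a $\poly(n,k,\Cmax/\eps)$-time implementation of $\AMinO$, which through Theorem~\ref{thm:mwumotp} yields the $\eps$-approximate sparse algorithm and thereby the full statement of Corollary~\ref{cor:binary-algs}; and the degenerate cases $S=\emptyset$ (cost $\equiv1$) and $S=[n]^k$ (cost $\equiv0$) are handled by inspection.
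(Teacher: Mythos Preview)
Your proposal is correct and follows essentially the same approach as the paper: split the $\MinO$ optimization over $S$ versus its complement, compute the unconstrained optimum $M$ in $O(nk)$ time, and combine it with the $S$-oracle value $W$ to get the answer. Your closed form $\min(-W,\,1-M)$ is in fact a slight simplification of the paper's case-split output (Algorithm~\ref{alg:binary-min} returns $a$ if $a\le x$ and $\min(a,1+x)$ otherwise, but both branches collapse to $\min(a,1+x)$ since $a\le x$ forces $a=x$), and your recovery of the minimizing tuple is a nice addition that the paper handles separately via the generic $\MinO\to\ArgMinO$ reduction in Lemma~\ref{lem:minargmin}.
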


This is the first polynomial-time algorithm for this class of $\MOT$ problems. We note that a more restrictive class of $\MOT$ problems was studied in~\citep{zemel1982polynomial} under the additional restriction that $S$ is upwards-closed.

In \S\ref{ssec:binary:rel}, we show how this general class of set-optimization structure captures, for example, the classical application of computing the extremal reliability of a network with stochastic edge failures. Network reliability is a fundamental topic in network science and engineering~\citep{gertsbakh2011network,ball1995network,ball1986computational} which is often studied in an average-case setting where each edge fails independently with some given probability \citep{moore1956reliable,karger2001randomized,valiant1979complexity,provan1983complexity}. The application investigated here is a robust notion of network reliability in which edge failures may be maximally correlated (e.g., by an adversary) or minimally correlated (e.g., by a network maintainer) subject to a marginal constraint on each edge's failure probability, a setting that dates back to the 1980s~\citep{zemel1982polynomial,weiss1986stochastic}. We show how to express both the minimally and maximally correlated network reliability problems as $\MOT$ problems with set-optimization structure, recovering as a special case of our general framework the known polynomial-time algorithms in~\citep{zemel1982polynomial,weiss1986stochastic} as well as more practical polynomial-time algorithms that scale to input sizes that are an order-of-magnitude larger.

\subsubsection{Low-rank and sparse structure}\label{sssec:intro:cont:lr}

In \S\ref{sec:lr}, we apply our algorithmic framework to $\MOT$ problems whose cost tensors $C$ decompose as
\[
	C = R + S,
\]
where $R$ is a constant-rank tensor, and $S$ is a polynomially-sparse tensor. We assume that $R$ is represented in factored form, and that $S$ is represented through its non-zero entries, which overall yields a $\poly(n,k)$-size representation of $C$.

\par We show that for $\MOT$ cost tensors with low-rank plus sparse structure, the $\AMinO$ and $\SMinO$ oracles can be implemented in polynomial time.\footnote{It is an interesting open question if the $\MinO$ oracle can similarly be implemented in $\poly(n,k)$ time. This would enable implementing $\ELLIP$ in $\poly(n,k)$ time by our algorithms-to-oracles reduction, and thus would enable computing exact solutions for this class of $\MOT$  problems (cf., Theorem~\ref{thm-intro:lr}).\label{fn:lr-exact}}
This may be of independent interest because, by taking all oracle inputs $p_i = 0$ in~\eqref{eq:intro-min}, this generalizes the previously open problem of approximately computing the smallest entry of a constant-rank tensor with $n^k$ entries in $\poly(n,k)$ time.

\begin{theorem}[Informal statement of Theorem~\ref{thm:lr-oracles}]\label{thm-intro:lr-oracles}
	Let $C \in \Rntk$ have low-rank plus sparse structure. Then the $\AMinO$ and $\SMinO$ oracles can be computed in $\poly(n,k,\Cmax/\eps)$ time. 
\end{theorem}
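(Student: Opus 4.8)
\emph{Plan.} I would reduce both the $\AMinO$ and $\SMinO$ oracles to a single primitive and then solve that primitive by polynomial approximation. Throughout, write $p(\jvec):=\sum_{i=1}^k[p_i]_{j_i}$ for a tuple $\jvec=(j_1,\dots,j_k)$, and consider the task: given $\eta>0$ and $p_1,\dots,p_k\in\R^n$, approximate $Z:=\sum_{\jvec\in[n]^k}e^{-\eta(C_{\jvec}-p(\jvec))}$ to within a multiplicative factor $1\pm\beta_0$ with $\beta_0=1/\poly(n,k,\Cmax/\eps)$. The $\SMinO$ oracle is exactly $-\eta^{-1}\log Z$ to additive accuracy, so the primitive solves it directly; the $\AMinO$ oracle follows from the standard soft-min bound $\big|\smineta_{\jvec}(C_{\jvec}-p(\jvec))-\min_{\jvec}(C_{\jvec}-p(\jvec))\big|\le\eta^{-1}\ln(n^k)$, by taking $\eta:=(k\ln n)/\eps$ so that the softmin is itself an $\eps$-approximation of the min. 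Since $\eta$, $\beta_0^{-1}$, $\Cmax$, and---in the algorithmic settings of Theorems~\ref{thm:mwumotp} and~\ref{thm:sink-mot:smin} where these oracles are invoked---the magnitudes $\norm{p_i}_\infty$ are all $\poly(n,k,\Cmax/\eps)$, it suffices to implement the primitive in polynomial time.

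\emph{Peeling off the sparse part.} Let $E:=\supp(S)=\{\jvec^{(1)},\dots,\jvec^{(m)}\}$ with $m=\poly(n,k)$. Using $C=R+S$ and that $S$ vanishes outside $E$,
\begin{align*}
Z&=Z_{\mathrm{LR}}+\sum_{\ell=1}^m\Big(e^{-\eta(C_{\jvec^{(\ell)}}-p(\jvec^{(\ell)}))}-e^{-\eta(R_{\jvec^{(\ell)}}-p(\jvec^{(\ell)}))}\Big),\\
Z_{\mathrm{LR}}&:=\sum_{\jvec\in[n]^k}e^{-\eta(R_{\jvec}-p(\jvec))}.
\end{align*}
The finite sum is computed exactly in $\poly(n,k)$ time, reading each $R_{\jvec^{(\ell)}}$ off the factored form of $R$ in $O(\rho k)$ time. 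It then remains to approximate the low-rank sum $Z_{\mathrm{LR}}=\sum_{\jvec}\big(\prod_i e^{\eta[p_i]_{j_i}}\big)e^{-\eta R_{\jvec}}$.

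\emph{Approximating $Z_{\mathrm{LR}}$.} The map $t\mapsto e^{-\eta t}$ is not rank-preserving, but on the bounded interval $[-\Rmax,\Rmax]$---where $\Rmax\ge\norm{R}_\infty$, e.g.\ $\Rmax:=\rho\max_r\prod_i\norm{u_i^{(r)}}_\infty$ read off the factored form---it is approximated to additive error $\beta$ by its degree-$d$ Taylor polynomial $\Phi(t)=\sum_{m=0}^d c_mt^m$ whenever $d=O(\eta\Rmax+\log(1/\beta))$ (Lagrange remainder). Writing $R=\sum_{r=1}^\rho\bigotimes_{i=1}^k u_i^{(r)}$ with $\rho=O(1)$ and $v^{(r)}_{\jvec}:=\prod_i[u_i^{(r)}]_{j_i}$, the multinomial theorem gives $\Phi(R_{\jvec})=\sum_{m=0}^d c_m\big(\sum_r v^{(r)}_{\jvec}\big)^m=\sum_{|\vec a|\le d}\gamma_{\vec a}\prod_{i=1}^k\big[\bigodot_{r=1}^\rho(u_i^{(r)})^{\odot a_r}\big]_{j_i}$ for explicit scalars $\gamma_{\vec a}$---a sum of at most $\binom{d+\rho}{\rho}=O(d^\rho)$ rank-one tensors, and this is exactly where constant rank is used, since the rank blows up only by the polynomial factor $d^\rho$. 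Substituting this surrogate for $e^{-\eta R_{\jvec}}$ and factoring the sum over $\jvec$ coordinatewise gives the estimator $\widetilde Z_{\mathrm{LR}}:=\sum_{|\vec a|\le d}\gamma_{\vec a}\prod_{i=1}^k\big\langle\bigodot_{r}(u_i^{(r)})^{\odot a_r},\,e^{\eta p_i}\big\rangle$, computable in $\poly(n,k)$ time; the oracle returns $-\eta^{-1}\log(\widetilde Z_{\mathrm{LR}}+\sum_{\ell}(\cdots))$.

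\emph{Precision---the main obstacle.} The quantity we must report, $-\eta^{-1}\log Z$, lies in a range of width $\poly(n,k,\Cmax/\eps)$ (each summand of $Z$ lies in $[e^{-\eta\Lambda},e^{\eta\Lambda}]$ with $\Lambda:=\Cmax+k\max_i\norm{p_i}_\infty$), and $Z$ can itself be as small as $e^{-\eta\Lambda}$, so $Z$ must be approximated to \emph{additive} error $e^{-\poly(n,k,\Cmax/\eps)}$. Propagating the per-entry error $\beta$ through the at most $n^k$ entries of $Z_{\mathrm{LR}}$, each weighted by $\prod_i e^{\eta[p_i]_{j_i}}\le e^{\eta k\max_i\norm{p_i}_\infty}$, gives $|Z_{\mathrm{LR}}-\widetilde Z_{\mathrm{LR}}|\le\beta\,n^ke^{\eta k\max_i\norm{p_i}_\infty}$, so one must take $\beta=e^{-\poly}$; fortunately the Taylor error decays super-exponentially in $d$, so $d=O(\eta\Rmax+\log(1/\beta))$ stays $\poly(n,k,\Cmax/\eps)$ and the term count $O(d^\rho)$ stays polynomial. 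The remaining subtlety is that the intermediate quantities (entries of the surrogate factors, the inner products, and $Z_{\mathrm{LR}}$ itself) can have magnitude $e^{\poly}$ and involve heavy cancellation, but their bit-lengths stay $\poly(n,k,\Cmax/\eps)$, so exact rational (equivalently, polynomially-many-bit) arithmetic suffices without affecting the polynomial runtime. Combining these bounds, $\widetilde Z_{\mathrm{LR}}+\sum_\ell(\cdots)\in[(1-\beta_0)Z,(1+\beta_0)Z]$ with $\beta_0=1/\poly(n,k,\Cmax/\eps)$, which completes the reduction and hence proves the theorem.
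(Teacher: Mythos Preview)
Your approach is essentially the same as the paper's: peel off the sparse part via a finite correction over $\supp(S)$, approximate $e^{-\eta t}$ on $[-\Rmax,\Rmax]$ by a degree-$d$ polynomial, expand via the multinomial theorem to get a rank-$O(d^r)$ surrogate for $\exp[-\eta R]$, and then sum the resulting factored tensor coordinatewise.

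The one substantive difference is in the precision analysis. You bound $|Z_{\mathrm{LR}}-\widetilde Z_{\mathrm{LR}}|$ by $\beta\cdot n^k e^{\eta k\max_i\|p_i\|_\infty}$, which forces $\log(1/\beta)$ and hence $d$ to depend on $\|p_i\|_\infty$; you then appeal to the fact that in the outer-loop algorithms the $p_i$ happen to be polynomially bounded. The paper avoids this entirely by a cleaner observation: if you require $\|L-\exp[-\eta R]\|_{\max}\le \tfrac{\eps}{3}e^{-\eta\Rmax}$, then since every entry of $\exp[-\eta R]$ is at least $e^{-\eta\Rmax}$, each entry of $L$ agrees with the corresponding entry of $\exp[-\eta R]$ to \emph{multiplicative} accuracy $1\pm\eps/3$. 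Because $Z_{\mathrm{LR}}$ is a nonnegatively-weighted sum of these entries, $\widetilde Z_{\mathrm{LR}}$ then automatically approximates $Z_{\mathrm{LR}}$ to the same multiplicative accuracy, \emph{for arbitrary $p$} (including the $-\infty$ entries that the $\SMinO$ oracle definition allows). The paper packages this as ``we compute $\SMinO_{\tilde C}$ exactly for $\tilde C:=-\eta^{-1}\log[L]+S$ with $\|\tilde C-C\|_{\max}\le\eps/2$''. The resulting degree is $d=O(\eta\Rmax+\log(1/\eps))$ rather than your $d=O(\eta\Rmax+\poly(n,k,\Cmax/\eps,\|p\|_\infty))$, and no side assumption on $p$ is needed. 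Your argument is not wrong, but this sharper bookkeeping both simplifies the proof and matches the oracle specification (which places no bound on $p$).
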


It is an immediate corollary of Theorem~\ref{thm-intro:lr-oracles} and our algorithms-to-oracles reduction described in \S\ref{ssec:intro:cont-alg} that one can implement $\MWU$ and $\Sink$ in polynomial time. Of these two algorithms, $\MWU$ computes sparse solutions, yielding the following theorem.

\begin{theorem}[Informal statement of Corollary~\ref{cor:lr-algs}]\label{thm-intro:lr}
	Let $C \in \Rntk$ have low-rank plus sparse structure. Then a sparse, $\eps$-approximate solution for $\MOT$ can be computed in $\poly(n,k,\Cmax/\eps)$ time.
\end{theorem}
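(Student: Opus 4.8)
The plan is to obtain Corollary~\ref{cor:lr-algs} by feeding the low-rank-plus-sparse oracle guaranteed by Theorem~\ref{thm-intro:lr-oracles} (i.e.\ Theorem~\ref{thm:lr-oracles}) into the $\MWU$-based $\MOT$ algorithm of Theorem~\ref{thm:mwumotp}. Concretely: for $C = R + S$ with $\rank(R) = O(1)$ and $S$ polynomially sparse, Theorem~\ref{thm-intro:lr-oracles} solves $\AMinO$ in $\poly(n,k,\Cmax/\eps)$ time; and $\MWU$ accesses $C$ only through $\poly(n,k,\Cmax/\eps)$ calls to $\AMinO$, spending $\poly(n,k,\Cmax/\eps)$ additional time. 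Composing the two yields an $\eps$-approximate solution in $\poly(n,k,\Cmax/\eps)$ time. Sparsity is essentially free: each iteration of $\MWU$ enlarges the support of the running iterate by at most the one tuple $\jvec\in[n]^k$ returned by the oracle, so after $T=\poly(n,k,\Cmax/\eps)$ iterations the output has at most $T$ nonzero entries; if a support size of $O(nk)$ is desired one appends the sparsification routine $\SparseRound$, which maps any feasible $P$ to a feasible $\hat P$ with $O(nk)$ nonzeros and $\langle \hat P, C\rangle \le \langle P,C\rangle$ in $\poly(n,k)$ time and with no further oracle calls, hence preserving the $\eps$-approximation. (Note $\ELLIP$ is \emph{not} available here, since $\MinO$ for this class is open; see the footnote to Theorem~\ref{thm-intro:lr-oracles}.)

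Thus essentially all the content sits inside Theorem~\ref{thm-intro:lr-oracles}, and this is where I expect the real work to lie. I would prove it by first reducing $\AMinO$ to $\SMinO$: for the function $g(\jvec)=C_{\jvec}-\sum_{i=1}^k[p_i]_{j_i}$ one has $\bigl|\smineta g-\min_{\jvec}g(\jvec)\bigr|\le \eta k\log n$, so taking $\eta=\eps/(k\log n)$ makes an (approximate) $\SMinO$ call an $\eps$-additive $\AMinO$ call. It then suffices to approximate the partition-function-type sum $Z=\sum_{\jvec\in[n]^k}\exp(-g(\jvec)/\eta)$. Write $Z = \sum_{\jvec}\exp(-R_{\jvec}/\eta)\prod_{i=1}^k\exp([p_i]_{j_i}/\eta)+\Delta$, where $\Delta$ collects the correction coming from $S$ over the $\poly(n,k)$ tuples $\jvec\in\supp(S)$; each correction term involves one entry $R_{\jvec}=\sum_{\ell=1}^{\rank(R)}\prod_i[u_i^{(\ell)}]_{j_i}$ and is computed directly in $O(\rank(R)\,k)$ time. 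For the main sum, observe $\prod_i\exp([p_i]_{j_i}/\eta)$ is a rank-one tensor, and approximate the scalar map $x\mapsto\exp(-x/\eta)$ on the (bounded) interval containing all values $R_{\jvec}$ by a polynomial $q$ of degree $d$. Since $R$ has constant rank $r$, the tensor $q(R)$ with entries $q(R_{\jvec})$ is a sum of at most $\sum_{e\le d}\binom{e+r-1}{r-1}=(d+1)^{O(r)}$ rank-one tensors (a power $R_{\jvec}^{e}$ expands multinomially into that many products $\prod_i(\cdot)$, each a rank-one tensor with entrywise powers of the $u_i^{(\ell)}$), and summing any rank-one tensor $\bigotimes_i v_i$ against the rank-one weight $\bigotimes_i\exp([p_i]_{\cdot}/\eta)$ over all $\jvec$ factors as $\prod_i\sum_j[v_i]_j\exp([p_i]_j/\eta)$, i.e.\ $O(nk)$ arithmetic. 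Hence $\widetilde Z := \sum_{\jvec}q(R_{\jvec})\prod_i\exp([p_i]_{j_i}/\eta)+\Delta$ is computable in $(d+1)^{O(r)}\cdot\poly(n,k)=\poly(n,k,\Cmax/\eps)$ time once $d$ is polynomial, and $-\eta\log\widetilde Z$ is the output.

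\textbf{Main obstacle.} The delicate part is the error bookkeeping that makes the above rigorous. One must (i) bound the magnitudes of the dual weights $p_i$ that arise inside $\MWU$ — they remain $\poly(\Cmax/\eps,\log n)$ — so that the argument interval for $R_{\jvec}$ and the range of the softmin are both of polynomial length; (ii) choose the polynomial degree $d=\poly(k,\Cmax/\eps,\log n)$ so that $q$ approximates $\exp(-x/\eta)$ to small enough \emph{relative} accuracy on that interval (standard Chebyshev/Taylor estimates suffice, since the interval has polynomial length and $\eta$ is polynomially small), and then propagate this into a bound $|\widetilde Z-Z|\le(\text{small})\cdot Z$ and hence $|-\eta\log\widetilde Z-\smineta g|\le\eps$; and (iii) track how the per-call additive error of the oracle composes through the $T=\poly(n,k,\Cmax/\eps)$ iterations of $\MWU$ so that the final output is still $O(\eps)$-approximate for $\MOT$ (achieved by rescaling $\eps$ by a fixed polynomial factor at the outset). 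Steps (i) and (iii) are essentially inherited from the $\MWU$ analysis already used for Theorem~\ref{thm:mwumotp}; step (ii), the polynomial-approximation argument and its interaction with the constant-rank structure of $R$, is the genuinely new and most error-prone part, so that is where I would concentrate the careful estimates.
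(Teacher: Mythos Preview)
Your proposal is correct and follows the paper's route: Corollary~\ref{cor:lr-algs} is obtained by plugging the $\AMinO$ implementation of Theorem~\ref{thm:lr-oracles} into the $\MWU$ reduction of Theorem~\ref{thm:mwumotp}; and Theorem~\ref{thm:lr-oracles} itself is proved exactly as you sketch---polynomial approximation of the exponential (Lemma~\ref{lem:lr-K}), multinomial expansion into a low-rank tensor, factored summation over $[n]^k$ (Lemma~\ref{lem:lr-marg}), and direct enumeration over $\supp(S)$ for the sparse correction (Lemma~\ref{lem:lr:sparse}). Sparsity of the output is, as you say, inherited directly from $\MWU$.

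One simplification worth noting: your obstacle (i) is unnecessary. The paper applies the polynomial approximation only to the tensor $\exp[-\eta R]$, whose entries lie in a fixed interval determined by $R_{\max}$ alone, independent of $p$; the weights enter only through the rank-one factor $\bigotimes_i \exp[\eta p_i]$, which multiplies both $Z$ and the approximation error by the same nonnegative quantity and therefore cancels in the relative error (cf.\ Lemma~\ref{lem:lr:prec}). So no bound on the dual weights $p_i$ is needed anywhere in the oracle analysis. (Also, your $\eta$ is the reciprocal of the paper's convention: the paper takes $\eta=(2k\log n)/\eps$ large, with softmin error $(k\log n)/\eta$.)
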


This is the first polynomial-time result for this class of $\MOT$ problems. We note that the runtime of our $\MOT$ algorithm depends exponentially on the rank $r$ of $R$, hence why we take $r$ to be constant. Nevertheless, such a restriction on the rank is unavoidable since unless $\P = \NP$, there does not exist an algorithm with runtime that is jointly polynomial in $n$, $k$, and the rank $r$~\citep{AltBoi20hard}.

\par We demonstrate this polynomial-time algorithm concretely on two applications. First, in \S\ref{ssec:lr:risk} we consider the risk estimation problem of computing an investor's expected profit in the worst-case over all future prices that are consistent with given marginal distributions. We show that this is equivalent to an $\MOT$ problem with a low-rank tensor and thereby provide the first efficient algorithm for it.

\par Second, in \S\ref{ssec:lr:proj}, we consider the fundamental problem of projecting a joint distribution $Q$ onto the transportation polytope. We provide the first polynomial-time algorithm for solving this when $Q$ decomposes into a constant-rank and sparse component, which models mixtures of product distributions with polynomially many corruptions. This application illustrates the versatility of our algorithmic results beyond polynomial-time solvability of $\MOT$, since this projection problem is a \emph{quadratic} optimization over the transportation polytope rather than linear optimization (a.k.a. $\MOT$). In order to achieve this, we develop a simple quadratic-to-linear reduction tailored to this problem that crucially exploits the sparsity of the $\MOT$ solutions enabled by the $\MWU$ algorithm.

\subsection{Related work}\label{ssec:intro:prev}

\subsubsection{$\MOT$ algorithms}

$\MOT$ algorithms fall into two categories. One category consists of general-purpose algorithms that do not depend on the specific $\MOT$ cost. For example, this includes running an LP solver out-of-the-box, or running the Sinkhorn algorithm where in each iteration one sums over all $n^k$ entries of the cost tensor to implement the marginalization bottleneck~\citep{LinHoJor19,Fri20,tupitsa2020multimarginal}. These approaches are robust in the sense that they do not need to be changed based on the specific $\MOT$ problem. However, they are impractical beyond tiny input sizes (e.g., $n=k=10$) because their runtimes scale as $n^{\Omega(k)}$.

\par The second category consists of algorithms that are much more scalable but require extra structure of the $\MOT$ problem. Specifically, these are algorithms that somehow exploit the structure of the relevant cost tensor $C$ in order to (approximately) solve an $\MOT$ problem in $\poly(n,k)$ time~\citep{teh2002unified,h20gm,h20tree,BenCarDi18,h20incremental,h19hmc,h20partial,CarObeOud15,Nen16,BenCarCut15,BenCarNen19,AltBoi20bary, Nat18dro,Nat09,MisNat14,zemel1982polynomial,weiss1986stochastic,Nat21extremal,Agr12,MeiNad79,nadas1979probabilistic,Han86}. Such a $\poly(n,k)$ runtime is far more tractable---but it is not well understood for which $\MOT$ problems such a runtime is possible. The purpose of this paper is to clarify this question.

 \par To contextualize our answer to this question with the rapidly growing literature requires further splitting this second category of algorithms. 

\paragraph{Sinkhorn algorithm.} Currently, the predominant approach in the second category is to solve an entropically regularized version of $\MOT$ with the Sinkhorn algorithm, a.k.a.~Iterative Proportional Fitting or Iterative Bregman Projections or RAS algorithm or Iterative Scaling algorithm, see
e.g.,~\citep{teh2002unified,BenCarDi18,BenCarNen16,BenCarNen19,Nen16,h20tree,h20gm}. Recent work has shown that a polynomial number of iterations of this algorithm suffices~\citep{LinHoJor19,Fri20,tupitsa2020multimarginal}. However, the bottleneck is that each iteration requires $n^{k}$ operations in general because it requires marginalizing a tensor with $n^k$ entries. The critical question is therefore: what structure of an $\MOT$ problem enables implementing this marginalization bottleneck in polynomial time.
\par This paper makes two contributions to this question. First, we identify new broad classes of $\MOT$ problems for which this bottleneck can be implemented in polynomial time, and thus $\Sink$ can be implemented in polynomial time (see \S\ref{ssec:intro:cont-app}). Second, we propose other algorithms that require strictly less structure than $\Sink$ does in order to solve an $\MOT$ problem in polynomial time (Theorem~\ref{thm:sink-separation}).

\paragraph{Ellipsoid algorithm.} The Ellipsoid algorithm is among the most classical algorithms for implicit LP~\citep{GLSbook,GroLovSch81,khachiyan1980polynomial}, however it has taken a back seat to the $\Sink$ algorithm in the vast majority of the $\MOT$ literature.
\par In \S\ref{ssec:algs:ellip}, we make explicit the fact that the variant of $\ELLIP$ from~\citep{AltBoi20bary} can solve $\MOT$ exactly in $\poly(n,k)$ time if and only if any algorithm can (Theorem~\ref{thm:ellip}). This is implicit from combining several known results~\citep{AltBoi20hard,AltBoi20bary,GLSbook}. In the process of making this result explicit, we exploit the special structure of the $\MOT$ LP to significantly simplify the reduction from the dual violation oracle to the dual feasibility oracle. The previously known reduction is highly impractical as it requires an indirect ``back-and-forth'' use of the Ellipsoid algorithm~\citep[page 107]{GLSbook}. In contrast, our reduction is direct and simple; this is critical for implementing our practical alternative to $\ELLIP$, namely $\COLGEN$, with the dual feasibility oracle.

\paragraph{Multiplicative Weights Update algorithm.} This algorithm, first introduced by~\citep{young2001sequential},
 has been studied in the context of optimal transport when $k=2$~\citep{BlaJamKenSid18,Qua18}, in which case implicit LP is not necessary for a polynomial runtime. $\MWU$ lends itself to implicit LP~\citep{young2001sequential}, but is notably absent from the $\MOT$ literature. 
\par In \S\ref{ssec:algs:mwu}, we show that $\MWU$ can be applied to $\MOT$ in polynomial time if and only if the approximate dual feasibility oracle can be solved in polynomial time. To do this, we show that in the special case of $\MOT$, the well-known ``softmax-derivative'' bottleneck of $\MWU$ is polynomial-time equivalent to the approximate dual feasibility oracle. Since it is known that the approximate dual feasibility oracle is polynomial-time reducible to approximate $\MOT$~\citep{AltBoi20hard}, we therefore establish that $\MWU$ can solve $\MOT$ approximately in polynomial time if and only if any algorithm can (Theorem~\ref{thm:mwumotp}).

\subsubsection{Graphically structured $\MOT$ problems with constant treewidth}

We isolate here graphically structured costs with constant treewidth because this framework encompasses all $\MOT$ problems that were previously known to be tractable in polynomial time~\citep{teh2002unified,h20gm}, with the exceptions of the fixed-dimensional Wasserstein barycenter problem and $\MOT$ problems related to combinatorial optimization---both of which are described below in \S\ref{sssec:prev:struc-general}. This family of graphical structured costs with treewidth $1$ (a.k.a. ``tree-structured costs''~\citep{h20tree}) includes applications in economics such as variational mean-field games~\citep{BenCarDi18}, interpolating histograms on trees~\citep{akagi2020probabilistic}, matching for teams~\citep{CarObeOud15,Nen16}; as well as encompasses applications in filtering and estimation for collective dynamics such as target tracking~\citep{h20tree,h20gm,h20incremental,h19hmc,h20partial} and Wasserstein barycenters in the case of fixed support~\citep{h20partial,Nen16,BenCarCut15,CarObeOud15}. With treewidth $2$, this family of costs also includes dynamic multi-commodity flow problems~\citep{haasler2021scalable}, as well as the application of computing generalized Euler flows in fluid dynamics~\citep{BenCarCut15,BenCarNen19,Nen16}, which was historically the original motivation of $\MOT$~\citep{Bre89,Bre93,Bre99,Bre08}.

\paragraph{Previous polynomial-time algorithms for graphically structured $\MOT$ compute approximate, dense solutions.} Implementing $\Sink$ for graphically structured $\MOT$ problems by using belief propagation to efficiently implement the marginalization bottleneck was first proposed twenty years ago in~\citep{teh2002unified}. There have been recent advancements in understanding connections of this algorithm to the Schr\"odinger bridge problem in the case of trees~\citep{h20tree}, as well as developing more practically efficient single-loop variations~\citep{h20gm}. 
\par All of these works prove theoretical runtime guarantees only in the case of tree structure (i.e., treewidth $1$). However, this graphical model perspective for efficiently implementing $\Sink$ readily extends to any constant treewidth: simply implement the marginalization bottleneck using junction trees. This, combined with the iteration complexity of $\Sink$ which is known to be polynomial~\citep{LinHoJor19,Fri20,tupitsa2020multimarginal}, immediately yields an overall polynomial runtime. This is why we cite~\citep{teh2002unified,h20gm} throughout this paper regarding the fact that $\Sink$ can be implemented in polynomial time for graphical structure with any constant treewidth. 
\par While the use of $\Sink$ for graphically structured $\MOT$ is mathematically elegant and can be impressively scalable in practice, it has two drawbacks.
The first drawback of this algorithm is that it computes (implicit representations of) solutions that are fully dense with $n^k$ non-zero entries. 
Indeed, it is well-known that $\Sink$ finds the unique optimal solution to the entropically regularized $\MOT$ problem $\min_{P \in \Coup} \langle P,C \rangle - \eta^{-1}H(P)$, and that this solution is fully dense~\citep{PeyCut17}. For example, in the simple case of cost $C = 0$, uniform marginals $\mu_i$, and any strictly positive regularization parameter $\eta > 0$, this solution $P$ has value $1/n^k$ in each entry. 
\par The second drawback of this algorithm is that it only computes solutions that are low-precision due to $\poly(1/\eps)$ runtime dependence on the accuracy $\eps$. This is because the number of $\Sink$ iterations is known to scale polynomially in the entropic regularization parameter $\eta$ even in the matrix case $k=2$~\citep[\S1.2]{linial1998deterministic}, and it is known that $\eta = \Omega(\eps^{-1} k \log n)$ is necessary for the converged solution of $\Sink$ to be an $\eps$-approximate solution to the (unregularized) original $\MOT$ problem~\citep{LinHoJor19}.

\paragraph{Improved algorithms for graphically structured $\MOT$ problems.}
 The contribution of this paper to the study of graphically structured $\MOT$ problems is that we give the first $\poly(n,k)$ time algorithms that can compute solutions which are exact and sparse (Corollary~\ref{cor:graphical-algs}). Our framework also directly recovers all known results about $\Sink$ for graphically structured $\MOT$ problems---namely that it can be implemented in polynomial time for trees~\citep{h20tree,teh2002unified} and for constant treewidth~\citep{h20gm,teh2002unified}.

\subsubsection{Tractable $\MOT$ problems beyond graphically structured costs}\label{sssec:prev:struc-general}

The two new classes of $\MOT$ problems studied in this paper---namely, set-optimization structure and low-rank plus sparse structure---are incomparable to each other as well as to graphical structure. Details in Remarks~\ref{rem:binary-incomparable}~and~\ref{rem:lr-incomparable}. This lets us handle a wide range of new $\MOT$ problems that could not be handled before.

There are two other classes of $\MOT$ problems studied in the literature which do not fall under the three structures studied in this paper. We elaborate on both below.

\begin{remark}[Low-dimensional Wasserstein barycenter]\label{rem:intro-wb}
	This $\MOT$ problem has cost $C_{j_1,\dots,j_k} = \sum_{i,i'=1}^k \|x_{i,j_i} - x_{i',j_{i'}}\|^2$ where $x_{i,j} \in \R^d$ denotes the $j$-th atom in the distribution $\mu_i$. Clearly this cost is not a graphically structured cost of constant treewidth---indeed, representing it through the lens of graphical structure requires the complete graph of interactions, which means a maximal treewidth of $k-1$.\footnote{
		We remark that the related but different problem of \emph{fixed-support} Wasserstein barycenters has graphical structure with treewidth $1$
		\citep{h20partial,Nen16,BenCarCut15,CarObeOud15}. However, it should be emphasized that the fixed-support Wasserstein barycenter problem is different from the Wasserstein barycenter problem: it only approximates the latter to $\eps$ accuracy if the fixed support is restricted to an $O(\eps)$-net which requires $n = 1/\eps^{\Omega(d)}$ discretization size for the barycenter's support, and thus (i) even in constant dimension, does not lead to high-precision algorithms due to $\poly(1/\eps)$ runtime; and (ii) scales exponentially in the dimension $d$. See~\citep[\S1.3]{AltBoi20baryhard} for further details about the complexity of Wasserstein barycenters.
	} This problem also does not fall under the set-optimization or constant-rank structures. Nevertheless, this $\MOT$ problem can be solved in $\poly(n,k)$ time for any fixed dimension $d$ by exploiting the low-dimensional geometric structure of the points $\{x_{i,j}\}$ that implicitly define the cost~\citep{AltBoi20bary}.
\end{remark}

\begin{remark}[Random combinatorial optimization]
	$\MOT$ problems also appear in the random combinatorial optimization literature since the 1970s, see e.g.,~\citep{MeiNad79,zemel1982polynomial,weiss1986stochastic,nadas1979probabilistic,Han86}, although under a different name and in a different community. These papers consider $\MOT$ problems with costs of the form $C(x) = \min_{v \in V} \langle x, v\rangle$ for polytopes $V \subseteq \{0,1\}^k$ given through a list of their extreme points. Applications include PERT (Program Evaluation and Review Technique), extremal network reliability, and scheduling. Recently, applications to Distributionally Robust Optimization were investigated in~\citep{Nat18dro,Nat09,MisNat14} which considered general polytopes $V \subset \R^k$, as well as in~\citep{Nat21extremal} which considered $\MOT$ costs of the related form $C(x) = \mathds{1}[\min_{v \in V} \langle x, v \rangle \geq t]$, and in~\citep{Agr12} which considers other combinatorial costs $C$ such as sub/supermodular functions. These papers show that these random combinatorial optimization problems are in general intractable, and give sufficient conditions on when they can be solved in polynomial time. In general, these families of $\MOT$ problems are different from the three structures studied in this paper, although some $\MOT$ applications fall under multiple umbrellas (e.g., extremal network reliability). It is an interesting question to understand to what extent these structures can be reconciled (as well as the algorithms, which sometimes use extended formulations in these papers).
\end{remark}

\subsubsection{Intractable $\MOT$ problems}

These algorithmic results beg the question: what are the fundamental limitations of this line of work on polynomial-time algorithms for structured $\MOT$ problems? To this end, the recent paper~\citep{AltBoi20hard} provides a systematic investigation of $\NP$-hardness results for structured $\MOT$ problems, including converses to several results in this paper. In particular,~\citep[Propositions 4.1 and 4.2]{AltBoi20hard} justify the constant-rank regime studied in \S\ref{sec:lr} by showing that unless $\P=\NP$, there does not exist an algorithm with runtime that is jointly polynomially in the rank $r$ and the input parameters $n$ and $k$. Similarly,~\citep[Propositions 5.1 and 5.2]{AltBoi20hard} justify the constant-treewidth regime for graphically structured costs studied in \S\ref{sec:graphical} and all previous work by showing that unless $\P=\NP$, there does not exist an algorithm with polynomial runtime even in the seemingly simple class of $\MOT$ costs that decompose into pairwise interactions $C_{j_1,\dots,j_k} = \sum_{i \neq i' \in [k]} c_{i,i'}(j_i,j_i')$. The paper~\citep{AltBoi20hard} also shows $\NP$-hardness for several $\MOT$ problems with repulsive costs, including for example the $\MOT$ formulation of Density Functional Theory with Coulomb-Buckingham potential. It is an problem whether the Coulomb potential, studied in~\citep{BenCarNen16,cotar2013density,buttazzo2012optimal}, also leads to an $\NP$-hard $\MOT$ problem~\citep[Conjecture 6.4]{AltBoi20hard}.

\subsubsection{Variants of $\MOT$}
The literature has studied several other variants of the $\MOT$ problem, notably with entropic regularization and/or with constraints on a subset of the $k$ marginals, see, e.g.,~\citep{BenCarCut15,BenCarDi18,BenCarNen19,BenCarNen16,h20partial,haasler2021scalable,h20incremental,h20gm,h20tree,h19hmc,LinHoJor19}. Our techniques readily apply with little change. Briefly, to handle entropic regularization, simply use the $\SMinO$ oracle and $\Sink$ algorithm with fixed regularization parameter $1/\eta > 0$ (rather than $1/\eta$ of vanishing size $\Theta(\eps / \log n)$) as described in \S\ref{ssec:algs:sink}. And to handle partial marginal constraints, essentially the only change is that in the $\MinO$, $\AMinO$, and $\SMinO$ oracles, the potentials $p_i$ are zero for all indices $i \in [k]$ corresponding to unconstrained marginals $m_i(P)$. Full details are omitted for brevity since they are straightforward modifications of our main results.

\subsubsection{Optimization over joint distributions}
Optimization problems over exponential-size joint distributions appear in many domains. For instance, they arise in game theory when computing correlated equilibria~\citep{pap08}; however, in that case the optimization has different constraints which lead to different algorithms. Such problems also arise in variational inference~\citep{wainwright2003variational}; however, the optimization there typically constrains this distribution to ensure tractability (e.g., mean-field approximation restricts to product distributions). The different constraints in these optimization problems over joint distributions versus $\MOT$ lead to significant differences in computational complexity, and thus also necessitate different algorithmic techniques.

\subsection{Organization}

In \S\ref{sec:prelim} we recall preliminaries about $\MOT$ and establish notation. The first part of the paper then establishes our unified algorithmic framework for $\MOT$. Specifically, in \S\ref{sec:oracles} we define and compare three variants of the dual feasibility oracle; and in \S\ref{sec:algs} we characterize the structure that $\MOT$ algorithms require for polynomial-time implementation in terms of these three oracles. For an overview of these results, see \S\ref{ssec:intro:cont-alg}. The second part of the paper applies this algorithmic framework to three general classes of $\MOT$ cost structures: graphical structure (\S\ref{sec:graphical}), set-optimization structure (\S\ref{sec:binary}), and low-rank plus sparse structure (\S\ref{sec:lr}). For an overview of these results, see \S\ref{ssec:intro:cont-app}. These three application sections are independent of each other and can be read separately. We conclude in \S\ref{sec:discussion}.

\section{Preliminaries}\label{sec:prelim}

\paragraph{General notation.} The set $\{1, \dots, n\}$ is denoted by $[n]$. For shorthand, we write $\poly(t_1, \dots, t_m)$ to denote a function that grows at most polynomially fast in those parameters. Throughout, we assume for simplicity of exposition that all entries of the input $C$ and $\mu_1, \dots, \mu_k$ have bit complexity at most $\poly(n,k)$, and same with the components defining $C$ in structured settings. As such, throughout runtimes refer to the number of arithmetic operations. The set $\R \cup \{-\infty\}$ is denoted by $\Rbar$, and note that the value $-\infty$ can be represented efficiently by adding a single flag bit. We use the standard $O(\cdot)$ and $\Omega(\cdot)$ notation, and use $\tilde{O}(\cdot)$ and $\tilde{\Omega}(\cdot)$ to denote that polylogarithmic factors may be omitted.

\paragraph{Tensor notation.} 
The $k$-fold tensor product space $\R^n \otimes \cdots \otimes \R^n$ is denoted by $\Rntk$, and similarly for $\Rpntk$. Let $P \in \Rntk$. Its $i$-th marginal, $i \in [k]$, is denoted by $m_i(P) \in \R^n$ and has entries $[m_i(P)]_j := \sum_{j_1,\dots,j_{i-1},j_{i+1}, \dots, j_k} P_{j_1,\dots,j_{i-1},j,j_{i+1}, \dots, j_k}$.
For shorthand, we often denote an index $(j_1,\dots,j_k)$ by $\jvec$. The sum of $P$'s entries is denoted by $m(P) = \sum_{\jvec} P_{\jvec}$. The maximum absolute value of $P$'s entries is denoted by $\|P\|_{\max} := \max_{\jvec} |P_{\jvec}|$, or simply $P_{\max}$ for short. For $\jvec \in [n]^k$, we write $\delta_{\jvec}$ to denote the tensor with value $1$ at entry $\jvec$, and $0$ elswewhere.
The operations $\odot$ and $\otimes$ respectively denote the entrywise product and the Kronecker product. The notation $\otimes_{i=1}^k d_i$ is shorthand for $d_1 \otimes \cdots \otimes d_k$.
A non-standard notation we use throughout is that $f[P]$ denotes a function $f : \R \to \R$ (typically $\exp$, $\log$, or a polynomial) applied \emph{entrywise} to a tensor $P$.

\subsection{Multimarginal Optimal Transport}\label{ssec:prelim:mot}

The transportation polytope between measures $\mu_1, \dots, \mu_k \in \Delta_n$ is
\begin{align}
\Coup := 
\left\{
P \in \Rpntk \; : \; m_i(P) = \mu_i, \; \forall i \in [k]	\right\}.
\label{eq:def-Coup}
\end{align}
For a fixed cost $C \in \Rntk$, the $\MOT_C$ problem is to solve the following linear program, given input measures $\mu = (\mu_1, \dots, \mu_k) \in (\Delta_n)^k$:
\begin{align}
\min_{P \in \Coup} \langle P, C \rangle.
\label{MOT}
\tag{MOT}
\end{align}
In the $k=2$ matrix case,~\eqref{MOT} is the Kantorovich formulation of $\OT$~\citep{Vil03}. Its dual LP is
\begin{align}
& \max_{p_1,\ldots,p_k \in \Rn} \sum_{i=1}^k \langle p_i, \mu_i \rangle \tag{MOT-D}
\quad \text{subject to} \quad  
C_{j_1,\dots,j_k} - \sum_{i=1}^k [p_i]_{j_i} \geq 0, \; \forall (j_1,\dots,j_k) \in [n]^k.
\label{MOT-D} 
\end{align}

A basic, folklore fact about $\MOT$ is that it always has a sparse optimal solution (e.g.,~\citep[Lemma 3]{anderes2016discrete}). This follows from elementary facts about standard-form LP; we provide a short proof for completeness.

\begin{lemma}[Sparse solutions for $\MOT$]\label{lem:mot-sparse}
	For any cost $C \in \Rntk$ and any marginals $\mu_1, \dots, \mu_k \in \Delta_n$, there exists an optimal solution $P$ to $\MOT_C(\mu)$ that has at most $nk-k+1$ non-zero entries. 
\end{lemma}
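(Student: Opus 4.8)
The plan is to view \eqref{MOT} as a standard-form linear program and invoke the classical fact that a bounded, feasible LP in standard form has an optimal vertex solution whose support size is at most the number of linearly independent equality constraints. First I would write \eqref{MOT} in the variable $P \in \R^{n^k}$ (thought of as a long vector of its $n^k$ entries) subject to the nonnegativity constraints $P \geq 0$ and the $nk$ marginal equality constraints $m_i(P) = \mu_i$ for $i \in [k]$. This is literally a standard-form LP, so it suffices to bound the number of linearly independent rows among these $nk$ marginal constraints.

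The key step is the rank count. Each marginal constraint block $m_i(P) = \mu_i$ contributes $n$ equations, for a total of $nk$. But they are not independent: for every $i$, summing the $n$ equations in block $i$ yields $\sum_{\jvec} P_{\jvec} = \sum_{j} [\mu_i]_j = 1$ (using $\mu_i \in \Delta_n$), which is the same equation regardless of $i$. Hence the $k$ constraints ``total mass of block $i$ equals $1$'' all coincide, giving at least $k-1$ redundancies, so the rank of the constraint matrix is at most $nk - (k-1) = nk - k + 1$. (One can check this is exactly the rank when all $\mu_i$ have full support, but for the lemma we only need the upper bound.) Therefore, provided \eqref{MOT} is feasible and bounded, it has an optimal basic feasible solution $P$ with at most $nk - k + 1$ nonzero entries.

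Finally I would dispatch the two hypotheses needed to apply the LP fact: feasibility holds because the product measure $P = \mu_1 \otimes \cdots \otimes \mu_k$ lies in $\Coup$; boundedness holds because $\Coup$ is a bounded polytope (every entry of $P \in \Coup$ lies in $[0,1]$, being part of a marginal that sums to $1$), so the linear objective $\langle P, C\rangle$ attains its minimum. Then the existence of an optimal vertex, together with the rank bound, gives the claim. I do not anticipate a real obstacle here; the only mild subtlety is making the ``redundant constraints'' argument precise, i.e.\ correctly identifying that exactly the all-ones combination within each block is what is shared across blocks, so that the rank drops by (at least) $k-1$ and not more in a way that would be needed — but since we only need an upper bound on the support size, even a crude rank bound of $nk-k+1$ suffices and the argument is short.
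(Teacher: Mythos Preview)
Your proposal is correct and follows essentially the same argument as the paper: view \eqref{MOT} as a standard-form LP, invoke the existence of an optimal vertex (basic feasible solution), and bound the support size by the rank of the $nk$ marginal constraints, which drops by at least $k-1$ because summing each marginal block yields the common equation $m(P)=1$. Your explicit verification of feasibility and boundedness is a nice touch that the paper handles more tersely via compactness.
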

\begin{proof}
	Since \eqref{MOT} is an LP over a compact domain, it has an optimal solution at a vertex~\citep[Theorem 2.7]{BerTsi97}. Since \eqref{MOT} is a standard-form LP, these vertices are in correspondence with basic solutions, thus their sparsity is bounded by the number of linearly dependent constraints defining $\Coup$~\citep[Theorem 2.4]{BerTsi97}. We bound this quantity by $nk-k+1$ via two observations. First, $\Coup$ is defined by $nk$ equality constraints $[m_i(P)]_j = [\mu_i]_j$ in~\eqref{eq:def-Coup}, one for each coordinate $j \in [n]$ of each marginal constraint $i \in [k]$. Second, at least $k-1$ of these constraints are linearly dependent because we can construct $k$ distinct linear combinations of them, namely $\sum_{j \in [n]} [m_i(P)]_j = \sum_{j \in [n]} [\mu_i]_j$ for each marginal $i \in [k]$, which all simplify to the same constraint $m(P) = 1$, and thus are redundant with each other.
\end{proof}

\begin{defin}[$\eps$-approximate $\MOT$ solution]\label{def:mot}
	$P$ is an $\eps$-approximate solution to $\MOT_C(\mu)$ if $P$ is feasible (i.e., $P \in \Coup$) and $\langle C, P \rangle$ is at most $\eps$ more than the optimal value. 
\end{defin}

\subsection{Regularization}\label{ssec:prelim:rmot}

We introduce two standard regularization operators. First is the Shannon entropy $H(P) := -\sum_{\jvec} P_{\jvec} \log P_{\jvec}$ of a tensor $P \in \Rpntk$ with entries summing to $m(P) = 1$. We adopt the standard notational convention that $0 \log 0 = 0$. Second is the softmin operator, which is defined for parameter $\eta > 0$ as
\begin{align}
\smineta_{i \in [m]} a_i := -\frac{1}{\eta} \log \left( \sum_{i=1}^m e^{-\eta a_i} \right).
\end{align}
This softmin operator naturally extends to $a_i \in \R \cup \{\infty\}$ by adopting the standard notational conventions that $e^{-\infty} = 0$ and $\log 0 = -\infty$.
\par We make use of the following folklore fact, which bounds the error between the $\min$ and $\smin$ operators based on the regularization and the number of points. For completeness, we provide a short proof.
\begin{lemma}[Softmin approximation bound]\label{lem:smin}
	For any $a_1, \dots, a_m \in \R \cup \{\infty\}$ and $\eta > 0$,
	\begin{align*}
		\min_{i \in [m]} a_i
		\geq 
		\smineta_{i \in [m]} a_i
		\geq 
		\min_{i \in [m]} a_i - \frac{\log m}{\eta}.
	\end{align*}
\end{lemma}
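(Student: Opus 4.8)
The plan is to prove the two inequalities in Lemma~\ref{lem:smin} separately, each by a direct elementary estimate on the exponential sum defining the softmin. Write $S := \sum_{i=1}^m e^{-\eta a_i}$, so that $\smineta_{i} a_i = -\tfrac{1}{\eta}\log S$, and let $a_{\min} := \min_{i\in[m]} a_i$. Note that all terms $e^{-\eta a_i}$ lie in $[0,\infty)$ (with the convention $e^{-\infty}=0$ handling the case $a_i = \infty$), and at least one of them equals $e^{-\eta a_{\min}} > 0$, so $S \in (0,\infty)$ and the expression is well-defined.

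For the upper bound $\smineta_i a_i \le \min_i a_i$: since every term is nonnegative and one term equals $e^{-\eta a_{\min}}$, we have $S \ge e^{-\eta a_{\min}}$. Taking $\log$ (monotone) gives $\log S \ge -\eta a_{\min}$, and multiplying by $-\tfrac{1}{\eta} < 0$ reverses the inequality to yield $-\tfrac{1}{\eta}\log S \le a_{\min}$, i.e. $\smineta_i a_i \le \min_i a_i$. For the lower bound: each term satisfies $e^{-\eta a_i} \le e^{-\eta a_{\min}}$ because $a_i \ge a_{\min}$ and $t \mapsto e^{-\eta t}$ is decreasing; summing over the $m$ indices gives $S \le m\, e^{-\eta a_{\min}}$. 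Taking $\log$ gives $\log S \le \log m - \eta a_{\min}$, and multiplying by $-\tfrac{1}{\eta}$ yields $-\tfrac{1}{\eta}\log S \ge a_{\min} - \tfrac{\log m}{\eta}$, which is exactly $\smineta_i a_i \ge \min_i a_i - \tfrac{\log m}{\eta}$. Chaining the two gives the claimed sandwich.

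There is essentially no obstacle here — the only thing to be slightly careful about is the edge case $a_i = \infty$ for some (but not all) $i$, where one invokes the stated conventions $e^{-\infty}=0$ and $\log 0 = -\infty$; since at least one $a_i$ is finite (otherwise both sides are $+\infty$ and the statement is trivial), $S>0$ and no degeneracy arises. I would present this as a five-line computation. A compact way to write it:
\begin{proof}
	Write $a_{\min} := \min_{i \in [m]} a_i$ and $S := \sum_{i=1}^m e^{-\eta a_i}$, so that $\smineta_{i} a_i = -\tfrac{1}{\eta}\log S$. If every $a_i = \infty$ then both the claimed bounds and $\smineta_i a_i$ equal $+\infty$, so assume some $a_i$ is finite; then $S \in (0,\infty)$. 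Since all summands are nonnegative and at least one equals $e^{-\eta a_{\min}}$,
	\[
		e^{-\eta a_{\min}} \;\leq\; S \;\leq\; m \, e^{-\eta a_{\min}},
	\]
	where the upper bound uses $e^{-\eta a_i} \leq e^{-\eta a_{\min}}$ for all $i$ (as $a_i \geq a_{\min}$ and $\eta > 0$). Taking logarithms and multiplying by $-\tfrac{1}{\eta} < 0$ reverses the inequalities:
	\[
		a_{\min} \;\geq\; -\tfrac{1}{\eta}\log S \;\geq\; a_{\min} - \tfrac{\log m}{\eta},
	\]
	which is the claim.
\end{proof}
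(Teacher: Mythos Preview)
Your proof is correct and follows essentially the same approach as the paper: both arguments bound the exponential sum between $e^{-\eta a_{\min}}$ and $m\,e^{-\eta a_{\min}}$ and then take $-\tfrac{1}{\eta}\log(\cdot)$. The only cosmetic difference is that you sandwich $S$ in one line, whereas the paper writes out the two inequalities separately.
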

\begin{proof}
	Assume without loss of generality that all $a_i$ are finite, else $a_i$ can be dropped (if all $a_i  = \infty$ then the claim is trivial). For shorthand, denote $\min_{i \in [m]} a_i$ by $a_{\min}$. For the first inequality, use the non-negativity of the exponential function to bound
	\[
		\smineta_{i \in [m]} a_i 
		=
		-\frac{1}{\eta} \log \left( \sum_{i=1}^m e^{-\eta a_i} \right)
		\leq 
		-\frac{1}{\eta} \log \left( e^{-\eta a_{\min}} \right)
		= 
		a_{\min}.
	\]
	For the second inequality, use the fact that each $a_i \geq a_{\min}$ to bound
	\[
		\smineta_{i \in [m]} a_i 
		=
		-\frac{1}{\eta} \log \left( \sum_{i=1}^m e^{-\eta a_i} \right)
		\geq 
		-\frac{1}{\eta} \log \left( m e^{-\eta a_{\min}} \right)
		=
		a_{\min} -\frac{\log m}{\eta}.
	\]
\end{proof}

The entropically regularized MOT problem ($\RMOT$ for short) is the convex optimization problem
\begin{align}
	\min_{P \in \Coup} \langle P, C \rangle - \eta^{-1} H(P).
	\label{RMOT}
	\tag{RMOT}
\end{align}
This is the natural multidimensional analog of entropically regularized $\OT$, which has a rich literature in statistics~\citep{Leo13} and transportation theory~\citep{Wil69}, and has recently attracted significant interest in machine learning~\citep{Cut13,PeyCut17}. The convex dual of~\eqref{RMOT} is the convex optimization problem
\begin{align}
	\max_{p_1, \dots, p_k \in \R^n} \sum_{i=1}^k \langle p_i, \mu_i \rangle + 
		\smineta_{\jvec \in [n]^k}\left( C_{\jvec} - \sum_{i=1}^k [p_i]_{j_i} \right).
	\label{RMOT-D}
	\tag{RMOT-D}
\end{align}

In contrast to $\MOT$, there is no analog of Lemma~\ref{lem:mot-sparse} for $\RMOT$: the unique optimal solution to $\RMOT$ is dense. Further, this solution may not even be ``approximately'' sparse. For example, when $C = 0$, all $\mu_i$ are uniform, and $\eta > 0$ is any positive number, the solution is fully dense with all entries equal to $1/n^k$.

We define $P$ to be an $\eps$-approximate $\RMOT$ solution in the analogous way as in Definition~\ref{def:mot}. A basic, folklore fact about $\RMOT$ is that if the regularization $\eta$ is sufficiently large, then $\RMOT$ and $\MOT$ are equivalent in terms of approximate solutions.

\begin{lemma}[$\MOT$ and $\RMOT$ are close for large regularization $\eta$]\label{lem:reg-close}
	Let $P \in \Coup$, $\eps > 0$, and $ \eta \geq \eps^{-1} k \log n$. If $P$ is an $\eps$-approximate solution to \eqref{RMOT}, then $P$ is also a $(2\eps)$-approximate solution to \eqref{MOT}; and vice versa. 
\end{lemma}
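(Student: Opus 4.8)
The plan is to compare the optimal values of \eqref{MOT} and \eqref{RMOT} directly, using the fact that the Shannon entropy of a probability tensor supported on $[n]^k$ is bounded between $0$ and $k \log n$. Write $\OPT_{\MOT}$ and $\OPT_{\RMOT}$ for the two optimal values, and note that for every feasible $P \in \Coup$ we have $0 \le H(P) \le \log(n^k) = k \log n$, since $P$ is a probability distribution on a set of size $n^k$. Hence for every $P \in \Coup$,
\[
	\langle P, C\rangle - \frac{k \log n}{\eta} \;\le\; \langle P, C\rangle - \eta^{-1} H(P) \;\le\; \langle P, C \rangle.
\]
Taking minima over $P \in \Coup$ in this chain (the feasible set is the same for both problems) gives $\OPT_{\MOT} - \frac{k\log n}{\eta} \le \OPT_{\RMOT} \le \OPT_{\MOT}$. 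Since $\eta \ge \eps^{-1} k \log n$, the additive gap $\frac{k \log n}{\eta}$ is at most $\eps$; so $|\OPT_{\MOT} - \OPT_{\RMOT}| \le \eps$.

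Now I would chain the approximation errors. Suppose $P \in \Coup$ is an $\eps$-approximate solution to \eqref{RMOT}, i.e.\ $\langle P, C\rangle - \eta^{-1}H(P) \le \OPT_{\RMOT} + \eps$. Using $\langle P,C\rangle \le \langle P,C\rangle - \eta^{-1}H(P) + \frac{k\log n}{\eta} \le \langle P,C\rangle - \eta^{-1}H(P) + \eps$ (again by the entropy upper bound and the choice of $\eta$) together with $\OPT_{\RMOT} \le \OPT_{\MOT}$, we get
\[
	\langle P, C \rangle \;\le\; \OPT_{\RMOT} + \eps + \eps \;\le\; \OPT_{\MOT} + 2\eps,
\]
so $P$ is a $2\eps$-approximate solution to \eqref{MOT}. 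Conversely, if $P \in \Coup$ is an $\eps$-approximate solution to \eqref{MOT}, then $\langle P,C\rangle - \eta^{-1}H(P) \le \langle P,C\rangle \le \OPT_{\MOT} + \eps \le \OPT_{\RMOT} + \frac{k\log n}{\eta} + \eps \le \OPT_{\RMOT} + 2\eps$, using $\OPT_{\MOT} \le \OPT_{\RMOT} + \frac{k \log n}{\eta}$ from the first paragraph and $\frac{k\log n}{\eta} \le \eps$. Hence $P$ is a $2\eps$-approximate solution to \eqref{RMOT}.

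There is no real obstacle here: the only thing to be careful about is bookkeeping the two separate $\eps$ contributions (one from the regularization-to-unregularized value gap, one from the assumed approximation quality) so that they sum to exactly $2\eps$, and making sure the entropy bound $H(P) \in [0, k\log n]$ is invoked with the correct sign in each direction. It may be cleanest to first isolate the value-gap statement $|\OPT_{\MOT} - \OPT_{\RMOT}| \le \eps$ as the key lemma (essentially the display in the first paragraph), and then the two implications follow in one line each. One could alternatively phrase the entropy bound via Lemma~\ref{lem:smin} applied to the dual \eqref{RMOT-D} versus \eqref{MOT-D}, but the primal entropy argument is more direct and self-contained.
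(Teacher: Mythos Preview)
Your argument is correct and is essentially identical to the paper's proof: both use that $0 \le H(P) \le k\log n$ so the two objectives differ pointwise by at most $\eta^{-1}k\log n \le \eps$ on the common feasible set, and then chain this with the assumed $\eps$-suboptimality. The paper states this in two sentences and leaves the final chaining implicit; you have simply written out that bookkeeping explicitly.
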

\begin{proof}
	Since a discrete distribution supported on $n^k$ atoms has entropy at most $k \log n$~\citep{CoverThomas}, the objectives of \eqref{MOT} and \eqref{RMOT} differ pointwise by at most $\eta^{-1} k \log n \leq \eps$. Since \eqref{MOT} and \eqref{RMOT} also have the same feasible sets, their optimal values therefore differ by at most $\eps$. 
\end{proof}

\section{Oracles}\label{sec:oracles}

Here we define the three oracle variants described in the introduction and discuss their relations.  In the below definitions, let $C \in \Rntk$ be a cost tensor.

\begin{defin}[$\MinO$ oracle]\label{def:oracle-min}
	For weights $p = (p_1,\ldots,p_k) \in \R^{n \times k}$, $\MinO_C(p)$ returns
	\[
	\min_{\jvec \in [n]^k} C_{\jvec} - \sum_{i=1}^k [p_i]_{j_i}.
	\]
\end{defin}

\begin{defin}[$\AMinO$ oracle]\label{def:oracle-amin}
	For weights $p = (p_1,\ldots,p_k) \in \R^{n \times k}$ and accuracy $\eps > 0$, $\AMinO_C(p,\eps)$ returns $\MinO_C(p)$ up to additive error $\eps$.
\end{defin}

\begin{defin}[$\SMinO$ oracle]\label{def:oracle-sminn}
	For weights $p = (p_1,\ldots,p_k) \in \Rbar^{n \times k}$ and regularization parameter $\eta > 0$, $\SMinO_C(p, \eta)$ returns 
	\[
	\displaystyle\smineta_{\jvec \in [n]^k} C_{\jvec} - \sum_{i=1}^k [p_i]_{j_i}.
	\]
\end{defin}

An algorithm is said to ``solve'' or ``implement'' $\MinO_C$ if given input $p$, it outputs $\MinO_C(p)$. Similarly for $\AMinO_C$ and $\SMinO_C$. Note that the weights $p$ that are input to $\SMinO$ have values inside $\Rbar = \R \cup \{-\infty\}$; this simplifies the notation in the treatment of the $\Sink$ algorithm below and does not increase the bit-complexity by more than $1$ bit by adding a flag for the value $-\infty$. 

\begin{remark}[Interpretation as variants of the dual feasibility oracle]\label{rem:oracles-feas}
	These three oracles can be viewed as variants of the feasibility oracle for~\eqref{MOT-D}. For $\MinO_C(p)$, this relationship is exact: $p \in \R^{n \times k}$ is feasible for~\eqref{MOT-D} if and only if $\MinO_C(p)$ is non-negative. For $\AMinO_C$ and $\SMinO_C$, this relationship is approximate, with the approximation depending on how small $\eps$ is and how large $\eta$ is, respectively.
\end{remark}

Since these oracles form the respective bottlenecks of all algorithms from the $\MOT$ and implicit linear programming literatures (see the overview in the introduction \S\ref{ssec:intro:cont-alg}), an important question is: if one oracle can be implemented in $\poly(n,k)$ time, does this imply that the other can be too?

Two reductions are straightforward: the $\AMinO$ oracle can be implemented in $\poly(n,k)$ time whenever either the $\MinO$ oracle or the $\SMinO$ oracle can be implemented in $\poly(n,k)$ time. We record these simple observations in remarks for easy recall.

\begin{remark}[$\MinO$ implies $\AMinO$]\label{rem:oracles:min-amin}
	For any accuracy $\eps > 0$, the $\MinO_C(p)$ oracle provides a valid answer to the $\AMinO_C(p,\eps)$ oracle by definition.
\end{remark}

\begin{remark}[$\SMinO$ implies $\AMinO$]\label{rem:oracles:smin-amin}
	For any $p \in \R^{n \times k}$ and regularization $\eta \geq \eps^{-1} k\log n$, the $\SMinO_C(p,\eta)$ oracle provides a valid answer to the $\AMinO_C(p,\eps)$ oracle due to the approximation property of the $\smin$ operator (Lemma~\ref{lem:smin}). 
\end{remark}

In the remainder of this section, we show a separation between the $\SMinO$ oracle and both the $\MinO$ and $\AMinO$ oracles by exhibiting a family of cost tensors $C$ for which there exist polynomial-time algorithms for $\MinO$ and $\AMinO$, however there is no polynomial-time algorithm for $\SMinO$. The non-existence of a polynomial-time algorithm of course requires a complexity theoretic assumption; our result holds under $\#$BIS-hardness---which is a by-now standard complexity theory assumption introduced in~\citep{dyer2000relative}, and in words is the statement that there does not exist a polynomial-time algorithm for counting the number of independent sets in a bipartite graph.
%~\citep{goldberg2007complexity}.

\begin{lemma}[Restrictiveness of the $\SMinO$ oracle]\label{lem:smin-separation}
	There exists a family of costs $C \in \Rntk$ for which $\MinO_C$ and $\AMinO_C$ can be solved in $\poly(n,k)$ time, however $\SMinO_C$ is \#BIS-hard.
\end{lemma}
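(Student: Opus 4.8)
The plan is to exhibit an explicit family of cost tensors encoding the bipartite independent set counting problem, arranged so that $\SMinO$ evaluates (a transform of) the number of independent sets, while $\MinO$ and $\AMinO$ only ask for the minimum-weight term, which will turn out to be trivial.

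First I would fix a bipartite graph $G=(U\sqcup V, E)$ with $|U|=|V|=m$ as the $\#$BIS instance, and set $k=2m$ and $n=2$, identifying the $k$ indices with the vertices of $G$ and letting each $j_i\in\{1,2\}$ encode whether vertex $i$ is ``out'' or ``in'' a candidate set. Define the cost tensor $C_{\jvec} = 0$ if the set $\{i : j_i = 2\}$ is an independent set of $G$, and $C_{\jvec} = +\infty$ (represented by the flag bit, per the conventions in \S\ref{sec:prelim}) otherwise; this is an $n^k$-entry tensor with a $\poly(n,k)$-size implicit representation since membership of $\jvec$ is checkable by scanning the edges of $G$. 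Now $\MinO_C(p) = \min_{\jvec}\, (C_{\jvec} - \sum_i [p_i]_{j_i})$; since the all-``out'' vector $\jvec=(1,\dots,1)$ is always feasible (gives a valid independent set, namely the empty set) and $C$ takes only the values $0$ and $+\infty$, the minimum over feasible $\jvec$ of $-\sum_i [p_i]_{j_i}$ is a bounded quantity that I can compute by a simple combinatorial argument. In fact I should choose the encoding so that $\MinO_C$ reduces to a tractable optimization: with $n=2$, for each coordinate $i$ the contribution $-[p_i]_{j_i}$ is minimized by picking whichever of $j_i\in\{1,2\}$ gives the larger potential, subject only to the independent-set constraint on the positions set to $2$. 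This is a maximum-weight independent set type problem, which is \emph{not} polynomial in general — so I would instead make the ``in'' state always \emph{weakly worse}, e.g. restrict attention to showing $\MinO$ and $\AMinO$ are easy by an even more direct route: observe that because the empty set is always independent and is also a pointwise-minimal configuration, one can show $\MinO_C(p) = -\sum_i \max([p_i]_1,[p_i]_2)$ fails, so the cleaner route (and the one I expect the authors take) is to use the monotone/antitone structure that makes $\MinO$ equivalent to an explicit formula while $\SMinO$ still sees the full partition function. Concretely I would arrange $C$ so that $\MinO_C$ and $\AMinO_C$ are solvable by inspection (a closed form in $p$), by making the independence constraint vacuous for the $\min$ but active for the weighted \emph{sum}.

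Next, for $\SMinO$: by definition $\SMinO_C(p,\eta) = -\tfrac1\eta\log\sum_{\jvec} e^{-\eta(C_{\jvec}-\sum_i[p_i]_{j_i})} = -\tfrac1\eta \log \sum_{\jvec\ \mathrm{feasible}} e^{\eta\sum_i [p_i]_{j_i}}$, where the sum ranges exactly over independent sets of $G$ (since infeasible terms contribute $e^{-\infty}=0$). Taking the special input $p_i = 0$ for all $i$ (allowed, and the key idea flagged in footnote~\ref{fn:lr-exact}'s spirit of ``take $p_i=0$''), this becomes $-\tfrac1\eta\log(\#\text{independent sets of }G)$. Hence a polynomial-time algorithm for $\SMinO_C$ — even approximately, since additive error $\delta$ in $\SMinO$ translates to a multiplicative $e^{\pm\eta\delta}$ factor on the count — would yield a (randomized, or deterministic) approximate counter for independent sets in a bipartite graph, contradicting $\#$BIS-hardness. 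I would cite Remark~\ref{rem:oracles:smin-amin} only in passing; the $\MinO$/$\AMinO$ tractability must be argued directly from the structure of $C$, and here is where the choice of encoding has to be gotten exactly right.

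The main obstacle, as the above vacillation shows, is \emph{simultaneously} making $\MinO$ (and hence $\AMinO$) polynomial-time solvable while keeping $\SMinO$ as hard as $\#$BIS. The danger is that a naive ``indicator of independent set'' cost makes $\MinO$ itself a max-weight-independent-set problem, which is NP-hard and would sink the construction. The fix is to build the cost so that for the \emph{min} only the single best configuration matters and that configuration is forced to be trivial regardless of the graph — e.g. by using the empty set's optimality together with an encoding where turning any vertex ``in'' only ever hurts the $\min$ objective (monotonicity in $p$ exploited), so $\MinO_C(p)$ has an explicit $\poly(n,k)$-time formula — while for the \emph{softmin} every independent set still contributes, so the count leaks through. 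Verifying this asymmetry carefully, and confirming the bit-complexity and representation-size bounds, is the crux; the $\SMinO$-to-$\#$BIS reduction itself is then essentially immediate from the log-partition-function identity above.
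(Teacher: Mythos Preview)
Your proposal has a genuine gap at exactly the point you yourself flag as ``the crux.'' You correctly observe that with $C_{\jvec}\in\{0,+\infty\}$ encoding independent sets, the oracle $\MinO_C(p)$ becomes a maximum-weight independent set problem (with weights $w_i=[p_i]_2-[p_i]_1$), and you say this ``would sink the construction.'' Your proposed fix---arranging $C$ so that the empty set is always the minimizer via some monotonicity---cannot work, because $p$ is an \emph{input} to the oracle, not something you control: for any fixed $C$ of your form, an adversary can choose $p$ with $[p_i]_2\gg[p_i]_1$ for all $i$, forcing the minimizer to be a large independent set. So as written, you never actually exhibit a family for which $\MinO_C$ is provably polynomial-time.

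Ironically, your first instinct was salvageable: maximum-weight independent set on \emph{bipartite} graphs is polynomial-time (it is the complement of minimum-weight vertex cover, solvable via max-flow/K\"onig), so had you invoked that instead of retreating, the $\MinO$ side would go through. You would still need to handle the $+\infty$ entries, since the lemma is stated for $C\in\Rntk$ with finite $\Cmax$; replacing $+\infty$ by a large finite $M$ perturbs the softmin count and requires care.

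The paper takes a cleaner and quite different route that sidesteps both issues. It uses \emph{finite}-valued costs $C_{\jvec}=-\langle\jvec,A\jvec\rangle-\langle b,\jvec\rangle$ for $\jvec\in\{\pm1\}^k$ with $A\geq 0$ entrywise. Then $\MinO_C(p)$, after absorbing $p$ into the linear term, is minimization of a function whose Hessian $-A$ has nonpositive off-diagonals---i.e.\ submodular minimization over $\{\pm1\}^k$, which is classical polynomial time. Meanwhile $\SMinO_C(p,\eta)$ is exactly $-\eta^{-1}\log Z$ for the partition function of a \emph{ferromagnetic Ising model with inconsistent external fields}, which Goldberg--Jerrum showed is $\#$BIS-hard. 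The submodularity observation is the key idea you were missing: it gives tractability of $\MinO$ for \emph{all} inputs $p$ without any case analysis, while keeping the soft version as hard as an Ising partition function.
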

\begin{proof}
	In order to prove hardness for general $n$, it suffices to exhibit such a family of cost tensors when $n=2$. Since $n=2$, it is convenient to abuse notation slightly by indexing a cost tensor $C \in \Rntk$ by $\jvec \in \{-1,1\}^k$ rather than by $\jvec \in \{1,2\}^k$. The family we exhibit is
	$
	\{ C(A,b)  :  A \in \R_{\geq 0}^{k \times k}, \, b \in \R^k \}$,
	where the cost tensors $C(A,b)$ are parameterized by a non-negative square matrix $A$ and a vector $b$, and have entries of the form
	\[
		C_{\jvec}(A,b) := - \langle \jvec, A \jvec \rangle - \langle b, \jvec \rangle, \qquad \jvec \in \{\plusminus 1\}^k.
	\]
	\par \underline{Polynomial-time algorithm for $\MinO$ and $\AMinO$.} We show that given a matrix $A \in \R_{\geq 0}^{k \times k}$, vector $b \in \R^k$, and weights $p \in \R^{2 \times k}$, it is possible to compute $\MinO_C(p)$ on the cost tensor $C(A,b)$ in $\poly(k)$ time. Clearly this also implies a $\poly(k)$ time algorithm for $\AMinO_C(p,\eps)$ for any $\eps > 0$, see Remark~\ref{rem:oracles:min-amin}.
	
	\par To this end, we first re-write the $\MinO_C(p)$ problem on $C(A,b)$ in a more convenient form that enables us to ``ignore'' the weights $p$. Recall that $\MinO_C(p)$ is the problem of 
	\[
		\MinO_C(p)
		=
		\min_{\jvec \in \{\plusminus 1\}^k} 
		- \langle \jvec, A \jvec \rangle 
		- \langle b, \jvec \rangle
		- \sum_{i=1}^k [p_i]_{j_i}.
	\]
	Note that the linear part of the cost is equal to 
	\begin{align}
		\langle b, \jvec \rangle
		+ \sum_{i=1}^k [p_i]_{j_i} = 
		\langle \ell, \jvec \rangle + d, 
		\label{eq:lem-smin-sep:simp}
	\end{align}
	where $\ell \in \R^k$ is the vector with entries $\ell_i = b_i + \half((p_i)_1 - (p_i)_{-1})$, and $d$ is the scalar $d = \half \sum_{i=1}^k ([p_i]_1 + [p_i]_{-1})$. Thus, since $d$ is clearly computable in $O(k)$ time, the $\MinO_C$ problem is equivalent to solving
	\begin{align}
		\min_{\jvec \in \{\plusminus 1\}^k} 
		- \langle \jvec, A \jvec \rangle 
		- \langle \ell, \jvec \rangle,
		\label{eq:lem-smin-sep:submodular}
	\end{align}
	when given as input a non-negative matrix $A \in \R_{\geq 0}^{k \times k}$ and a vector $\ell \in \R^k$.
	\par To show that this task is solvable in $\poly(k)$ time, note that the objective in~\eqref{eq:lem-smin-sep:submodular} is a submodular function because it is a quadratic whose Hessian $-A$ has non-positive off-diagonal terms~\citep[Proposition 6.3]{Bach13}. Therefore~\eqref{eq:lem-smin-sep:submodular} is a submodular optimization problem, and thus can be solved in $\poly(k)$ time using classical algorithms from combinatorial optimization~\citep[Chapter 10.2]{GLSbook}.

	\par \underline{$\SMinO$ oracle is $\#$BIS-hard.} On the other hand, by using the definition of the $\SMinO$ oracle, the re-parameterization~\eqref{eq:lem-smin-sep:simp}, and then the definition of the softmin operator,  
	\[
		\SMinO_C(p,\eta) = 
		\smineta_{\jvec \in \{\plusminus 1\}^k} - \langle \jvec, A \jvec \rangle 
		- \langle b, \jvec \rangle
		- \sum_{i=1}^k [p_i]_{j_i}
		=
		\smineta_{\jvec \in \{\plusminus 1\}^k} - \langle \jvec, A \jvec \rangle 
		- \langle \ell, \jvec \rangle - d
		= - \frac{\log Z}{\eta} - d,
	\]
	where $Z =  \sum_{\jvec \in \{\plusminus 1\}^k} Q(\jvec) $ is the partition function
	of the ferromagnetic Ising model with inconsistent external fields given by
	\[
		Q(\jvec) =  
		\exp\left( \eta  \langle  \vec{j}, A \vec{j} \rangle + \eta \langle \ell, \jvec \rangle  \right).
	\]
	Because it is $\#$BIS hard to compute the partition function $Z$ of a ferromagnetic Ising model with inconsistent external fields~\citep{goldberg2007complexity}, it is $\#$BIS hard to compute the value $-\eta^{-1} \log Z - d$ of the oracle $\SMinO_C(p,\eta)$.
\end{proof}

\begin{remark}[The restrictiveness of $\SMinO$ extends to approximate computation]\label{rem:smin-separation}
	The separation between the oracles shown in Lemma~\ref{lem:smin-separation} further extends to approximate computation of the $\SMinO$ oracle under the assumption that $\#$BIS is hard to approximate, since under this assumption it is hard to approximate the partition function of a ferromagnetic Ising model with inconsistent external fields~\citep{goldberg2007complexity}.
\end{remark}

\section{Algorithms to oracles}\label{sec:algs}

In this section, we consider three algorithms for $\MOT$. Each is iterative and requires only polynomially many iterations. The key issue for each algorithm is the per-iteration runtime, which is in general exponential (roughly $n^k$). We isolate the respective bottlenecks of these three algorithms into the three variants of the dual feasibility oracle defined in \S\ref{sec:oracles}.
See \S\ref{ssec:intro:cont-alg} and Table~\ref{tab:oracles} for a high-level overview of this section's results.

\subsection{The Ellipsoid algorithm and the $\MinO$ oracle}\label{ssec:algs:ellip}

Among the most classical algorithms for implicit LP is the Ellipsoid algorithm~\citep{GLSbook,GroLovSch81,khachiyan1980polynomial}.
However it has taken a back seat to the $\Sink$ algorithm in the vast majority of the $\MOT$ literature. The very recent paper~\citep{AltBoi20bary}, which focuses on the specific $\MOT$ application of computing low-dimensional Wasserstein barycenters, develops a variant of the classical Ellipsoid algorithm specialized to $\MOT$; henceforth this is called $\ELLIP$, see \S\ref{sssec:ellip:alg} for a description of this algorithm. The objective of this section is to analyze $\ELLIP$ in the context of general $\MOT$ problems in order to prove the following.

\begin{theorem}\label{thm:ellip}
	For any family of cost tensors $C \in \Rntk$, the following are equivalent:
	\begin{itemize}
		\item[(i)] $\ELLIP$ takes $\poly(n,k)$ time to solve the $\MOT_C$ problem. (Moreover, it outputs a vertex solution represented as a sparse tensor with at most $nk-k+1$ non-zeros.)
		\item[(ii)] There exists a $\poly(n,k)$ time algorithm that solves the $\MOT_C$ problem.
		\item[(iii)] There exists a $\poly(n,k)$ time algorithm that solves the $\MinO_C$ problem. 
	\end{itemize}
\end{theorem}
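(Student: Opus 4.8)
The plan is to establish the cycle of implications (i) $\Rightarrow$ (ii) $\Rightarrow$ (iii) $\Rightarrow$ (i). The first implication is immediate: if $\ELLIP$ solves $\MOT_C$ in $\poly(n,k)$ time, then it \emph{is} a $\poly(n,k)$-time algorithm for $\MOT_C$, so (ii) holds trivially. The second implication, (ii) $\Rightarrow$ (iii), is a reduction from the dual feasibility oracle to the primal optimization problem; as noted in the excerpt, this direction was established in~\citep{AltBoi20hard}, so I would simply invoke that: given weights $p = (p_1,\dots,p_k)$, one can decide feasibility of $p$ for~\eqref{MOT-D} (equivalently, determine the sign of $\MinO_C(p)$) and in fact recover the exact value of $\MinO_C(p)$ by calling a $\MOT$ solver on suitably constructed instances (e.g., by binary search on a shift parameter, or by using the solver on a modified cost, exploiting that the optimal dual value equals the optimal primal value and relates to $\min_{\jvec} C_{\jvec} - \sum_i [p_i]_{j_i}$ when all marginals are point masses or via an LP-duality argument). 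Since the input bit-complexities are $\poly(n,k)$, the number of binary-search steps is polynomial, so this runs in $\poly(n,k)$ time.

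The main work is (iii) $\Rightarrow$ (i): implementing $\ELLIP$ in $\poly(n,k)$ time given a $\poly(n,k)$-time algorithm for $\MinO_C$. The standard theory of the Ellipsoid method (see~\citep{GLSbook}) says that one can solve an LP in polynomial time given a polynomial-time \emph{separation oracle} for the feasible region, and moreover recover a vertex (basic optimal) solution. Here $\ELLIP$ is run on the dual~\eqref{MOT-D}, whose feasible region is $\{p : C_{\jvec} - \sum_i [p_i]_{j_i} \geq 0 \ \forall \jvec\}$. A separation oracle for this region, on input $p$, must either certify $p$ is feasible or return a violated constraint, i.e., an index $\jvec^\star$ with $C_{\jvec^\star} - \sum_i [p_i]_{j_i^\star} < 0$. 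This is \emph{exactly} what a slight strengthening of $\MinO_C$ provides: the arg-min version. So the key step is to argue that a $\poly(n,k)$-time algorithm for the value oracle $\MinO_C$ can be upgraded to one that also returns a minimizing index $\jvec^\star$. This is done coordinate-by-coordinate: having computed $v = \MinO_C(p)$, fix $j_1$ to each of the $n$ candidate values in turn by sending $[p_1]_{j}$ to $-\infty$ for $j \neq j_1$ (or, to stay in $\R$, subtracting a huge constant), recompute $\MinO_C$, and keep a value of $j_1$ that does not increase the minimum; repeat for $j_2,\dots,j_k$. This costs $O(nk)$ oracle calls, hence $\poly(n,k)$ time total. (One should note that the paper promises a \emph{direct} reduction here, simpler than the generic ``back-and-forth'' of~\citep[p.~107]{GLSbook}; I would remark that because~\eqref{MOT} is in standard form and strongly dual to~\eqref{MOT-D}, one can read off a primal vertex solution directly from the constraints that the Ellipsoid run identifies as tight, rather than running Ellipsoid a second time on a restricted system.)

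Given the separation oracle, the remaining steps are bookkeeping: (a) bound the dual feasible region (it may be unbounded in directions $(p_1+c\mathbf{1}, p_2, \dots, p_k - (k-1)$-style shifts are \emph{not} free here, but one still needs a bounding-box argument using $\Cmax$ and the bit-complexity of $C$, e.g.\ restricting to $\|p_i\|_\infty \leq \poly(n,k,\Cmax)$ without changing the optimum); (b) invoke the polynomial iteration bound of the Ellipsoid method, noting each iteration makes one separation-oracle call, hence $O(nk)$ calls to $\MinO_C$; (c) extract from the run a set of $\leq nk-k+1$ tight constraints whose corresponding primal basic solution is optimal, which is sparse by the same counting as in Lemma~\ref{lem:mot-sparse}; and (d) solve the resulting small linear system to output $P$. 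The main obstacle I anticipate is the careful handling of (a) — certifying that one can a priori bound the search region for the dual variables in terms of $\poly(n,k)$ and $\Cmax$ without affecting optimality, and dealing with the fact that~\eqref{MOT-D} has a one-dimensional (really $(k-1)$-dimensional after fixing the sum) lineality-type degeneracy from the redundant marginal constraints noted in Lemma~\ref{lem:mot-sparse}; this is handled by fixing a normalization (e.g.\ $[p_k]_1 = 0$ or projecting out the redundant direction) before running $\ELLIP$.
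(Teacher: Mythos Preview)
Your proposal is essentially correct and follows the same approach as the paper: the cycle (i) $\Rightarrow$ (ii) trivial, (ii) $\Rightarrow$ (iii) cited from~\citep{AltBoi20hard}, and (iii) $\Rightarrow$ (i) by upgrading $\MinO_C$ to an argmin/separation oracle via coordinate-by-coordinate fixing and then running Ellipsoid on the dual. Your $O(nk)$-call reduction from $\ArgMinO_C$ to $\MinO_C$ is exactly the paper's Lemma~\ref{lem:minargmin} (the paper uses a large finite constant $M = 2\Cmax + 2\sum_i \|p_i\|_{\max} + 1$ in place of your ``subtract a huge constant'').

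One point deserves correction. For primal recovery in step~(c), you propose to ``extract from the run a set of $\leq nk-k+1$ tight constraints'' and read off the primal solution directly. This does not quite work as stated: the separation oracle during an Ellipsoid run returns constraints that were \emph{violated at intermediate iterates}, not constraints that are tight at the optimal dual solution, and there may be $\poly(n,k)$ of them rather than $nk-k+1$. The paper's approach (Theorem~\ref{thm:ellip-argmin} and Algorithm~\ref{alg:ellip}, from~\citep{AltBoi20bary}) is to collect \emph{all} tuples $S$ ever returned by the separation oracle and then solve the restricted primal LP~\eqref{eq-ellip:mots} supported on $S$; since $|S| = \poly(n,k)$ this is a polynomial-size LP, and its vertex optimum is automatically an optimal vertex of the full $\MOT$ LP with the claimed sparsity. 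Your concerns in~(a) about bounding the dual search region and handling the lineality directions are legitimate, but the paper delegates them entirely to the cited result~\citep[Proposition~12]{AltBoi20bary} rather than working them out in the proof of Theorem~\ref{thm:ellip}.
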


\paragraph{Interpretation of results.} In words, the equivalence ``(i) $\Longleftrightarrow$ (ii)'' establishes that $\ELLIP$ can solve any $\MOT$ problem in polynomial time that any other algorithm can. Thus from a theoretical perspective, this paper's restriction to $\ELLIP$ is at no loss of generality for developing polynomial-time algorithms that exactly solve $\MOT$. In words, the equivalence ``(ii) $\Longleftrightarrow$ (iii)'' establishes that the $\MOT$ and $\MinO$ problems are polynomial-time equivalent. Thus we may investigate when $\MOT$ is tractable by instead investigating the more amenable question of when $\MinO$ is tractable (see \S\ref{sssec:intro-q4}) at no loss of generality.

As stated in the related work section, Theorem~\ref{thm:ellip} is implicit from combining several known results~\citep{AltBoi20hard,AltBoi20bary, GLSbook}. Our contribution here is to make this result explicit, since this allows us to unify algorithms from the implicit LP literature with the $\Sink$ algorithm. We also significantly simplify part of the implication ``(iii) $\Longrightarrow$ (i)'', which
is crucial for making an algorithm that relies on the $\MinO$ oracle practical---namely, the Column Generation algorithm discussed below. 

\paragraph{Organization of \S\ref{ssec:algs:ellip}.}  In \S\ref{sssec:ellip:alg}, we recall this $\ELLIP$ algorithm and how it depends on the violation oracle for~\eqref{MOT-D}. 
In \S\ref{sssec:ellip:oracle}, we give a significantly simpler proof that the violation and feasibility oracles are polynomial-time equivalent in the case of~\eqref{MOT-D}, and use this to prove Theorem~\ref{thm:ellip}.
In \S\ref{sssec:ellip:cg}, we describe a practical implementation that replaces the $\ELLIP$ outer loop with Column Generation.

\subsubsection{Algorithm}\label{sssec:ellip:alg}

A key component of the proof of Theorem~\ref{thm:ellip} is the $\ELLIP$ algorithm introduced in~\citep{AltBoi20bary} for $\MOT$, which we describe below. In order to present this, we first define a variant of the $\MinO$ oracle that returns a minimizing tuple rather than the minimizing value.

\begin{defin}[Violation oracle for~\eqref{MOT-D}]
	Given weights $p = (p_1,\dots,p_k) \in \R^{n \times k}$, $\ArgMinO_C$ returns the minimizing solution $\jvec$ and value of $\min_{\jvec \in [n]^k} C_{\jvec} - \sum_{i=1}^k [p_i]_{j_i}$.
\end{defin}

$\ArgMinO_C$ can be viewed as a violation oracle\footnote{Recall that a violation oracle for a polytope $K = \{x : \langle a_i, x \rangle \leq b_i, \forall i \in [N]\}$ is an algorithm that given a point $p$, either asserts $p$ is in $K$, or otherwise outputs the index $i$ of a violated constraint $\langle a_i,p \rangle > b_i$.} for the decision set to~\eqref{MOT-D}. This is because, given $p = (p_1,\dots,p_k) \in \R^{n \times k}$, the tuple $\jvec$ output by $\ArgMinO_C(p)$ either provides a violated constraint if $C_{\jvec} - \sum_{i=1}^k [p_i]_{j_i} < 0$, or otherwise certifies $p$ is feasible. In \citep{AltBoi20bary} it is proved that $\MOT$ can be solved with polynomially many calls to the $\ArgMinO_C$ oracle.

\begin{theorem}[$\ELLIP$ guarantee; Proposition 12 of~\citep{AltBoi20bary}]\label{thm:ellip-argmin}
	Algorithm~\ref{alg:ellip} finds an optimal vertex solution for~$\MOT_C(\mu)$ using $\poly(n,k)$ calls to the $\ArgMinO_C$ oracle and $\poly(n,k)$ additional time. The solution is returned as a sparse tensor with at most $nk-k+1$ non-zero entries.
\end{theorem}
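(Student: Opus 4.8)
The plan is to recognize Theorem~\ref{thm:ellip-argmin} as the classical reduction of linear programming to a violation (separation) oracle via the ellipsoid method~\citep{GLSbook}, specialized to the dual~\eqref{MOT-D} of $\MOT_C(\mu)$ and followed by a postprocessing step that extracts a sparse primal vertex. As noted just above the theorem, $\ArgMinO_C$ is exactly a violation oracle for the polyhedron $D := \{p = (p_1,\dots,p_k) \in \R^{n\times k} : C_{\jvec} - \sum_{i=1}^k [p_i]_{j_i} \ge 0 \text{ for all } \jvec \in [n]^k\}$ of dual-feasible points: on input $p$ it returns a tuple $\jvec$ with $C_{\jvec} - \sum_i [p_i]_{j_i} < 0$ (a separating hyperplane) whenever $p \notin D$, and otherwise certifies $p \in D$. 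So the content of the theorem is: run the ellipsoid method on $\max\{\sum_i \langle p_i, \mu_i\rangle : p \in D\}$ using this oracle, and then convert the collected dual information into an optimal sparse primal tensor.

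First I would run the ellipsoid method on the dual. The dual objective is bounded above — its optimum equals the $\MOT$ value by strong duality, the primal being feasible and compact — while $D$ itself is unbounded; I would handle this with the standard bounding-box device, intersecting $D$ with $\{\|p_i\|_\infty \le R : i \in [k]\}$ for an $R$ of bit-size $\poly(n,k)$ large enough that some optimal dual vertex (which has bit complexity $\poly(n,k)$, being the solution of a subsystem of the $\poly(n,k)$-bit system defining $D$) lies in the box, so that the optimal value is unchanged. Running the ellipsoid method with the violation oracle $\ArgMinO_C$ on this bounded program then terminates after $\poly(n,k)$ iterations using $\poly(n,k)$ additional arithmetic, querying $\ArgMinO_C$ at $\poly(n,k)$ points; I collect the returned tuples into a set $J \subseteq [n]^k$ with $|J| = \poly(n,k)$. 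The crucial property — standard for the ellipsoid method, since its trajectory is determined solely by the constraints it actually queries — is that running the method on the \emph{reduced} dual obtained by keeping only the constraints indexed by $J$ produces an identical run and hence the same optimal value as the full dual; equivalently, the LP over $D$ restricted to $J$ has optimal value equal to the $\MOT$ value.

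Next I would pass back to the primal. The reduced dual (with constraints only in $J$, plus the box) is feasible — e.g. any sufficiently negative $p$ satisfies all constraints — and bounded, with optimal value equal to the $\MOT$ value; by LP strong duality its dual, the reduced primal $\min\{\langle P,C\rangle : P \in \Rpntk,\ \supp(P) \subseteq J,\ m_i(P) = \mu_i \text{ for all } i \in [k]\}$ (the box constraints contribute only harmless auxiliary variables one can take to be zero at optimum), is feasible with the same optimal value. This is an explicit LP in $|J| = \poly(n,k)$ nonnegative variables with $nk$ equality constraints, so I can solve it \emph{exactly} in $\poly(n,k)$ time by any exact LP routine and obtain a vertex solution $P^\star$. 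Being a basic feasible solution of a standard-form LP, the columns of the marginal-incidence matrix indexed by $\supp(P^\star)$ are linearly independent, so the rank count from the proof of Lemma~\ref{lem:mot-sparse} — the $k$ total-mass combinations of the $nk$ marginal equations all collapse to $m(P)=1$, yielding $k-1$ dependencies — gives $|\supp(P^\star)| \le nk-k+1$; moreover linear independence of those columns means $P^\star$, extended by zeros, is a basic feasible solution of, i.e. a vertex of, the full transportation polytope $\Coup$. Since $\supp(P^\star)\subseteq J$ and $m_i(P^\star)=\mu_i$ it is feasible for~\eqref{MOT}, and since its value equals the $\MOT$ value it is an optimal vertex solution, returned as a sparse tensor with at most $nk-k+1$ nonzeros.

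The main obstacle is not conceptual but the careful invocation of the ellipsoid machinery: bounding the bit complexity of dual vertices, replacing the unbounded $D$ by a polynomially sized bounding box without perturbing the optimum, certifying the $\poly(n,k)$ iteration count with a violation (rather than full separation) oracle, and—most delicately—making precise the ``value-preservation'' statement that the ellipsoid run depends only on its queried constraints, so that the small reduced LP pair inherits the optimal value of the original. All of these are standard ingredients from~\citep{GLSbook}; the $\MOT$-specific content (identifying $\ArgMinO_C$ as the violation oracle and performing the sparsity count) is comparatively light, which is why the result is essentially a specialization of known theory.
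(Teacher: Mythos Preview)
Your proposal is correct and follows essentially the same approach as the paper: run the Ellipsoid method on the dual~\eqref{MOT-D} using $\ArgMinO_C$ as the violation oracle, collect the polynomially many returned tuples into a set $S$ (your $J$), and then solve the restricted primal~\eqref{eq-ellip:mots} to obtain a sparse vertex solution. The paper itself only sketches this two-step procedure and defers full details to~\citep{AltBoi20bary}; your write-up supplies somewhat more of the underlying justification (the bounding-box device, the value-preservation property of the Ellipsoid run, the strong-duality argument for feasibility and optimality of the reduced primal, and the basic-feasible-solution argument that the reduced vertex is also a vertex of the full polytope), all of which are standard and correctly invoked.
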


\begin{algorithm}
	\caption{$\ELLIP$: specialization of the classical Ellipsoid algorithm to $\MOT$}
	\hspace*{\algorithmicindent} \textbf{Input:} 
	Cost $C \in \Rntk$,
	marginals $\mu_1,\dots,\mu_k \in \Delta_n$  \\
	\hspace*{\algorithmicindent} \textbf{Output:} Vertex solution to $\MOT_C(\mu)$
	\begin{algorithmic}[1]
		\Statex \textbackslash\textbackslash$\;$ Solve dual
		\State Solve~\eqref{MOT-D} using the Ellipsoid algorithm, with $\ArgMinO_C$ as the violation oracle. Let $S$ denote the set of tuples returned by all calls to $\ArgMinO_C$.
%		\Comment{Solve dual}
		\Statex
		\Statex \textbackslash\textbackslash$\;$ Solve primal
		\State Solve~\eqref{eq-ellip:mots} using the Ellipsoid algorithm. 
%		\Comment{Solve primal}
	\end{algorithmic}
	\label{alg:ellip}
\end{algorithm}

\paragraph{Sketch of algorithm.}
Full details and a proof are in \citep{AltBoi20bary}. We give a brief overview here for completeness. First, recall from the implicit LP literature that the classical Ellipsoid algorithm can be implemented in polynomial time for an LP with arbitrarily many constraints so long as it has polynomially many variables and the violation oracle for its decision set is solvable in polynomial time~\citep{GLSbook}. This does not directly apply to the LP~\eqref{MOT} because that LP has $n^k$ variables. However, it can apply to the dual LP~\eqref{MOT-D} because that LP only has $nk$ variables. 

\par This suggests a natural two-step algorithm for $\MOT$. First, compute an optimal dual solution by directly applying the Ellipsoid algorithm to~\eqref{MOT-D}. Second, use this dual solution to construct a sparse primal solution. Although this dual-to-primal conversion does not extend to arbitrary LP~\citep[Exercise 4.17]{BerTsi97}, the paper~\citep{AltBoi20bary} provides a solution by exploiting the standard-form structure of $\MOT$. The procedure is to solve
\begin{align}
	\min_{\substack{P \in \Coup \\ \text{s.t. } P_{\jvec} = 0, \; \forall \jvec \notin S}} \langle C, P \rangle 
	\label{eq-ellip:mots}
\end{align}
which is the $\MOT$ problem restricted to sparsity pattern $S$, where $S$ is the set of tuples $\jvec$ returned by the violation oracle during the execution of step one of Algorithm~\ref{alg:ellip}. This second step takes $\poly(n,k)$ time using a standard LP solver, because running the Ellipsoid algorithm in the first step only calls the violation oracle $\poly(n,k)$ times, and thus $S$ has $\poly(n,k)$ size, and therefore the LP~\eqref{eq-ellip:mots} has $\poly(n,k)$ variables and constraints. In \citep{AltBoi20bary} it is proved that this produces a primal vertex solution to the original $\MOT$ problem.

\subsubsection{Equivalence of bottleneck to $\MinO$}\label{sssec:ellip:oracle}

Although Theorem~\ref{thm:ellip-argmin} shows that $\ELLIP$ can solve $\MOT$ in $\poly(n,k)$ time using the $\ArgMinO$ oracle, this is not sufficient to prove the implication ``(iii)$\implies$(i)'' in Theorem~\ref{thm:ellip}. In order to prove that implication requires showing the polynomial-time equivalence between $\MinO$ and $\ArgMinO$.

\begin{lemma}[Equivalence of $\MinO$ and $\ArgMinO$]\label{lem:minargmin}
	Each of the oracles $\MinO_C$ and $\ArgMinO_C$ can be implemented using $\poly(n,k)$ calls of the other oracle and $\poly(n,k)$ additional time. 
\end{lemma}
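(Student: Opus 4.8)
The plan is to prove the two directions separately. The direction ``$\ArgMinO_C \Rightarrow \MinO_C$'' is immediate: given the minimizing tuple $\jvec^\star$ returned by $\ArgMinO_C(p)$, one simply evaluates $C_{\jvec^\star} - \sum_{i=1}^k [p_i]_{j_i^\star}$, which takes $O(k)$ additional arithmetic operations (plus one query to the implicit representation of $C$), and this is by definition the value $\MinO_C(p)$. So a single call suffices.

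The interesting direction is ``$\MinO_C \Rightarrow \ArgMinO_C$,'' i.e., recovering a minimizing tuple from an oracle that only reports the optimal value. First I would compute $v := \MinO_C(p)$, the optimal value. Then I would determine the coordinates of a minimizing tuple one at a time by a ``self-reduction'' / perturbation argument: to pin down a good value for coordinate $i$, I perturb the weight vector $p_i$ so as to forbid (or heavily penalize) all but one choice of $j_i$, and re-query $\MinO_C$. Concretely, for coordinate $1$, for each candidate value $a \in [n]$ I can form a modified weight $p_1^{(a)}$ by subtracting a huge constant $M$ from every entry of $p_1$ except entry $a$ (equivalently, adding $M$ to $C$ on all slices $j_1 \ne a$, but it is cleaner to do it through the weights since the oracle takes arbitrary real $p$). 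For $M$ larger than the total variation of the problem (e.g. $M > 2(k\,\|C\|_{\max} + \sum_i \|p_i\|_\infty)$, which has $\poly(n,k)$ bit-complexity under the paper's standing assumptions), the tuple achieving $\MinO_C(p^{(a)}_1, p_2,\dots,p_k)$ is forced to use $j_1 = a$, and moreover there exists some $a$ for which this restricted minimum equals $v$ exactly — namely any $a$ that occurs as the first coordinate of a true minimizer. So I scan $a = 1,\dots,n$, and let $j_1^\star$ be the first value for which the restricted oracle value equals $v$ (after correcting for the known additive shift coming from the perturbation of $p_1$). Having fixed $j_1^\star$, I repeat on coordinate $2$ while keeping $p_1$ frozen at $p_1^{(j_1^\star)}$, and so on through coordinate $k$. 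After $k$ rounds, each costing $n$ oracle calls and $\poly(n,k)$ bookkeeping, I have a full tuple $\jvec^\star$ with $C_{\jvec^\star} - \sum_i [p_i]_{j_i^\star} = v$, which I return together with $v$. The total is $O(nk)$ calls to $\MinO_C$ plus $\poly(n,k)$ additional time.

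I expect the main obstacle to be bookkeeping the additive shifts correctly and, relatedly, arguing that the ``penalty'' parameter $M$ is large enough to truly eliminate all unwanted coordinate values without being so large that bit-complexity blows up — this is where one invokes the standing assumption that $C$ and $\mu$ (and hence any weights arising in the relevant reductions) have $\poly(n,k)$ bit-complexity, so that $\|C\|_{\max}$ and $\|p_i\|_\infty$ are bounded by $2^{\poly(n,k)}$ and an $M$ of the same order works while keeping all oracle inputs of polynomial size. A secondary point to handle cleanly is that the perturbation must be applied \emph{only} to the already-undetermined coordinates so that once $j_1^\star,\dots,j_{i-1}^\star$ are fixed they stay fixed in all subsequent queries; freezing $p_1,\dots,p_{i-1}$ at their penalized versions accomplishes exactly this, and one should check that the cumulative additive shift, which is just $(i-1)$ copies of the known penalty offsets, is tracked so that the comparison ``restricted value $=$ original value $v$'' is made on the correctly normalized quantity.
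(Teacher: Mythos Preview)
Your proposal is correct and follows essentially the same approach as the paper: both directions are handled as you describe, and for the nontrivial direction the paper likewise fixes the coordinates of a minimizer one at a time by replacing each $p_i$ with a penalized vector (the paper sets the forbidden entries to a constant $-M$ with $M = 2\Cmax + 2\sum_i \|p_i\|_{\max} + 1$, and selects $j_s^\star$ as the \emph{argmin} over the $n$ restricted queries rather than the first value matching $v$, but these are cosmetic differences). One small simplification: with your perturbation, the chosen entry $a$ is left unchanged, so there is no additive shift to correct when $j_1 = a$; you can therefore either drop the shift bookkeeping or, as the paper does, simply take $j_s^\star$ to be the minimizer of the $n$ restricted oracle values without ever comparing to $v$.
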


This equivalence follows from classical results about the equivalence of violation and feasibility oracles~\citep{yudin1976informational}. However, the known proof of that general result requires an involved and indirect argument based on ``back-and-forth'' applications of the Ellipsoid algorithm~\citep[\S4.3]{GLSbook}. Here we exploit the special structure of $\MOT$ to give a direct and elementary proof. This is essential to practical implementations (see \S\ref{sssec:ellip:cg}).

\begin{proof}
	It is obvious how the $\MinO_C$ oracle can be implemented via a single call of the $\ArgMinO_C$ oracle; we now show the converse. Specifically, given $p_1,\dots,p_k \in \R^n$, we show how to compute a solution $\jvec = (j_1,\dots,j_k) \in [n]^k$ for $\ArgMinO_C([p_1, \dots, p_k])$ using $nk$ calls to the $\MinO_C$ oracle and polynomial additional time. We use the first $n$ calls to compute the first index $j_1$ of the solution, the next $n$ calls to compute the next index $j_2$, and so on.
	\par Formally, for $s \in [k]$, let us say that $(j_1^*, \dots,j_{s}^*) \in [n]^s$ is a ``partial solution'' of size $s$ if there exists a solution $j \in [n]^k$ for $\ArgMinO_C([p_1, \dots, p_k])$ that satisfies $j_i = j_i^*$ for all $i \in [s]$. Then it suffices to show that for every $s \in [k]$, it is possible to compute a partial solution $(j_1^*,\dots,j_{s}^*)$ of size $s$ from a partial solution $(j_1^*,\dots,j_{s-1}^*)$ of size $s-1$ using $n$ calls to the $\MinO_C$ oracle and polynomial additional time. 
	\par The simple but key observation enabling this is the following. Below, for $i \in [k]$ and $j \in [n]$, define $q_{i,j}$ to be the vector in $\R^n$ with value $[p_i]_j$ on entry $j$, and value $-M$ on all other entries. In words, the following observation states that if the constant $M$ is sufficiently large, then for any indices $j_i'$, replacing the vectors $p_i$ with the vectors $q_{i,j_i'}$ in a $\MinO$ oracle query effectively performs a $\MinO$ oracle query on the original input $p_1,\dots,p_k$ except that now the minimization is only over $\jvec \in [n]^k$ satisfying $j_i = j_i'$.
		
	\begin{obs}\label{obs:minargmin}
		Set $M := 2\Cmax + 2\sum_{i=1}^k \|p_i\|_{\max} + 1$. 
		Then for any $s \in [k]$ and any $(j_1', \dots, j_s') \in [n]^s$,
		\begin{align}
			\MinO_C([q_{1,j_1'}, \dots, q_{s,j_s'}, p_{s+1},\dots, p_k])
			=
			\min_{\substack{\vec{j} \in [n]^k \\ \text{s.t. } j_1 = j_1', \dots, j_s = j_s'}} C_{\jvec} -  \sum_{i=1}^k [p_i]_{j_i} .
			\nonumber
		\end{align}
	\end{obs}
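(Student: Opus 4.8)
The plan is to verify Observation~\ref{obs:minargmin} by a direct case analysis on whether a given tuple $\jvec \in [n]^k$ respects the prescribed prefix $(j_1',\dots,j_s')$ or not. Concretely, I would expand the left-hand side using the definition of $q_{i,j_i'}$: for a tuple $\jvec$, the quantity being minimized is $C_{\jvec} - \sum_{i=1}^s [q_{i,j_i'}]_{j_i} - \sum_{i=s+1}^k [p_i]_{j_i}$, and the only thing that changes relative to the original objective $C_{\jvec} - \sum_{i=1}^k [p_i]_{j_i}$ is the first $s$ terms, where $[q_{i,j_i'}]_{j_i}$ equals $[p_i]_{j_i'}$ if $j_i = j_i'$ and equals $-M$ otherwise.

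First, I would handle the case where $\jvec$ respects the prefix, i.e.\ $j_i = j_i'$ for all $i \in [s]$. Then $[q_{i,j_i'}]_{j_i} = [p_i]_{j_i}$ for each such $i$, so the modified objective at $\jvec$ coincides exactly with the original objective $C_{\jvec} - \sum_{i=1}^k [p_i]_{j_i}$. Hence over this set of tuples, the modified minimization agrees pointwise with the right-hand side of the claimed identity.

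Second, I would show that any tuple $\jvec$ violating the prefix (some $j_i \neq j_i'$ with $i \le s$) cannot achieve a smaller value than the best prefix-respecting tuple, so these tuples are irrelevant to the minimum. If $\jvec$ violates the prefix in at least one coordinate, at least one term $-[q_{i,j_i'}]_{j_i} = M$ appears, so the modified objective at $\jvec$ is at least $M - \Cmax - \sum_{i=1}^k \|p_i\|_{\max} = \Cmax + \sum_{i=1}^k \|p_i\|_{\max} + 1$, using the definition of $M$ and the crude bounds $C_{\jvec} \ge -\Cmax$ (should be $|C_{\jvec}| \le \Cmax$) and $[q_{i,j_i'}]_{j_i} \ge -M$, $[p_i]_{j_i} \ge -\|p_i\|_{\max}$ for the other terms. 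On the other hand, any prefix-respecting tuple has modified objective value $C_{\jvec} - \sum_{i=1}^k [p_i]_{j_i} \le \Cmax + \sum_{i=1}^k \|p_i\|_{\max}$, which is strictly smaller. Since the prefix-respecting set is nonempty (as $s \le k$ and each coordinate can be chosen freely), the overall minimum on the left is attained on a prefix-respecting tuple, and by the first case it equals the right-hand side. This completes the proof of the observation.

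I do not expect a serious obstacle here — the only thing to be careful about is getting the constant $M$ large enough that a single $-M$ penalty strictly dominates, which is exactly why the definition uses $2\Cmax + 2\sum_i \|p_i\|_{\max} + 1$ rather than something tighter (the factor of $2$ gives slack so that even one penalized coordinate beats the best unpenalized tuple with room to spare). After establishing the observation, the remaining argument for Lemma~\ref{lem:minargmin} is the inductive prefix-extension scheme already sketched in the excerpt: having fixed a partial solution $(j_1^*,\dots,j_{s-1}^*)$, call $\MinO_C([q_{1,j_1^*},\dots,q_{s-1,j_{s-1}^*}, q_{s,j}, p_{s+1},\dots,p_k])$ for each $j \in [n]$, pick the $j$ achieving the smallest value, and set $j_s^* := j$; correctness follows because by the observation this value equals $\min_{\jvec : j_1 = j_1^*,\dots,j_{s-1}=j_{s-1}^*, j_s = j} C_{\jvec} - \sum_i [p_i]_{j_i}$, and minimizing over $j$ recovers the optimal value subject to the length-$(s-1)$ prefix, so some global minimizer extends the prefix by $j_s^*$.
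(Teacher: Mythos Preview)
Your proposal is correct and takes essentially the same approach as the paper's proof: both expand the $\MinO$ definition and show that any tuple violating the prefix has objective value at least $M - \Cmax - \sum_i \|p_i\|_{\max} = \Cmax + \sum_i \|p_i\|_{\max} + 1$, which strictly exceeds the value of any prefix-respecting tuple. The paper frames this as a proof by contradiction on a putative minimizer, while you do a direct two-case comparison, but the computation and constants are identical.
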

	\begin{proof}
		By definition of the $\MinO$ oracle,
		\begin{align*}
			\MinO_C([q_{1,j_1'}, \dots, q_{s,j_s'}, p_{s+1},\dots, p_k])
			&=
			\min_{\jvec \in [n]^k} C_{\jvec} - \sum_{i=1}^s [q_{i,j_i'}]_{j_i} - \sum_{i=s+1}^k [p_{i}]_{j_i}
		\end{align*}
		It suffices to prove that every minimizing tuple $\jvec \in [n]^k$ for the right hand side satisfies $j_i = j_i'$ for all $i \in [s]$. Suppose not for sake of contradiction. Then there exists a minimizing tuple $\jvec \in [n]^k$ for which $j_{\ell} \neq j_{\ell}'$ for some $\ell \in [s]$. But then $[q_{\ell,j_\ell'}]_{j_\ell} = -M$, so the objective value of $\jvec$ is at least
		\[
			C_{\jvec} - \sum_{i=1}^s [q_{i,j_i'}]_{j_i} - \sum_{i=s+1}^k [p_{i}]_{j_i}
			\geq 
			M - \Cmax  - \sum_{i=1}^k \|p_i\|_{\max}
			= 
			\Cmax + \sum_{i=1}^k \|p_i\|_{\max} + 1.
		\]
		But this is strictly larger (by at least $1$) than the value of any tuple with prefix $(j_1',\dots,j_{s}')$, contradicting the optimality of $\jvec$.
	\end{proof}
	
	Thus, given a partial solution $(j_1^*,\dots,j_{s-1}^*)$ of length $s-1$, we construct a partial solution $(j_1^*,\dots,j_s^*)$ of length $s$ by setting $j_s^*$ to be a minimizer of 
	\begin{align}
		\min_{j_s' \in [n]} \MinO_C([q_{1,j_1^*}, \dots, q_{s-1,j_{s-1}^*}, q_{s,j_s'}, p_{s+1},\dots, p_k]).
		\label{eq:lem-pf:minargmin}
	\end{align}
	The runtime bound is clear; it remains to show correctness. To this end, note that 
	\begin{align*}
			\min_{\substack{\jvec \in [n]^k \\ \text{s.t. }j_1 = j_1^*, \dots, j_s = j_s^* }} C_{\jvec} - \sum_{i=1}^k [p_i]_{j_i}
		&=
			\MinO_C([q_{1,j_1^*}, \dots,q_{s,j_s^*}, p_{s+1},\dots, p_k])
		\\ &= 
			\min_{j_s' \in [n]}
			\MinO_C([q_{1,j_1^*}, \dots, q_{s-1,j_{s-1}^*}, q_{s,j_s'}, p_{s+1},\dots, p_k])
		\\ &= 
			\min_{j_s' \in [n]}
		\min_{\substack{\jvec \in [n]^k \\ \text{s.t. } j_1 = j_1^*, \dots, j_{s-1} = j_{s-1}^*, j_s = j_s'}} C_{\jvec} - \sum_{i=1}^k [p_i]_{j_i}
		\\ &=
			\min_{\substack{\jvec \in [n]^k \\ \text{s.t. }j_1 = j_1^*, \dots, j_{s-1} = j_{s-1}^*}} C_{\jvec} - \sum_{i=1}^k [p_i]_{j_i}
		\\ &=
			\MinO_{C}([p_1,\dots,p_k]),
	\end{align*}
	where above the first and third steps are by Observation~\ref{obs:minargmin}, the second step is by construction of $j_s^*$, the fourth step is by simplifying, and the final step is by definition of $(j_1^*,\dots,j_{s-1}^*)$ being a partial solution of size $s-1$. We conclude that $(j_1^*,\dots,j_s^*)$ is a partial solution of size $s$, as desired.
\end{proof}

We can now conclude the proof of the main result of \S\ref{ssec:algs:ellip}.

\begin{proof}[Proof of Theorem~\ref{thm:ellip}]
	The implication ``(i) $\implies$ (ii)'' is trivial, and the implication ``(ii) $\implies$ (iii)'' is shown in~\citep{AltBoi20hard}. It therefore suffices to show the implication ``(iii) $\implies$ (i)''. This follows from combining the fact that $\ELLIP$ solves $\MOT_C$ in polynomial time given an efficient implementation of $\ArgMinO_C$ (Theorem~\ref{thm:ellip-argmin}), with the fact that the $\MinO_C$ and $\ArgMinO_C$ oracles are polynomial-time equivalent (Lemma~\ref{lem:minargmin}).
\end{proof}

\subsubsection{Practical implementation via Column Generation}\label{sssec:ellip:cg}

Although $\ELLIP$ enjoys powerful theoretical runtime guarantees, it is slow in practice because the classical Ellipsoid algorithm is. Nevertheless, whenever $\ELLIP$ is applicable (i.e., whenever the $\MinO_C$ oracle can be efficiently implemented), we can use an alternative practical algorithm, namely the delayed Column Generation method $\COLGEN$, to compute exact, sparse solutions to $\MOT$.
\par For completeness, we briefly recall the idea behind $\COLGEN$; for further details see the standard textbook~\citep[\S6.1]{BerTsi97}. $\COLGEN$ runs the Simplex method, keeping only basic variables in the tableau. 
Each time that $\COLGEN$ needs to find a Simplex variable on which to pivot, it solves the ``pricing problem'' of finding a variable with negative reduced cost. This is the key subroutine in $\COLGEN$. In the present context of the $\MOT$ LP, this pricing problem is equivalent to a call to the $\ArgMinO$ violation oracle (see~\citep[Definition 3.2]{BerTsi97} for the definition of reduced costs). By the polynomial-time equivalence of the $\ArgMinO$ and $\MinO$ oracles shown in Lemma~\ref{lem:minargmin}, this bottleneck subroutine in $\COLGEN$ can be computed in polynomial time whenever the $\MinO$ oracle can. 
For easy recall, we summarize this discussion as follows.

\begin{theorem}[Standard guarantee for $\COLGEN$; Section 6.1 of~\citep{BerTsi97}]\label{thm:colgen}
	For any $T > 0$, one can implement $T$ iterations of $\COLGEN$ in $\poly(n,k,T)$ time and calls to the $\MinO_C$ oracle.
	When $\COLGEN$ terminates, it returns an optimal vertex solution, which is given as a sparse tensor with at most $nk-k+1$ non-zero entries.
\end{theorem}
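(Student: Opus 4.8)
The plan is to recognize $\COLGEN$ as the revised simplex method (a.k.a.\ delayed column generation) applied to the standard-form LP~\eqref{MOT}, and to observe that its only potentially super-polynomial step — pricing — is exactly the $\ArgMinO_C$ oracle, which by Lemma~\ref{lem:minargmin} reduces to $\MinO_C$. Concretely, I would first run a $\poly(n,k)$-time preprocessing step that discards $k-1$ redundant marginal constraints (exactly as in the proof of Lemma~\ref{lem:mot-sparse}), leaving a full-rank standard-form LP with $m := nk-k+1$ equality constraints and $n^k$ variables. The revised simplex method maintains a basis $\mathcal{B}$ of $m$ columns together with the associated basic feasible solution $P$ (supported on $\mathcal{B}$) and the inverse $B^{-1}$ of the $m \times m$ basis matrix $B$. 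Each iteration (i) computes the simplex multipliers $p = (p_1,\dots,p_k)$ by solving $B^\top p = c_{\mathcal B}$; (ii) solves the pricing problem to find an entering column of negative reduced cost or to declare optimality; (iii) performs a ratio test to pick the leaving column; and (iv) updates $\mathcal{B}$ and $B^{-1}$. Steps (i), (iii), (iv) manipulate only $\poly(n,k)$-size objects and hence take $\poly(n,k)$ time; this is exactly the content of~\citep[\S6.1]{BerTsi97}.

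The crux is step (ii). For the $\MOT$ LP, the reduced cost of the primal variable $P_{\jvec}$ relative to the basis $\mathcal{B}$ is precisely $C_{\jvec} - \sum_{i=1}^k [p_i]_{j_i}$, where $p_i \in \R^n$ is the block of multipliers dual to the $i$-th marginal's constraints. Thus the pricing problem — return some $\jvec$ with $C_{\jvec} - \sum_{i=1}^k [p_i]_{j_i} < 0$ if one exists, and otherwise certify that all reduced costs are non-negative — is exactly a call to $\ArgMinO_C(p)$. By Lemma~\ref{lem:minargmin}, $\ArgMinO_C$ can be implemented using $\poly(n,k)$ calls to $\MinO_C$ and $\poly(n,k)$ additional time. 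Combining with the previous paragraph, each iteration of $\COLGEN$ runs in $\poly(n,k)$ time and calls to $\MinO_C$, so $T$ iterations run in $\poly(n,k,T)$ time and calls, as claimed.

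It remains to handle initialization and termination, and the output guarantee. For initialization I would use an auxiliary Phase I simplex (minimize the sum of artificial variables), whose pricing step has the same $\ArgMinO_C$/$\MinO_C$ structure and hence also costs $\poly(n,k)$ per iteration; alternatively one can start from an explicit sparse feasible vertex built by a greedy northwest-corner-type construction. To ensure $\COLGEN$ terminates I would invoke an anti-cycling pivoting rule (Bland's rule or lexicographic perturbation), which adds no asymptotic overhead; note the theorem only asserts polynomial time per iteration and correctness \emph{upon termination}, so finiteness of the pivot sequence suffices and no bound on the iteration count is needed. When $\COLGEN$ stops, the pricing step has certified $\MinO_C(p) \ge 0$, i.e.\ $p$ is feasible for~\eqref{MOT-D}, so by LP duality the current basic feasible solution $P$ is optimal for~\eqref{MOT}. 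Since $P$ is a basic solution of a standard-form LP with $m = nk-k+1$ constraints, at most $m$ of its entries are non-zero (exactly as in Lemma~\ref{lem:mot-sparse}), so it is returned as a sparse tensor with at most $nk-k+1$ non-zeros.

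I expect the main obstacle to be conceptual rather than technical: one must notice that the exponentially many reduced-cost evaluations that textbook simplex would perform collapse into a single $\ArgMinO_C$ query, and then appeal to Lemma~\ref{lem:minargmin} to pass from $\ArgMinO_C$ to $\MinO_C$. The remaining subtleties — removing redundant constraints so $B$ is invertible, providing a Phase I start, and preventing cycling — are routine, but I would spell them out because the LP here is non-standard in having exponentially many variables.
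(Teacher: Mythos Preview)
Your proposal is correct and follows essentially the same approach as the paper: the paper's justification (the paragraph preceding the theorem) likewise identifies $\COLGEN$ as the Simplex method with delayed column generation, observes that the pricing step is precisely the $\ArgMinO_C$ oracle, and appeals to Lemma~\ref{lem:minargmin} to reduce this to $\MinO_C$. Your write-up is more thorough in spelling out the preprocessing (dropping $k-1$ redundant constraints), Phase~I initialization, and anti-cycling, but these are routine details that the paper simply defers to the textbook citation.
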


Note that
$\COLGEN$ does not have a theoretical guarantee stating that it terminates after a polynomial number of iterations. But it often performs well in practice and terminates after a small number of iterations, leading to much better empirical performance than $\ELLIP$.

\subsection{The Multiplicative Weights Update algorithm and the $\AMinO$ oracle}\label{ssec:algs:mwu}

The second classical algorithm for solving implicitly-structured LPs that we study in the context of $\MOT$ is the Multiplicative Weights Update algorithm $\MWU$ \citep{young2001sequential}. The objective of this section is to prove the following guarantees for its specialization to $\MOT$.

\begin{theorem}\label{thm:mwumotp}
For any family of cost tensors $C \in \Rntk$, the following are equivalent:
\begin{itemize}
\item[(i)] For any $\eps > 0$, $\MWU$ takes $\poly(n,k,\Cmax/\eps)$ time to solve the $\MOT_C$ problem $\eps$-approximately. (Moreover, it outputs a sparse solution with at most $\poly(n,k,\Cmax/\eps)$ non-zero entries.)
\item[(ii)] There exists a $\poly(n,k,\Cmax/\eps)$-time algorithm that solves the $\MOT_C$ problem $\eps$-approximately for any $\eps > 0$.
\item[(iii)] There exists a $\poly(n,k,\Cmax/\eps)$-time algorithm that solves the $\AMinO_C$ problem $\eps$-approximately for any $\eps > 0$.
\end{itemize}
\end{theorem}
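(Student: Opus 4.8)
The plan is to establish the cycle of implications $(i)\Rightarrow(ii)\Rightarrow(iii)\Rightarrow(i)$. The implication $(i)\Rightarrow(ii)$ is immediate, since $\MWU$ is itself such an algorithm. The implication $(ii)\Rightarrow(iii)$ is known: by Remark~\ref{rem:oracles-feas} the oracle $\AMinO_C$ is, up to an $\eps$-tolerant reformulation and a binary search over a uniform additive shift of the potentials $p_i$, the approximate feasibility oracle for~\eqref{MOT-D}, and~\citep{AltBoi20hard} shows that this oracle reduces in $\poly(n,k,\Cmax/\eps)$ time to approximately solving $\MOT_C$. So the new content is $(iii)\Rightarrow(i)$: a $\poly(n,k,\Cmax/\eps)$-time algorithm for $\AMinO_C$ can be bootstrapped into a $\MWU$-based algorithm of the same runtime that outputs a sparse $\eps$-approximate $\MOT_C$ solution.

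To prove $(iii)\Rightarrow(i)$, the first step is to cast $\MOT_C$ in the form handled by the $\MWU$ algorithm of~\citep{young2001sequential}. Using that $m(P)=1$ for every $P\in\Coup$ (obtained by summing any single marginal constraint), we regard $\MOT_C(\mu)$ as the problem of finding $P$ in the simplex $\Delta_{n^k}=\{P\in\Rpntk:m(P)=1\}$ that satisfies the $nk$ marginal constraints $m_i(P)=\mu_i$ together with the objective constraint $\langle C,P\rangle\le v$; binary searching $v$ over $[-\Cmax,\Cmax]$ reduces the optimization to this feasibility problem at the cost of an $O(\log(\Cmax/\eps))$ factor. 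Here $\Delta_{n^k}$ is the ``easy'' body, since any linear functional is minimized over it at a single vertex $\delta_{\jvec^\star}$. The $\MWU$ algorithm maintains exponential weights on the $O(nk)$ constraints (each equality split into two inequalities) and, per iteration, must solve the best-response subproblem $\min_{P\in\Delta_{n^k}}\langle D,P\rangle$, where $D$ is the weighted combination of the constraint normals and the cost $C$.

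The crucial observation is that this best-response subproblem is (approximately) a single $\AMinO_C$ call. Writing the marginal constraint as $[m_i(P)]_j=\langle e_{i,j},P\rangle$ with $[e_{i,j}]_{\jvec}=\mathds{1}[j_i=j]$, the combined normal has entries $D_{\jvec}=\lambda_0\big(C_{\jvec}-\sum_{i=1}^k[p_i]_{j_i}\big)$, where $\lambda_0>0$ is the weight on the objective constraint and $[p_i]_j$ is the signed combination of the $\MWU$ weights on the two inequalities coming from the $(i,j)$-th marginal constraint, normalized by $\lambda_0$ (these $p_i$ are the ``softmax derivatives,'' being normalized exponentiated constraint slacks), all computable in $O(nk)$ time. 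Hence $\min_{P\in\Delta_{n^k}}\langle D,P\rangle=\lambda_0\cdot\MinO_C(p)$, attained at $\delta_{\jvec^\star}$ for a minimizing tuple $\jvec^\star$. Since $\MWU$ only needs this value together with an approximately minimizing vertex to within an additive tolerance, it suffices to call $\AMinO_C(p,\cdot)$ for the value and to recover an approximately minimizing tuple $\jvec^\star$; the latter reduces to $\AMinO_C$ by an approximate analogue of the prefix-fixing argument in Lemma~\ref{lem:minargmin} (an ``$\ArgAMinO_C$'' equivalence). Thus one $\MWU$ iteration costs one $\AMinO_C$ call plus $\poly(n,k)$ overhead.

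It remains to bound the iteration count and to recover exact feasibility and sparsity. Over $P\in\Delta_{n^k}$ each marginal constraint has width $\le 1$ (since $m_i(P),\mu_i\in\Delta_n$) and, after the natural normalization, the objective constraint contributes width $O(\Cmax)$; the standard width-based analysis of $\MWU$ then yields $T=\poly(n,k,\Cmax/\eps)$ iterations producing a point $\bar P$ that is $O(\eps)$-additively feasible with $\langle C,\bar P\rangle$ within $O(\eps)$ of the optimal value. Since each iterate is a scaled $\delta_{\jvec}$, the average $\bar P$ has at most $T$ nonzero entries. Finally, applying the standard (multimarginal) rounding that projects $\bar P$ onto $\Coup$ changes $O(nk)$ entries and increases the cost by at most $O\big(\Cmax\sum_i\|m_i(\bar P)-\mu_i\|_1\big)$; rescaling $\eps$ by a constant yields an exactly feasible, $\poly(n,k,\Cmax/\eps)$-sparse, $\eps$-approximate solution, completing $(iii)\Rightarrow(i)$. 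I expect the main obstacle to be the bookkeeping in this last step: pinning down the $\MWU$ width (which controls both the iteration count and the output sparsity) and verifying that it is precisely $\MWU$'s robustness to additive oracle error that lets the approximate oracle $\AMinO_C$ — rather than the exact $\MinO_C$ — suffice as the per-iteration bottleneck.
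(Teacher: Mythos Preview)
Your proposal is correct and follows essentially the same approach as the paper: the cycle $(i)\Rightarrow(ii)\Rightarrow(iii)\Rightarrow(i)$, with $(ii)\Rightarrow(iii)$ deferred to~\citep{AltBoi20hard}, and $(iii)\Rightarrow(i)$ established by casting $\MOT$ as a mixed packing-covering problem for Young's $\MWU$, showing that the per-iteration bottleneck reduces to an $\ArgAMinO_C$ call (your ``best-response'' computation is exactly the paper's derivative $V_{\jvec}$ in~\eqref{eq:mwubottleneckquantity}), invoking the $\AMinO_C\Leftrightarrow\ArgAMinO_C$ equivalence (Lemma~\ref{lem:argaminequalsamino}), and finishing with a rounding step. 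The one technical wrinkle you have not made explicit---but which your closing caveat anticipates---is that the weight $\lambda_0$ on the objective constraint depends on $\langle C,P\rangle$, which is itself only known approximately through oracle calls; the paper handles this by maintaining a running estimate $\tilde c$ and tracking the resulting multiplicative error (Lemma~\ref{lem:mwuargamin}), and also first normalizes $C$ to have entries in $[1,2]$ so that the polytope is genuinely packing-covering.
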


\paragraph{Interpretation of results.} Similarly to the analogous Theorem~\ref{thm:ellip} for $\ELLIP$, the equivalence ``(i) $\Longleftrightarrow$ (ii)'' establishes that $\MWU$ can approximately solve any $\MOT$ problem in polynomial time that any other algorithm can. Thus, from a theoretical perspective, restricting to $\MWU$ for approximately solving $\MOT$ problems is at no loss of generality. In words, the equivalence ``(ii) $\Longleftrightarrow$ (iii)'' establishes that approximating $\MOT$ and approximating $\MinO$ are polynomial-time equivalent. Thus we may investigate when $\MOT$ is tractable to approximate by instead investigating the more amenable question of when $\MinO$ is tractable (see \S\ref{sssec:intro-q4}) at no loss of generality.

Theorem~\ref{thm:mwumotp} is new to this work. In particular, equivalences between problems with polynomially small error do not fall under the purview of classical LP theory, which deals with exponentially small error \citep{GLSbook}. Our use of the $\MWU$ algorithm exploits a simple reduction of $\MOT$ to a mixed packing-covering LP that has appeared in the $k=2$ matrix case of Optimal Transport in~\citep{BlaJamKenSid18,Qua18}, where implicit LP is not necessary for polynomial runtime.

\paragraph{Organization of \S\ref{ssec:algs:mwu}.} In \S\ref{sssec:algs:mwu:pseudo} we present the specialization of Multiplicative Weights Update to $\MOT$, and recall how it runs in polynomial time and calls to a certain bottleneck oracle. In \S\ref{sssec:algs:mwu:equiv}, we show that this bottleneck oracle is equivalent to the $\AMinO$ oracle, and then use this to prove Theorem~\ref{thm:mwumotp}.

\subsubsection{Algorithm}\label{sssec:algs:mwu:pseudo}

Here we present the $\MWU$ algorithm, which combines the generic Multiplicative Weights Update algorithm of \citep{young2001sequential} specialized to $\MOT$, along with a final rounding step that ensures feasibility of the solution.

In order to present $\MWU$, it is convenient to assume that the cost $C$ has entries in the range $[1,2] \subset \RR$, which is at no loss of generality by simply translating and rescaling the cost (see \S\ref{sssec:algs:mwu:equiv}), and can be done implicitly given a bound on $\Cmax$. This is why in the rest of this subsection, every runtime dependence on $\eps$ is polynomial in $1/\eps$ for costs in the range $[1,2]$; after transformation back to $[-\Cmax,\Cmax]$, this is polynomial dependence in the standard scale-invariant quantity $\Cmax/\eps$. 

\par Since the cost $C$ is assumed to have non-negative entries, for any $\lambda \in [1,2]$, the polytope
\begin{align*}
	K(\lambda) = \{P \in \Coup : \langle C, P \rangle \leq \lambda\}
\end{align*}
of couplings with cost at most $\lambda$ is a mixed packing-covering polytope (i.e., all variables are non-negative and all constraints have non-negative coefficients). Note that $K(\lambda)$ is non-empty if and only if $\MOT_C(\mu)$ has value at most $\lambda$. Thus, modulo a binary search on $\lambda$, this reduces computing the value of $\MOT_C(\mu)$ to the task of detecting whether $K(\lambda)$ is empty. Since $K(\lambda)$ is a mixed packing-covering polytope, the Multiplicative Weights Update algorithm of \citep{young2001sequential} determines whether $K(\lambda)$ is empty, and runs in polynomial time apart from one bottleneck, which we now define.

In order to define the bottleneck, we first define a potential function. For this, we define the softmax analogously to the softmin as 
\[
	\smax(a_1,\ldots,a_t) = -\smin(-a_1,\ldots,-a_t) = \log\left(\sum_{i=1}^t e^{a_i}\right).
\]
Here we use regularization parameter $\eta = 1$ for simplicity, since this suffices for analyzing $\MWU$, and thus we have dropped this index $\eta$ for shorthand.

\begin{defin}[Potential function for $\MWU$]
Fix a cost $C \in \Rntk$, target marginals $\mu \in (\Delta_n)^k$, and target value $\lambda \in \RR$. Define the potential function $\Phi := \Phi_{C,\mu,\lambda} : \Rpntk \to \R$ by
$$\Phi(P)
= \smax \left(\frac{\langle C, P \rangle }{\lambda}, \frac{m_1(P) }{ \mu_1}, \ldots, \frac{m_k(P) }{ \mu_k}\right).
$$
The softmax in the above expression is interpreted as a softmax over the $nk+1$ values in the concatenation of vectors and scalars in its input. (This slight abuse of notation significantly reduces notational overhead.)
\end{defin}

Given this potential function, we now define the bottleneck operation for $\MWU$: find a direction $\vec{j} \in [n]^k$ in which $P$ can be increased such that the potential is increased as little as possible.
\begin{defin}[Bottleneck oracle for $\MWU$]
 Given iterate $P \in \Rpntk$, target marginals $\mu \in (\Delta_n)^k$, target value $\lambda \in \R$, and accuracy $\eps > 0$, $\MWUoracle_C(P,\mu,\lambda,\eps)$ either:
\begin{itemize}
\item Outputs ``null'', certifying that $\min_{\jvec \in [n]^k} \pd{}{h} \Phi(P + h \cdot \delta_{\jvec}) \mid_{h = 0} > 1$, or
\item Outputs $\jvec \in [n]^k$ such that $\pd{}{h} \Phi(P + h \cdot \delta_{\jvec}) \mid_{h = 0} \leq 1 + \eps$.
\end{itemize}
(If $\min_{\jvec \in [n]^k} \pd{}{h} \Phi(P + h \cdot \delta_{\jvec}) \mid_{h = 0}$ is within $(1,1+\eps]$, then either return behavior is a valid output.)
\end{defin}

Pseudocode for the $\MWU$ algorithm is given in Algorithm~\ref{alg:mwu}. We prove that $\MWU$ runs in polynomial time given access to this bottleneck oracle.

\begin{theorem}\label{thm:mwuhelper}
Let the entries of the cost $C$ lie in the range $[1,2]$. Given $\lambda \in \RR$ and accuracy parameter $\eps > 0$, $\MWU$ either certifies that $\MOT_C(\mu) \leq \lambda$, or returns a $\poly(n,k,1/\eps)$-sparse $P \in \Coup$ satisfying $\langle C,P \rangle \leq \lambda + 8\eps$.

Furthermore, the loop in Step 1 runs in $\tilde{O}(nk/\eps^2)$ iterations, and Step 2 runs in $\poly(n,k,1/\eps)$ time.
\end{theorem}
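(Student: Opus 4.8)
The plan is to recognize Algorithm~\ref{alg:mwu} as the specialization to the polytope $K(\lambda)$ of Young's width-independent multiplicative-weights method for mixed packing--covering linear programs~\citep{young2001sequential}, followed by a one-shot rounding in Step~2. The key point is that the per-iteration subproblem of Young's generic method coincides, in this instance, with the oracle $\MWUoracle$ exactly as defined above, so the analysis of the Step~1 loop is simply an invocation of~\citep{young2001sequential} and only Step~2 needs a separate argument. Concretely, I would write $K(\lambda)$ with variables $P_{\jvec} \geq 0$, the $nk$ packing constraints $[m_i(P)]_j \leq [\mu_i]_j$, the extra packing constraint $\langle C, P\rangle \leq \lambda$, and the $nk$ covering constraints $[m_i(P)]_j \geq [\mu_i]_j$; all coefficients are nonnegative because $C$ has entries in $[1,2]$, and $K(\lambda) \neq \emptyset$ if and only if $\MOT_C(\mu) \leq \lambda$, since the marginal upper and lower bounds together force $m_i(P) = \mu_i$ exactly. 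Young's method maintains $P \geq 0$, supported on a single coordinate at the start, and at each step increases $P$ along one coordinate direction $\delta_{\jvec}$; the exponential potential it tracks over the packing constraints is exactly $\Phi_{C,\mu,\lambda}$, and its greedy choice of direction together with its infeasibility test is precisely one call to $\MWUoracle_C(P,\mu,\lambda,\eps)$.

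Invoking~\citep{young2001sequential}, the Step~1 loop runs for $\tilde{O}(nk/\eps^2)$ iterations --- there are $\Theta(nk)$ packing and covering constraints, and the width-independent analysis contributes only logarithmic and $\eps^{-2}$ factors --- each iteration costing one call to $\MWUoracle$ plus $O(nk)$ time to update the exponential weights and the affected marginal and cost values. The method either reports at some iteration that $\min_{\jvec} \partial_h \Phi(P + h\,\delta_{\jvec})\big|_{h=0} > 1$, which certifies that the exact system defining $K(\lambda)$ is infeasible and hence resolves whether $\MOT_C(\mu) \leq \lambda$; or it outputs an iterate $\tilde P \geq 0$ that is $(1+\eps)$-approximately feasible, namely $\mu_i \leq m_i(\tilde P) \leq (1+\eps)\mu_i$ coordinatewise for all $i$ and $\langle C, \tilde P\rangle \leq (1+\eps)\lambda$. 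Since $\tilde P$ acquires at most one new nonzero entry per iteration, it is $\tilde{O}(nk/\eps^2) = \poly(n,k,1/\eps)$-sparse.

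For Step~2 I would convert $\tilde P$ into an exact coupling without accumulating any dependence on $k$ in the error. Set $\hat P := \tilde P/(1+\eps)$, so that $m_i(\hat P) \leq \mu_i$ coordinatewise and the deficits $d_i := \mu_i - m_i(\hat P) \geq 0$ have the common $\ell_1$ mass $\|d_i\|_1 = 1 - m(\hat P) \leq \eps$. Let $\Delta \geq 0$ be any tensor with $m_i(\Delta) = d_i$ for all $i$; this is a (scaled) transportation feasibility problem with \emph{no} capacity constraints, hence always solvable, and one can construct such a $\Delta$ greedily (a $k$-dimensional northwest-corner-type construction) with at most $nk$ nonzero entries in $\poly(n,k)$ time --- cf.\ the sparsity bound in Lemma~\ref{lem:mot-sparse}. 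Then $P := \hat P + \Delta \in \Coup$, it is $\poly(n,k,1/\eps)$-sparse, and since $C$ is nonnegative with $\Cmax \leq 2$ and $\langle C, \hat P\rangle \leq \lambda$, we get $\langle C, P\rangle = \langle C, \hat P\rangle + \langle C, \Delta\rangle \leq \lambda + \Cmax\,\|d_i\|_1 \leq \lambda + 2\eps \leq \lambda + 8\eps$. All of Step~2 runs in $\poly(n,k,1/\eps)$ time.

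The part that requires the most care is not any asymptotics but the bookkeeping: verifying that the abstract per-step subproblem of Young's generic method is \emph{syntactically} the oracle $\MWUoracle$ --- in particular that the relevant quantity is the softmax-derivative $\partial_h \Phi(P + h\,\delta_{\jvec})|_{h=0}$ and that the threshold ``$>1$'' is the correct normalization of the covering progress rate --- and then checking the normalizations in the packing--covering formulation and in Young's $(1+\eps)$-multiplicative feasibility guarantee so that the cumulative additive error comes out to $8\eps$ (rather than, e.g., $\Theta(k\eps)$, which a naive axis-by-axis rounding would incur), with the iteration count $\tilde{O}(nk/\eps^2)$ and $\poly(n,k,1/\eps)$ sparsity exactly as stated.
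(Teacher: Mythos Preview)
Your approach is essentially the paper's: invoke Young~\citep{young2001sequential} for Step~1 and then do a greedy mass-completion for Step~2. There is, however, a mismatch between your description and what Algorithm~\ref{alg:mwu} actually does. You pose $nk$ covering constraints $[m_i(P)]_j \geq [\mu_i]_j$, but the loop in Algorithm~\ref{alg:mwu} tracks only the packing potential $\Phi$ and terminates on the \emph{single} aggregate covering condition $m(P) \geq \eta$. This is why the oracle threshold is exactly ``$1$'': adding $h\,\delta_{\jvec}$ increases $m(P)$ at rate $1$ regardless of $\jvec$, so the test is whether the packing-potential derivative can be kept at or below the covering rate. With $nk$ separate covering constraints, Young's per-step subproblem would involve covering weights and would not coincide syntactically with $\MWUoracle$ as defined.

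Correspondingly, the Step~1 output guarantee is not the entrywise sandwich $\mu_i \leq m_i(\tilde P) \leq (1+\eps)\mu_i$ you claim. After the rescaling in Line~\ref{line:step1last}, the paper obtains (Lemma~\ref{lem:mwu:step1}) $m_i(P) \leq \mu_i$ entrywise, $\langle C,P\rangle \leq \lambda$, and only the aggregate bound $m(P) \geq 1-4\eps$. Your Step~2 still works with this corrected guarantee---no division by $1+\eps$ is needed, the deficits satisfy $\|d_i\|_1 = 1 - m(P) \leq 4\eps$, and your greedy $\Delta$ (which is exactly what Lines~\ref{line:sparseround:exists}--\ref{line:sparseround:sat} implement) adds at most $\Cmax \cdot 4\eps \leq 8\eps$ to the cost, recovering the stated bound.
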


The $\MWU$ algorithm can be used to output a $O(\eps)$-approximate solution for $\MOT$ time via an outer loop that performs binary search over $\lambda$; this only incurs $O(\log(1 / \eps))$-multiplicative overhead in runtime.

\begin{algorithm}[h]
	\caption{\texttt{MWU}: specialization of Multiplicative Weights Update~\citep{young2001sequential} to $\MOT$}
	\label{alg:mwu}
	\begin{algorithmic}[1]
		\Require Accuracy $\eps > 0$, marginals $\mu_1, \dots, \mu_k \in \Delta_n$, target value $\lambda > 0$
		\Ensure Either certifies $\MOT_C(\mu) > \lambda$ by returning ``infeasible'', or returns a solution $P$ with $\langle C, P \rangle \leq \lambda + 8\eps$ %
		\Statex \textbackslash\textbackslash$\;$ Assume cost $C$ satisfies $C_{\vec{j}} \in [1,2]$ for all $\vec{j} \in [n]^k$ (without loss of generality by rescaling)
		\Statex \textbackslash\textbackslash$\;$ Step 1: Multiplicative Weights Update
		
		\State $P \gets 0 \in \Rpntk$, $\eta \gets 2 (\log(nk + 1)) / \eps$ \label{line:step1first}
		\While{$m(P) < \eta$} \Comment{While total mass is small}
		\State $\jvec \gets \MWUoracle_C(P,\mu,\lambda,\eps)$ \Comment{Bottleneck: find direction with small potential increase}
		\If{$\jvec = $``null''} \Return{``infeasible''} \Comment{Infeasible if no good direction}
		\Else\ $P \gets P + \delta_{\jvec} \cdot \eps \cdot \min(\lambda /C_{\jvec},\min_i [\mu_i]_{j_i})$ \Comment{Else, increase in good direction}
		\EndIf
		\EndWhile \label{line:step1penultimate}
		\State $P \gets P / (\eta(1 + \eps)^4)$  \Comment{Rescale} \label{line:step1last}
		\Statex
		\Statex \textbackslash\textbackslash$\;$ Step 2: round to transportation polytope
%		\While{$[m_i(P)]_j < [\mu_i]_j$ for some $i \in [k]$ and $j \in [n]$} \Comment{Until all constraints are satisfied}
		\While{$m(P) < 1$} \Comment{Until all constraints are satisfied}
		\State $j_i \gets \argmax_{j \in [n]} ([\mu_i]_j - [m_i(P)]_j)$ for each $i \in [k]$ \Comment{For each marginal, find unsatisfied constraint}\label{line:sparseround:exists}
		\State $\alpha \gets \min_{i \in [k]} ([\mu_i]_{j_i} - [m_i(P)]_{j_i})$ 
		\label{line:sparseround:i}\Comment{Maximal mass to add}
		\State $P \gets P + \alpha \cdot \delta_{\jvec}$ \label{line:sparseround:sat} \Comment{Add mass to saturate at least one constraint}
		\EndWhile
	\end{algorithmic}
\end{algorithm}

\begin{proof}
We analyze Step 1 (Multiplicative Weights Update) and Step 2 (rounding) of $\MWU$ separately. 

\begin{lemma}[Correctness of Step 1] \label{lem:mwu:step1}
Step 1 of Algorithm~\ref{alg:mwu} runs in $\tilde{O}(nk /\eps^2)$ iterations. It either returns (i) ``infeasible'', certifying that $K(\lambda)$ is empty; or (ii) finds a $\poly(n,k,1/\eps)$-sparse tensor $P \in \Rpntk$ that is approximately in $K(\lambda)$, i.e., $P$ satisfies:
$$m(P) \geq 1 - 4\eps, \quad \langle C, P \rangle \leq \lambda, \quad \mbox{ and } \quad m_i(P) \leq \mu_i\mbox{ for all }i \in [k]$$
\end{lemma}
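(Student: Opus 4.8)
The plan is to carry out the standard potential-function analysis for Young's sequential multiplicative-weights method~\citep{young2001sequential}, specialized to the mixed packing--covering polytope $K(\lambda)$. First I would rename the $nk+1$ arguments of the softmax defining $\Phi$: set $x_0(P)=\langle C,P\rangle/\lambda$ and $x_{i,j}(P)=[m_i(P)]_j/[\mu_i]_j$, so that $\Phi(P)=\log\sum_\ell e^{x_\ell(P)}$, $\Phi(0)=\log(nk+1)$, and each $x_\ell$ is affine in $P$ and nondecreasing along every update $P\mapsto P+h\delta_{\jvec}$ with $h\ge 0$. The crucial structural point is that the step size $h=\eps\min(\lambda/C_{\jvec},\min_i[\mu_i]_{j_i})$ used by the algorithm is exactly the largest value for which \emph{every} $x_\ell$ increases by at most $\eps$ over the step, while at least one $x_\ell$ increases by exactly $\eps$. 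Convexity of $t\mapsto e^t$ on $[0,\eps]$ then yields a per-step bound $\Phi(P+h\delta_{\jvec})-\Phi(P)\le h\cdot\frac{e^\eps-1}{\eps}\cdot\partial_h\Phi(P+h\delta_{\jvec})|_{h=0}\le h(1+\eps)^2$, where the last step uses the oracle guarantee $\partial_h\Phi(P+h\delta_{\jvec})|_{h=0}\le 1+\eps$ together with $\frac{e^\eps-1}{\eps}\le 1+\eps$ (valid for $\eps$ below an absolute constant). Since each step adds exactly $h$ to $m(P)$, summing over all iterations gives the invariant $\Phi(P)\le\log(nk+1)+(1+\eps)^2\,m(P)$, which I would maintain throughout the run.

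Next I would treat the two possible outputs. If the oracle returns ``null'' then $\min_{\jvec}\partial_h\Phi(P+h\delta_{\jvec})|_{h=0}>1$, and this certifies $K(\lambda)=\emptyset$: for any $P^*\in K(\lambda)$, a direct computation of $\nabla\Phi(P)$ gives $\langle\nabla\Phi(P),P^*\rangle=\frac{1}{Z}\bigl(\tfrac{\langle C,P^*\rangle}{\lambda}e^{x_0(P)}+\sum_{i,j}\tfrac{[m_i(P^*)]_j}{[\mu_i]_j}e^{x_{i,j}(P)}\bigr)\le 1$, where $Z=\sum_\ell e^{x_\ell(P)}$, using $\langle C,P^*\rangle\le\lambda$ and $m_i(P^*)=\mu_i$; but $\nabla\Phi(P)$ has nonnegative entries and $\sum_{\jvec}P^*_{\jvec}=1$, so $\langle\nabla\Phi(P),P^*\rangle\ge\min_{\jvec}[\nabla\Phi(P)]_{\jvec}>1$, a contradiction, giving alternative (i). Otherwise the loop exits with $m(P)\ge\eta$, and since each step adds at most $\eps\min_i[\mu_i]_{j_i}\le\eps$ mass we also have $m(P)<\eta+\eps$; plugging this into the invariant and using $\log(nk+1)=\eps\eta/2$ gives $\Phi(P)\le(1+O(\eps))\,\eta$, and I would fix the hidden constants so that $(1+O(\eps))\eta\le\eta(1+\eps)^4$ in the relevant regime. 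Since $\Phi(P)\ge\max_\ell x_\ell(P)$, this yields $\langle C,P\rangle\le\lambda\eta(1+\eps)^4$ and $[m_i(P)]_j\le[\mu_i]_j\,\eta(1+\eps)^4$ for all $i,j$; hence the rescaled tensor $\hat{P}=P/(\eta(1+\eps)^4)$ satisfies $\langle C,\hat{P}\rangle\le\lambda$ and $m_i(\hat{P})\le\mu_i$ entrywise, while $m(\hat{P})=m(P)/(\eta(1+\eps)^4)\ge 1/(1+\eps)^4\ge 1-4\eps$, which is alternative (ii).

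Finally, for the iteration count and sparsity: during the loop $m(P)<\eta+\eps$, so the invariant gives $\Phi(P)=O(\eta)$ and hence every $x_\ell(P)=O(\eta)$ at all times; since some $x_\ell$ increases by exactly $\eps$ in each iteration and there are only $nk+1$ of them, the number of iterations is $O(nk\eta/\eps)=O(nk\log(nk)/\eps^2)=\tilde{O}(nk/\eps^2)$, and since each iteration increments $P$ at a single index $\jvec$, the final $P$ (and its rescaling) has at most $\tilde{O}(nk/\eps^2)=\poly(n,k,1/\eps)$ nonzero entries. The conceptual ingredients are all short; the part I expect to be most delicate is the constant bookkeeping---checking $\frac{e^\eps-1}{\eps}\le 1+\eps$, and, more importantly, verifying that the accumulated multiplicative slack from the per-step losses, plus the additive $O(\eps)$ overshoot of $m(P)$ at termination, plus the additive term $\log(nk+1)=\eps\eta/2$, are all simultaneously absorbed by the single fixed rescaling factor $(1+\eps)^4$, in the regime where $\eps$ is below some absolute constant.
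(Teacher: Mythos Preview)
Your proposal is correct and follows essentially the same approach as the paper's proof in Appendix~\ref{app:pf:mwu}: the infeasibility certificate via averaging against $P^*$, the invariant $\Phi(P)\le\log(nk+1)+(1+\eps)^2 m(P)$ proved by a per-step smoothness bound on the softmax, the iteration count via the observation that $\sum_\ell x_\ell(P)$ grows by at least $\eps$ per step while staying $O((nk+1)\eta)$, and the final rescaling by $\eta(1+\eps)^4$ are all identical in substance. Your per-step bound via convexity of $e^t$ and $(e^\eps-1)/\eps\le 1+\eps$ is a more explicit version of what the paper simply calls ``smoothness of the softmax,'' and your iteration-count phrasing (``some $x_\ell$ increases by exactly $\eps$'') is equivalent to the paper's ``$\Psi(P)$ increases by at least $\eps$.''
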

Step 1 is the Multiplicative Weights Update algorithm of \citep{young2001sequential} applied to the polytope $K(\lambda)$, so correctness follows from the analysis of \citep{young2001sequential}. 
We briefly recall the main idea behind this algorithm for the convenience of the reader, and provide a full proof in Appendix~\ref{app:pf:mwu} for completeness. The main idea behind the algorithm is that on each iteration, $\jvec \in [n]^k$ is chosen so that the increase in the potential $\Phi(P)$ is approximately bounded by the increase in the total mass $m(P)$. If this is impossible, then the bottleneck oracle returns null, which means $K(\lambda)$ is empty. So assume otherwise. Then once the total mass has increased to $m(P) = \eta + O(\eps)$, the potential $\Phi(P)$ must be bounded by $\eta (1 + O(\eps))$. By exploiting the inequality between the max and the softmax, this means that $\max(\langle C, P \rangle / \lambda, \max_{i \in [n], j \in [k]} [m_i(P)]_j / [\mu_i]_j) \leq \Phi(P) \leq \eta (1 + O(\eps))$ as well. Thus, rescaling $P$ by $1 / (\eta(1 + O(\eps)))$ in Line~\ref{line:step1last} satisfies $m(P) \geq 1 - O(\eps)$, $\langle C, P \rangle / \lambda \leq 1$, and $m_i(P)/\mu_i \leq 1$. 
See Appendix~\ref{app:pf:mwu} for full details and a proof of the runtime and sparsity claims.

\begin{lemma}[Correctness of Step 2]\label{lem:mwu:step2}
Step 2 of Algorithm~\ref{alg:mwu} runs in $\poly(n,k,1/\eps)$ time and returns $P \in \Coup$ satisfying $\langle C, P \rangle \leq \lambda + 8\eps$. Furthermore, $P$ only has $\poly(n,k,1/\eps)$ non-zero entries.
\end{lemma}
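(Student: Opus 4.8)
The plan is to analyze the rounding loop of Step~2 by maintaining the entrywise invariant $m_i(P) \leq \mu_i$ for all $i \in [k]$. This holds for the tensor $P_0$ entering Step~2 by Lemma~\ref{lem:mwu:step1}, and it is preserved at each iteration: adding $\alpha \cdot \delta_{\jvec}$ changes only the coordinate $[m_i(P)]_{j_i}$ of each marginal $i$, increasing it by $\alpha$, and $\alpha$ is chosen in Line~\ref{line:sparseround:i} to be at most $[\mu_i]_{j_i} - [m_i(P)]_{j_i}$ for every $i$. In particular $\alpha \geq 0$, so $m(P)$ is nondecreasing, and summing the invariant over the coordinates of any single marginal gives $m(P) = \sum_j [m_i(P)]_j \leq \sum_j [\mu_i]_j = 1$ at all times.

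For termination, runtime, and sparsity I would track the monovariant given by the number of ``saturated'' pairs $(i,j)$, meaning pairs with $[m_i(P)]_j = [\mu_i]_j$. By the invariant, a saturated pair can never become unsaturated. Whenever $m(P) < 1$, no marginal is fully saturated (otherwise $m(P) = 1$), so the index $j_i$ chosen in Line~\ref{line:sparseround:exists} satisfies $[\mu_i]_{j_i} - [m_i(P)]_{j_i} > 0$ for every $i$; hence $\alpha > 0$, and the pair $(i, j_i)$ for the index $i$ attaining the minimum in Line~\ref{line:sparseround:i} becomes newly saturated. Since there are only $nk$ such pairs, the loop runs for at most $nk$ iterations, each costing $\poly(n,k,1/\eps)$ time and creating at most one new nonzero entry of $P$; together with the $\poly(n,k,1/\eps)$ sparsity of $P_0$ guaranteed by Lemma~\ref{lem:mwu:step1}, this gives the claimed runtime and sparsity bounds. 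When the loop exits we have $m(P) \geq 1$, hence $m(P) = 1$ by the bound above, and then the invariant $m_i(P) \leq \mu_i$ together with $\sum_j [m_i(P)]_j = 1 = \sum_j [\mu_i]_j$ forces $m_i(P) = \mu_i$ for every $i$, i.e.\ $P \in \Coup$.

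Finally, for the cost bound I would note that the total mass added during Step~2 equals $m(P) - m(P_0) = 1 - m(P_0) \leq 4\eps$, using $m(P_0) \geq 1 - 4\eps$ from Lemma~\ref{lem:mwu:step1}. Since every entry of $C$ lies in $[1,2]$, adding $4\eps$ units of mass increases $\langle C, P \rangle$ by at most $8\eps$, so together with $\langle C, P_0 \rangle \leq \lambda$ the returned $P$ satisfies $\langle C, P \rangle \leq \lambda + 8\eps$. The only point requiring care is the termination argument---verifying that $\alpha > 0$ on every iteration before the loop exits, which is what rules out an infinite loop and gives the $nk$ iteration bound---but this is immediate once the entrywise invariant $m_i(P) \leq \mu_i$ is in place, so the overall argument is elementary.
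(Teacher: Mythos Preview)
Your proof is correct and follows essentially the same approach as the paper: maintain the entrywise invariant $m_i(P) \leq \mu_i$, use it to show $\alpha > 0$ so that each iteration saturates at least one new constraint (giving the $nk$ iteration bound and hence the sparsity and runtime), and bound the cost increase by $2 \cdot 4\eps = 8\eps$ using $m(P_0) \geq 1 - 4\eps$ and $\Cmax \leq 2$. Your write-up is slightly more explicit in places---e.g., arguing $m(P)=1$ at termination and then deducing $P \in \Coup$ from the invariant---but the underlying argument is the same.
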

\begin{proof}[Proof of Lemma~\ref{lem:mwu:step2}]
	By Lemma~\ref{lem:mwu:step1}, $P$ satisfies $m_i(P) \leq \mu_i$ for all $i \in [k]$. Observe that this is an invariant that holds throughout the execution of Step $2$. This, along with the fact that $\sum_{j=1}^n [m_i(P)]_j = m(P)$ is equal for all $i$, implies that the indices $(j_1,\dots,j_k)$ found in Line~\ref{line:sparseround:exists} satisfy $[\mu_i]_{j_i} - [m_i(P)]_{j_i} > 0$ for each $i \in [k]$. Thus in particular $\alpha > 0$ in Line~\ref{line:sparseround:i}. It follows that Line~\ref{line:sparseround:sat} makes at least one more constraint satisfied (in particular the constraint ``$[\mu_i]_{j_i} = [m_i(P)]_{j_i}$'' where $i$ is the minimizer in Line~\ref{line:sparseround:i}). Since there are $nk$ constraints total to be satisfied, Step 2 terminates in at most $nk$ iterations. Each iteration increases the number of non-zero entries in $P$ by at most one, thus $P$ is $\poly(n,k,1/\eps)$ sparse throughout. That $P$ is sparse also implies that each iteration can be performed in $\poly(n,k,1/\eps)$ time, thus Step 2 takes $\poly(n,k,1/\eps)$ time overall. 
	
	\par Finally, we establish the quality guarantee on $\langle C, P \rangle$. By Lemma~\ref{lem:mwu:step1}, this is at most $\lambda$ before starting Step 2. During Step 2, the total mass added to $P$ is equal to $1 - m(P)$. This is upper bounded by $4\eps$ by Lemma~\ref{lem:mwu:step1}. Since $\Cmax \leq 2$, we conclude that the value of $\langle C, P \rangle$ is increased by at most $8\eps$ in Step 2. 
\end{proof}
Combining Lemmas \ref{lem:mwu:step1} and \ref{lem:mwu:step2} concludes the proof of Theorem~\ref{thm:mwuhelper}.
\end{proof}

\subsubsection{Equivalence of bottleneck to $\AMinO$}\label{sssec:algs:mwu:equiv}

In order to prove Theorem~\ref{thm:mwumotp}, we show that the $\MWU$ algorithm can be implemented in polynomial time and calls to the $\AMinO$ oracle.  First, we prove this fact for the $\ArgAMinO$ oracle, which differs from the $\AMinO$ oracle in that it also returns a tuple $\jvec \in [n]^k$ that is an approximate minimizer.

\begin{defin}[Approximate violation oracle for~\eqref{MOT-D}]
	Given weights $p = (p_1,\ldots,p_k) \in \R^{n \times k}$ and accuracy $\eps > 0$, $\ArgAMinO_C$ returns $\jvec \in [n]^k$ that minimizes $\min_{\jvec \in [n]^k} C_{\jvec} - \sum_{i=1}^k [p_i]_{j_i}$ up to additive error $\eps$, and its value up to additive error $\eps$.
\end{defin}

\begin{lemma}\label{lem:mwuargamin}
Let the entries of the cost $C$ lie in the range $[1,2]$. The $\MWU$ algorithm (Algorithm~\ref{alg:mwu}), can be implemented by $\poly(n,k,1/\eps)$ time and calls to the $\ArgAMinO_C$ oracle with accuracy parameter $\eps' = \Theta(\eps^2 / (nk))$.
\end{lemma}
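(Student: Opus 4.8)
The plan is to implement the sole bottleneck of Algorithm~\ref{alg:mwu}, the oracle $\MWUoracle_C$, using a single call to $\ArgAMinO_C$ together with $\poly(n,k,1/\eps)$ additional arithmetic; everything else in $\MWU$ is oracle-free and runs in $\poly(n,k,1/\eps)$ time by Theorem~\ref{thm:mwuhelper}. The key observation is that the directional derivative of the potential $\Phi$ that $\MWUoracle_C$ must minimize over directions $\jvec\in[n]^k$ is, up to a positive scalar that does not depend on $\jvec$, exactly a $\MinO_C$ objective. Writing $Z := e^{\Phi(P)}$, $w_0 := \tfrac{1}{\lambda} e^{\langle C,P\rangle/\lambda}$, and $w_i^{(j)} := \tfrac{1}{[\mu_i]_j} e^{[m_i(P)]_j/[\mu_i]_j}$, a direct computation with $P + h\cdot\delta_{\jvec}$ gives
\[
	\pd{}{h}\Phi(P + h\cdot\delta_{\jvec})\big|_{h=0}
	= \frac{1}{Z}\Big( C_{\jvec}\, w_0 + \sum_{i=1}^k w_i^{(j_i)}\Big)
	= \frac{w_0}{Z}\Big( C_{\jvec} - \sum_{i=1}^k [p_i]_{j_i}\Big),
\]
where $p_i\in\R^n$ has entries $[p_i]_j := -w_i^{(j)}/w_0$. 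Hence minimizing the directional derivative over $\jvec$ is the same combinatorial problem as $\MinO_C(p)$, just rescaled by the positive constant $w_0/Z$.

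Given this identity, I would implement $\MWUoracle_C(P,\mu,\lambda,\eps)$ as follows. First, from the current (sparse) iterate $P$ compute $m_i(P)$ and $\langle C,P\rangle$, and hence sufficiently precise approximations of the weights $p=(p_1,\dots,p_k)$ and of the scalar $w_0/Z$; this is the step that requires approximating exponentials. Second, call $\ArgAMinO_C(p,\eps')$ to obtain a tuple $\jvec$ and an estimate $\hat v$ of $m^\star := \min_{\jvec'}\big(C_{\jvec'}-\sum_i[p_i]_{j'_i}\big)$, each to additive error $\eps'$. Third, if $\tfrac{w_0}{Z}\hat v$ exceeds $1$ by more than the total accumulated error, return ``null''; otherwise return $\jvec$. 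Correctness of both branches rests on two facts: $\tfrac{w_0}{Z} = \big(\lambda(1 + \sum_{i,j} e^{[m_i(P)]_j/[\mu_i]_j - \langle C,P\rangle/\lambda})\big)^{-1} \le 1/\lambda = \Theta(1)$, so an additive-$\eps'$ error in the $\MinO$ objective becomes an additive error of at most $\tfrac{w_0}{Z}\eps'$ in the directional derivative; and the $\jvec$ returned by $\ArgAMinO_C$ has directional derivative exceeding the true minimum by at most $\tfrac{w_0}{Z}\eps'$. Chaining these with the errors incurred in the first step, one verifies that $\eps'$ of order $\eps^2/(nk)$ makes the returned data a valid answer to $\MWUoracle_C(\cdot,\eps)$. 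Finally, for the overall count: Step~1 of Algorithm~\ref{alg:mwu} runs $\tilde{O}(nk/\eps^2)$ iterations (Lemma~\ref{lem:mwu:step1}), each making exactly one $\ArgAMinO_C$ call and $\poly(n,k,1/\eps)$ additional arithmetic (using that $P$ stays $\poly(n,k,1/\eps)$-sparse); Step~2 makes no oracle calls and runs in $\poly(n,k,1/\eps)$ time (Lemma~\ref{lem:mwu:step2}); so $\MWU$ uses $\poly(n,k,1/\eps)$ time and calls to $\ArgAMinO_C$.

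I expect the main obstacle to be the numerical accuracy bookkeeping rather than the reduction itself. Because the entries $[p_i]_j$ and the scalar $w_0/Z$ are defined through exponentials, they cannot be evaluated exactly; to control the approximations one must first show that $\Phi(P)$ --- and therefore every exponent $[m_i(P)]_j/[\mu_i]_j$ and $\langle C,P\rangle/\lambda$ --- stays $O(\eta)=O(\tfrac{1}{\eps}\log(nk))$ throughout Step~1, which is precisely the potential invariant underlying the analysis of~\citep{young2001sequential} recalled in Lemma~\ref{lem:mwu:step1}. This bound, together with the $\poly(n,k)$ bit-complexity of the inputs $\mu_i$, shows that $p$ and $w_0/Z$ have $\poly(n,k,1/\eps)$ bit-complexity, so their exponentials can be computed to additive error $2^{-b}$ in $\poly(n,k,1/\eps,b)$ time; one then propagates these errors, the $(nk+1)$-term sum defining $Z$, and the $\eps'$ error of $\ArgAMinO_C$ through the rescaling by $w_0/Z \le 1$ to land on the stated requirement $\eps' = \Theta(\eps^2/(nk))$. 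The one genuinely delicate point is showing that the threshold used in the null/non-null decision is simultaneously loose enough to absorb all approximation error (so that ``null'' is returned only when the true minimum directional derivative really exceeds $1$) and tight enough that whenever a direction $\jvec$ is returned its directional derivative is at most $1+\eps$.
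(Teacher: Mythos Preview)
Your reduction is the same as the paper's: write the directional derivative as $(w_0/Z)\big(C_{\jvec}-\sum_i[p_i]_{j_i}\big)$ and call $\ArgAMinO_C$ on the resulting weights $p$. However, you have misidentified the main technical difficulty, and as a result your error analysis does not actually justify the stated $\eps'$.

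You say that from the sparse iterate $P$ one computes ``$m_i(P)$ and $\langle C,P\rangle$'', with the only approximation error coming from evaluating exponentials. The marginals $m_i(P)$ can indeed be computed exactly, but $\langle C,P\rangle$ cannot: in this lemma the cost $C$ is accessed \emph{only} through the $\ArgAMinO_C$ oracle, which returns values up to additive error $\eps'$. Each time Line~5 adds mass at some $\jvec$, the only information available about $C_{\jvec}$ is the $\eps'$-approximate value the oracle just returned alongside $\jvec$. Hence one can merely maintain a running estimate $\tilde c$ of $\langle C,P\rangle$, whose error accumulates to order $T\eps'$ after $T$ iterations. This is precisely what the paper flags as ``the main technical difficulty'': since $w_0$ (and hence every $[p_i]_j$) depends on $\langle C,P\rangle$, one must instead build approximate weights $\tilde p$ from $\tilde c$, define the surrogate derivative $\tilde V_{\jvec}$, and bound the multiplicative distortion $V_{\jvec}/\tilde V_{\jvec}\in[\exp(-O(T\eps'/\lambda)),\exp(O(T\eps'/\lambda))]$. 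Requiring $T\eps'/\lambda=O(\eps)$ with $T=\tilde O(nk/\eps^2)$ is exactly what forces $\eps'$ down to the stated $\poly(\eps)/(nk)$ scale. By contrast, the exponentials you worry about and the $(nk{+}1)$-term sum defining $Z$ can each be computed to any desired precision in polynomial time and do not drive the choice of $\eps'$; your claim that ``chaining'' those errors yields $\eps'=\Theta(\eps^2/(nk))$ is unsupported.
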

\begin{proof}
We show that on each iteration of Step 1 of Algorithm~\ref{alg:mwu} we can emulate the call to the $\MWUoracle$ oracle with one call to the $\ArgAMinO$ oracle. Recall that $\MWUoracle_C(P,\mu,\lambda,\eps)$ seeks to find $\jvec \in [n]^k $ such that
\[
	V_{\jvec} := \pd{}{h} \Phi(P + h \delta_{\jvec}) \Big|_{h=0}
\]
is at most $1 + \eps$, or to certify that for all $\jvec$ it is greater than $1$. By explicit computation, 
\begin{align}
V_{\jvec} &= \pd{}{h} \log\left(\exp\left(\left(\langle C, P \rangle + h C_{\jvec}\right)/\lambda + \sum_{s=1}^k \sum_{t=1}^n \exp\left(\left([m_s(P)]_t + \delta_{t,j_s}\right)/[\mu_s]_t\right)\right) \right) \Bigg|_{h=0} \nonumber \\
&= \left(C_{\jvec} - \sum_{i=1}^k [p_i]_{j_i}\right) \frac{\exp(\langle C, P \rangle / \lambda) / \lambda}{\exp(\langle C, P \rangle / \lambda) + \sum_{s=1}^k \sum_{t=1}^n \exp([m_s(P)]_t / [\mu_s]_t)},  \label{eq:mwubottleneckquantity}
\end{align}
where the weights $p = (p_1,\ldots,p_k) \in \RR^{k \times n}$ in the last line are defined as $$[p_i]_j = -\frac{\lambda }{ \exp(\langle C, P \rangle / \lambda)} \cdot\frac{\exp([m_i(P)]_{j} / [\mu_i]_{j})}{ [\mu_i]_{j}}, \qquad \forall i \in [k], j \in [n].$$
Note that the second term in the product in \eqref{eq:mwubottleneckquantity} is positive and does not depend on $\jvec$. This suggests that in order to minimize \eqref{eq:mwubottleneckquantity}, it suffices to compute $\jvec \gets \ArgAMinO_C(p,\eps')$ for some accuracy parameter $\eps' > 0$.

The main technical difficulty with formalizing this intuitive approach is that the weights $p$ are not necessarily efficiently computable. Nevertheless, using $\poly(n,k)$ extra time on each iteration, we can compute the marginals $m_1(P),\ldots,m_k(P)$. Since the $\ArgAMinO$ oracle returns an $\eps'$-additive approximation of the cost, we can also compute a running estimate $\tilde{c}$ of the cost such that, on iteration $T$, $$\tilde{c} - T\eps' \leq \langle C, P \rangle \leq\tilde{c} + T\eps'.$$
 
Therefore, we define weights $\tilde{p} \in \RR^{n \times k}$, which approximate $p$ and which can be computed in $\poly(n,k)$ time on each iteration:
$$[\tilde{p}_i]_j = -\frac{\lambda}{\exp(\tilde{c} / \lambda)} \cdot \frac{\exp([m_i(P)]_j / [\mu_i]_j)}{[\mu_i]_j}, \qquad \forall i \in [k], j \in [n].$$
We also define the approximate value for any $\jvec \in [n]^k$:
\begin{align*}
\tilde{V}_{\jvec} :=  \left(C_{\jvec} - \sum_{i=1}^k [\tilde{p}_i]_{j_i}\right)\frac{\exp(\tilde{c}/\lambda)/\lambda}{\exp(\tilde{c}/\lambda) + \sum_{s=1}^k \sum_{t=1}^n \exp([m_s(P)]_t / [\mu_s]_t)}
\end{align*}
It holds that $\ArgAMinO_C(\tilde{p}, \eps')$ returns a $\jvec \in [n]^k$  that minimizes $C_{\jvec} - \sum_{i=1}^k [\tilde{p}_i]_j$ up to multiplicative error $1/(1-\eps')$, because the entries of the cost $C$ are lower-bounded by 1, and $[\tilde{p}_i]_j \leq 0$ for all $i \in [n], j \in [k]$. In particular, $\ArgAMinO_C(\tilde{p}, \eps')$ minimizes $\tilde{V}_{\jvec}$ up to multiplicative error $1/(1 - \eps')$. We prove the following claim relating $V_{\jvec}$ and $\tilde{V}_{\jvec}$:
\begin{claim}
For any $\jvec \in [n]^k$, on iteration $T$, we have $V_{\jvec} / \tilde{V}_{\jvec} \in [\exp(-2T\eps'/\lambda), \exp(2T\eps'/\lambda)]$.
\end{claim}

By the above claim, therefore $\ArgAMinO_C(\tilde{p},\eps')$ minimizes $V_{\jvec}$ up to multiplicative error $\exp(4T\eps'/\lambda) / (1 - \eps') \leq (1 + \eps/3)$ if we choose $\eps' = \Omega(\lambda \eps / T)$. Thus one can implement $\MWUoracle_C(p,\mu,\lambda,\eps)$ by returning the value of $\ArgAMinO_C(\tilde{p},\eps')$ if its value is estimated to be at most $1 + \eps/3$, and returning ``null`` otherwise. The bound on the accuracy  $\eps' = \tilde{\Omega}(\eps^2 / (nk))$ follows since $\lambda \in [1,2]$ follows since $\lambda \in [1,2]$ and $T = \tilde{O}(nk/\eps^2)$ by Theorem~\ref{thm:mwuhelper}.

\begin{proof}[Proof of Claim]
We compare the expressions for $V_{\jvec}$ and $\tilde{V}_{\jvec}$. Each of these is a product of two terms. Since $C_{\jvec} \geq 0$, and $[\tilde{p}_i]_{j_i}, [p_i]_{j_i} \leq 0$ for all $i$, the ratio of the first terms is
\begin{align*}\frac{C_{\jvec} - \sum_{i=1}^k [\tilde{p}_i]_{j_i}}{C_{\jvec} - \sum_{i=1}^k [p_i]_{j_i}} \in [\min_{i} [\tilde{p}_i]_{j_i} / [p_i]_{j_i}, \max_{i} [\tilde{p}_i]_{j_i} / [p_i]_{j_i}]
\subset [\exp(-T\eps'/\lambda), \exp(T\eps'/\lambda)],\end{align*}
where we have used that, for all $i \in [k]$, $$[\tilde{p}_i]_{j_i} / [p_i]_{j_i} = \exp(\langle C, P \rangle / \lambda) / \exp(\tilde{c} / \lambda) \in [\exp(-T\eps'/\lambda), \exp(T\eps'/\lambda)].$$
Similarly the ratio of the second terms in the expression for $V_{\jvec}$ and $\tilde{V}_{\jvec}$ is also in the range $[\exp(-T\eps'/\lambda), \exp(T\eps'/\lambda)]$. This concludes the proof of the claim.
\end{proof}
\end{proof}

Finally, we show that the $\ArgAMinO$ oracle can be reduced to the $\AMinO$ oracle, which completes the proof that $\MWU$ can be run with $\AMinO$.

\begin{lemma}[Equivalence of $\AMinO$ and $\ArgAMinO$]\label{lem:argaminequalsamino}
	Each of the oracles $\AMinO_C$ and $\ArgAMinO_C$ with accuracy parameter $\eps > 0$ can be implemented using $\poly(n,k)$ calls of the other oracle with accuracy parameter $\Theta(\eps / k)$ and $\poly(n,k)$ additional time.
\end{lemma}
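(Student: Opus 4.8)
The plan is to follow the proof of Lemma~\ref{lem:minargmin} almost verbatim, but to track the additive error through the $k$ rounds of the argument; this error bookkeeping is precisely what forces the factor $\Theta(\eps/k)$ loss in the accuracy parameter. One direction is immediate: an answer to $\ArgAMinO_C(p,\eps)$ by definition already includes a value within $\eps$ of $\MinO_C(p)$, so a single call to $\ArgAMinO_C(p,\Theta(\eps/k))$ --- or even to $\ArgAMinO_C(p,\eps)$ --- implements $\AMinO_C(p,\eps)$.

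For the converse, I would implement $\ArgAMinO_C$ using the ``coordinate-pinning'' device from Lemma~\ref{lem:minargmin}. Fix weights $p=(p_1,\dots,p_k)\in\R^{n\times k}$ and a target accuracy $\eps$; set $\eps' := \eps/(2k+1) = \Theta(\eps/k)$ and, as in Observation~\ref{obs:minargmin}, $M := 2\Cmax + 2\sum_{i=1}^k \|p_i\|_{\max} + 1$, and recall that $q_{i,j}\in\R^n$ denotes the vector equal to $[p_i]_j$ on coordinate $j$ and $-M$ elsewhere. Determine the output tuple $\jvec^*=(j_1^*,\dots,j_k^*)$ coordinate by coordinate: having fixed a prefix $(j_1^*,\dots,j_{s-1}^*)$, call $\AMinO_C$ with accuracy $\eps'$ on the masked weights $[q_{1,j_1^*},\dots,q_{s-1,j_{s-1}^*},q_{s,j_s'},p_{s+1},\dots,p_k]$ for every candidate $j_s'\in[n]$, and let $j_s^*$ be the candidate whose returned value is smallest. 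This makes $nk$ oracle calls plus $\poly(n,k)$ additional time, and needs no complexity assumptions. By Observation~\ref{obs:minargmin}, $\MinO_C$ on such masked weights equals the partial minimum $h_{s-1}(j_s') := \min\{C_{\jvec} - \sum_i [p_i]_{j_i} : j_1=j_1^*,\dots,j_{s-1}=j_{s-1}^*,\, j_s=j_s'\}$, so the value returned on query $j_s'$ lies in $[\,h_{s-1}(j_s')-\eps',\ h_{s-1}(j_s')+\eps'\,]$.

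The main step, and the only place requiring care, is the error accumulation. Write $g(\jvec):=C_{\jvec}-\sum_i [p_i]_{j_i}$, let $m^*:=\min_\jvec g(\jvec)$, and let $m_s^* := \min_{j_s'} h_{s-1}(j_s')$ evaluated at the committed prefix (so $m_0^*=m^*$ and $m_k^*=g(\jvec^*)$). A candidate $j_s^\dagger$ attaining $h_{s-1}(j_s^\dagger)=m_{s-1}^*$ has returned value at most $m_{s-1}^*+\eps'$, hence so does the selected $j_s^*$; since its returned value is also at least $h_{s-1}(j_s^*)-\eps' = m_s^*-\eps'$, we get $m_s^* \le m_{s-1}^* + 2\eps'$. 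Telescoping over $s=1,\dots,k$ yields $g(\jvec^*)=m_k^* \le m^* + 2k\eps' \le m^*+\eps$, so $\jvec^*$ is an $\eps$-approximate minimizer; moreover the last query (all $k$ coordinates pinned) returns $g(\jvec^*)$ up to $\eps'$, hence a value within $(2k+1)\eps'=\eps$ of $m^*$, which is a valid value output for $\ArgAMinO_C(p,\eps)$. This completes the reduction with accuracy parameter $\Theta(\eps/k)$, and hence the proof. The expected main obstacle is simply making this telescoping tight enough that linear---rather than, say, exponential---error growth suffices, which is what keeps the accuracy loss at $\Theta(\eps/k)$.
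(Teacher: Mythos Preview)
Your proposal is correct and follows essentially the same approach as the paper: both reduce $\ArgAMinO_C$ to $\AMinO_C$ via the coordinate-pinning device of Observation~\ref{obs:minargmin}, choosing each $j_s^*$ by minimizing the masked $\AMinO$ values and telescoping the $O(\eps')$ error over $k$ rounds to obtain total accuracy $O(k\eps')=\eps$. One small notational slip: your definition ``$m_s^* := \min_{j_s'} h_{s-1}(j_s')$'' should read $m_s^* := h_{s-1}(j_s^*)$ (the minimum with the length-$s$ committed prefix), which is what you actually use in the inequality $m_s^*-\eps' \le m_{s-1}^*+\eps'$ and is consistent with your boundary checks $m_0^*=m^*$, $m_k^*=g(\jvec^*)$.
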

It is worth remarking that the equivalence that we show between $\AMinO$ and $\ArgAMinO$ is \textit{not} known to hold for the feasibility and separation oracles of general LPs, since the known result for general LPs requires exponentially small error in $nk$ \citep[\S4.3]{GLSbook}. However, in the case of $\MOT$ the equivalence follows from a direct and practical reduction, similar to the proof of the equivalence of the exact oracles (Lemma~\ref{lem:minargmin}). The main difference is that some care is needed to bound the propagation of the errors of the approximate oracles. For completeness, we provide the full proof of Lemma~\ref{lem:argaminequalsamino} in Appendix~\ref{app:pfs}. 

We conclude by proving Theorem~\ref{thm:mwumotp}.
\begin{proof}[Proof of Theorem~\ref{thm:mwumotp}]
The implication ``(i) $\implies$ (ii)'' is trivial, and the implication ``(ii) $\implies$ (iii)'' is shown in~\citep{AltBoi20hard}. It therefore suffices to show the implication ``(iii) $\implies$ (i)''. For costs $C$ with entries in the range $[1,2]$, this follows from combining the fact that $\MWU$ can be implemented to solve $\MOT_C$ in $\poly(n,k,1/\eps)$ time given an efficient implementation of $\ArgAMinO_C$ with polynomially-sized accuracy parameter $\eps' = \poly(1/n,1/k,\eps)$ (Lemma~\ref{lem:mwuargamin}), along with the fact that the $\AMinO_C$ and $\ArgAMinO_C$ oracles are polynomially-time equivalent with polynomial-sized accuracy parameter (Lemma~\ref{lem:argaminequalsamino}).

The assumption that $C$ has entries within the range $[1,2]$ can be removed with no loss by translating and rescaling the original cost $C' \gets (C + 3\Cmax)/(2\Cmax)$ and running Algorithm~\ref{alg:mwu} on $C'$ with approximation parameter $\eps' \gets \eps / (2\Cmax)$. Each $\tau'$-approximate query to the $\AMinO_{C'}$ oracle can be simulated by a $\tau$-approximate query to the $\AMinO_C$ oracle, where $\tau = 2\Cmax\tau'$.
\end{proof}

\begin{remark}[Practical optimizations]
Our numerical implementation of $\MWU$ has two modifications that provide practical speedups. One is maintaining a cached list of the tuples $\jvec \in [n]^k$ previously returned by calls to $\MWUoracle$. Whenever $\MWUoracle$ is called, we first check whether any tuple $\jvec$ in the cache satisfies the desiderata $\pd{}{h} \Phi(P + h \cdot \delta_{\jvec}) \mid_{h = 0} \leq 1 + \eps$, in which case we use this $\jvec$ to answer the oracle query. Otherwise, we answer the oracle query using $\AMinO$ as explained above. In practice, this cache allows us to avoid many calls to the potentially expensive $\AMinO$ bottleneck. Our second optimization is that, at each iteration of $\MWU$, we check whether the current iterate $P$ can be rescaled in order to satisfy the guarantees in Lemma~\ref{lem:mwu:step1} required from Step 1. If so, we stop Step 1 early and use this rescaled $P$.
\end{remark}
\subsection{The Sinkhorn algorithm and the $\SMinO$ oracle}\label{ssec:algs:sink}

The Sinkhorn algorithm ($\Sink$) is specially tailored to $\MOT$, and does not apply to general exponential-size LP. Currently it is by far the most popular algorithm in the $\MOT$ literature (see \S\ref{ssec:intro:prev}). 
However, in general each iteration of $\Sink$ takes exponential time $n^{\Theta(k)}$, and it is not well-understood when it can be implemented in polynomial-time. 
The objective of this section is to show that this bottleneck is polynomial-time equivalent to the $\SMinO$ oracle, and in doing so put $\Sink$ on equal footing with classical implicit LP algorithms in terms of their reliance on variants of the dual feasibility oracle for $\MOT$. Concretely, this lets us establish the following two results.
\par First, $\Sink$ can solve $\MOT$ in polynomial time whenever $\SMinO$ can be solved in polynomial time. 

\begin{theorem}\label{thm:sink-mot:smin}
	For any family of cost tensors $C \in \Rntk$ and accuracy $\eps > 0$, $\Sink$ solves $\MOT_C$ to $\eps$ accuracy in $\poly(n,k,\Cmax/\eps)$ time and $\poly(n,k,\Cmax/\eps)$ calls to the $\SMinO_C$ oracle with regularization $\eta = (2 k \log n)/\eps$.  (The solution is output through a polynomial-size implicit representation, see \S\ref{sssec:sink:alg}.)
\end{theorem}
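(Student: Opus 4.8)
The plan is to reduce the theorem to three standard ingredients and one new observation. First, by Lemma~\ref{lem:reg-close}, with $\eta=(2k\log n)/\eps$ the optimal values of \eqref{MOT} and \eqref{RMOT} differ by at most $\eta^{-1}k\log n = \eps/2$, and the two problems have the same feasible set; this is precisely the regularization level at which the entropic bias is $O(\eps)$. It therefore suffices to show that $\Sink$, run on the $\eta$-regularized problem and followed by a rounding step, outputs a feasible $P\in\Coup$ with $\langle C,P\rangle$ within $\eps$ of the $\MOT$ optimum, using $\poly(n,k,\Cmax/\eps)$ time and $\poly(n,k,\Cmax/\eps)$ calls to $\SMinO_C$. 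The known analyses of multimarginal $\Sink$ give exactly such a guarantee once the per-iteration cost and the iteration count are controlled \citep{LinHoJor19,Fri20,tupitsa2020multimarginal}, so the remaining work is (a) bounding the number of iterations and (b) showing each iteration, and the rounding, can be carried out through $\SMinO_C$.

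Recall the structure of $\Sink$: it maintains dual potentials $p_1,\dots,p_k\in\R^n$, initialized to $0$, whose associated primal iterate is the Gibbs tensor $P_{\jvec}=\exp(\eta(\sum_{i=1}^k[p_i]_{j_i}-C_{\jvec}))$, stored implicitly via the $p_i$ and a handle on $C$. A round of $\Sink$ updates, cyclically for $i=1,\dots,k$, the potential $[p_i]_j\gets [p_i]_j+\eta^{-1}\log([\mu_i]_j/[m_i(P)]_j)$, which forces $m_i(P)=\mu_i$. The only step that touches $C$ is computing the $i$-th marginal of the current Gibbs tensor, and this is where $\SMinO_C$ enters: for each $j\in[n]$,
\[
-\tfrac{1}{\eta}\log[m_i(P)]_j \;=\; \smineta_{\jvec:\,j_i=j}\Big(C_{\jvec}-\sum_{i'=1}^k[p_{i'}]_{j_{i'}}\Big) \;=\; \SMinO_C(p',\eta),
\]
where $p'\in\Rbar^{n\times k}$ agrees with $p$ except that $[p'_i]_{j'}=-\infty$ for all $j'\neq j$; the $-\infty$ entries---which $\SMinO$ is defined to accept---effectively restrict the softmin to tuples with $j_i=j$. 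Hence each $\Sink$ round costs $nk$ calls to $\SMinO_C$ plus $\poly(n,k)$ arithmetic for updating and storing the potentials.

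For the number of rounds, I would invoke the known polynomial bounds for multimarginal $\Sink$ \citep{LinHoJor19,Fri20,tupitsa2020multimarginal}: $\poly(k)\cdot\eta\Cmax/\delta$ rounds suffice to drive the total marginal violation $\sum_{i=1}^k\|m_i(P)-\mu_i\|_1$ below a target $\delta$. Choosing $\delta=\Theta(\eps/\Cmax)$ and recalling $\eta=\Theta(k\log n/\eps)$ gives $T=\poly(n,k,\Cmax/\eps)$ rounds, hence $\poly(n,k,\Cmax/\eps)$ total $\SMinO_C$ calls. Finally, apply the multidimensional rounding procedure of \citep{LinHoJor19}: rescale the one-dimensional slices of $P$ so that $m_i(P)\le\mu_i$ entrywise for each $i$, then add the rank-one correction $\bigotimes_{i=1}^k(\mu_i-m_i(P_{\mathrm{scaled}}))$ renormalized by the common mass deficit, yielding a feasible $\hat P\in\Coup$ with $\|\hat P-P\|_1=O(\delta)$ and hence $|\langle C,\hat P\rangle-\langle C,P\rangle|=O(\Cmax\delta)=O(\eps)$. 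The slice rescalings and the correction term's marginals are again computed from Gibbs marginals via $\SMinO_C$, and $\hat P$ retains a $\poly(n,k)$-size implicit representation (a rescaled Gibbs tensor plus one rank-one tensor), which is the representation referred to in \S\ref{sssec:sink:alg}. Composing constants so that the final $\MOT$ suboptimality is at most $\eps$ completes the argument.

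The main obstacle---and the one genuinely new piece rather than bookkeeping---is the reduction in the second paragraph: expressing the marginalization bottleneck of $\Sink$ as $n$ queries to $\SMinO_C$ with $\Rbar$-valued potentials, and then verifying that the rounding step can likewise be executed entirely through $\SMinO_C$ while preserving a polynomial-size implicit representation of the output $\hat P$. The polynomial iteration count and the $O(\Cmax\delta)$ rounding-error bound are direct adaptations of the known $k=2$ and prior $\MOT$ analyses and require only care with constants.
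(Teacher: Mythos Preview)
Your proposal is correct and takes essentially the same approach as the paper: the paper factors the argument into Theorem~\ref{thm:sink-mot:marg} (the cited $\Sink$ guarantee with the $\MARG_C$ oracle as bottleneck) plus Lemma~\ref{lem:marg-smin} (the polynomial-time equivalence of $\MARG_C$ and $\SMinO_C$), and your second paragraph is precisely the $\MARG$-to-$\SMinO$ direction of that lemma, carried out inline by zeroing out all but one coordinate via $-\infty$ entries in the potentials. The only presentational difference is that the paper isolates the marginalization oracle $\MARG_C$ as an explicit intermediate abstraction, which also yields the converse reduction (from $\SMinO$ to $\MARG$) needed later for Theorem~\ref{thm:sink-separation}.
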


Second, we show that $\Sink$ requires strictly more structure than other algorithms do to solve an $\MOT$ problem. This is why the results about $\ELLIP$ (Theorem~\ref{thm:ellip}) and $\MWU$ (Theorem~\ref{thm:mwumotp}) state that those algorithms solve an $\MOT$ problem whenever possible, whereas Theorem~\ref{thm:sink-mot:smin} cannot be analogously extended to such an ``if and only if'' characterization.

\begin{theorem}\label{thm:sink-separation}
	There is a family of cost tensors $C \in \Rntk$ for which
	$\ELLIP$ solves $\MOT_C$ exactly in $\poly(n,k)$ time, however it is $\#$BIS-hard to implement a single iteration of $\Sink$ in $\poly(n,k)$ time.
\end{theorem}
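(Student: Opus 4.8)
The plan is to exhibit the separation using the very same family of cost tensors constructed in the proof of Lemma~\ref{lem:smin-separation}: the Ising-type costs $\cC = \{\,C(A,b) : A \in \R_{\geq 0}^{k \times k},\ b \in \R^k\,\}$ with $n=2$, whose entries are $C_{\jvec}(A,b) = -\langle \jvec, A\jvec\rangle - \langle b,\jvec\rangle$ for $\jvec \in \{\plusminus 1\}^k$. I would then argue the positive half (tractability for $\ELLIP$) and the negative half ($\#$BIS-hardness of a single $\Sink$ iteration) separately, each by a short invocation of results already in the excerpt.

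For the positive half, recall from Lemma~\ref{lem:smin-separation} that $\MinO_C$ is solvable in $\poly(k)$ time on this family: after folding the potentials into an external field as in~\eqref{eq:lem-smin-sep:simp}, a $\MinO_C$ query becomes minimization of the submodular quadratic $\jvec \mapsto -\langle \jvec, A\jvec\rangle - \langle \ell, \jvec\rangle$, which classical combinatorial optimization solves in $\poly(k)$ time. Applying the implication ``(iii) $\Longrightarrow$ (i)'' of Theorem~\ref{thm:ellip} then gives that $\ELLIP$ solves $\MOT_C$ exactly (returning a sparse vertex solution) in $\poly(n,k)$ time for every $C \in \cC$.

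For the negative half, I would pin down the per-iteration bottleneck of $\Sink$. At its first iteration, $\Sink$ must compute a marginal $m_i(K^{(0)})$ of its initial Gibbs kernel $K^{(0)}$, which has the form $K^{(0)}_{\jvec} = \exp\!\big(-\eta C_{\jvec} + \eta \sum_{i} [p^{(0)}_i]_{j_i}\big)$ for whatever initial potentials $p^{(0)}$ the algorithm uses (e.g.\ $p^{(0)} = 0$). Since $n = 2$, summing the two entries of $m_i(K^{(0)})$ yields $\sum_{\jvec \in \{\plusminus 1\}^k} \exp\!\big(\eta \langle \jvec, A\jvec\rangle + \eta \langle b', \jvec\rangle\big)$ for a field $b'$ obtained by folding $p^{(0)}$ into $b$ exactly as in~\eqref{eq:lem-smin-sep:simp}; this is precisely the partition function $Z$ of a ferromagnetic Ising model with inconsistent external fields. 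Equivalently, a single $\Sink$ iteration implements the $\SMinO_C(p^{(0)},\eta)$ oracle, since $\SMinO_C(p^{(0)},\eta) = -\eta^{-1}\log Z - d'$. As computing such a $Z$ is $\#$BIS-hard~\citep{goldberg2007complexity} — the same hardness already exploited in Lemma~\ref{lem:smin-separation} — it is $\#$BIS-hard to implement even the first iteration of $\Sink$ in $\poly(n,k)$ time. Combining the two halves yields the theorem; moreover, by Remark~\ref{rem:smin-separation}, the negative half persists even when the $\Sink$ iteration is only required to be carried out approximately, under the assumption that $\#$BIS is hard to approximate.

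The one place that needs care — and the main obstacle — is the reduction in the negative half: one must ensure that ``implementing a single $\Sink$ iteration'' genuinely forces the computation of a quantity from which $Z$ (or $\SMinO_C$) can be read off, rather than some strictly easier partial information. This is clean here because a $\Sink$ update on coordinate $i$ requires the entire marginal vector $m_i(K)$, whose $n=2$ entries sum to $Z$, so the hardness already bites at the very first iteration and is robust to the algorithm's initialization. If one instead prefers to route through the general polynomial-time equivalence between the $\Sink$ marginalization bottleneck and the $\SMinO$ oracle established in \S\ref{ssec:algs:sink}, the argument is the same two-line combination: that equivalence together with Lemma~\ref{lem:smin-separation} immediately gives $\#$BIS-hardness of a single $\Sink$ iteration on $\cC$.
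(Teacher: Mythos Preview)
Your proposal is correct and matches the paper's proof essentially line for line: both use the Ising-type family from Lemma~\ref{lem:smin-separation}, invoke Theorem~\ref{thm:ellip} for the positive half, and argue the negative half via the $\MARG_C \leftrightarrow \SMinO_C$ equivalence. The paper's write-up is slightly terser---it simply cites Lemma~\ref{lem:marg-smin} rather than unpacking the ``sum the two marginal entries to recover $Z$'' computation you spell out---but your direct argument is exactly the content of that lemma specialized to $n=2$, so there is no substantive difference.
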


\paragraph{Organization of \S\ref{ssec:algs:sink}.} In \S\ref{sssec:sink:alg}, we recall this $\Sink$ algorithm and how it depends on a certain marginalization oracle. In \S\ref{sssec:sink:oracle}, we show that this marginalization oracle is polynomial-time equivalent to the $\SMinO$ oracle, and use this to prove Theorems~\ref{thm:sink-mot:smin} and~\ref{thm:sink-separation}.

\subsubsection{Algorithm}\label{sssec:sink:alg}

Here we recall $\Sink$ and its known guarantees. To do this, we first define the following oracle. While this oracle does not have an interpretation as a dual feasibility oracle, we show below that it is polynomial-time equivalent to $\SMinO$, which is a specific type of approximate dual feasibility oracle (Remark~\ref{rem:oracles:smin-amin}).

\begin{defin}[\MARG]
	Given scalings $d = (d_1, \dots, d_k) \in \Rp^{n \times k}$, regularization $\eta > 0$, and an index $i \in [k]$, the marginalization oracle $\MARG_C(d,\eta,i)$ returns the vector $m_i((\otimes_{i'=1}^k d_{i'}) \odot \exp[-\eta C]) \in \R_{\geq 0}^{n}$.
\end{defin}

It is known that $\Sink$ can solve $\MOT$ with only polynomially many calls to this oracle~\citep{LinHoJor19}. The approximate solution that $\Sink$ computes is a fully dense tensor with $n^k$ non-zero entries, but it is output implicitly in $O(nk)$ space through ``scaling vectors'' and ``rounding vectors'', described below.

\begin{theorem}[$\Sink$ guarantee,~\citep{LinHoJor19}]\label{thm:sink-mot:marg}
	Algorithm~\ref{alg:sink} computes an $\eps$-approximate solution to $\MOT_C(\mu)$ using $\poly(n,k,\Cmax/\eps)$ calls to the $\MARG_C$ oracle with parameter $\eta = (2 k \log n)/\eps$, and $\poly(n,k,\Cmax/\eps)$ additional time. The solution is of the form
	\begin{align}
		P = 
		\left(\otimes_{i=1}^k d_i\right) \odot \exp[-\eta C] + \left(\otimes_{i=1}^k v_i\right),
		\label{eq:sink-rmot}
	\end{align}
	and is output implicitly via the scaling vectors $d_1, \dots, d_k \in \Rpn$ and rounding vectors $v_1, \dots, v_k \in \Rpn$.
\end{theorem}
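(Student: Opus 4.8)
The plan is to follow the analysis of~\citep{LinHoJor19}, in three parts: (a) reduce \eqref{MOT} to the entropically regularized problem \eqref{RMOT}; (b) identify $\Sink$ as exact block coordinate ascent on \eqref{RMOT-D} whose only non-elementary step is a $\MARG_C$ call, and bound its iteration complexity; and (c) analyze the rounding step that turns the near-feasible scaled tensor into an exactly feasible one of the form~\eqref{eq:sink-rmot}.

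First, by Lemma~\ref{lem:reg-close}, with $\eta = (2k\log n)/\eps$ any $\eps$-approximate solution of \eqref{RMOT} is a $(2\eps)$-approximate solution of \eqref{MOT} and vice versa; so up to relabeling $\eps$ by a constant it suffices to approximately solve \eqref{RMOT}. Parametrizing the dual iterates as $p_i = \eta^{-1}\log d_i$, the tensor $(\otimes_{i=1}^k d_i)\odot\exp[-\eta C]$ is exactly the primal tensor assigned to the dual point $(p_1,\dots,p_k)$ by the optimality conditions of \eqref{RMOT}, and one checks that the partial maximization of the concave objective \eqref{RMOT-D} over a single block $p_i$ (all others fixed) is attained by rescaling $[d_i]_j \gets [d_i]_j\,[\mu_i]_j / [m_i((\otimes_{i'} d_{i'})\odot\exp[-\eta C])]_j$, i.e.\ by forcing $m_i$ of the scaled tensor to equal $\mu_i$. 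Hence each iteration of $\Sink$ is one block coordinate ascent step, and its only non-trivial computation is evaluating $m_i((\otimes_{i'} d_{i'})\odot\exp[-\eta C])$, which is precisely one call to $\MARG_C(d,\eta,i)$; the rest is $O(nk)$ arithmetic.

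Second, for the iteration complexity one tracks the dual suboptimality of \eqref{RMOT-D} along the iterates $P^{(t)} := (\otimes_{i'} d_{i'}^{(t)})\odot\exp[-\eta C]$. A standard computation (the matrix case $k=2$ goes back to~\citep{linial1998deterministic}; the multimarginal extension is in~\citep{LinHoJor19}) shows that an iteration updating block $i$ increases the objective by the KL divergence $\KL{\mu_i}{m_i(P^{(t)})/m(P^{(t)})}$ of the (renormalized) current $i$-th marginal from its target, which by Pinsker's inequality is $\Omega(\|\mu_i - m_i(P^{(t)})\|_1^2)$ once one checks the running total mass $m(P^{(t)})$ stays $\Theta(1)$. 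Updating on each step the block with the largest $\ell_1$ marginal violation, as long as some violation exceeds $\delta$ the objective gains $\Omega(\delta^2/k)$ per step; since the total possible gain is bounded by a dual-diameter term $O(\eta\Cmax + k\log n)$, after $O\!\big(k(\eta\Cmax + k\log n)/\delta^2\big)$ iterations every marginal has $\|\mu_i - m_i(P^{(t)})\|_1 \le \delta$ and $|m(P^{(t)})-1| \le k\delta$. With $\eta = (2k\log n)/\eps$ and the choice $\delta = \Theta(\eps/(k\Cmax))$, this is $\poly(n,k,\Cmax/\eps)$ iterations, hence $\poly(n,k,\Cmax/\eps)$ calls to $\MARG_C$. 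Finally, in the rounding step, write $P' = (\otimes_i d_i)\odot\exp[-\eta C]$; multiplying $P'$ along axis $i$ by the diagonal with entries $\min(1,[\mu_i]_j/[m_i(P')]_j)$ merely replaces $d_i$ by a smaller scaling vector $d_i'$, and doing this for each $i$ yields $P'' := (\otimes_i d_i')\odot\exp[-\eta C]$ with $m_i(P'') \le \mu_i$ entrywise for all $i$, having removed total mass $O(\sum_i\|\mu_i - m_i(P')\|_1) = O(k\delta)$. Setting $v_i := c^{-1}(\mu_i - m_i(P''))$ for a common scalar $c$ chosen so $\otimes_i v_i$ has $i$-th marginal $\mu_i - m_i(P'')$, the tensor $P := P'' + \otimes_{i=1}^k v_i$ lies in $\Coup$ and has exactly the form~\eqref{eq:sink-rmot} with scaling vectors $d_i'$ and rounding vectors $v_i$; since the mass moved in rounding totals $O(k\delta)$ and $\|C\|_{\max}\le\Cmax$, $|\langle C,P\rangle - \langle C,P'\rangle| = O(k\Cmax\delta)\le\eps$, and combining with the regularization gap from Lemma~\ref{lem:reg-close} gives that $\langle C,P\rangle$ exceeds the \eqref{MOT} optimum by $O(\eps)$. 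All rounding steps are $O(nk)$ arithmetic plus a constant number of extra $\MARG_C$ evaluations, so the total runtime and oracle-call count are $\poly(n,k,\Cmax/\eps)$.

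I expect the iteration-complexity bound to be the main obstacle: it requires writing the \eqref{RMOT-D} objective increase exactly as a KL divergence (using the multiplicative form of the Sinkhorn update), controlling normalization so Pinsker applies to genuine probability vectors while keeping the running mass $\Theta(1)$, and bounding the dual diameter $O(\eta\Cmax + k\log n)$ (which needs a little care about where an optimal dual point sits). The reduction to \eqref{RMOT} is immediate from Lemma~\ref{lem:reg-close}, and the rounding analysis is routine once the greedy-$\Sink$ guarantee $\sum_i\|\mu_i - m_i(P')\|_1 = O(k\delta)$ is in hand.
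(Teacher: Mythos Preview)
Your proposal is correct and follows essentially the same approach as the paper, which attributes the result to~\citep{LinHoJor19} and gives only a high-level sketch (reduce to \eqref{RMOT} via Lemma~\ref{lem:reg-close}, interpret Step~1 as alternating maximization on \eqref{RMOT-D} with the $\MARG_C$ call as bottleneck, then round \`a la~\citep{AltWeeRig17}). You supply more detail on the iteration-complexity argument (the KL-gain-per-step and Pinsker bound) than the paper does, but this is exactly the analysis of~\citep{LinHoJor19} that the paper is citing; one small simplification you can make is that after the very first $\Sink$ update the total mass $m(P^{(t)})$ is \emph{exactly}~$1$ (since the updated marginal equals $\mu_i$), so the ``$m(P^{(t)})$ stays $\Theta(1)$'' caveat is unnecessary.
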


\begin{algorithm}
	\caption{$\Sink$: multidimensional analog of classical Sinkhorn scaling}
	\hspace*{\algorithmicindent} \textbf{Input:} 
	Cost $C \in \Rntk$,
	marginals $\mu_1,\dots,\mu_k \in \Delta_n$  \\
	\hspace*{\algorithmicindent} \textbf{Output:} Implicit representation of tensor~\eqref{eq:sink-rmot} that is an $\eps$-approximate solution to $\MOT_C(\mu)$
	\begin{algorithmic}[1]
		\Statex \textbackslash\textbackslash$\;$ Step 1: scale
		\State $d_1, \dots, d_k \gets \bone$ and $\eta \gets (2k \log n)/\eps$ \Comment{Initialize (no scaling)}
		\For{$\poly(n,k,\Cmax/\eps)$ iterations} 		
		\label{line:sink:stop}
		\State Choose $i \in [k]$
		\Comment{Round-robin, greedily, or randomly}
		\label{line:sink:choice}
		\State $\tilde{\mu}_i \gets \MARG_C(d,\eta,i)$
		\Comment{Bottleneck: compute $i$-th marginal}	
		\label{line:sink:marg}
		\State $d_i \gets d_i \odot( \mu_i / \tilde{\mu}_i)$
		\Comment{Rescale $i$-th marginal (division is entrywise)}
		\label{line:sink:rescale}
		\EndFor
		\Statex
		\Statex \textbackslash\textbackslash$\;$ Step 2: round to transportation polytope
		\For{$i=1,\dots,k$} \Comment{Rescale each marginal to be below marginal constraints}
		\State $\tilde{\mu}_i \gets \MARG_C(d,\eta,i)$ \Comment{Bottleneck: compute $i$-th marginal}\label{line:round:marg}
		\State $d_i \gets d_i \odot \min[\bone, \mu_i/\tilde{\mu}_i]$ \Comment{Rescale $i$-th marginal (operations are entrywise)} \label{line:round:rescale}
		\EndFor
		
		\State $v_i \gets \mu_i - \MARG_C(d,\eta,i)$ for each $i \in [k]$ \Comment{Add back mass}
		\State $v_1 \gets v_1 / \|v\|_1^{k-1}$ \Comment{Rescale so that~\eqref{eq:sink-rmot} satisfies marginal constraints}
		\State Return $d_1,\dots,d_k$ and $v_1,\dots,v_k$ \Comment{Implicit representation of solution~\eqref{eq:sink-rmot}}
	\end{algorithmic}
	\label{alg:sink}
\end{algorithm}

\paragraph{Sketch of algorithm.} Full details and a proof are in~\citep{LinHoJor19}. We give a brief overview here for completeness. The main idea of $\Sink$ is to solve $\RMOT$, the entropically regularized variant of $\MOT$ described in \S\ref{ssec:prelim:rmot}. On one hand, this provides an $\eps$-approximate solution to $\MOT$ by taking the regularization parameter $\eta = \Theta(\eps^{-1} k \log n)$ sufficiently high  (Lemma~\ref{lem:reg-close}). On the other hand, solving $\RMOT$ rather than $\MOT$ enables exploiting the first-order optimality conditions of $\RMOT$, which imply that the unique solution to $\RMOT$ is the unique tensor in $\Coup$ of the form 
\begin{align}
P^* = 
(\otimes_{i=1}^k d_i^*) \odot K,
\label{eq:sink-opt}
\end{align}
where $K$ denotes the entrywise exponentiated tensor $\exp[-\eta C]$, and $d_1^*, \dots, d_k^* \in \Rp^n$ are non-negative vectors. The $\Sink$ algorithm approximately computes this solution in two steps.

\par The first and main step of Algorithm~\ref{alg:sink} is the natural multimarginal analog of the Sinkhorn scaling algorithm~\citep{Sin67}. 
It computes an approximate solution $P = (\otimes_{i=1}^k d_i) \odot K$ by finding $d_1, \dots, d_k$ such that $P$ is nearly feasible in the sense that $m_i(P) \approx \mu_i$ for each $i \in [k]$. Briefly, it does this via
alternating optimization: initialize $d_i$ to the all-ones vector $\bone \in \R^n$, and then iteratively update one $d_i$ so that the $i$-th marginal $m_i(P)$ of the current scaled iterate $P= (\otimes_{i=1}^k d_i)  \odot K$ is $\mu_i$. 
Although correcting one marginal can detrimentally affect the others, this algorithm nevertheless converges---in fact, in a polynomial number of iterations~\citep{LinHoJor19}.

\par The second step of Algorithm~\ref{alg:sink} is the natural multimarginal analog of the rounding algorithm~\citep[Algorithm 2]{AltWeeRig17}. It rounds the solution $P = (\otimes_{i=1}^k d_i) \odot K$ found in step one to the transportation polytope $\Coup$. Briefly, it performs this by scaling each marginal $m_i(P)$ to be entrywise less than the desired $\mu_i$, and then adding mass back to $P$ so that all marginals constraints are exactly satisfied. The former adjustment is done by adjusting the diagonal scalings $d_1, \dots, d_k$, and the latter adjustment is done by adding a rank-$1$ term $\otimes_{i=1}^k v_i$. 

\par Critically, note that Algorithm~\ref{alg:sink} takes polynomial time except for possibly the calls to the $\MARG_C$ oracle. In the absence of structure in the cost tensor $C$, evaluating this $\MARG_C$ oracle takes exponential time because it requires computing marginals of a tensor with $n^k$ entries.

\par We conclude this discussion with several remarks about $\Sink$.

\begin{remark}[Choice of update index in $\Sink$]\label{rem:sink-update}
	In line~\ref{line:sink:choice} there are several ways to choose update indices, all of which lead to the polynomial iteration complexity we desire. Iteration-complexity bounds are shown for a greedy choice in~\citep{LinHoJor19,Fri20}. Similar bounds can be shown for random and round-robin choices by adapting the techniques of~\citep{AltPar20bal}. These latter two choices do not incur the overhead of $k$ $\MARG$ computations per iteration required by the greedy choice, which is helpful in practice. Empirically, we observe that round-robin works quite well, and we use this in our experiments.
\end{remark}

\begin{remark}[Alternative implementations of $\Sink$]\label{rem:sink}
	For simplicity, Algorithm~\ref{alg:sink} provides pseudocode for the ``vanilla'' version of $\Sink$ as it performs well in practice and it achieves the polynomial iteration complexity we desire.
	There are several variants in the literature, including accelerated versions and first rounding small entries of the marginals---these variants have iteration-complexity bounds with better polynomial dependence on $\eps$ and $k$, albeit sometimes at the expense of larger polynomial factors in $n$~\citep{LinHoJor19,tupitsa2020multimarginal}.
\end{remark}

\begin{remark}[Output of $\Sink$ and efficient downstream tasks]\label{rem:sink-output} 
	While the output $P$ of $\Sink$ is fully dense with $n^k$ non-zero entries, its specific form~\eqref{eq:sink-rmot} enables performing downstream tasks in polynomial time. This is conditional on a polynomial-time $\MARG_C$ oracle, which is at no loss of generality since that is required for running $\Sink$ in polynomial time in the first place. The basic idea is that $P$ is a mixture of two simple distributions (modulo normalization). The first is $\left(\otimes_{i=1}^k d_i\right) \odot \exp[-\eta C]$, which is marginalizable using $\MARG_C$. The second is $\otimes_{i=1}^k v_i$, which is easily marginalizable since it is a product distribution (as the $v_i$ are non-negative). Together, this enables efficient marginalization of $P$. By recursively marginalizing on conditional distributions, this enables efficiently sampling from $P$. This in turn enables efficient estimation of bounded statistics of $P$ (e.g., the cost $\langle C, P\rangle$) by Hoeffding's inequality. 
\end{remark}

\subsubsection{Equivalence of bottleneck to $\SMinO$}\label{sssec:sink:oracle}

Although Theorem~\ref{thm:sink-mot:marg} shows that $\Sink$ solves $\MOT$ in polynomial time using the $\MARG$ oracle, this is neither sufficient to prove the implication ``(ii)$\implies$(i)'' in Theorem~\ref{thm:sink-mot:smin}, nor to prove Theorem~\ref{thm:sink-separation}. In order to prove these results, we show that $\SMinO$ and $\MARG$ are polynomial-time equivalent.

\begin{lemma}[Equivalence of $\MARG$ and $\SMinO$]\label{lem:marg-smin}
	For any regularization $\eta > 0$, each of the oracles $\MARG_C$ and $\SMinO_C$ can be implemented using $\poly(n)$ calls of the other oracle and $\poly(n,k)$ additional time.
\end{lemma}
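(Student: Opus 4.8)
The plan is to exploit the fact that both oracles are expressed in terms of the same weighted sum over the $n^k$ entries of $\exp[-\eta C]$: writing $K = \exp[-\eta C]$ and $d_i = \exp[\eta p_i]$ (entrywise), one has the key identity
\[
    \sum_{\jvec \in [n]^k} \Big(\bigotimes_{i'=1}^k d_{i'}\Big)_{\jvec} K_{\jvec}
    = \sum_{\jvec \in [n]^k} \exp\!\Big(-\eta\big(C_{\jvec} - \textstyle\sum_{i=1}^k [p_i]_{j_i}\big)\Big)
    = \exp\!\big(-\eta \cdot \SMinO_C(p,\eta)\big),
\]
so that the total mass of the scaled tensor $(\otimes_i d_i)\odot K$ is exactly $\exp(-\eta\,\SMinO_C(p,\eta))$. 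This observation is the engine for both directions. First I would handle $\SMinO \Rightarrow \MARG$: to compute $[\MARG_C(d,\eta,i)]_j = [m_i((\otimes_{i'} d_{i'})\odot K)]_j$, note that fixing the $i$-th coordinate to $j$ is the same as replacing $d_i$ by the vector that agrees with $d_i$ in coordinate $j$ and is $0$ elsewhere (equivalently, setting $[p_i]_{j'} = -\infty$ for $j' \neq j$, which is why the $\SMinO$ oracle is defined on $\Rbar$-valued weights). Then $[\MARG_C(d,\eta,i)]_j$ equals the total mass of this modified scaled tensor, which by the identity above is $\exp(-\eta\,\SMinO_C(p',\eta))$ for the correspondingly modified weights $p'$. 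This requires $n$ calls to $\SMinO_C$ (one per coordinate $j \in [n]$) and $\poly(n,k)$ arithmetic, giving the vector $\MARG_C(d,\eta,i)$.

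For the converse $\MARG \Rightarrow \SMinO$: given weights $p$, set $d_i = \exp[\eta p_i]$ and call $\MARG_C(d,\eta,1)$ to obtain the vector $v = m_1((\otimes_i d_i)\odot K)$. Its entries sum to the total mass of $(\otimes_i d_i)\odot K$, which by the identity equals $\exp(-\eta\,\SMinO_C(p,\eta))$; hence $\SMinO_C(p,\eta) = -\eta^{-1}\log(\sum_{j} v_j)$, computable with a single $\MARG_C$ call and $\poly(n)$ additional arithmetic. One subtlety to address is that $p$ may have $-\infty$ entries (the oracle is defined on $\Rbar^{n\times k}$), in which case $d_i$ has zero entries; this is harmless since $\Rp^{n\times k}$ (the domain of $\MARG$) includes $0$, and $\log 0 = -\infty$ is handled by the stated conventions, so the correspondence extends cleanly. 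A second routine point is bit-complexity/representation: $d_i = \exp[\eta p_i]$ need not be computed exactly, but since the statement counts arithmetic operations and allows the usual $-\infty$ flag bit, this is not a real obstacle—though I would remark that in an exact-arithmetic model one works symbolically with the $p_i$'s, which is exactly what the $\SMinO \Rightarrow \MARG$ direction already does.

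The main obstacle—such as it is—is the $\SMinO \Rightarrow \MARG$ direction: one must be careful that "restricting the $i$-th coordinate to $j$" is faithfully implemented by the $-\infty$ substitution in the weights and that this yields all $n$ entries of the marginal vector with only $n$ oracle calls rather than $n^k$. Once the identity linking total mass to $\SMinO$ is in hand, both directions are short; the bulk of the write-up is just carefully unwinding the definitions of $\MARG$, $\SMinO$, and $\smineta$, and confirming the $\poly(n,k)$ bookkeeping and the handling of infinite weights.
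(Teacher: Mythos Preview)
Your proposal is correct and follows essentially the same approach as the paper: both directions hinge on the identity that the total mass of $(\otimes_i d_i)\odot\exp[-\eta C]$ equals $\exp(-\eta\,\SMinO_C(p,\eta))$ when $d_i=\exp[\eta p_i]$, with $\MARG\Rightarrow\SMinO$ implemented via a single marginalization call and a sum, and $\SMinO\Rightarrow\MARG$ implemented by zeroing out all but one coordinate of $d_i$ (equivalently setting the corresponding weights to $-\infty$) and making $n$ calls. The paper's proof is identical in substance, so there is nothing to add.
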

\begin{proof}
	\underline{Reduction from $\SMinO$ to $\MARG$.} First, we show how to compute $\SMinO_C(p,\eta)$ using one call to the marginalization oracle and $O(n)$ additional time. Consider the entrywise exponentiated matrix $d = \exp[\eta p] \in \Rp^{n \times k}$, and let $\mu_1 = m_1((\otimes_{i=1}^k d_i) \odot \exp[-\eta C])$ be the answer to $\MARG_C(d,\eta,1)$. Observe that
	\begin{align*}
		-\eta^{-1} \log \left( \sum_{j_1=1}^n [\mu_1]_{j_1} \right)
		&=
		-\eta^{-1} \log \left( \sum_{j_1=1}^n \sum_{j_2,\dots,j_k \in [n]} \prod_{i=1}^k [d_i]_{j_i} e^{-\eta C_{\jvec}} \right)
		\\ &=
		-\eta^{-1} \log \left( \sum_{\jvec \in [n]^k} e^{-\eta (C_{\jvec} - \sum_{i=1}^k [p_i]_{j_i})} \right)
		\\ &=
		\smineta_{\jvec \in [n]^k} \left(C_{\jvec} - \sum_{i=1}^k [p_i]_{j_i}\right),
	\end{align*}
	where above the first step is by definition of $\mu_1$, the second step is by definition of $d$ and combining the sums, and the third step is by definition of $\smin$. We conclude that $-\eta^{-1} \log \sum_{j_1=1}^n [\mu_1]_{j_1}$ is a valid answer to $\SMinO_C(p,\eta)$. Since this is clearly computable from $\mu_1$ in $O(n)$ time, this establishes the claimed reduction.

	\par \underline{Reduction from $\MARG$ to $\SMinO$.} Next, we show that for any marginalization index $i \in [k]$ and entry $\ell \in [n]$, it is possible to compute the $\ell$-th entry of the vector $\MARG_C(d,\eta,i)$ using one call to the $\SMinO_C$ oracle and $\poly(n,k)$ additional time. Define $v \in \R^n$ to be the vector with $\ell$-th entry equal to $[d_i]_{\ell}$, and all other entries $0$. Define the matrix $p = \eta^{-1} \log[d_1,\ldots,d_{i-1},v,d_{i+1},\ldots,d_k]  \in \Rbar^{n \times k}$, where recall that $\log 0 = -\infty$ (see \S\ref{sec:prelim}). Let $s\in \R$ denote the answer to $\SMinO_C(p, \eta)$. Observe that
	\begin{align*}
		e^{-\eta s}
		=
		\sum_{\jvec \in [n]^k} e^{-\eta(C_{\jvec} - \sum_{i=1}^k [p_i]_{j_i})} 
		= 
		\sum_{\jvec \in [n]^k \; : \; \jvec_{i} = \ell} \prod_{i=1}^k [d_i]_{j_i} e^{-\eta C_{\jvec}}
		= \left[ m_i\left( (\otimes_{i=1}^k d_i) \odot \exp[-\eta C] \right) \right]_{\ell},
	\end{align*}
	where above the first step is by definition of $s$, the second step is by definition of $p$ and $v$, and the third step is by definition of the marginalization notation $m_i(\cdot)$. We conclude that $\exp(-\eta s)$ is a valid answer for the $\ell$-th entry of the vector $\MARG_C(d,\eta,i)$. This establishes the claimed reduction since we may repeat this procedure $n$ times to compute all $n$ entries of the the vector $\MARG_C(d,\eta,i)$.
	
\end{proof}

We can now conclude the proofs of the main results of \S\ref{ssec:algs:sink}.

\begin{proof}[Proof of Theorem~\ref{thm:sink-mot:smin}]
	This follows from the fact that $\Sink$ approximates $\MOT_C$ in polynomial time given a efficient implementation of $\MARG_C$ (Theorem~\ref{thm:sink-mot:marg}), combined with the fact that the $\MARG_C$ and $\SMinO_C$ oracles are polynomial-time equivalent (Lemma~\ref{lem:marg-smin}).
\end{proof}

\begin{proof}[Proof of Theorem~\ref{thm:sink-separation}]
	Consider the family of cost tensors in Lemma~\ref{lem:smin-separation} 
	for which the $\MinO_C$ oracle admits a polynomial-time algorithm, but for which the $\SMinO_C$ oracle is \#BIS-hard. 
	Then on one hand, the $\ELLIP$ algorithm solves $\MOT_C$ in polynomial time by Theorem~\ref{thm:ellip}. And on the other hand, it is \#BIS-hard to implement a single iteration of $\Sink$ because that requires implementing the $\MARG_C$ oracle, which is polynomial-time equivalent to the $\SMinO_C$ oracle by Lemma~\ref{lem:marg-smin}.
\end{proof}

\section{Application: $\MOT$ problems with graphical structure}\label{sec:graphical}

In this section, we illustrate our algorithmic framework on $\MOT$ problems with graphical structure. Although a polynomial-time algorithm is already known for this particular structure~\citep{h20gm,teh2002unified}, that algorithm computes solutions that are approximate and dense; see the related work section for details. By combining our algorithmic framework developed above with classical facts about graphical models, we show that it is possible to compute solutions that are exact and sparse in polynomial time. 

\par The section is organized as follows. In \S\ref{ssec:graphical:structure}, we recall the definition of graphical structure.
 In \S\ref{ssec:graphical:alg}, we show that the $\MinO$, $\AMinO$, and $\SMinO$ oracles can be implemented in polynomial time for cost tensors with graphical structure; from this it immediately follows that all of the $\MOT$ algorithms discussed in part 1 of this paper can be implemented in polynomial time. Finally, in \S\ref{ssec:graphical:fluid}, we demonstrate our results on the popular application of computing generalized Euler flows, which was the original motivation of $\MOT$. Numerical simulations demonstrate how the exact, sparse solutions produced by our new algorithms provide qualitatively better solutions than previously possible in polynomial time.

\subsection{Setup}\label{ssec:graphical:structure}

We begin by recalling preliminaries about undirected graphical models, a.k.a., Markov Random Fields. We recall only the relevant background; for further details we refer the reader to the textbooks~\citep{KolFri09,wainwright2008graphical}. 

\par In words, graphical models provide a way of encoding the independence structure of a collection of random variables in terms of a graph. The formal definition is as follows. 
Below, all graphs are undirected, and the notation $2^V$ means the power set of $V$ (i.e., the set of all subsets of $V$).

\begin{defin}[Graphical model structure]\label{def:gm}
	Let $\cS \subset 2^{[k]}$. The graphical model structure corresponding to $\cS$ is the graph $G_{\cS} = (V,E)$ with vertices $V = [k]$ and edges $E = \{(i,j) : i,j \in S, \text{ for some } S \in \cS\}$. 
\end{defin}

\begin{defin}[Graphical model]\label{def:mrf}
	Let $\cS \subset 2^{[k]}$.
	A probability distribution $P$ over $\{X_i\}_{i \in [k]}$ is a graphical model with structure $\cS$ if there exist functions $\{\psi_S\}_{S \in \cS}$ and normalizing constant $Z$ such that
	\[
		P\Big(\{x_i\}_{i \in [k]}\Big) = \frac{1}{Z} 
		\prod_{S \in \cS} \psi_S\Big(\{x_i\}_{i \in S}\Big).
	\]
\end{defin}

A standard measure of complexity for graphical models is the treewidth of the underlying graphical model structure $G_{\cS}$ because this captures not just the storage complexity, but also the algorithmic complexity of performing fundamental tasks such as computing the mode, log-partition function, and marginal distributions~\citep{KolFri09,wainwright2008graphical}. There are a number of equivalent definitions of treewidth~\citep{bodlaender2007treewidth}. Each requires defining intermediate combinatorial concepts. We recall here the definition that is based on the concept of a junction tree because this is perhaps the most standard definition in the graphical models community.

\begin{defin}[Junction tree, treewidth]\label{def:treewidth}
	A junction tree $T = (V_T, E_T, \{B_u\}_{u \in V_T})$ for a graph $G = (V,E)$ consists of a tree $(V_T,E_T)$ and a set of bags $\{B_u \subseteq V\}_{u \in V_T}$ satisfying:
	\begin{itemize}
		\item For each variable $i \in V$, the set of nodes $U_i = \{u \in V_T : i \in B_u\}$ induces a subtree of $T$.
		\item For each edge $e \in E$, there is some bag $B_u$ containing both endpoints of $e$.
	\end{itemize}
	The width of the junction tree is one less than the size of the largest bag, i.e., is $\max_{u \in V_T} |B_u| - 1$.
	The treewidth of a graph is the width of its minimum-width junction tree.
\end{defin}

See Figures~\ref{fig:graphical-path},~\ref{fig:graphical-window}, and~\ref{fig:graphical:cycle} for illustrated examples. 
\par We now formally recall the definition of graphical structure for $\MOT$.

\begin{figure}
	\centering
	\begin{tabular}{cc}
		\begin{tabular}{c}\includegraphics[scale=0.85]{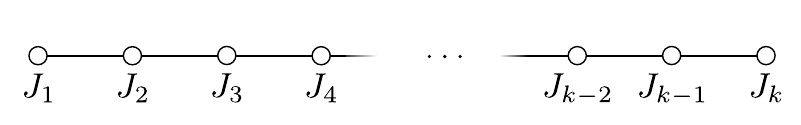}\end{tabular} &\begin{tabular}{c}\includegraphics[scale=0.85]{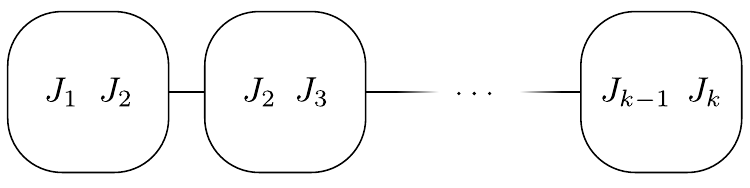} \end{tabular} \\
		\begin{tabular}{c}(a) Path graph.\end{tabular} & \begin{tabular}{c}(b) Junction tree.\end{tabular}
	\end{tabular}
	\caption{The path graph (left) has treewidth $1$ because the corresponding junction tree (right) has bags of size at most $2$.}
	\label{fig:graphical-path}
\end{figure}

\begin{figure}
	\centering
	\begin{tabular}{cc}
		\begin{tabular}{c}\includegraphics[scale=0.85]{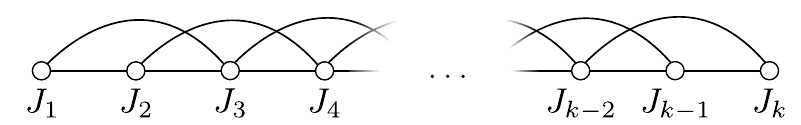}\end{tabular} &\begin{tabular}{c}\includegraphics[scale=0.85]{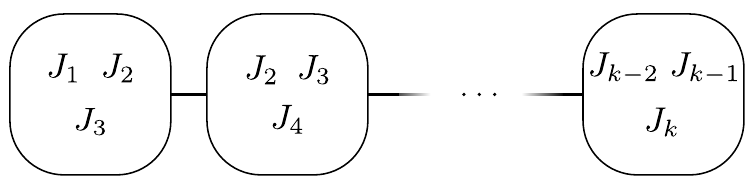} \end{tabular} \\
		\begin{tabular}{c}(a) Window graph with window size $2$.\end{tabular} & \begin{tabular}{c}(b) Junction tree.\end{tabular}
	\end{tabular}
	\caption{The graph that has an edges between all vertices of distance at most two when ordered sequentially (left) has treewidth $2$ because the corresponding junction tree (right) has bags of size at most $3$.}
	\label{fig:graphical-window}
\end{figure}

\begin{figure}
	\centering
	\begin{tabular}{cc}
		\begin{tabular}{c}\includegraphics[scale=0.85]{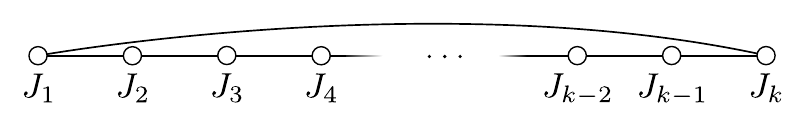}\end{tabular} &\begin{tabular}{c}\includegraphics[scale=0.85]{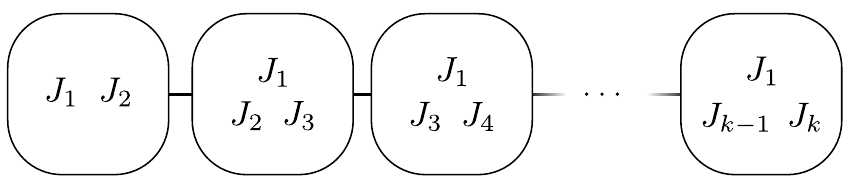} \end{tabular} \\
		\begin{tabular}{c}(a) Cycle graph.\end{tabular} & \begin{tabular}{c}(b) Junction tree.\end{tabular}
	\end{tabular}
	\caption{The cycle graph (left) has treewidth $2$ because the corresponding junction tree (right) has bags of size at most $3$.}
	\label{fig:graphical:cycle}
\end{figure}

\begin{defin}[Graphical structure for $\MOT$]\label{def:graphical}
	An $\MOT$ cost tensor $C \in \Rntk$ has graphical structure with treewidth $\omega$ if there is a graphical model structure $\cS \subset 2^{[k]}$ and functions $\{f_S\}_{S \in \cS}$ such that
	\begin{equation}\label{eq:decomposableC}
		C_{\jvec} = \sum_{S \in \cS} f_S\Big(\{j_i\}_{i \in S}\Big), 
		\qquad \forall \jvec := (j_1,\dots,j_k) \in [n]^k,
	\end{equation} 
	and such that the graph $G_{\cS}$ has treewidth $\omega$.
\end{defin}

We make three remarks about this structure. First, note that the functions $\{f_S\}_{S \in \cS}$ can be arbitrary so long as the corresponding graphical model structure has treewidth at most $\omega$. 

\par Second, if Definition~\ref{def:graphical} did not constrain the treewidth $\omega$, then every tensor $C$ would trivially have graphical structure with maximal treewidth $\omega = k -1$ (take $\cS$ to be the singleton containing $[k]$, $G_{\cS}$ to be the complete graph, and $f_{[k]}$ to be $C$). Just like all previous algorithms, our algorithms have runtimes that depend exponentially (only) on the treewidth of $G_{\cS}$. This is optimal in the sense that unless $\P = \NP$, there is no algorithm with jointly polynomial runtime in the input size and treewidth~\citep{AltBoi20hard}. We also point out that in all current applications of graphically structured $\MOT$, the treewidth is either $1$ or $2$, see \S\ref{ssec:intro:prev}.

\par Third, as in all previous work on graphically structured $\MOT$, we make the natural assumptions that the cost $C$ is input implicitly through the functions $\{f_S\}_{S \in \cS}$, and that each function $f_S$ can be evaluated in polynomial time, since otherwise graphical structure is useless for designing polynomial-time algorithms. In all applications in the literature, these two basic assumptions are always satisfied. Note also that if the treewidth of the graphical structure is constant, then there is a linear-time algorithm to compute the treewidth and a corresponding minimum-width junction tree~\citep{bodlaender1996linear}.

\subsection{Polynomial-time algorithms}\label{ssec:graphical:alg}

By our oracle reductions in \S\ref{sec:algs}, in order to design polynomial-time algorithms for $\MOT$ with graphical structure, it suffices to design polynomial-time algorithms for the $\MinO$, $\AMinO$, or $\SMinO$ oracles. This follows directly from classical algorithmic results in the graphical models literature~\citep{KolFri09}.

\begin{theorem}[Polynomial-time algorithms for the $\MinO$, $\AMinO$, and $\SMinO$ oracles for costs with graphical structure]\label{thm:graphical-oracles}
	Let $C \in \Rntk$ be a cost tensor that has graphical structure with constant treewidth $\omega$ (see Definition~\ref{def:graphical}). Then the $\MinO_C$, $\AMinO_C$, and $\SMinO_C$ oracles can be computed in $\poly(n,k)$ time.
\end{theorem}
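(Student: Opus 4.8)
The plan is to reduce each of the three oracles to a standard inference task on an associated graphical model and then invoke classical junction-tree algorithms. Fix a cost $C$ with graphical structure, so that $C_{\jvec} = \sum_{S \in \cS} f_S(\jvec_S)$ with $G_{\cS}$ of treewidth $\omega$, and fix oracle weights $p = (p_1,\dots,p_k)$. The key observation is that the extra linear term $-\sum_{i=1}^k [p_i]_{j_i}$ is itself a sum of interactions supported on the singleton sets $\{i\}$, so that
\[
C_{\jvec} - \sum_{i=1}^k [p_i]_{j_i} \;=\; \sum_{S \in \cS'} g_S(\jvec_S),
\]
where $\cS' := \cS \cup \{\{1\},\dots,\{k\}\}$, and $g_S := f_S$ for every $S \in \cS$ with $|S| \geq 2$, while for the singletons $g_{\{i\}}(j) := -[p_i]_j$ (adding $f_{\{i\}}(j)$ to this if $\{i\}$ already belonged to $\cS$). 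Since adding singleton sets introduces no new edges, $G_{\cS'}$ has the same edge set as $G_{\cS}$, hence the same treewidth $\omega$ (when $\omega = 0$ the graph is edgeless, the problem separates coordinatewise, and all three oracles are trivial). Consequently
\[
\MinO_C(p) = \min_{\jvec \in [n]^k} \sum_{S \in \cS'} g_S(\jvec_S),
\qquad
\SMinO_C(p,\eta) = -\tfrac1\eta \log \sum_{\jvec \in [n]^k} \prod_{S \in \cS'} \psi_S(\jvec_S),
\]
with $\psi_S := \exp[-\eta g_S]$; that is, $\MinO_C$ asks for the minimum energy (equivalently the mode) of the graphical model with potentials $\{\psi_S\}$, and $\SMinO_C$ asks for $-\eta^{-1}$ times its log-partition function.

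Given this correspondence the algorithm is immediate. Since $\omega$ is constant, we first compute in $\poly(k)$ time a width-$\omega$ junction tree $T$ for $G_{\cS'}$~\citep{bodlaender1996linear}. For $\MinO_C$, run min-sum (max-product) message passing on $T$ to evaluate $\min_{\jvec}\sum_{S} g_S(\jvec_S)$: each bag contains at most $\omega+1$ vertices, so each message update enumerates at most $n^{\omega+1}$ joint assignments, and there are $O(k)$ bags, giving total time $\poly(n,k)$ for constant $\omega$~\citep{KolFri09,wainwright2008graphical}. For $\SMinO_C$, run sum-product message passing on the same tree to compute the partition function $Z = \sum_{\jvec}\prod_S \psi_S(\jvec_S)$, then return $-\eta^{-1}\log Z$, again in $\poly(n,k)$ time. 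For $\AMinO_C$ there is nothing new to do: the exact $\MinO_C$ algorithm is a fortiori a valid implementation for any accuracy $\eps > 0$ (Remark~\ref{rem:oracles:min-amin}).

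The remaining points require care but are bookkeeping rather than genuine obstacles. First, one must verify that folding the weights $p$ into singleton potentials does not increase the treewidth — this is exactly the observation above that singletons add no edges, which is why it matters that $\omega \geq 1$, the $\omega=0$ case being dispatched separately. Second, for $\SMinO_C$ the potentials $\psi_S = \exp[-\eta g_S]$ may be astronomically large or small when $\eta$ is large, so the sum-product recursion should be carried out entirely in the log domain using the $\smineta$ / log-sum-exp primitive; this keeps every intermediate quantity of $\poly(n,k)$ bit-complexity and automatically respects the $-\infty$ conventions for entries of $p$ that equal $-\infty$ (such entries simply zero out the corresponding singleton factor). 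With these two caveats handled, the claimed $\poly(n,k)$ runtimes follow directly from the standard analysis of junction-tree inference, completing the proof.
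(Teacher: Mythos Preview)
Your proof is correct and follows essentially the same approach as the paper: fold the weights $p$ into singleton potentials (which leaves the graphical model structure and treewidth unchanged), identify $\MinO$ with the mode and $\SMinO$ with the log-partition function of the resulting graphical model, and invoke classical max-product/sum-product junction-tree inference; $\AMinO$ then follows from $\MinO$ via Remark~\ref{rem:oracles:min-amin}. Your additional remarks on log-domain computation and the $-\infty$ convention are sensible implementation details that the paper leaves implicit, and your separate handling of $\omega=0$ is harmless but unnecessary (singletons add no edges regardless, so the treewidth argument goes through uniformly).
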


\begin{algorithm}
	\caption{Polynomial-time algorithm for $\MinO$ for graphically structured costs (Theorem~\ref{thm:graphical-oracles}).}
	\hspace*{\algorithmicindent} \textbf{Input:} 
	Cost $C$ with graphical structure, matrix $p \in \R^{n \times k}$  \\
	\hspace*{\algorithmicindent} \textbf{Output:} Solution to $\MinO_C(p)$
	\begin{algorithmic}[1]
		\State $\jvec \gets$ mode of the graphical model $P$ in~\eqref{eq:pf-graphical-oracles:P} \Comment{Using the classical max-product algorithm~\citep[\S13.3]{KolFri09}}
		\State Return $C_{\jvec} - \sum_{i=1}^k [p_i]_{j_i}$ \Comment{Value of the $\MinO_C(p)$ oracle}
	\end{algorithmic}
	\label{alg:graphical-min}
\end{algorithm}

\begin{algorithm}
	\caption{Polynomial-time algorithm for $\SMinO$ for graphically structured costs (Theorem~\ref{thm:graphical-oracles}).}
	\hspace*{\algorithmicindent} \textbf{Input:} 
	Cost $C$ with graphical structure, matrix $p \in \Rbar^{n \times k}$, regularization $\eta > 0$  \\
	\hspace*{\algorithmicindent} \textbf{Output:} Solution to $\SMinO_C(p,\eta)$
	\begin{algorithmic}[1]
		\State $Z \gets $ partition function of the graphical model $P$ in~\eqref{eq:pf-graphical-oracles:P} \Comment{Using the classical sum-product algorithm~\citep[\S10.2]{KolFri09}}
		\State Return $-\eta^{-1} \log Z$ \Comment{Value of the $\SMinO_C(p,\eta)$ oracle}
	\end{algorithmic}
	\label{alg:graphical-smin}
\end{algorithm}

\begin{proof}
	Consider input $p$ for the oracles. Let $P$ denote the probability distribution on $[n]^k$ given by
	\begin{align}
		P(\jvec) = \frac{1}{Z} \exp\left(-\eta\left(C_{\jvec} - \sum_{i=1}^k [p_i]_{j_i}\right)\right), \qquad \forall \jvec \in [n]^k,
		\label{eq:pf-graphical-oracles:P}
	\end{align}
	where $Z = \sum_{\jvec \in [n]^{[k]}} \exp(-\eta(C_{\jvec} - \sum_{i=1}^k [p_i]_{j_i}))$ ensures $P$ is normalized. Observe that the $\MinO_C$ oracle amounts\footnote{In fact, for the purpose of computing $\MinO_C$, the distribution $P(\jvec)$ can be defined using any $\eta > 0$.} to computing the mode of the distribution $P$ because
	$\MinO_C(p) = C_{\jvec} - \sum_{i=1}^k [p_i]_{j_i}$, where $\jvec \in [n]^k$ is a maximizer of $P_{\jvec}$. Also, the $\SMinO_C$ oracle amounts to computing the partition function $Z$ because
	$\SMinO_C(p) = - \eta^{-1} \log Z$.
	Thus 
	it suffices to compute the mode and partition function of $P$ in polynomial time. (The $\AMinO_C$ oracle follows from the $\MinO_C$ oracle by Remark~\ref{rem:oracles:min-amin}).
	
	\par To this end, observe that by assumption on $C$, there is a graphical model structure $\cS \in 2^{[k]}$ and functions $\{f_S\}_{S \in \cS}$ such that the corresponding graph $G_{\cS}$ has treewidth $\omega$ and the distribution $P$ factors as
	\[
		P(\jvec)
		=
			\exp\left(-\eta\left(\sum_{S \in \cS} f_S\left( \{j_i\}_{i \in S} \right) - \sum_{i=1}^k [p_i]_{j_i}\right)\right).
	\]
	It follows that $P$ is a graphical model with respect to the same graphical model structure $\cS$ because the ``vertex potentials'' $\exp(\eta [p_i]_{j_i})$ do not affect the underlying graphical model structure. Thus $P$ is a graphical model with constant treewidth $\omega$, so we may compute the mode and partition function of $P$ in $\poly(n,k)$ time using, respectively, the classical max-product and sum-product algorithms~\citep[Chapters 13.3 and 10.2]{KolFri09}. For convenience, pseudocode summarizing this discussion is provided in Algorithms~\ref{alg:graphical-min} and~\ref{alg:graphical-smin}.
\end{proof}

An immediate consequence of Theorem~\ref{thm:graphical-oracles} combined with our oracle reductions is that all candidate $\MOT$ algorithms in \S\ref{sec:algs} can be efficiently implemented for $\MOT$ problems with graphical structure. From a theoretical perspective, $\ELLIP$ gives the best guarantee since it produces an exact, sparse solution.

\begin{cor}[Polynomial-time algorithms for $\MOT$ problems with graphical structure]\label{cor:graphical-algs}
	Let $C \in \Rntk$ be a cost tensor that has graphical structure with constant treewidth $\omega$ (see Definition~\ref{def:graphical}). Then:
	\begin{itemize}
		\item The $\ELLIP$ algorithm in \S\ref{ssec:algs:ellip} computes an exact solution to $\MOT_C$ in $\poly(n,k)$ time.
		\item The $\MWU$ algorithm in \S\ref{ssec:algs:mwu} computes an $\eps$-approximate solution to $\MOT_C$ in $\poly(n,k,\Cmax/\eps)$ time. 
		\item The $\Sink$ algorithm in \S\ref{ssec:algs:sink} computes an $\eps$-approximate solution to $\MOT_C$ in $\poly(n,k,\Cmax/\eps)$ time.
		\item The $\COLGEN$ algorithm in \S\ref{sssec:ellip:cg} can be run for $T$ iterations in $\poly(n,k,T)$ time. 
	\end{itemize}
	Moreover, $\ELLIP$, $\MWU$, and $\COLGEN$ output a polynomially sparse tensor, whereas $\Sink$ outputs a fully dense tensor through the implicit representation described in \S\ref{sssec:sink:alg}.
\end{cor}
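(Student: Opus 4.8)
The plan is to prove Corollary~\ref{cor:graphical-algs} by simply combining the oracle-complexity result for graphically structured costs (Theorem~\ref{thm:graphical-oracles}) with the four algorithm-to-oracle reductions established in \S\ref{sec:algs}. Each of the four bullet points follows by plugging the relevant oracle---$\MinO_C$, $\AMinO_C$, or $\SMinO_C$---into the corresponding algorithmic guarantee, so there is no new ingredient to introduce; the work is only in matching up which oracle each algorithm needs and verifying that the accuracy parameters line up.

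First, for $\ELLIP$ I would invoke Theorem~\ref{thm:graphical-oracles}, which gives a $\poly(n,k)$-time algorithm for $\MinO_C$ whenever $C$ has constant-treewidth graphical structure (concretely, by reducing $\MinO_C$ to computing the mode of the associated graphical model via the max-product algorithm). Feeding this into the implication ``(iii) $\Rightarrow$ (i)'' of Theorem~\ref{thm:ellip} immediately yields that $\ELLIP$ solves $\MOT_C$ exactly in $\poly(n,k)$ time and returns a vertex solution represented as a sparse tensor with at most $nk-k+1$ non-zeros. For $\COLGEN$, the same $\poly(n,k)$-time $\MinO_C$ oracle plugged into Theorem~\ref{thm:colgen} shows that $T$ iterations run in $\poly(n,k,T)$ time, and upon termination the returned solution is a vertex solution with at most $nk-k+1$ non-zeros.

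Second, for $\MWU$: Theorem~\ref{thm:graphical-oracles} gives a $\poly(n,k)$-time algorithm for $\AMinO_C$ at any accuracy---indeed it computes $\MinO_C$ exactly, which answers $\AMinO_C$ by Remark~\ref{rem:oracles:min-amin}---so in particular $\AMinO_C$ can be solved to the polynomially small accuracy demanded by the $\MWU$ analysis (and by the $\AMinO \leftrightarrow \ArgAMinO$ reduction of Lemma~\ref{lem:argaminequalsamino}). The implication ``(iii) $\Rightarrow$ (i)'' of Theorem~\ref{thm:mwumotp} then gives that $\MWU$ computes an $\eps$-approximate solution to $\MOT_C$ in $\poly(n,k,\Cmax/\eps)$ time, returning a tensor with $\poly(n,k,\Cmax/\eps)$ non-zeros. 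Third, for $\Sink$: Theorem~\ref{thm:graphical-oracles} gives a $\poly(n,k)$-time algorithm for $\SMinO_C$ (via the sum-product algorithm for the log-partition function), which by Lemma~\ref{lem:marg-smin} yields a $\poly(n,k)$-time algorithm for $\MARG_C$; Theorem~\ref{thm:sink-mot:smin} (equivalently Theorem~\ref{thm:sink-mot:marg}) then gives that $\Sink$ computes an $\eps$-approximate solution in $\poly(n,k,\Cmax/\eps)$ time, output through the $O(nk)$-size implicit representation of \S\ref{sssec:sink:alg}, which corresponds to a fully dense tensor.

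I expect essentially no obstacle here: the corollary is a direct consequence of already-established results, and the only points requiring a sentence of care are (a) checking that the accuracy parameters demanded by $\MWU$ are polynomially small, so that the exact $\MinO_C$ oracle of Theorem~\ref{thm:graphical-oracles} comfortably suffices; and (b) recalling that all of these runtimes hide a factor that is exponential in the treewidth $\omega$, which is harmless since $\omega$ is assumed constant, and that a minimum-width junction tree can be found in linear time~\citep{bodlaender1996linear} so no structural preprocessing is an issue. The sparsity claims are inherited verbatim from Theorems~\ref{thm:ellip},~\ref{thm:mwumotp}, and~\ref{thm:colgen} for the first three algorithms, and the density of the $\Sink$ output follows from the form~\eqref{eq:sink-rmot} of its solution.
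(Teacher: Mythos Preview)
Your proposal is correct and follows exactly the same approach as the paper's proof: combine the polynomial-time oracle implementations of Theorem~\ref{thm:graphical-oracles} with the algorithm-to-oracle reductions of Theorems~\ref{thm:ellip},~\ref{thm:mwumotp},~\ref{thm:sink-mot:smin}, and~\ref{thm:colgen}. The paper's proof is in fact just this one sentence, whereas you have spelled out a few more details (which oracle feeds which algorithm, the accuracy matching for $\MWU$, etc.), but there is no substantive difference.
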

\begin{proof}
	Combine the polynomial-time implementations of the oracles in Theorem~\ref{thm:graphical-oracles} with the polynomial-time algorithm-to-oracle reductions in Theorems~\ref{thm:ellip},~\ref{thm:mwumotp},~\ref{thm:sink-mot:smin}, and~\ref{thm:colgen}, respectively.
\end{proof}

\subsection{Application vignette: fluid dynamics}\label{ssec:graphical:fluid}

In this section, we numerically demonstrate our new results for graphically structured $\MOT$---namely the ability to compute exact, sparse solutions in polynomial time (Corollary~\ref{cor:graphical-algs}). We illustrate this on the problem of computing generalized Euler flows---an $\MOT$ application which has received significant interest and which was historically the motivation of $\MOT$, see e.g.,~\citep{BenCarCut15,BenCarNen16,Bre08,Bre89,Bre93,Bre99}. This $\MOT$ problem is already known to be tractable via a popular, specially-tailored modification of \Sink~\citep{BenCarCut15}---which can be interpreted as implementing $\Sink$ using graphical structure~\citep{h20gm,teh2002unified}. However, that algorithm is based on $\Sink$ and thus unavoidably produces solutions that are low-precision (due to $\poly(1/\eps)$ runtime dependence), fully dense (with $n^k$ non-zero entries), and have well-documented numerical precision issues. We offer the first polynomial-time algorithm for computing exact and/or sparse solutions.

\par We briefly recall the premise of this $\MOT$ problem; for further background see~\citep{Bre08,BenCarCut15}. An incompressible fluid (e.g., water) is modeled by $n$ particles which are uniformly distributed in space (due to incompressibility) at all times $t \in \{1, \dots,k+1\}$. We observe each particle's location at initial time $t=1$ and final time $t=k+1$. The task is to infer the particles' locations at all intermediate times $t \in \{2, \dots, k\}$, and this is modeled by an $\MOT$ problem as follows. 
\par Specifically, the locations of the fluid particles are discretized to points $\{x_{j}\}_{j \in [n]} \subset \R^d$, and $\sigma$ is a known permutation on this set that encodes the relation between each initial location $x_j$ at time $t=1$ and final location $\sigma(x_j)$ at time $t = k+1$. The total movement of a particle that takes the trajectory $x_{j_1},x_{j_2},\ldots,x_{j_k},\sigma(x_{j_1})$ is given by
\begin{equation}
	C_{j_1,\ldots,j_k} = \|\sigma(x_{j_1}) - x_{j_k}\|^2 + \sum_{t=1}^{k-1} \|x_{j_{t+1}} - x_{j_t}\|^2, \label{eq:Cfluiddynamicscost}
\end{equation}
By the principle of least action, the generalized Euler flow problem of inferring the most likely trajectories of the fluid particles is given by the solution to the $\MOT$ problem with this cost $C$ and uniform marginals $\mu_t = \bone_n/n \in \Delta_n$ which impose the constraint that the fluid is incompressible.

\begin{cor}[Exact, sparse solutions for generalized Euler flows]\label{cor:fluiddynamics}
	The $\MOT$ problem with cost~\eqref{eq:Cfluiddynamicscost} can be solved in $d \cdot \poly(n,k)$ time. The solution is returned as a sparse tensor with at most $nk-k+1$ non-zeros.
\end{cor}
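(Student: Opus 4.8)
The plan is to reduce Corollary~\ref{cor:fluiddynamics} to Corollary~\ref{cor:graphical-algs} by exhibiting that the fluid-dynamics cost \eqref{eq:Cfluiddynamicscost} has graphical structure of constant treewidth, and then tracking the dependence on the ambient dimension $d$ separately. First I would write down the graphical decomposition. Take $\cS = \{\{t,t+1\} : t \in [k-1]\} \cup \{\{1,k\}\} \subset 2^{[k]}$, which is exactly the edge set of the cycle on $k$ vertices, and set $f_{\{t,t+1\}}(j_t,j_{t+1}) := \|x_{j_{t+1}} - x_{j_t}\|^2$ for $t \in [k-1]$ and $f_{\{1,k\}}(j_1,j_k) := \|\sigma(x_{j_1}) - x_{j_k}\|^2$. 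Then $C_{\jvec} = \sum_{S \in \cS} f_S(\jvec_S)$ for every $\jvec \in [n]^k$, matching Definition~\ref{def:graphical}, and the associated graph $G_{\cS}$ is the $k$-cycle.

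Next I would check that the $k$-cycle has treewidth $2$ (this is exactly Figure~\ref{fig:graphical:cycle}). A width-$2$ junction tree is the path of bags $B_1 = \{1,2,k\},\, B_2 = \{2,3,k\},\, \dots,\, B_{k-2} = \{k-2,k-1,k\}$: every bag has size $3$; vertex $k$ belongs to all bags and vertices $1$ and $k-1$ to one bag each, while each $t \in \{2,\dots,k-1\}$ lies in exactly the two consecutive bags $B_{t-1},B_t$, so the running-intersection property of Definition~\ref{def:treewidth} holds; and each cycle edge $\{t,t+1\}$, as well as $\{1,k\}$, is contained in some $B_i$. Hence the treewidth $\omega$ equals the constant $2$, and Theorem~\ref{thm:graphical-oracles} applies.

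Then I would make the $d$-dependence explicit. The cost is specified implicitly via the points $\{x_j\}_{j\in[n]} \subset \R^d$ and the permutation $\sigma$, and each $f_S$ is a squared Euclidean distance in $\R^d$, evaluable in $O(d)$ time; precomputing a lookup table of all $O(n^2 k)$ values $f_S(\jvec_S)$ therefore takes $O(d n^2 k)$ time, after which the max-product/sum-product subroutines underlying Theorem~\ref{thm:graphical-oracles} (and the outer $\ELLIP$ loop of Corollary~\ref{cor:graphical-algs}) run in $\poly(n,k)$ time with no further dependence on $d$. Invoking the $\ELLIP$ branch of Corollary~\ref{cor:graphical-algs} with $\omega = 2$ and the uniform marginals $\mu_t = \bone_n/n$ then yields an \emph{exact} solution to the generalized-Euler-flow $\MOT$ problem in $d \cdot \poly(n,k)$ total time, returned as a vertex of $\Coup$ and hence (by Lemma~\ref{lem:mot-sparse}) a sparse tensor with at most $nk - k + 1$ nonzeros. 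The only genuinely non-mechanical point is the bookkeeping in this last paragraph—confirming that $d$ enters only through the $O(dn^2k)$ tabulation of local interactions and that every downstream step is dimension-free—while the treewidth-$2$ verification is immediate from Figure~\ref{fig:graphical:cycle}.
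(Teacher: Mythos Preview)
Your proposal is correct and follows essentially the same route as the paper: express the cost as a graphical model on the $k$-cycle (treewidth $2$, as in Figure~\ref{fig:graphical:cycle}), tabulate the $O(n^2 k)$ pairwise squared distances in $O(dn^2k)$ time, and then invoke the $\ELLIP$ branch of Corollary~\ref{cor:graphical-algs} to obtain an exact vertex solution with at most $nk-k+1$ nonzeros. The paper's proof is terser—citing Figure~\ref{fig:graphical:cycle} for the treewidth bound rather than writing out the junction tree—but the argument is identical.
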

\begin{proof}
	This cost tensor $C$ can be expressed in graphical form $C_{\jvec} = \sum_{S \in \cS} f_S(\{j_i\})$ where $\cS$ consists of the sets $\{1,2\}, \dots, \{k-1,k\}$ of adjacent time points as well as the set $\{1,k\}$. Moreover, each function $f_S : [n]^2 \to \R$ can be computed in $O(dn^2)$ time since this simply requires computing $\|x_j - x_{j'}\|^2$ for $n^2$ pairs of points $x_j,x_{j'} \in \R^d$. Once this graphical representation is computed, Corollary~\ref{cor:graphical-algs} implies a $\poly(n,k)$ time algorithm for this $\MOT$ problem because the graphical model structure $\cS$ is a cycle graph and thus has treewidth $2$ (cf., Figure~\ref{fig:graphical:cycle}).
\end{proof}

\begin{figure}
	\centering
	\begin{tabular}{c@{}c@{\hskip 0.5in}c@{}c}
	{\renewcommand{\arraystretch}{1.7}\begin{tabular}{@{}c@{}} t = 1 \\ t = 2 \\ t = 3 \\ t = 4 \\ t = 5 \\ t = 6 \\ t = 7 \end{tabular}}& \begin{tabular}{@{}c@{}} \includegraphics[scale=0.25]{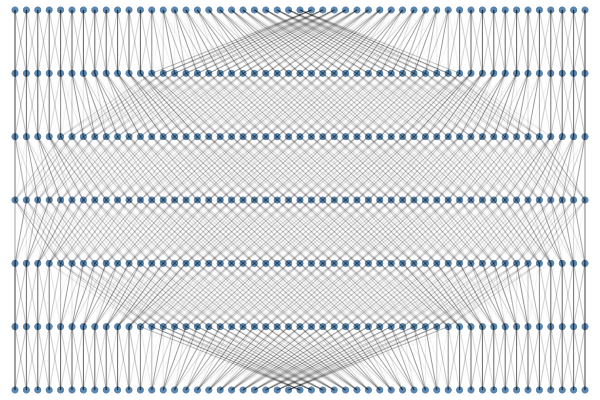} \end{tabular} & {\renewcommand{\arraystretch}{1.7}\begin{tabular}{@{}c@{}} t = 1 \\ t = 2 \\ t = 3 \\ t = 4 \\ t = 5 \\ t = 6 \\ t = 7 \end{tabular}} & \begin{tabular}{@{}c@{}}\includegraphics[scale=0.25]{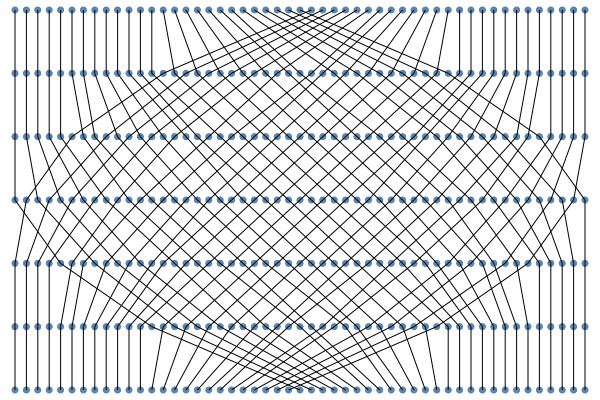}\end{tabular}
	\\
	& $\Sink$ & & $\COLGEN$ 
	\end{tabular}
	\caption{
		Transport maps computed by the fast implementation of $\Sink$~\citep{BenCarCut15} (left) and our $\COLGEN$ implementation (right) on a standard fluid dynamics benchmark problem in dimension $d = 1$ \citep{Bre08}. The pairwise transport maps between successive timesteps are plotted with opacity proportional to the mass. The $\Sink$ algorithm is run at the highest precision (i.e., smallest regularization) before serious numerical precision issues (NaNs). It returns a dense, approximate solution in 2.25 seconds.\protect\footnotemark\ $\COLGEN$ returns an exact, sparse solution in 9.52 seconds. Furthermore, in this particular problem instance, the $\COLGEN$ method returns a Monge solution, i.e., the sparsity is $n$ so that the particles never split in the computed trajectories.
	}
	\label{fig:fluids}
\end{figure}\footnotetext{All experiments in this paper are run on a standard-issue Apple MacBook Pro 2020 laptop with an M1 Chip.}

Figure~\ref{fig:fluids} illustrates how the exact, sparse solutions found by our new algorithm provide visually sharper estimates than the popular modification of $\Sink$ in~\citep{BenCarCut15}, which blurs the trajectories. The latter is the state-of-the-art algorithm in the literature and in particular is the only previously known non-heuristic algorithm that has polynomial-time guarantees. 
Note that this algorithm is identical to implementing $\Sink$ by exploiting the graphical structure to perform exact marginalization efficiently~\citep{teh2002unified,h20gm}.

\par The numerical simulation is on a standard benchmark problem used in the literature (see e.g., \citep[Figure 9]{BenCarCut15} and~\citep[Figure 2]{Bre08})
in which the particle at initial location $x \in [0,1]$ moves to final location $\sigma(x) = x + \half \pmod{1}$. This is run with $k = 6$ and marginals $\mu_1 = \dots = \mu_k$ uniformly supported on $n = 51$ positions in $[0,1]$. See Appendix~\ref{app:exp} for numerics on other standard benchmark instances. 
Note that this amounts to solving an $\MOT$ LP with $n^k = 51^6 \approx 1.8 \times 10^{10}$ variables, which is infeasible for standard LP solvers. Our algorithm is the first to compute exact solutions for problem instances of this scale. 

\par Two important remarks. First, since this $\MOT$ problem is a discretization of the underlying PDE, an exact solution is of course not necessary; however, there is an important---even qualitative---difference between low-precision solutions (computable with $\poly(1/\eps)$ runtime) and high-precision solutions (computable with $\polylog(1/\eps)$ runtime) for the discretized problem. Second, a desirable feature of $\Sink$ that should be emphasized is its practical scalability, which might make it advantageous for problems where very fine discretization is required. It is an interesting direction of practical relevance to develop algorithms that can compute high-precision solutions at a similarly large scale in practice (see the discussion in \S\ref{sec:discussion}).

\section{Application: $\MOT$ problems with set-optimization structure}\label{sec:binary}

In this section, we consider $\MOT$ problems whose cost tensors $C$ take values $0$ and $1$---or more generally any two values, by a straightforward reduction\footnote{If $C$ takes two values $a < b$, then define the tensor $\tilde{C}$ with $\{0,1\}$-entries by $\tilde{C}_{\jvec} = (C_{\jvec} - a)/(b-a)$. It is straightforward to see that the $\MOT$ problems with costs $C$ and $\tilde{C}$ have identical solutions.}.
Such $\MOT$ problems arise naturally in applications where one seeks to minimize or maximize the probability that some event occurs given marginal probabilities on each variable (see Example~\ref{ex:binary}). We establish that this general class of $\MOT$ problems can be solved in polynomial time under a condition on the sparsity pattern of $C$ that is often simple to check due its connection to classical combinatorial optimization problems.
\par The section is organized as follows. In \S\ref{ssec:binary:setup} we formally describe this setup
and discuss why it is incomparable to all other structures discussed in this paper. In \S\ref{ssec:binary:alg}, we show that for costs with this structure, the $\MinO$, $\AMinO$, and $\SMinO$ oracles can be implemented in polynomial time; from this it immediately follows that the $\ELLIP$, $\MWU$, $\Sink$, and $\COLGEN$ algorithms discussed in part 1 of this paper can be implemented in polynomial time. In \S\ref{ssec:binary:rel}, we illustrate our results via a case study on network reliability.

\subsection{Setup}\label{ssec:binary:setup}

\begin{example}[Motivation for binary-valued $\MOT$ costs: minimizing/maximizing probability of an event]\label{ex:binary}
	Let $S \subset [n]^k$. If $C_{\jvec} = \mathds{1}[\jvec \in S]$, then the $\MOT_C$ problem amounts to minimizing the probability that event $S$ occurs, given marginals on each variable. On the other hand, if $C_{\jvec} = \mathds{1}[\jvec \notin S]$, then the $\MOT_C$ problem amounts to maximizing the probability that event $S$ occurs since
	\[
		\MOT_C(\mu_1,\dots,\mu_k) 
		=
		\min_{P \in \Coup} \Prob_{\jvec \sim P}[\jvec \notin S]
		=
		1 - \max_{P \in \Coup} \Prob_{\jvec \sim P}[\jvec \in S].
	\] 
\end{example}

Even if the cost is binary-valued, there is no hope to solve $\MOT$ in polynomial time without further assumptions---essentially because in the worst case, any algorithm must query all $n^k$ entries if $C$ is a completely arbitrary $\{0,1\}$-valued tensor. 
\par We show that $\MOT$ is polynomial-time solvable under the general and often simple-to-check condition that the $\MinO$, $\AMinO$, and $\SMinO$ oracles introduced in \S\ref{sec:oracles} are polynomial-time solvable when restricted to the set $S$ of indices $\jvec \in [n]^k$ for which $C_{\jvec} = 0$. For simplicity, our definition of these set oracles removes the cost $C_{\jvec}$ as it is constant on $S$. Of course it is also possible to remove the negative sign in $-p$ by re-parameterizing the inputs as $w = -p$; however, we keep this notation in order to parallel the original oracles.

\begin{defin}[$\MinO$ set oracle]\label{def:oracle-min-S}
	Let $S \subset [n]^k$. For weights $p = (p_1,\ldots,p_k) \in \R^{n \times k}$, $\MinOCS(p)$ returns
	\[
	\min_{\jvec \in S} - \sum_{i=1}^k [p_i]_{j_i}.
	\]
\end{defin}

\begin{defin}[$\AMinO$ set oracle]\label{def:oracle-amin-S}
	Let $S \subset [n]^k$. For weights $p = (p_1,\ldots,p_k) \in \R^{n \times k}$ and accuracy $\eps > 0$, $\AMinOCS(p,\eps)$ returns $\MinOCS(p)$ up to additive error $\eps$.
\end{defin}

\begin{defin}[$\SMinO$ set oracle]\label{def:oracle-smin-S}
	Let $S \subset [n]^k$. For weights $p = (p_1,\ldots,p_k) \in \Rbar^{n \times k}$ and regularization parameter $\eta > 0$, $\SMinOCS(p, \eta)$ returns 
	\[
	\displaystyle\smineta_{\jvec \in S} - \sum_{i=1}^k [p_i]_{j_i}.
	\]
\end{defin}

The key motivation behind these set oracle definitions (aside from the syntactic similarity to the original oracles) is that they encode the problem of (approximately) finding the min-weight object in $S$. This opens the door to combinatorial applications of $\MOT$ because finding the min-weight object in $S$ is well-known to be polynomial-time solvable for many ``combinatorial-structured'' sets $S$ of interest---e.g., the set $S$ of cuts in a graph, or the set $S$ of independent sets in a matroid. See \S\ref{ssec:binary:rel} for fully-detailed applications.

\begin{defin}[Set-optimization structure for $\MOT$]\label{def:binary}
	An $\MOT$ cost tensor $C \in \Rntk$ has exact, approximate, or soft set-optimization structure of complexity $\beta$ if
	\[
		C_{\jvec} = \mathds{1}[\jvec \notin S]
	\]
	for a set $S \subset [n]^k$ for which there is an algorithm solving $\MinOCS$, $\AMinOCS$, or $\SMinOCS$, respectively, in $\beta$ time.
\end{defin}

We make two remarks about this structure.

\begin{remark}[Only require set oracle for $C^{-1}(0)$, not for $C^{-1}(1)$]
 	Note that Definition~\ref{def:binary} only requires the set oracles for the set $S$ of entries where $C$ is $0$, and does \emph{not} need the set oracles for the set $[n]^k \setminus S$ where $C$ is $1$. The fact that both set oracles are not needed makes set-optimization structure easier to check than the original oracles in \S\ref{sec:oracles}, because those effectively require optimization over both $S$ and $[n]^k \setminus S$.
\end{remark}

\begin{remark}[Set-optimization structure is incomparable to graphical and low-rank plus sparse structure]\label{rem:binary-incomparable}
	Costs $C$ that satisfy Definition~\ref{def:binary} in general do not have non-trivial graphical structure or low-rank plus sparse structure. Specifically, there are costs $C$ that satisfy Definition~\ref{def:binary}, yet require maximal $k-1$ treewidth to model via graphical structure, and super-constant rank or exponential sparsity to model via low-rank plus sparse structure. (A concrete example is the network reliability application in \S\ref{ssec:binary:rel}.) 
	Because of the $\NP$-hardness of $\MOT$ problems with $(k-1)$-treewidth graphical structure or super-constant rank~\citep{AltBoi20hard}, simply modeling such problems with graphical structure or low-rank plus rank structure is therefore useless for the purpose of designing polynomial-time $\MOT$ algorithms. 
\end{remark}

\subsection{Polynomial-time algorithms}\label{ssec:binary:alg}

By our oracle reductions in part 1 of this paper, in order to design polynomial-time algorithms for $\MOT$ with set-optimization structure, it suffices to design polynomial-time algorithms for the $\MinO$, $\AMinO$, or $\SMinO$ oracles. We show how to do this for all three oracles in a straightforward way by exploiting the set-optimization structure. 

\begin{theorem}[Polynomial-time algorithms for the $\MinO$, $\AMinO$, and $\SMinO$ oracles for costs with set-optimization structure]\label{thm:binary-oracles}
	If $C \in \Rntk$ is a cost tensor with exact, approximate, or soft set-optimization structure of complexity $\beta$ (see Definition~\ref{def:binary}), then the $\MinO_C$, $\AMinO_C$, and $\SMinO_C$ oracles, respectively, can be computed in $\beta +  \poly(n,k)$ time.
\end{theorem}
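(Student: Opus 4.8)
The plan is to relate each of the three $\MOT$ oracles on the binary-valued cost $C_{\jvec} = \mathds{1}[\jvec \notin S]$ directly to the corresponding set oracle on $S$, paying attention to the constant $1$ that $C$ contributes on the complement of $S$. The key observation is that the optimization defining, say, $\MinO_C(p)$ decomposes according to whether $\jvec$ lies in $S$ or not: on $S$ the cost contributes $0$, and on $[n]^k \setminus S$ it contributes $1$. The piece over $[n]^k \setminus S$ is the expensive part a priori, but we can handle it cheaply because on that piece the contribution is just $1 - \sum_i [p_i]_{j_i}$, whose minimum over \emph{all} of $[n]^k$ (ignoring the set constraint) is trivially computable as $1 - \sum_{i=1}^k \max_{j \in [n]} [p_i]_{j}$ in $O(nk)$ time. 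So the plan is: compute (i) the min over $S$ using the set oracle, (ii) the unconstrained min of $1 - \sum_i [p_i]_{j_i}$ in closed form, and take the smaller of the two — but with care, since the unconstrained minimizer might land inside $S$, in which case its true value there should be $0 - \sum_i[p_i]_{j_i}$, not $1 - \sum_i [p_i]_{j_i}$.

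\textbf{The $\MinO$ case.} First I would write
\[
	\MinO_C(p) = \min\Bigl( \min_{\jvec \in S}\bigl(-\textstyle\sum_{i=1}^k [p_i]_{j_i}\bigr),\ \min_{\jvec \notin S}\bigl(1-\textstyle\sum_{i=1}^k [p_i]_{j_i}\bigr)\Bigr).
\]
The first term is exactly $\MinOCS(p)$, computable in $\beta$ time. For the second term, let $a := \min_{\jvec \in [n]^k}\bigl(1 - \sum_{i=1}^k [p_i]_{j_i}\bigr) = 1 - \sum_{i=1}^k \max_{j\in[n]}[p_i]_j$, computable in $O(nk)$ time. If the minimizing tuple $\jvec^\star$ achieving $a$ lies outside $S$, then $a$ equals the second term. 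If $\jvec^\star \in S$, then $a$ is a valid \emph{lower bound} on the second term, but in fact this causes no problem: when $\jvec^\star \in S$ its true $\MinO_C$-value is $-\sum_i[p_i]_{j_i^\star} = a - 1 < a$, which is already accounted for (indeed dominated) by the $\MinOCS(p)$ term, since $\MinOCS(p) \le -\sum_i [p_i]_{j_i^\star} = a-1$. Hence $\MinO_C(p) = \min(\MinOCS(p),\, a)$ regardless, and this identity can be verified by the short two-case argument above; this gives $\MinO_C$ in $\beta + O(nk)$ time. (Checking membership $\jvec^\star \in S$ is not even needed for the final formula, only for the proof.)

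\textbf{The $\SMinO$ case} is cleaner because the softmin is a genuine sum over $[n]^k$, so there is no case analysis: expand
\[
	\smineta_{\jvec \in [n]^k}\Bigl(C_{\jvec} - \textstyle\sum_i [p_i]_{j_i}\Bigr)
	= -\frac1\eta \log\!\Bigl(\sum_{\jvec \in S} e^{\eta \sum_i [p_i]_{j_i}} + e^{-\eta}\sum_{\jvec \notin S} e^{\eta \sum_i [p_i]_{j_i}}\Bigr),
\]
and write $\sum_{\jvec \notin S} e^{\eta\sum_i[p_i]_{j_i}} = \prod_{i=1}^k\bigl(\sum_{j\in[n]} e^{\eta [p_i]_j}\bigr) - \sum_{\jvec\in S} e^{\eta \sum_i [p_i]_{j_i}}$. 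The first term of this difference is a product of $k$ scalar sums, computable in $O(nk)$ time; the subtracted term equals $\exp(-\eta \cdot \SMinOCS(p,\eta))$, one call to the soft set oracle. Assembling, $\SMinO_C(p,\eta) = -\eta^{-1}\log\bigl((1-e^{-\eta})\,e^{-\eta\,\SMinOCS(p,\eta)} + e^{-\eta}\prod_{i=1}^k \sum_{j=1}^n e^{\eta[p_i]_j}\bigr)$, giving the result in $\beta + O(nk)$ time; the $\AMinO_C$ reduction is then immediate either directly by the same two-case argument as $\MinO$ (using $\AMinOCS$ and incurring the same additive error $\eps$), or via Remark~\ref{rem:oracles:smin-amin}. \textbf{The main obstacle} is the mild bookkeeping in the exact-$\MinO$ (and $\AMinO$) case to justify that one may safely use the over-counted value $1 - \sum_i[p_i]_{j_i}$ for \emph{all} tuples rather than only those outside $S$ — i.e., arguing that whenever the unconstrained minimizer accidentally lies in $S$, the $\MinOCS$ term already dominates — and, in the approximate case, tracking that this substitution does not inflate the additive error beyond $\eps$; the softmin case has no such subtlety and is a direct computation.
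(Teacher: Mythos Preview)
Your proposal is correct and follows essentially the same approach as the paper: split by membership in $S$, call the set oracle for the $S$-piece, and handle the complement via the closed-form unconstrained minimum (for $\MinO$/$\AMinO$) or the product $\prod_i \sum_j e^{\eta[p_i]_j}$ (for $\SMinO$). Your $\SMinO$ derivation is identical to the paper's; your $\MinO$ formula $\min(\MinOCS(p),\,1-\sum_i\max_j[p_i]_j)$ is in fact a mild simplification of the paper's presentation, which carries an explicit case split that collapses to this single expression.

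One small caveat: your second suggested route for $\AMinO_C$ ``via Remark~\ref{rem:oracles:smin-amin}'' would require the soft set oracle $\SMinOCS$, which is not available under \emph{approximate} set-optimization structure (only $\AMinOCS$ is). Your primary route---run the same $\MinO$ formula with $\AMinOCS(p,\eps)$ in place of $\MinOCS(p)$ and note that perturbing one argument of $\min$ by $\eps$ perturbs the output by at most $\eps$---is exactly what the paper does and is correct.
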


\begin{algorithm}
	\caption{Polynomial-time algorithm for $\MinO$ for costs with exact set-optimization structure (Theorem~\ref{thm:binary-oracles}).}
	\hspace*{\algorithmicindent} \textbf{Input:} 
	Access to $C$ via $\MinOCS$ oracle, matrix $p \in \R^{n \times k}$  \\
	\hspace*{\algorithmicindent} \textbf{Output:} Solution to $\MinO_C(p)$
	\begin{algorithmic}[1]
		\State $a \gets \MinOCS(p)$ \Comment{One oracle call}
		\State $x \gets -\sum_{i=1}^k \max_{j \in [n]} [p_i]_{j}$ \Comment{Takes $O(nk)$ time}
		\State Return $a$ if $a \leq x$, or $\min(a,1+x)$ otherwise \Comment{Takes $O(1)$ time}
	\end{algorithmic}
	\label{alg:binary-min}
\end{algorithm}

\begin{algorithm}
	\caption{Polynomial-time algorithm for $\SMinO$ for costs with soft set-optimization structure (Theorem~\ref{thm:binary-oracles}).}
	\hspace*{\algorithmicindent} \textbf{Input:} 
	Access to $C$ via $\SMinOCS$ oracle, matrix $p \in \Rbar^{n \times k}$, regularization $\eta$  \\
	\hspace*{\algorithmicindent} \textbf{Output:} Solution to $\SMinO_C(p,\eta)$
	\begin{algorithmic}[1]
		\State $a \gets \exp(-\eta \cdot \SMinOCS(p))$ \Comment{One oracle call}
		\State $x \gets \prod_{i=1}^k \sum_{j=1}^n \exp(\eta [p_i]_{j_i})$ \Comment{Takes $O(nk)$ time}
		\State Return $-\eta^{-1} \log(e^{-\eta}x + (1-e^{-\eta})a)$ \Comment{Takes $O(1)$ time}
	\end{algorithmic}
	\label{alg:binary-smin}
\end{algorithm}

\begin{proof}
	\underline{Polynomial-time algorithm for $\MinO$.} We first claim that Algorithm~\ref{alg:binary-min} implements the $\MinO_C(p)$ oracle.
	To this end, define
	\begin{align}
		a := \MinOCS(p) = \min_{\substack{\jvec \in [n]^k \\ \text{s.t. } C_{\jvec} = 0}} - \sum_{i=1}^k [p_i]_{j_i}
		\qquad \text{and} \qquad
		b := \min_{\substack{\jvec \in [n]^k \\ \text{s.t. } C_{\jvec} = 1}} - \sum_{i=1}^k [p_i]_{j_i}.
		\label{eq-bin-oracle:min:ab}
	\end{align}
	By re-arranging the sum and max, it follows that
	\begin{align}
	x 
	:=
	-\sum_{i=1}^k \max_{j \in [n]} [p_i]_{j}
	=
	- \max_{\jvec \in [n]^k} \sum_{i=1}^k [p_i]_{j_i}
	=
	\min_{\jvec \in [n]^k} \sum_{i=1}^k - [p_i]_{j_i}
	=
	\min(a,b).
		\label{eq-bin-oracle:min:x}
	\end{align}
	Therefore
	\begin{align}
		\MinO_C(p)
		=
		\min_{\jvec \in [n]^k} C_{\jvec} - \sum_{i=1}^k [p_i]_{j_i}
		=
		\min(a,1+b)
		&=
		\begin{cases}
			a & \text{ if } a \leq b \\
			\min(a,1+\min(a,b)) & \text{ if } a > b
		\end{cases}
		\nonumber
		\\ &=
		\begin{cases}
			a & \text{ if } a \leq x \\
			\min(a,1+x) & \text{ if } a > x
		\end{cases},
			\label{eq-bin-oracle:min:min}
	\end{align}
	where above the first step is by definition of $\MinO_C$; the second step is by partitioning the minimization over $\jvec \in [n]^k$ into $\jvec$ such that $C_{\jvec} = 0$ or $C_{\jvec} = 1$, and then plugging in the definitions of $a$ and $b$; the third step is by manipulating $\min(a,1+b)$ in both cases; and the last step is because $x = \min(a,b)$ as shown above. We conclude that Algorithm~\ref{alg:binary-min} correctly outputs $\MinO_C(p)$. Since the algorithm uses one call to the $\MinOCS$ oracle and $O(nk)$ additional time, the claim is proven.

	\par \underline{Polynomial-time algorithm for $\AMinO$.} Next, we claim that the same Algorithm~\ref{alg:binary-min}, now run with the approximate oracle $\AMinOCS(p,\eps)$ in the first step instead of the exact oracle $\MinOCS(p)$, computes a valid solution to $\AMinO_C(p,\eps)$. To prove this, let $a$, $b$, and $x$ be as defined in~\eqref{eq-bin-oracle:min:ab} and~\eqref{eq-bin-oracle:min:x} for the $\MinO$ analysis, and let $\tilde{a} = \AMinOCS(p,\eps)$. By the same logic as in~\eqref{eq-bin-oracle:min:min}, except now reversed, the output
	\begin{align*}
		\begin{cases}
			\tilde{a} & \text{ if } \tilde{a} \leq x \\
			\min(\tilde{a},1+x) & \text{ if } \tilde{a} > x
		\end{cases}
	\end{align*}
	is equal to $\min(\tilde{a},1+b)$. Now because $\tilde{a}$ is within additive $\eps$ error of $a$ (by definition of the $\AMinOCS$ oracle), it follows that the above output is within $\eps$ additive error of
	\[
		\min(a,1+b) 
		=
		\min_{\jvec \in [n]^k} C_{\jvec} - \sum_{i=1}^k [p_i]_{j_i}
		=	\MinO_C(p).
	\]
	Thus the output is a valid answer to $\AMinO_C(p,\eps)$, establishing correctness. The runtime claim is obvious.
	
	\par \underline{Polynomial-time algorithm for $\SMinO$.} Finally, we claim that Algorithm~\ref{alg:binary-smin} implements the $\SMinO_C(p,\eta)$ oracle. To this end, define
	\[
		a := e^{-\eta \cdot \SMinOCS(p,\eta)} = \sum_{\substack{\jvec \in [n]^k \\ \text{s.t. } C_{\jvec = 0}}} e^{\eta \sum_{i=1}^k [p_i]_{j_i}}
		\qquad \text{ and } \qquad
		b := 
		\sum_{\substack{\jvec \in [n]^k \\ \text{s.t. } C_{\jvec = 1}}} e^{\eta \sum_{i=1}^k [p_i]_{j_i}}.
	\]
	By re-arranging products and sums, it follows that
	\[
		x
		:=
		\prod_{i=1}^k \sum_{j=1}^n e^{\eta [p_i]_{j_i}}
		=
		\sum_{\jvec \in [n]^k} \prod_{i=1}^k e^{\eta [p_i]_{j_i}}
		= a + b.
	\]
	Therefore
	\[
		\SMinO_C(p,\eta)
		=
		-\frac{1}{\eta} \log \left( 
			\sum_{\jvec \in [n]^k} e^{-\eta (C_{\jvec} - \sum_{i=1}^k [p_i]_{j_i})}
		\right) 
		=
		-\frac{1}{\eta} \log \left( a + e^{-\eta}b \right)
		=
		-\frac{1}{\eta} \log \left( e^{-\eta} x + (1-e^{-\eta})a \right),
	\]
	where above the first step is by definition of $\SMinO_C$; the second step is by partitioning the sum over $\jvec \in [n]^k$ into $\jvec$ such that $C_{\jvec} = 0$ or $C_{\jvec} = 1$, and then plugging in the definitions of $a$ and $b$; and the third step is because $x = a+b$ as shown above. We conclude that Algorithm~\ref{alg:binary-smin} correctly outputs $\SMinO_C(p,\eta)$. Since the algorithm uses one call to the $\SMinOCS$ oracle and $O(nk)$ additional time, the claim is proven.
\end{proof}

An immediate consequence of Theorem~\ref{thm:binary-oracles} combined with our oracle reductions is that all of the candidate $\MOT$ algorithms described in \S\ref{sec:algs} can be efficiently implemented for $\MOT$ problems with set-optimization structure. From a theoretical perspective, the $\ELLIP$ algorithm gives the best guarantee since it produces an exact, sparse solution in polynomial time.

\begin{cor}[Polynomial-time algorithms for $\MOT$ problems with set-optimization structure]\label{cor:binary-algs}
	Let $C \in \Rntk$ be a cost tensor that has set-optimization structure with $\poly(n,k)$ complexity (see Definition~\ref{def:binary}).
	\begin{itemize}
		\item \underline{Exact set-optimization structure.} The $\ELLIP$ algorithm in \S\ref{ssec:algs:ellip} computes an exact solution to $\MOT_C$ in $\poly(n,k)$ time. Also, the $\COLGEN$ algorithm in \S\ref{sssec:ellip:cg} can be run for $T$ iterations in $\poly(n,k,T)$ time. 
		\item \underline{Approximate set-optimization structure.} The $\MWU$ algorithm in \S\ref{ssec:algs:mwu} computes an $\eps$-approximate solution to $\MOT_C$ in $\poly(n,k,\Cmax/\eps)$ time. 
		\item \underline{Soft set-optimization structure.} The $\Sink$ algorithm in \S\ref{ssec:algs:sink} computes an $\eps$-approximate solution to $\MOT_C$ in $\poly(n,k,\Cmax/\eps)$ time. 
	\end{itemize}
	Moreover, $\ELLIP$, $\MWU$, and $\COLGEN$ output a polynomially sparse tensor, whereas $\Sink$ outputs a fully dense tensor through the implicit representation described in \S\ref{sssec:sink:alg}.
\end{cor}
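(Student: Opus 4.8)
The statement to prove is Corollary~\ref{cor:binary-algs}, which asserts that the four candidate $\MOT$ algorithms ($\ELLIP$, $\MWU$, $\Sink$, $\COLGEN$) run in polynomial time for costs with set-optimization structure, and that the first three of these (plus $\COLGEN$) output sparse tensors while $\Sink$ outputs a dense implicit tensor.

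\medskip

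\noindent\textbf{Proof plan.} This corollary is a direct composition of two previously established ingredients, so the proof will be short. First I would invoke Theorem~\ref{thm:binary-oracles}, which states that if $C$ has exact (resp.\ approximate, soft) set-optimization structure of complexity $\beta$, then the $\MinO_C$ (resp.\ $\AMinO_C$, $\SMinO_C$) oracle can be computed in $\beta + \poly(n,k)$ time. Under the hypothesis that the set-optimization structure has $\poly(n,k)$ complexity, i.e.\ $\beta = \poly(n,k)$, this means each of the three oracles is computable in $\poly(n,k)$ time for the respective structural assumption. Second, I would feed each of these polynomial-time oracle implementations into the corresponding algorithm-to-oracle reduction from Part~1 of the paper: Theorem~\ref{thm:ellip} (equivalence ``(iii)$\Longrightarrow$(i)'') gives that $\ELLIP$ solves $\MOT_C$ exactly in $\poly(n,k)$ time given a polynomial-time $\MinO_C$ oracle; Theorem~\ref{thm:colgen} gives that $T$ iterations of $\COLGEN$ run in $\poly(n,k,T)$ time and calls to $\MinO_C$; Theorem~\ref{thm:mwumotp} (``(iii)$\Longrightarrow$(i)'') gives that $\MWU$ computes an $\eps$-approximate solution in $\poly(n,k,\Cmax/\eps)$ time given a polynomial-time $\AMinO_C$ oracle; and Theorem~\ref{thm:sink-mot:smin} gives that $\Sink$ computes an $\eps$-approximate solution in $\poly(n,k,\Cmax/\eps)$ time given a polynomial-time $\SMinO_C$ oracle. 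Composing a polynomial number of polynomial-time oracle calls with polynomial additional overhead yields overall polynomial runtime, which establishes each of the four bullet points.

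\medskip

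\noindent\textbf{Sparsity claims.} The final sentence of the corollary follows immediately from the structural guarantees already attached to each algorithm: Theorem~\ref{thm:ellip} states that $\ELLIP$ returns a vertex solution represented as a sparse tensor with at most $nk-k+1$ nonzeros; Theorem~\ref{thm:colgen} states the same for $\COLGEN$ upon termination; Theorem~\ref{thm:mwumotp} (and Theorem~\ref{thm:mwuhelper}) states that $\MWU$ returns a $\poly(n,k,\Cmax/\eps)$-sparse solution; and Theorem~\ref{thm:sink-mot:marg} states that $\Sink$ returns the dense tensor of the form~\eqref{eq:sink-rmot}, output implicitly via $O(nk)$ scaling and rounding vectors. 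So no new argument is needed here---one simply quotes these facts.

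\medskip

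\noindent\textbf{Main obstacle.} Honestly, there is no real obstacle: the corollary is a bookkeeping exercise once Theorem~\ref{thm:binary-oracles} and the Part~1 reductions are in hand. The only point requiring a sentence of care is matching the three flavors of set-optimization structure (exact / approximate / soft) to the three flavors of oracle (\MinO{} / \AMinO{} / \SMinO{}) and hence to the correct subset of algorithms---i.e.\ exact structure powers $\ELLIP$ and $\COLGEN$, approximate structure powers $\MWU$, and soft structure powers $\Sink$---rather than conflating them. I would therefore structure the proof as three short cases mirroring the three bullets, each of the form ``apply Theorem~\ref{thm:binary-oracles} to get the oracle in $\poly(n,k)$ time, then apply the relevant reduction theorem,'' and close with one sentence citing the sparsity guarantees.

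\begin{proof}
Suppose $C$ has set-optimization structure of complexity $\beta = \poly(n,k)$ (Definition~\ref{def:binary}).

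\textit{Exact set-optimization structure.} By Theorem~\ref{thm:binary-oracles}, the $\MinO_C$ oracle can be computed in $\beta + \poly(n,k) = \poly(n,k)$ time. By the implication ``(iii) $\implies$ (i)'' of Theorem~\ref{thm:ellip}, $\ELLIP$ then solves $\MOT_C$ exactly in $\poly(n,k)$ time. Likewise, by Theorem~\ref{thm:colgen}, $T$ iterations of $\COLGEN$ take $\poly(n,k,T)$ time and calls to $\MinO_C$, hence $\poly(n,k,T)$ time overall.

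\textit{Approximate set-optimization structure.} By Theorem~\ref{thm:binary-oracles}, the $\AMinO_C$ oracle can be computed in $\poly(n,k)$ time (for any accuracy parameter, with the dependence on the accuracy absorbed into the runtime in the standard way). By the implication ``(iii) $\implies$ (i)'' of Theorem~\ref{thm:mwumotp}, $\MWU$ computes an $\eps$-approximate solution to $\MOT_C$ in $\poly(n,k,\Cmax/\eps)$ time.

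\textit{Soft set-optimization structure.} By Theorem~\ref{thm:binary-oracles}, the $\SMinO_C$ oracle can be computed in $\poly(n,k)$ time. By Theorem~\ref{thm:sink-mot:smin}, $\Sink$ computes an $\eps$-approximate solution to $\MOT_C$ in $\poly(n,k,\Cmax/\eps)$ time.

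Finally, the sparsity claims are exactly the output guarantees of the cited theorems: $\ELLIP$ and $\COLGEN$ return a vertex solution with at most $nk-k+1$ nonzero entries (Theorems~\ref{thm:ellip} and~\ref{thm:colgen}); $\MWU$ returns a solution with $\poly(n,k,\Cmax/\eps)$ nonzero entries (Theorem~\ref{thm:mwumotp}); and $\Sink$ returns the fully dense tensor of the form~\eqref{eq:sink-rmot}, output implicitly via the $O(nk)$ scaling and rounding vectors described in \S\ref{sssec:sink:alg} (Theorem~\ref{thm:sink-mot:marg}).
\end{proof}
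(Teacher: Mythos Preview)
Your proposal is correct and takes essentially the same approach as the paper: combine the polynomial-time oracle implementations from Theorem~\ref{thm:binary-oracles} with the algorithm-to-oracle reductions of Theorems~\ref{thm:ellip}, \ref{thm:colgen}, \ref{thm:mwumotp}, and~\ref{thm:sink-mot:smin}. Your write-up is in fact more careful than the paper's one-line proof (which, incidentally, contains a typo citing Theorem~\ref{thm:lr-oracles} instead of Theorem~\ref{thm:binary-oracles}).
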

\begin{proof}
	Combine the polynomial-time implementations of the oracles in Theorem~\ref{thm:lr-oracles} with the polynomial-time algorithm-to-oracle reductions in Theorems~\ref{thm:ellip},~\ref{thm:colgen},~\ref{thm:mwumotp}, and~\ref{thm:sink-mot:smin}, respectively.
\end{proof}

\subsection{Application vignette: network reliability with correlations}\label{ssec:binary:rel}

In this section, we illustrate this class of $\MOT$ structures via an application to network reliability, a central topic in network science, engineering, and operations research, see e.g., the textbooks~\citep{gertsbakh2011network,ball1995network,ball1986computational}. The basic network reliability question is: given an undirected graph $G = (V,E)$ where each edge $e \in E$ is reliable with some probability $q_e$ and fails with probability $1-q_e$, what is the probability that all vertices are reachable from all others? This connectivity is desirable in applications, e.g., if $G$ is a computer cluster, the vertices are the machines, and the edges are communication links, then connectivity corresponds to the reachability of all machines. See the aforementioned textbooks for many other applications.

\par Of course, the above network reliability question is not yet well-defined since the edge failures are only prescribed up to their marginal distributions. \emph{Which joint distribution} greatly impacts the answer.

\par The most classical setup posits that edge failures are independent~\citep{moore1956reliable}.
Denote the network reliability probability for this setting by $\ppind$. This quantity $\ppind$ is $\#$P-complete~\citep{valiant1979complexity,provan1983complexity} and thus $\NP$-hard to compute, but there exist fully polynomial randomized approximation schemes (a.k.a.\,FPRAS) for multiplicatively approximating both the connection probability $\ppind$~\citep{karger2001randomized} and the failure probability $1 - \ppind$~\citep{guo2019polynomial}.

\par Here we investigate the setting of \emph{coordinated} edge failures, which dates back to the 1980s~\citep{weiss1986stochastic,zemel1982polynomial}. This coordination may optimize for disconnection (e.g., by an adversary), or for connection (e.g., maximize the time a network is connected while performing maintenance on each edge $e$ during $1 - q_e$ fraction of the time). We define these notions below; see also Figure~\ref{fig:networkrel} for an illustration. Below, $\Ber(q_e)$ denotes the Bernoulli distribution with parameter $q_e$. 

\begin{defin}[Network reliability with correlations]
	For an undirected graph $G = (V,E)$ and edge reliability probabilities $\{q_e\}_{e \in E}$:
	\begin{itemize}
		\item The \emph{worst-case network reliability} is 
		\[
		\ppmin := 
		\min_{P \in \cM(\{\Ber(q_e)\}_{e \in E})} \Prob_{H \sim P}\left[H\text{ is a connected subgraph of } G\right].
		\]
		\item The \emph{best-case network reliability} is
		\[
		\ppmax := \max_{P \in \cM(\{\Ber(q_e)\}_{e \in E})} \Prob_{H \sim P}[H\text{ is a connected subgraph of }G].
		\]
	\end{itemize}
\end{defin}

\begin{figure}
	\centering
	\includegraphics[scale=0.08]{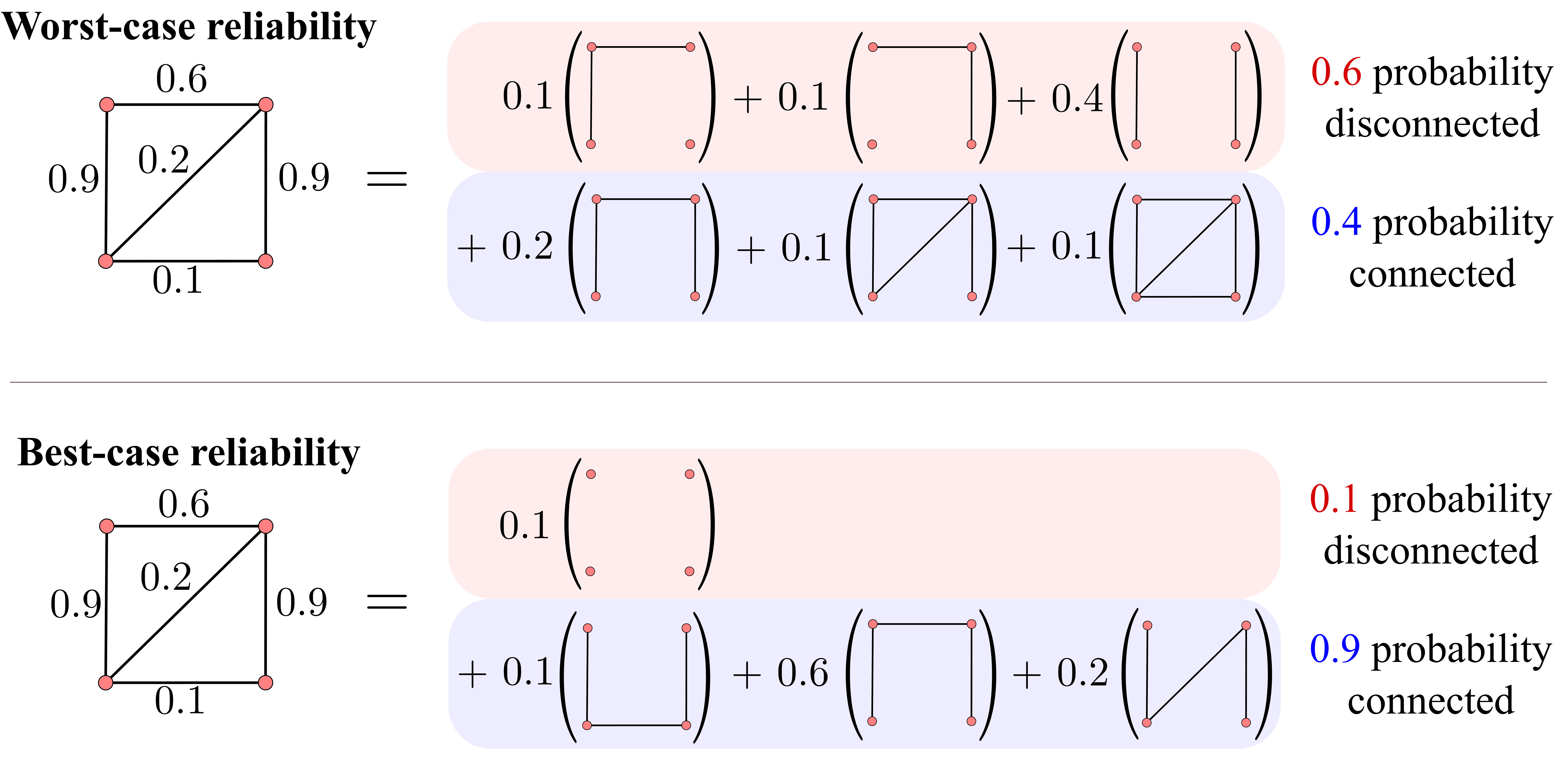}
	\caption{Optimal decompositions for the the worst-case (top) and best-case (bottom) reliability problems on the same graph $G$ and edge reliability probabilities $q_e$ (left). Coordinating edge failures yields significantly different connection probabilities: $\ppmin = 40\%$, $\ppind \approx 60\%$, and $\ppmax = 90\%$.
		}
	\label{fig:networkrel}
\end{figure}

Clearly $\ppmin \leq \ppind \leq \ppmax$. These gaps can be large (e.g., see Figure~\ref{fig:networkrel}), which promises large opportunities for applications in which coordination is possible. However, in order to realize such an opportunity requires being able to compute $\ppmin$ and $\ppmax$, and both of these problems require solving an exponentially large LP with $2^{|E|}$ variables. Below we show how to use set-optimization structure to compute these quantities in $\poly(|E|)$ time, thereby recovering as a special case of our general framework the known polynomial-time algorithms for this particular problem in~\citep{zemel1982polynomial,weiss1986stochastic}, as well as more practical polynomial-time algorithms that scale to input sizes that are an order-of-magnitude larger.

\begin{cor}[Polynomial-time algorithm for network reliability with correlations]\label{cor:rel}
	The worst-case and best-case network reliability can both be computed 
	in $\poly(|E|)$ time.
\end{cor}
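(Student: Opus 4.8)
The plan is to realize both $\ppmin$ and $\ppmax$ as $\MOT$ problems whose $\{0,1\}$-valued cost tensors have set-optimization structure (Definition~\ref{def:binary}), and then invoke the $\ELLIP$-based guarantee of Corollary~\ref{cor:binary-algs}. Concretely, I would take $k=|E|$ marginals each of support size $n=2$, identifying the two states of coordinate $e\in E$ with ``$e$ reliable'' and ``$e$ fails'', so that $\mu_e=\Ber(q_e)$ and a coupling $P\in\cM(\{\Ber(q_e)\}_{e\in E})$ is exactly a joint law on $\{0,1\}^E$; I identify an index $\jvec$ with the subgraph $H(\jvec)=(V,\{e:j_e=1\})$. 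Writing $S:=\{\jvec : H(\jvec)\text{ is connected}\}$ and $\bar S:=\{0,1\}^E\setminus S$, the definitions give $\ppmin=\min_{P}\langle P,\mathds{1}[\cdot\notin\bar S]\rangle$ and, via the maximization form of Example~\ref{ex:binary}, $\ppmax=1-\min_{P}\langle P,\mathds{1}[\cdot\notin S]\rangle$. So it suffices to show that the costs $\mathds{1}[\jvec\notin\bar S]$ and $\mathds{1}[\jvec\notin S]$ both have \emph{exact} set-optimization structure of complexity $\poly(|E|)$: Corollary~\ref{cor:binary-algs} then solves each $\MOT$ exactly in $\poly(n,k)=\poly(|E|)$ time via $\ELLIP$ (or $\COLGEN$), and a final $O(1)$-time arithmetic step recovers $\ppmin,\ppmax$ from the two optimal values. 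The degenerate cases would be dispatched up front: if $G$ is disconnected then $\ppmin=\ppmax=0$, and if $|V|\le 1$ then $\ppmin=\ppmax=1$, in each case independently of the joint law.

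The core of the proof is implementing the set oracles $\MinO_{C,\bar S}$ and $\MinO_{C,S}$ in polynomial time (the $\AMinO$ variants then come for free by Remark~\ref{rem:oracles:min-amin}, which is all the ``exact'' bullet of Corollary~\ref{cor:binary-algs} needs). Given weights $p=(p_e)_{e\in E}\in\R^{2\times k}$, I would reparametrize by $w_e:=[p_e]_1-[p_e]_0$, so that the oracle objective $-\sum_e[p_e]_{j_e}$ evaluated at the index with support $F\subseteq E$ equals $c_0-\sum_{e\in F}w_e$ with $c_0:=-\sum_e[p_e]_0$ computable in $O(k)$ time; hence $\MinO_{C,S}(p)=c_0-\max_{F:(V,F)\text{ connected}}\sum_{e\in F}w_e$ and $\MinO_{C,\bar S}(p)=c_0-\max_{F:(V,F)\text{ disconnected}}\sum_{e\in F}w_e$. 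For the connected case, including every edge with $w_e\ge0$ in $F$ never increases the objective nor destroys connectivity, so the optimum may be taken to contain $F_+:=\{e:w_e\ge0\}$, and one then needs only a cheapest (in cost $-w_e\ge0$) subset of the remaining edges to reconnect the components of $(V,F_+)$ --- that is, a minimum spanning tree of the graph obtained by contracting those components, which (since $G$ is connected) exists and is computed in $\poly(|E|)$ time. For the disconnected case, $(V,F)$ is disconnected iff $F$ avoids all edges of some cut $\delta(A)$ with $\emptyset\ne A\subsetneq V$, and for fixed $A$ the best $F$ keeps exactly the non-crossing edges with $w_e\ge0$; thus the maximum equals $\sum_{e:w_e\ge0}w_e-\min_{\emptyset\ne A\subsetneq V}\sum_{e\in\delta(A)}\max(w_e,0)$, whose second term is a global minimum cut of $G$ under capacities $\max(w_e,0)$, again $\poly(|E|)$-time. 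Plugging these into Theorem~\ref{thm:binary-oracles} yields the $\MinO_C$ and $\AMinO_C$ oracles for both costs.

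I expect the only real subtlety is the sign bookkeeping in the second paragraph: the set oracle minimizes $-\sum_e[p_e]_{j_e}$ over a combinatorial family, and after the $w_e$-reparametrization this becomes a \emph{max-weight} connected (resp. disconnected) subgraph problem with \emph{arbitrary-sign} edge weights; the ``take all nonnegative-weight edges, then patch'' reductions to minimum spanning tree and minimum cut are correct precisely because $G$ is connected, so this hypothesis must be carried carefully. For context I would also remark that one cannot route this result through $\Sink$: the soft set oracle $\SMinOCS$ is essentially the evaluation of a weighted network-reliability polynomial $\sum_{F\in S}\prod_{e\in F}e^{\eta w_e}$, which is $\#$P-hard in general (mirroring the hardness of $\ppind$); it is exactly the reliance on the \emph{exact} $\MinO$ oracle with $\ELLIP$/$\COLGEN$, rather than $\SMinO$ with $\Sink$, that makes this corollary go through --- a concrete instance of the $\SMinO$-versus-$\MinO$ separation recorded in Lemma~\ref{lem:smin-separation}.
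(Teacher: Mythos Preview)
Your proposal is correct and follows essentially the same approach as the paper: cast $\ppmin$ and $1-\ppmax$ as $\MOT$ problems with set-optimization structure, implement $\MinOCS$ for the family of connected subgraphs via a minimum-spanning-tree computation (after contracting the favorable edges), and implement $\MinOCS$ for the family of disconnected subgraphs via a global minimum cut, then invoke Corollary~\ref{cor:binary-algs}. The only cosmetic difference is that for the disconnected case you reduce directly to a minimum cut on $G$ with capacities $\max(w_e,0)$, whereas the paper first deletes the edges with $w_e\le 0$ and computes a minimum cut on the resulting subgraph; these two formulations are immediately equivalent.
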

\begin{proof}
	By the observation in Example~\ref{ex:binary},
	the optimization problems defining $\ppmin$ and $1 - \ppmax$ are instances of $\MOT$ in which $k=|E|$, $n=2$, $\mu_e = \Ber(q_e)$, and each entry of the cost $C \in \{0,1\}^{|E|}$ is the indicator of whether that subset of edges is a connected or disconnected subgraph of $G$, respectively. It therefore suffices to show that both of these $\MOT$ cost tensors satisfy exact set-optimization structure (Definition~\ref{def:binary}) since that implies a polynomial-time algorithm for exactly solving $\MOT$ (Corollary~\ref{cor:binary-algs}).

	\paragraph{Set-optimization structure for $1-\ppmax$.} In this case, $S$ is the set of connected subgraphs of $G$. Thus the $\MinOCS$ problem is: given weights $p \in \RR^{2 \times |E|}$, compute
	\[
	\min_{\text{connected subgraph } H \text{ of }G} 
	- \sum_{e \in H} p_{2,e} - \sum_{e \notin H} p_{1,e}.
	\]
	Note that this objective is equal to $\sum_{e \in H} x_e - \sum_{e \in E} p_{1,e}$ where $x_e := p_{1,e} - p_{2,e}$. Since the latter sum is independent of $H$, the $\MinOCS$ problem therefore reduces to the problem of finding a minimum-weight connected subgraph in $G$; that is, given edge weights $x \in \R^{|E|}$, compute
	\begin{align}
		\min_{\text{connected subgraph } H \text{ of }G} \sum_{e \in H} x_e.
		\label{eq-pf:ppmax}
	\end{align}
	We first show how to solve this in polynomial time in the case that all edge weights $x_e$ are positive. In this case, the optimal solution $H$ is a minimum-weight spanning tree of $G$. This can be found by Kruskal's algorithm in $O(|E| \log |E|)$ time~\citep{kruskal1956shortest}. 
	\par For the general case of arbitrary edge weights, note that the edges $e$ with non-positive weight $x \leq 0$ can be added to any solution without worsening the cost or feasibility. Thus these edges are without loss of generality in every solution $H$, and so it suffices to solve the same problem~\eqref{eq-pf:ppmax} on the graph $G'$ obtained by contracting these non-positively-weighted edges in $G$. This reduces~\eqref{eq-pf:ppmax} to the same problem of finding a minimum-weight connected subgraph, except now in the special case that all edge weights are positive. Since we have already shown how to solve this case in polynomial time, the proof is complete.
	
	\paragraph{Set-optimization structure for $\ppmin$.} In this case, $S$ is the set of disconnected subgraphs of $G$. We may simplify the $\MinOCS$ problem for $\ppmin$ by re-parameterizing the input $p \in \R^{2 \times |E|}$ to edge weights $x \in \R^{|E|}$ as done above in~\eqref{eq-pf:ppmax} for $1-\ppmax$. Thus the $\MinOCS$ problem for $\ppmin$ is: given weights $x \in \R^{|E|}$, compute
	\begin{align}
		\min_{\text{disconnected subgraph } H \text{ of }G} \sum_{e \in H} x_e.
		\label{eq-pf:ppin}
	\end{align}
	We first show how to solve this in the case that all edge weights $x_e$ are negative. In that case, the optimal solution is of the form $H = E \setminus C$, where $C$ is a maximum-weight cut of the graph $G$ with weights $x_e$. Equivalently, by negating all edge weights, $C$ is a minimum-weight cut of the graph $G$ with weights $-x_e$. Since a minimum-weight cut of a graph with positive weights can be found in polynomial time~\citep{stoer1997simple}, the problem~\eqref{eq-pf:ppin} can be solved in polynomial time when all $x_e$ are negative.
	\par Now in the general case of arbitrary edge weights, note that the edges $e$ with non-negative weight $x \geq 0$ can be removed from any solution without worsening the cost or feasibility. Thus these edges are without loss of generality not in every solution $H$, and so it suffices to solve the same problem~\eqref{eq-pf:ppin} on the graph $G'$ obtained by deleting these non-negatively-weighted edges in $G$. This reduces~\eqref{eq-pf:ppin} to the same problem of finding a minimum-weight disconnected subgraph, except now in the special case that all edge weights are negative. Since we have already shown how to solve this case in polynomial time, the proof is complete.
\end{proof}

\begin{figure}
	\centering
	\begin{tabular}{c@{}c}
			\includegraphics[scale=0.5]{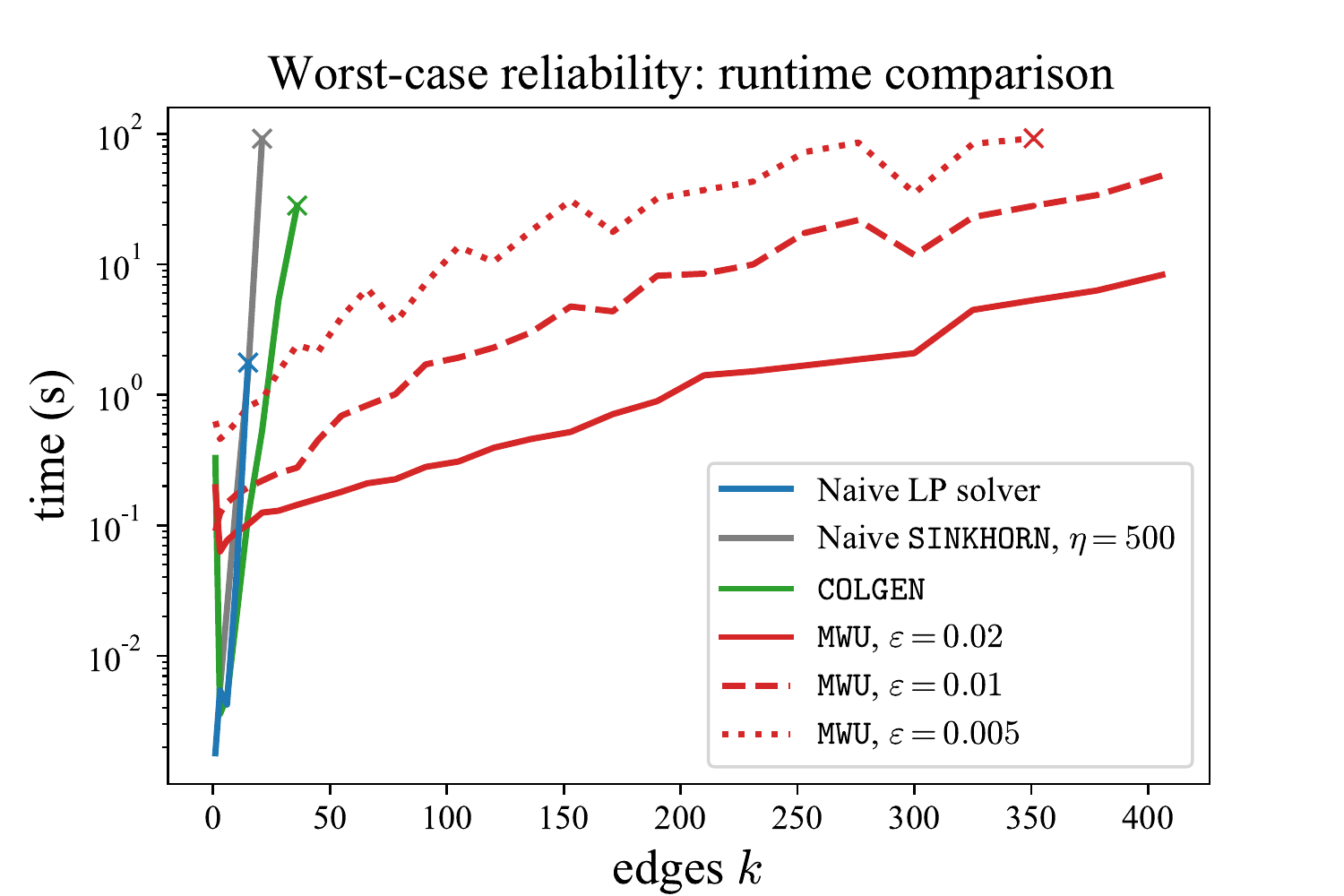} & \includegraphics[scale=0.5]{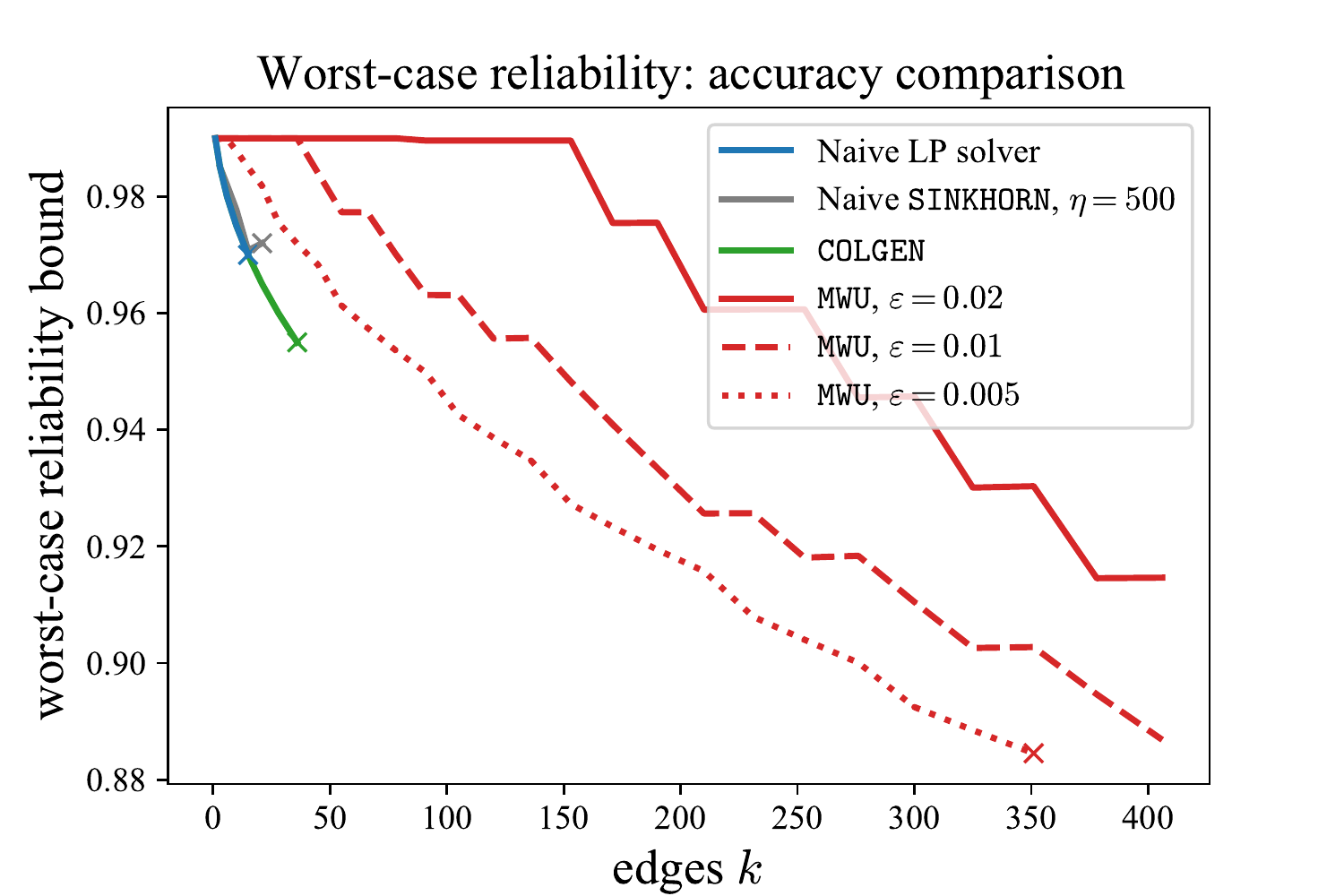} \\
			\includegraphics[scale=0.5]{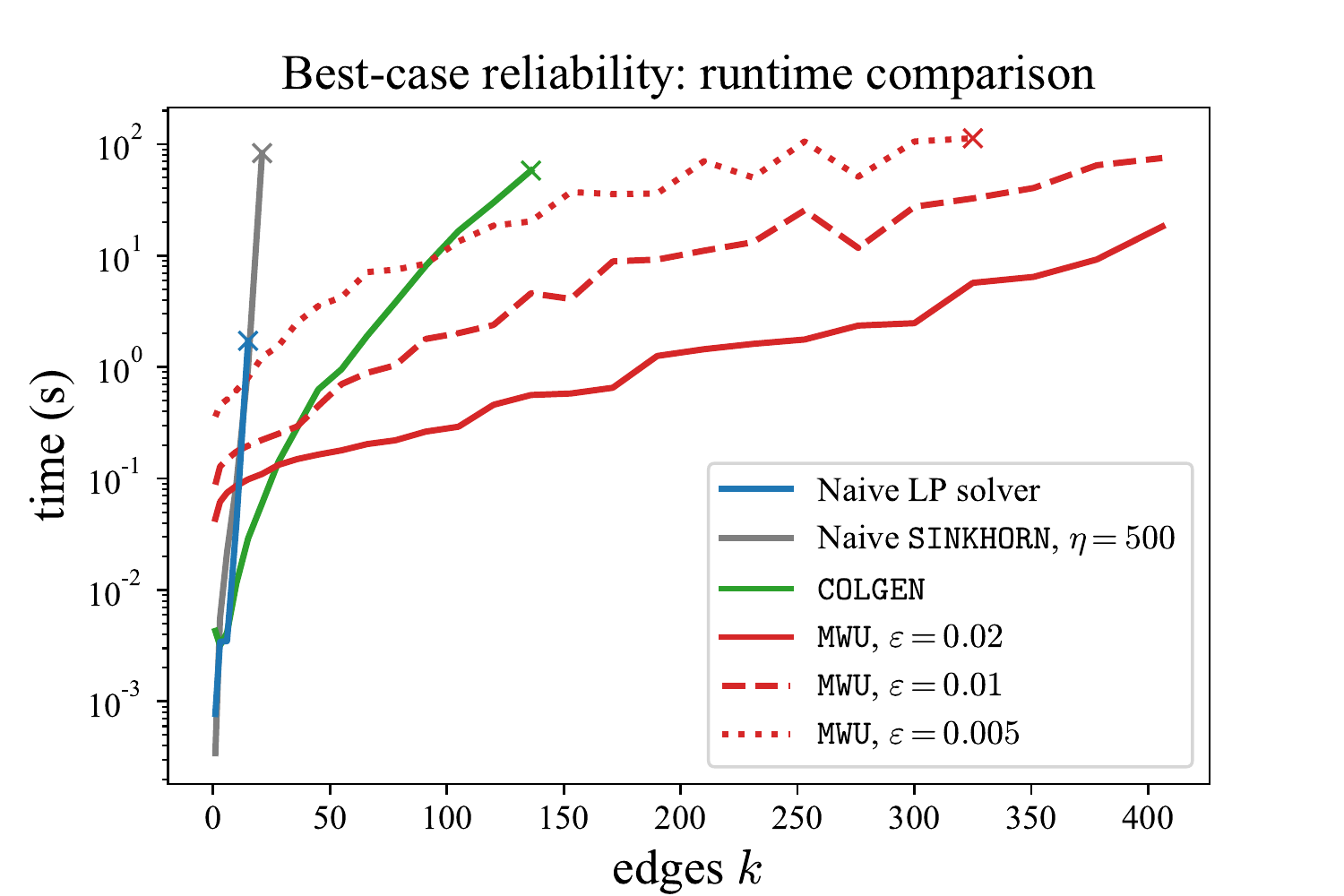} & \includegraphics[scale=0.5]{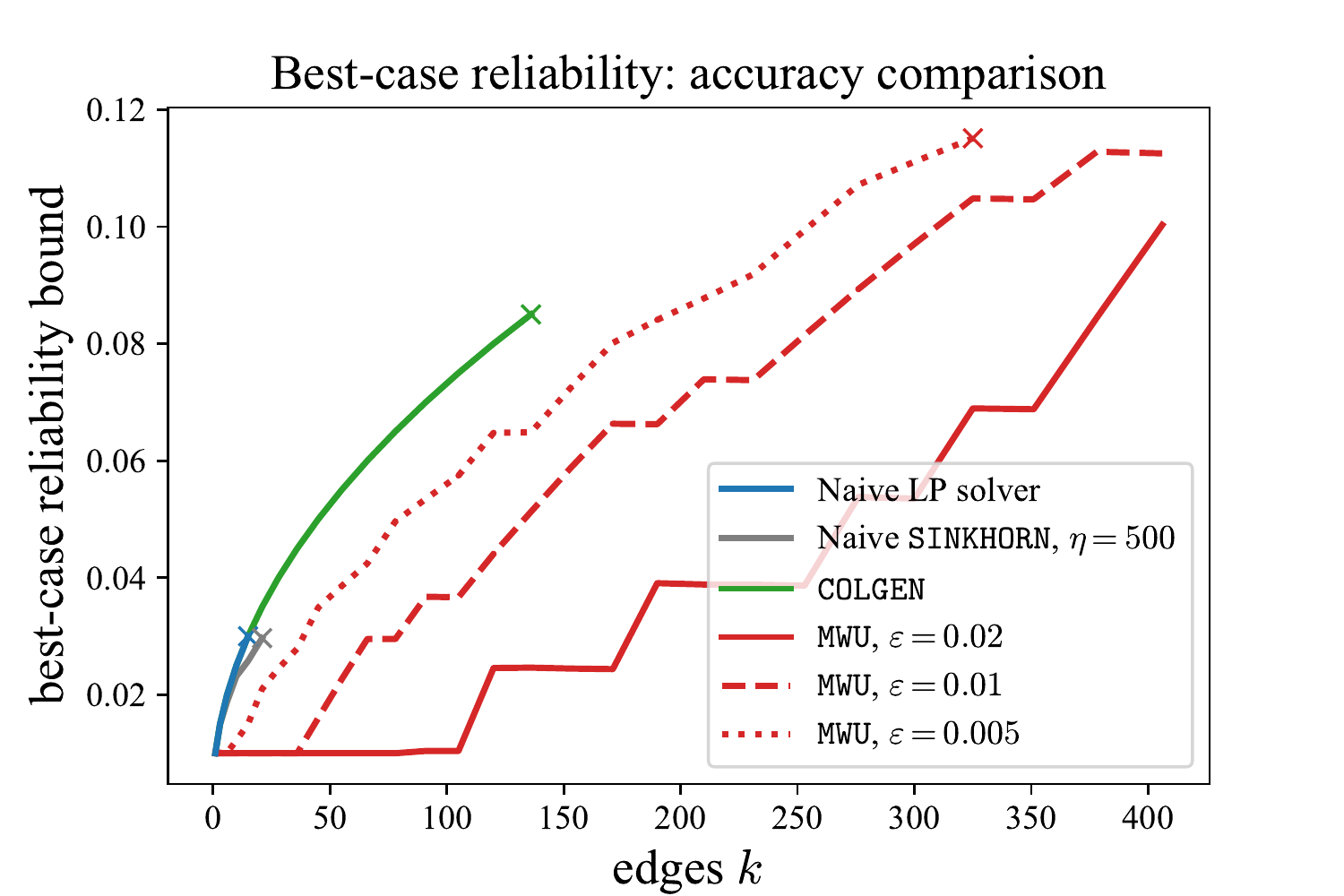}
		\end{tabular}
	\caption{
		Top: comparison of the runtime (left) and accuracy (right) of the algorithms described in the main text, for the worst-case reliability of a clique graph on $t$ vertices and $k = \binom{t}{2}$ edges with reliability probabilities $q_e = 0.99$. Bottom: same, but for best-case reliability and reliability probabilities $q_e = 0.01$. For worst-case reliability, the algorithms compute an upper bound, so a smaller value is better; reverse for best-case reliability. The algorithms are cut off at 2 minutes, denoted by an ``x''. $\Sink$ is run at the highest precision (i.e., highest $\eta$) before numerical precision issues. The $\COLGEN$ algorithm that our framework recovers computes exact solutions an order-of-magnitude faster than the other algorithms, and the new $\MWU$ algorithm computes reasonably approximate solutions for $k = 400$, which amounts to an $\MOT$ LP with $n^k = 2^{400} \approx 2.6 \times 10^{120}$ variables.
	} \label{fig:reliabilityclique}
\end{figure}

In Figure~\ref{fig:reliabilityclique}, we compare the numerical performance of the algorithms in Corollary~\ref{cor:rel}---$\COLGEN$ and $\MWU$ with polynomial-time implementation of their bottlenecks---with the fastest previous algorithms for both best-case and worst-case network reliability. Previously, the fastest algorithms that apply to this problem are (1) out-of-the-box LP solvers run on $\MOT$, (2) the brute-force implementation of $\Sink$ which marginalizes over all $n^k = 2^{|E|}$ entries in each iteration, and (3) this $\COLGEN$ algorithm that we recover~\citep{zemel1982polynomial,weiss1986stochastic}. It is unknown if there is a practically efficient implementation of the $\SMinOCS$ oracle (and thus of $\Sink$) for both best-case or worst-case reliability. Since the previous algorithms (1) and (2) have exponential runtime that scales as $n^{\Omega(k)} = 2^{\Omega(|E|)}$, they do not scale past tiny input sizes. In contrast, the algorithms in Corollary~\ref{cor:rel} scale to much larger inputs. Indeed, the $\COLGEN$ algorithm that our framework recovers can compute exact solutions roughly an order-of-magnitude faster than the other algorithms, and the new $\MWU$ algorithm computes reasonably approximate solutions beyond $k = 400$, which amounts to an $\MOT$ LP with $n^k = 2^{400} \approx 2.6 \times 10^{120}$ variables. 
\section{Application: $\MOT$ problems with low-rank plus sparse structure}\label{sec:lr}

In this section, we consider $\MOT$ problems whose cost tensors $C$ decompose into low-rank and sparse components. We propose the first polynomial-time algorithms for this general class of $\MOT$ problems.
\par The section is organized as follows. In \S\ref{ssec:lr:setup} we formally describe this setup and discuss why it is incomparable to all other structures discussed in this paper. In \S\ref{ssec:lr:alg}, we show that for costs with this structure, the $\AMinO$ and $\SMinO$ oracles can be implemented in polynomial time; from this it immediately follows that $\MWU$ and $\Sink$ can be implemented in polynomial time. Finally, in \S\ref{ssec:lr:risk} and \S\ref{ssec:lr:proj}, we provide two illustrative applications of these algorithms. The former regards portfolio risk management and is a direct application of our result for $\MOT$ with low-rank cost tensors. The latter regards projecting mixture distributions to the transportation polytope and illustrates the versality of our algorithmic results since this problem is quadratic optimization over the transportation polytope rather than linear (a.k.a. $\MOT$).

\subsection{Setup}\label{ssec:lr:setup}

We begin by recalling the definition of tensor rank. It is the direct analog of the standard concept of matrix rank. See the survey~\citep{kolda2009tensor} for further background.

\begin{defin}[Tensor rank]\label{def:tensor-rank}
	A rank-$r$ factorization of a tensor $R \in \Rntk$ is a collection of $rk$ vectors $\{u_{i,\ell}\}_{i \in [k], \ell \in [r]} \subset \R^n$ satisfying
	\[
		R = \sum_{\ell=1}^r \bigotimes_{i=1}^k u_{i,\ell}.
	\]
	The rank of a tensor is the minimal $r$ for which there exists a rank-$r$ factorization.
\end{defin}

In this section we consider $\MOT$ problems with the following ``low-rank plus sparse'' structure.

\begin{defin}[Low-rank plus sparse structure for $\MOT$]\label{def:lr}
	An $\MOT$ cost tensor $C \in \Rntk$ has low-rank plus sparse structure of rank $r$ and sparsity $s$ if it decomposes as
	\begin{align}
		C = 
		R + S,
		\label{eq:lr-def}
	\end{align}  
	where $R$ is a rank-$r$ tensor and $S$ is an $s$-sparse tensor.
\end{defin}

Throughout, we make the natural assumption that $S$ is input through its $s$ non-zero entries, and that $R$ is input through a rank-$r$ factorization. We also make the natural assumption that the entries of both $R$ and $S$ are of size $O(\Cmax)$---this rules out the case of having extremely large entries of $R$ and $S$, one positive and one negative, which cancel to yield a small entry of $C = R + S$.

\begin{remark}[Neither low-rank structure nor sparse structure can be modeled by graphical structure or set-optimization structure]\label{rem:lr-incomparable}
	 In general, both rank-$1$ costs and polynomially sparse costs do not have non-trivial graphical structure. Specifically, modeling these costs with graphical structure requires the complete graph (a.k.a., maximal treewidth of $k-1$)---and because $\MOT$ problems with graphical structure of treewidth $k-1$ are $\NP$-hard to solve in the absence of further structure~\citep{AltBoi20hard}, modeling such problems with graphical structure is useless for the purpose of designing polynomial-time $\MOT$ algorithms. It is also clear that neither low-rank structure nor sparse structure can be modeled by set-optimization structure because in general, neither $R$ nor $S$ nor $R+S$ has binary-valued entries.
\end{remark}

\subsection{Polynomial-time algorithms}\label{ssec:lr:alg}

From a technical perspective, the main result of this section is that there is a polynomial-time algorithm for approximating the minimum entry of a tensor that decomposes into constant-rank and sparse components. Previously, this was not known even for constant-rank tensors. This result may be of independent interest. We remark that this result is optimal in the sense that unless $\P = \NP$, there does not exist an algorithm with runtime that is \emph{jointly} polynomial in the input size and the rank $r$~\citep{AltBoi20hard}.

\begin{theorem}[Polynomial-time algorithm solving $\AMinO$ and $\SMinO$ for low-rank + sparse costs]\label{thm:lr-oracles}
	Consider cost tensors $C \in \Rntk$ that have low-rank plus sparse structure of rank $r$ and sparsity $s$ (see Definition~\ref{def:lr}).
	For any fixed $r$, Algorithm~\ref{alg:lr} runs in $\poly(n, k, s, \Cmax/\eps)$ time and solves the $\eps$-approximate $\AMinO_C$ oracle. Furthermore, it also solves the $\SMinO_{\tilde{C}}$ oracle for $\eta = (2k \log n)/\eps$ on some cost tensor $\tilde{C} \in \Rntk$ satisfying $\|C - \tilde{C}\|_{\max} \leq \eps/2$.
\end{theorem}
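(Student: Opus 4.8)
The plan is to reduce both assertions to the task of computing the partition function of a structured tensor, and to carry this out by replacing $\exp[-\eta R]$ with a low-rank tensor arising from a polynomial approximation of the exponential. Fix $\eta = (2k\log n)/\eps$ as in the statement, and let $B = O(\Cmax)$ be a known bound on $\max_{\jvec}|R_{\jvec}|$, furnished by the standing assumption on $C = R + S$ (Definition~\ref{def:lr}). First note that once we have produced a tensor $\tilde C$ with $\|\tilde C - C\|_{\max} \le \eps/2$ for which $\SMinO_{\tilde C}(p,\eta)$ is exactly computable, we are also done with the $\AMinO_C$ oracle: by Lemma~\ref{lem:smin} with $m = n^k$ we have $\SMinO_{\tilde C}(p,\eta) \in [\MinO_{\tilde C}(p) - \eps/2,\ \MinO_{\tilde C}(p)]$, and $|\MinO_{\tilde C}(p) - \MinO_C(p)| \le \|\tilde C - C\|_{\max} \le \eps/2$ since the perturbation of the cost enters the minimand uniformly; hence the single number $\SMinO_{\tilde C}(p,\eta)$ lies within $\eps$ of $\MinO_C(p)$.

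Next I would construct $\tilde C$. Write $R = \sum_{\ell=1}^{r}\bigotimes_{i=1}^{k} u_{i,\ell}$ in the given factored form, so $R_{\jvec} = \sum_{\ell=1}^{r}\pi_{\ell}(\jvec)$ with $\pi_{\ell}(\jvec) := \prod_{i=1}^{k}[u_{i,\ell}]_{j_i}$. Let $p_D(y) := \sum_{m=0}^{D}(-\eta y)^m/m!$ be the degree-$D$ Taylor truncation of $y \mapsto e^{-\eta y}$, and choose $D = \Theta(\eta B) = \poly(n,k,\Cmax/\eps)$ so that the Lagrange remainder bound $|p_D(y) - e^{-\eta y}| \le e^{\eta B}(\eta B)^{D+1}/(D+1)!$ (valid for $|y| \le B$) is at most $\tfrac12 e^{-\eta B}$; this is possible because $(\eta B)^{D+1}/(D+1)! \le (e\eta B/(D+1))^{D+1}$ decays super-exponentially in $D$. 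For $|y| \le B$ this gives $p_D(y)/e^{-\eta y} \in [\tfrac12,\tfrac32]$, so $p_D(R_{\jvec}) > 0$ for every $\jvec$, and defining $\tilde R$ entrywise by $\tilde R_{\jvec} := -\eta^{-1}\log p_D(R_{\jvec})$ yields $\|\tilde R - R\|_{\max} \le (\log 2)/\eta \le \eps/2$. Set $\tilde C := \tilde R + S$; then $\|\tilde C - C\|_{\max} = \|\tilde R - R\|_{\max} \le \eps/2$, as required.

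The point of this choice is that $\exp[-\eta\tilde C]$ is low-rank plus sparse with an explicit factorization. Indeed $\exp[-\eta\tilde R]$ is, entrywise, the tensor $\jvec \mapsto p_D(R_{\jvec}) = \sum_{m=0}^{D}\tfrac{(-\eta)^m}{m!}\big(\sum_{\ell=1}^{r}\pi_\ell(\jvec)\big)^m$; expanding by the multinomial theorem, it is a sum over nonnegative integer vectors $d \in \mathbb{Z}_{\geq 0}^{r}$ with $|d| \le D$ of an explicit scalar coefficient times $\prod_{\ell=1}^{r}\pi_\ell(\jvec)^{d_\ell} = \prod_{i=1}^{k}\prod_{\ell=1}^{r}([u_{i,\ell}]_{j_i})^{d_\ell}$, which is a rank-one tensor in $\jvec$ with explicit factor vectors. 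Hence $\exp[-\eta\tilde R]$ has rank at most $\binom{D+r}{r} = O(D^r) = \poly(n,k,\Cmax/\eps)$ for fixed $r$, with a factorization computable in $\poly(n,k,\Cmax/\eps)$ time. Since $\exp[-\eta\tilde C] = \exp[-\eta\tilde R]\odot\exp[-\eta S]$ and $\exp[-\eta S] - \bone$ is $s$-sparse, $\exp[-\eta\tilde C] = \exp[-\eta\tilde R] + E$ where $E$ is an $s$-sparse tensor with $E_{\jvec} = p_D(R_{\jvec})(e^{-\eta S_{\jvec}} - 1)$ on $\supp(S)$, computable in $\poly(n,k,s,\Cmax/\eps)$ time. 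Given a query $p$, the algorithm (Algorithm~\ref{alg:lr}) then evaluates $\SMinO_{\tilde C}(p,\eta) = -\eta^{-1}\log Z$ with $Z = \sum_{\jvec}\exp[-\eta\tilde C]_{\jvec}\prod_{i=1}^{k}d_{i,j_i}$ and $d_{i,j} := e^{\eta[p_i]_j}$ (using $e^{-\infty} = 0$): the rank-one summand indexed by $d$ factors as $\prod_{i=1}^{k}\big(\sum_{j\in[n]}\prod_{\ell=1}^{r}([u_{i,\ell}]_j)^{d_\ell}\, d_{i,j}\big)$ and is summed over the $\binom{D+r}{r}$ multi-indices, while the sparse part contributes $\sum_{\jvec\in\supp(S)}E_{\jvec}\prod_i d_{i,j_i}$ over $s$ terms; all of this runs in $\poly(n,k,s,\Cmax/\eps)$ time.

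The expected main obstacle is exactly the second step: producing a polynomial of degree only $\poly(n,k,\Cmax/\eps)$ that approximates $e^{-\eta\cdot}$ well enough on $[-B,B]$ that $\tilde R$ is well-defined (positivity of $p_D$) and $(\eps/2)$-close to $R$, while keeping the induced rank $\binom{D+r}{r}$ polynomial for fixed $r$. What makes an elementary Taylor bound suffice is the generous slack afforded by the large regularization: since $\eta\eps/2 = k\log n$, $\tilde R$ is automatically $(\eps/2)$-close to $R$ as soon as $p_D$ matches $e^{-\eta\cdot}$ to within a multiplicative factor $n^{\pm k}$ on $[-B,B]$, which is far weaker than the approximation the Taylor tail actually delivers. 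The remaining steps — the multinomial expansion, the sparse correction, and the product-form evaluation of $Z$ — are routine bookkeeping.
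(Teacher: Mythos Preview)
Your proposal is correct and follows essentially the same approach as the paper: replace $\exp[-\eta R]$ by a polynomial approximation of degree $\Theta(\eta R_{\max})$, expand via the multinomial theorem to obtain a tensor of rank $\binom{D+r}{r}$ that is polynomial for fixed $r$, handle $S$ via a sparse correction, and evaluate the resulting partition function by factoring each rank-one summand; the reduction of $\AMinO_C$ to $\SMinO_{\tilde C}$ via Lemma~\ref{lem:smin} and the $\|\tilde C - C\|_{\max}$ bound is also the same. The only cosmetic differences are that the paper states the approximation additively (Lemma~\ref{lem:lr-K}) and converts to a bound on $\tilde R$ via $|\log x - \log y|\le |x-y|/\min(x,y)$ (Lemma~\ref{lem:lr:prec}), whereas you go directly through a multiplicative bound on $p_D/e^{-\eta\cdot}$.
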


We make three remarks about Theorem~\ref{thm:lr-oracles}. First, we are unaware of any polynomial-time implementation of $\SMinO_C$ for the cost $C$. Instead, Theorem~\ref{thm:lr-oracles} solves the $\SMinO_{\tilde{C}}$ oracle for an $O(\eps)$-approximate cost tensor $\tilde{C}$ since this is sufficient for implementing $\Sink$ on the original cost tensor $C$ (see Corollary~\ref{cor:lr-algs} below). Second, it is an interesting open question if the $\poly(n,k,\Cmax/\eps)$ runtime for the $\eps$-approximate $\AMinO_C$ oracle can be improved to $\poly(n,k,\log (\Cmax/\eps))$, as this would imply a $\poly(n,k)$ runtime for the $\MinO_C$ oracle and thus for this class of $\MOT$ problems (see also Footnote~\ref{fn:lr-exact} in the introduction). Third, we remark about practical efficiency: the runtime of Algorithm~\ref{alg:lr} is not just polynomially small in $s$ and $n$, but in fact linear in $s$ and near-linear in $n$. 
However, since this improved runtime is not needed for the theoretical results in the sequel, we do not pursue this further.

Combining the efficient oracle implementations in Theorem~\ref{thm:lr-oracles} with our algorithm-to-oracles reductions in \S\ref{sec:algs} implies the first polynomial-time algorithms for $\MOT$ problems with costs that have constant-rank plus sparse structure. This is optimal in the sense that unless $\P = \NP$, there does not exist an algorithm with runtime that is \emph{jointly} polynomial in the input size and the rank $r$~\citep{AltBoi20hard}.

\begin{cor}[Polynomial-time algorithms solving $\MOT$ for low-rank + sparse costs]\label{cor:lr-algs}
	Consider cost tensors $C \in \Rntk$ that have low-rank plus sparse structure of constant rank $r$ and $\poly(n,k)$ sparsity $s$ (see Definition~\ref{def:lr}). For any $\eps > 0$:
	\begin{itemize}
		\item The $\MWU$ algorithm in \S\ref{ssec:algs:mwu} computes an $\eps$-approximate solution to $\MOT_C$ in $\poly(n,k,\Cmax/\eps)$ time.
		\item The $\Sink$ algorithm in \S\ref{ssec:algs:sink} computes an $\eps$-approximate solution to $\MOT_C$ in $\poly(n,k,\Cmax/\eps)$ time.
	\end{itemize}
	Moreover, $\MWU$ outputs a polynomially sparse tensor, whereas $\Sink$ outputs a fully dense tensor through the implicit representation described in \S\ref{sssec:sink:alg}.
\end{cor}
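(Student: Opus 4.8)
The corollary follows the same template as Corollaries~\ref{cor:graphical-algs} and~\ref{cor:binary-algs}: combine the polynomial-time oracle implementations of Theorem~\ref{thm:lr-oracles} with the algorithm-to-oracle reductions of \S\ref{sec:algs}. The plan is as follows.

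First, for $\MWU$. By Theorem~\ref{thm:lr-oracles}, for constant rank $r$ and sparsity $s = \poly(n,k)$, there is a $\poly(n,k,\Cmax/\eps)$-time algorithm solving the $\eps$-approximate $\AMinO_C$ oracle. This is precisely condition (iii) of Theorem~\ref{thm:mwumotp}, so by the implication ``(iii)$\implies$(i)'' of that theorem, $\MWU$ computes an $\eps$-approximate solution to $\MOT_C$ in $\poly(n,k,\Cmax/\eps)$ time, and moreover outputs a tensor with at most $\poly(n,k,\Cmax/\eps)$ nonzero entries. This handles the $\MWU$ bullet and its sparsity claim.

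Second, for $\Sink$. Here the one point requiring care is that Theorem~\ref{thm:lr-oracles} does not provide $\SMinO_C$ itself, but rather $\SMinO_{\tilde C}$ with $\eta = (2k\log n)/\eps$ for some cost $\tilde C$ satisfying $\|C - \tilde C\|_{\max} \le \eps/2$. The plan is to run $\Sink$ (Algorithm~\ref{alg:sink}) on the cost $\tilde C$ with regularization $\eta = (2k\log n)/(\eps')$ for $\eps' = \eps/2$, implementing its $\MARG_{\tilde C}$ bottleneck via $\SMinO_{\tilde C}$ using the equivalence of Lemma~\ref{lem:marg-smin}; by Theorem~\ref{thm:sink-mot:smin} this produces (in $\poly(n,k,\Cmax/\eps)$ time, via the implicit representation of \S\ref{sssec:sink:alg}) a $P \in \Coup$ that is an $(\eps/2)$-approximate solution to $\MOT_{\tilde C}$. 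It then remains to transfer this guarantee back to $\MOT_C$: since every $P \in \Coup$ has $m(P) = 1$, we have $|\langle C, P\rangle - \langle \tilde C, P\rangle| \le \|C-\tilde C\|_{\max}\, m(P) \le \eps/2$, and likewise the optimal values of $\MOT_C$ and $\MOT_{\tilde C}$ differ by at most $\eps/2$; hence $P$ is an $(\tfrac{3\eps}{2})$-approximate solution to $\MOT_C$. Absorbing constants into $\eps$ (equivalently, running the above with $\eps/3$ in place of $\eps$, which only changes the $\poly(\cdot)$ factors) gives the claimed $\eps$-approximate solution in $\poly(n,k,\Cmax/\eps)$ time, output as a fully dense tensor through the implicit representation of \S\ref{sssec:sink:alg}. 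Combining the two bullets completes the proof.

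There is no substantive obstacle here—the corollary is a routine consequence of the oracle reductions—so the only ``hard part'' is the bookkeeping in the $\Sink$ case needed to pass from the approximate cost $\tilde C$ back to the original cost $C$, which is the perturbation argument above (essentially Lemma~\ref{lem:reg-close}'s style of reasoning applied to a cost perturbation rather than an entropic perturbation).
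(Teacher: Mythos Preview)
Your proposal is correct and follows essentially the same approach as the paper: for $\MWU$ you combine Theorem~\ref{thm:lr-oracles} with Theorem~\ref{thm:mwumotp} exactly as the paper does, and for $\Sink$ you run the algorithm on the perturbed cost $\tilde C$ and then use the H\"older-type bound $|\langle C,P\rangle - \langle \tilde C,P\rangle|\le \|C-\tilde C\|_{\max}$ to transfer the guarantee back to $C$, which is precisely the paper's argument. Your bookkeeping (arriving at $3\eps/2$ and then rescaling $\eps$) is in fact slightly more explicit than the paper's, which folds the two $\eps/2$ losses together somewhat tersely.
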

\begin{proof}
	For $\MWU$, simply combine the polynomial-time reduction to the $\AMinO_C$ oracle (Theorem~\ref{thm:mwumotp}) with the polynomial-time algorithm for the $\AMinO$ oracle (Theorem~\ref{thm:lr-oracles}). For $\Sink$, combining the polynomial-time reduction to the $\SMinO_{\tilde{C}}$ oracle (Theorem~\ref{thm:sink-mot:smin}) with the polynomial-time algorithm for the $\SMinO_{\tilde{C}}$ oracle (Theorem~\ref{thm:lr-oracles}) yields a $\poly(n,k,\Cmax/\eps)$ algorithm for $\eps/2$-approximating the $\MOT$ problem with cost tensor $\tilde{C}$. It therefore suffices to show that the values of the $\MOT$ problems with cost tensors $C$ and $\tilde{C}$ differ by at most $\eps/2$, that is,
	\[
	\abs{\min_{P \in \Coup} \langle P, C \rangle - \min_{P \in \Coup} \langle P, \tilde{C} \rangle} 
	\leq \eps/2.
	\]
	But this holds because both $\MOT$ problems have the same feasible set, and for any feasible $P \in \Coup$ it follows from H\"older's inequality that the objectives of the two $\MOT$ problems differ by at most 
	\[
	\abs{\langle P, C \rangle - \langle P, \tilde{C} \rangle} 
	\leq \|P\|_1 \|C - \tilde{C}\|_{\max}
	\leq \eps/2.
	\] 
\end{proof}

Below, we describe the algorithm in Theorem~\ref{thm:lr-oracles}.
Specifically, in \S\ref{sssec:lr:tech}, we give four helper lemmas which form the technical core of our algorithm; and then in \S\ref{sssec:lr:alg}, we combine these ingredients to design the algorithm and prove its correctness. Throughout, recall that we use the bracket notation $f[A]$ to denote the \emph{entrwise} application of a univariate function $f$ (e.g., $\exp$, $\log$, or a polynomial) to $A$.

\subsubsection{Technical ingredients}\label{sssec:lr:tech}
At a high level, our approach to designing the algorithm in Theorem~\ref{thm:lr-oracles} is to approximately compute the $\SMinO$ oracle in polynomial time by synthesizing four facts:
\begin{enumerate}
	\item By expanding the softmin and performing simple operations,
	 it suffices to compute the total sum of all $n^k$ entries of the entrywise exponentiated tensor $\exp[-\eta R]$ (modulo simple transforms). 
	 \item Although $\exp[-\eta R]$ is in general a full-rank tensor, we can exploit the fact that $R$ is a low-rank tensor in order to approximate $\exp[-\eta R]$ by a low-rank tensor $L$. (Moreover, we can efficiently compute a low-rank factorization of $L$ in closed form.)
	 \item There is a simple algorithm for computing the sum of all $n^k$ entries of $L$ in polynomial time because $L$ is low-rank. (And thus we may approximate the sum of all $n^k$ entries of $\exp[-\eta R]$ as desired in step 1.)
	 \item This approximation is sufficient for computing both the $\AMinO$ and $\SMinO$ oracle in Theorem~\ref{thm:lr-oracles}.
\end{enumerate}

Of these four steps, the main technical step is the low-rank approximation in step two. Below, we formalize these four steps individually in Lemmas~\ref{lem:lr:sparse},~\ref{lem:lr-K},~\ref{lem:lr-marg}, and~\ref{lem:lr:prec}. 
Further detail on how to synthesize these four steps is then provided afterwards, in the proof of Theorem~\ref{thm:lr-oracles}. 

\par It is convenient to write the first lemma in terms of an approximate tensor $\tilde{C} = \tilde{R}+S$ rather than the original cost $C = R + S$. 

\begin{lemma}[Softmin for cost with sparse component]\label{lem:lr:sparse}
	Let $\tilde{C} = \tilde{R} + S$ and $p_1,\dots,p_k \in \R^n$. Then
	\[
	\smineta_{\jvec \in [n]^k} \tilde{C}_{\jvec} - \sum_{i=1}^k [p_i]_{j_i}
	=
	-\eta^{-1} \log(a + b),
	\]
	where $d_i := \exp[\eta p_i] \in \Rp^n$,
	\begin{align}
		a := \sum_{\substack{\jvec \in [n]^k \\ \text{s.t. } S_{\jvec} \neq 0}} \prod_{i=1}^k [d_i]_{j_i} \cdot e^{-\eta \tilde{R}_{\jvec}} \cdot (e^{-\eta S_{\jvec}} - 1)
		\label{eq:lr:alg:a}
	\end{align}
	and 
	\begin{align}
		b := \sum_{\jvec \in [n]^k} \prod_{i=1}^k [d_i]_{j_i} \cdot e^{-\eta \tilde{R}_{\jvec}}.
		\label{eq:lr:alg:b}
	\end{align}
\end{lemma}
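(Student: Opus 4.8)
\textbf{Proof proposal for Lemma~\ref{lem:lr:sparse}.}

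The plan is a direct computation expanding the definition of the softmin operator, then splitting the sum over $\jvec \in [n]^k$ into the part where $S_{\jvec} \neq 0$ and the part where $S_{\jvec} = 0$. First, by the definition of $\smineta$,
\[
	\smineta_{\jvec \in [n]^k} \tilde{C}_{\jvec} - \sum_{i=1}^k [p_i]_{j_i}
	=
	-\eta^{-1} \log \left( \sum_{\jvec \in [n]^k} e^{-\eta(\tilde{C}_{\jvec} - \sum_{i=1}^k [p_i]_{j_i})} \right).
\]
Using $\tilde{C}_{\jvec} = \tilde{R}_{\jvec} + S_{\jvec}$ and the substitution $d_i = \exp[\eta p_i]$, so that $e^{\eta [p_i]_{j_i}} = [d_i]_{j_i}$, the argument of the logarithm becomes $\sum_{\jvec \in [n]^k} \prod_{i=1}^k [d_i]_{j_i} \cdot e^{-\eta \tilde{R}_{\jvec}} \cdot e^{-\eta S_{\jvec}}$.

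The key algebraic step is to write $e^{-\eta S_{\jvec}} = (e^{-\eta S_{\jvec}} - 1) + 1$, which separates the ``sparse correction'' from the ``dense base''. Substituting this in and using linearity of the sum gives
\[
	\sum_{\jvec \in [n]^k} \prod_{i=1}^k [d_i]_{j_i} \cdot e^{-\eta \tilde{R}_{\jvec}} \cdot e^{-\eta S_{\jvec}}
	=
	\sum_{\jvec \in [n]^k} \prod_{i=1}^k [d_i]_{j_i} \cdot e^{-\eta \tilde{R}_{\jvec}} \cdot (e^{-\eta S_{\jvec}} - 1)
	+
	\sum_{\jvec \in [n]^k} \prod_{i=1}^k [d_i]_{j_i} \cdot e^{-\eta \tilde{R}_{\jvec}}.
\]
The second sum on the right is exactly $b$. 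For the first sum, observe that whenever $S_{\jvec} = 0$ the factor $e^{-\eta S_{\jvec}} - 1 = 0$ vanishes, so the sum may be restricted to the indices $\jvec$ with $S_{\jvec} \neq 0$; this restricted sum is exactly $a$. Hence the argument of the logarithm equals $a + b$, and therefore $\smineta_{\jvec} \tilde{C}_{\jvec} - \sum_i [p_i]_{j_i} = -\eta^{-1}\log(a+b)$, as claimed.

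There is essentially no obstacle here — the statement is a bookkeeping identity, and the only thing to be careful about is the handling of edge cases in the softmin/log conventions (e.g.\ if some $[p_i]_{j_i} = -\infty$, i.e.\ $[d_i]_{j_i} = 0$, or if $a + b = 0$). These are covered by the standard conventions $e^{-\infty} = 0$ and $\log 0 = -\infty$ already adopted in \S\ref{sec:prelim} and \S\ref{ssec:prelim:rmot}, so the identity continues to hold in the extended reals. The substantive work of the section lies in the subsequent lemmas (Lemmas~\ref{lem:lr-K},~\ref{lem:lr-marg}, and~\ref{lem:lr:prec}), which actually approximate and evaluate $a$ and $b$ efficiently; this lemma merely sets up the decomposition they will exploit.
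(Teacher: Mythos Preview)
Your proof is correct and follows essentially the same approach as the paper: expand the softmin, substitute $d_i = \exp[\eta p_i]$ and $\tilde{C} = \tilde{R} + S$, and separate out the sparse correction. Your use of the identity $e^{-\eta S_{\jvec}} = (e^{-\eta S_{\jvec}} - 1) + 1$ is a slightly cleaner packaging of the paper's ``partition the sum by whether $S_{\jvec}=0$, then add and subtract,'' but the argument is the same bookkeeping either way.
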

\begin{proof}
	By expanding the definition of softmin, and then substituting $p_i$ with $d_i$ and $\tilde{C}$ with $\tilde{R} + S$,
	\begin{align*}
		\smineta_{\jvec \in [n]^k} \tilde{C}_{\jvec} - \sum_{i=1}^k [p_i]_{j_i}
		&=
		-\frac{1}{\eta}  \log \left( \sum_{\jvec \in [n]^k} 
		e^{\eta \sum_{i=1}^k [p_i]_{j_i}}
		e^{-\eta \tilde{C}_{\jvec}} 
		\right)
		= 
		-\frac{1}{\eta} \log \left( \sum_{\jvec \in [n]^k} \prod_{i=1}^k [d_i]_{j_i} \cdot e^{-\eta \tilde{R}_{\jvec}} e^{-\eta S_{\jvec}} \right).
	\end{align*}
	By simple manipulations, we conclude that the above quantity is equal to the desired quantity:
	\begin{align*}
		\cdots =&-\frac{1}{\eta} \log \left( \sum_{\substack{\jvec \in [n]^k \\ \text{s.t. } S_{\jvec} \neq 0}} \prod_{i=1}^k [d_i]_{j_i} \cdot e^{-\eta \tilde{R}_{\jvec}} e^{-\eta S_{\jvec}}
		+
		\sum_{\substack{\jvec \in [n]^k \\ \text{s.t. } S_{\jvec} = 0}} \prod_{i=1}^k [d_i]_{j_i} \cdot e^{-\eta \tilde{R}_{\jvec}}  \right)
		\\=&
		-\frac{1}{\eta}  \log \left( \sum_{\substack{\jvec \in [n]^k \\ \text{s.t. } S_{\jvec} \neq 0}} \prod_{i=1}^k [d_i]_{j_i} \cdot e^{-\eta \tilde{R}_{\jvec}} \left( e^{-\eta S_{\jvec}} - 1\right)
		+
		\sum_{\jvec \in [n]^k} \prod_{i=1}^k [d_i]_{j_i} \cdot e^{-\eta \tilde{R}_{\jvec}} \right)
		\\=&
		- \frac{1}{\eta} \log (a + b).
	\end{align*}
	Above, the first step is by partitioning the sum over $\jvec \in [n]^k$ based on if $S_{\jvec} = 0$, the second step is by adding and subtracting $\sum_{\jvec \in [n]^k \text{ s.t. } S_{\jvec} \neq 0} \prod_{i=1}^k [d_i]_{j_i} \cdot e^{-\eta \tilde{R}_{\jvec}}$, and the last step is by definition of $a$ and $b$. 
\end{proof}

\begin{lemma}[Low-rank approximation of the exponential of a low-rank tensor]\label{lem:lr-K}
	There is an algorithm that given $R \in \Rntk$ in rank-$r$ factored form, $\eta > 0$, and a precision $\epst < e^{-\eta R_{\max}}$, takes $n \cdot \poly(k,\tilde{r})$ time to compute a rank-$\tilde{r}$ tensor $L  \in \Rntk$ in factored form satisfying $\|L - \exp[-\eta R]\|_{\max} \leq \epst$, where
	\begin{align}
		\tilde{r}
		\leq \binom{r + O(\log\tfrac{1}{\epst})}{r}.
		\label{eq:lr:rtilde}
	\end{align}
\end{lemma}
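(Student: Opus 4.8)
The plan is to approximate the scalar function $g(t) := e^{-\eta t}$ on the interval $[-R_{\max},R_{\max}]$ --- which contains every entry of $R$ --- by a polynomial $q$ of low degree $D$, and then to take $L := q[R]$, the entrywise application of $q$ to $R$. Two facts drive this. First, because $\epst < e^{-\eta R_{\max}}$, we have $\eta R_{\max} < \log\tfrac{1}{\epst}$, so a Taylor truncation of degree $D = O(\log\tfrac{1}{\epst})$ already suffices to approximate $g$ to within $\epst$ on this interval. Second, an entrywise polynomial of degree $D$ applied to a rank-$r$ tensor has rank at most $\binom{D+r}{r}$ --- which is polynomial in $D$ --- whereas the naive estimate $\rank(A\odot B)\le\rank(A)\rank(B)$ would only give the useless bound $\sum_{m\le D} r^m = r^{\Theta(D)}$.

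First I would fix $q(t) := \sum_{m=0}^D \tfrac{(-\eta)^m}{m!}\, t^m$, the degree-$D$ Taylor polynomial of $g$ at $0$, with $D := \lceil 2e\log\tfrac{1}{\epst}\rceil = O(\log\tfrac{1}{\epst})$, and set $L := q[R]$. Since every entry $R_{\jvec}$ lies in $[-R_{\max},R_{\max}]$, the Lagrange remainder together with $(D+1)!\ge((D+1)/e)^{D+1}$ gives
\[
\|L - \exp[-\eta R]\|_{\max} = \max_{\jvec}\bigl|q(R_{\jvec})-g(R_{\jvec})\bigr| \le \frac{(\eta R_{\max})^{D+1}}{(D+1)!}\,e^{\eta R_{\max}} \le \Bigl(\tfrac{e\eta R_{\max}}{D+1}\Bigr)^{D+1}e^{\eta R_{\max}},
\]
and a short computation using $\eta R_{\max}<\log\tfrac{1}{\epst}$ and $D+1\ge 2e\log\tfrac{1}{\epst}$ shows this is at most $\epst$.

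Next I would produce the factored form of $L$ from the given rank-$r$ factorization $R=\sum_{\ell=1}^r\bigotimes_{i=1}^k u_{i,\ell}$. Writing $R_{\jvec}=\sum_{\ell=1}^r\prod_{i=1}^k[u_{i,\ell}]_{j_i}$ and expanding each power $(R_{\jvec})^m$ by the multinomial theorem yields
\[
L_{\jvec} = \sum_{m=0}^D\frac{(-\eta)^m}{m!}\sum_{\substack{\alpha\in\Z_{\ge 0}^r\\ |\alpha|=m}}\binom{m}{\alpha}\prod_{i=1}^k[w_{i,\alpha}]_{j_i},\qquad [w_{i,\alpha}]_j:=\prod_{\ell=1}^r[u_{i,\ell}]_j^{\alpha_\ell},
\]
which exhibits $L$ as a sum of $\#\{\alpha\in\Z_{\ge 0}^r:|\alpha|\le D\}=\binom{D+r}{r}$ rank-one tensors $\bigotimes_{i=1}^k w_{i,\alpha}$ (absorbing the scalar $\tfrac{(-\eta)^m}{m!}\binom{m}{\alpha}$ into the first factor). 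Hence $L$ has rank $\tilde r\le\binom{D+r}{r}\le\binom{r+O(\log\tfrac{1}{\epst})}{r}$, as required. For the runtime: there are $k\tilde r$ vectors $w_{i,\alpha}\in\R^n$ to form, and each coordinate is a product of $\sum_\ell\alpha_\ell\le D$ of the given reals (computable with $O(r\log D)$ multiplications), so the whole factorization is built in $O(nk\tilde r(r+\log D))=n\cdot\poly(k,\tilde r)$ time, plus $\poly(\tilde r)$ for the $\binom{D+r}{r}$ scalar coefficients.

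I expect the only delicate step to be the rank estimate $\binom{D+r}{r}$ obtained from the multinomial expansion; everything else --- the polynomial approximation and the runtime accounting --- is routine, and it is precisely the hypothesis $\epst < e^{-\eta R_{\max}}$ that keeps the degree $D$, and therefore the rank $\tilde r$, at $O(\log\tfrac{1}{\epst})$.
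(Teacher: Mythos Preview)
Your proposal is correct and follows essentially the same approach as the paper: approximate $e^{-\eta t}$ on $[-R_{\max},R_{\max}]$ by a degree-$O(\log\tfrac{1}{\epst})$ polynomial (the paper cites Taylor or Chebyshev; you pick Taylor and verify the error explicitly using $\epst<e^{-\eta R_{\max}}$), then expand $q[R]$ via the multinomial theorem to obtain the $\binom{r+D}{r}$ rank bound and read off the factored form. Your error and runtime bookkeeping is a bit more detailed than the paper's, but the argument is the same.
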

\begin{proof}
	By classical results from approximation theory (see, e.g.,~\citep{Tre19}), there exists a polynomial $q$ of degree $m = O(\log1/\epst)$ satisfying 
	\[
	\abs{\exp(-\eta x) - q(x)} \leq \epst, \qquad \forall x \in [-R_{\max},R_{\max}].
	\]
	For instance, the Taylor or Chebyshev expansion of $x \mapsto \exp(-\eta x)$ suffices. Thus the tensor $L$ with entries 
	\[
	L_{\jvec} = q(R_{\jvec})
	\]
	approximates $\exp[-\eta R]$ to error 
	\[
	\|L - \exp[-\eta R]\|_{\max} \leq \epst.
	\]
	\par We now show that $L$ has rank $\tilde{r} \leq \binom{r + m }{r}$, and moreover that a rank-$\tilde{r}$ factorization can be computed in $n \cdot \poly(k,\tilde{r})$ time. Denote $q(x) = \sum_{t=0}^m a_t x^t$ and $R = \sum_{\ell=1}^r \otimes_{i=1}^k u_{i,\ell}$. By definition of $L$, definition of $q$ and $R$, and then the Multinomial Theorem, 
	\[
	L_{\jvec}
	= q(R_{\jvec})
	= \sum_{t=0}^m a_t \left( \sum_{\ell=1}^r \prod_{i=1}^k [u_{i,\ell}]_{j_i} \right)^t
	= \sum_{\alpha \in \N_0^r \, : \, |\alpha| \leq m} \binom{|\alpha|}{\alpha} a_{|\alpha|} \prod_{\ell=1}^r \prod_{i=1}^k [u_{i,\ell}]_{j_i}^{\alpha_i},
	\]
	where the sum is over $r$-tuples $\alpha$ with non-negative entries summing to at most $m$.
	Thus 
	\[
	L = \sum_{\alpha \in \N_0^r \, : \, |\alpha| \leq m} \bigotimes_{i=1}^kv_{i,\alpha},
	\]
	where $v_{i,\alpha} \in \Rn$ denotes the vector with $j$-th entry $ \binom{|\alpha|}{\alpha} a_{|\alpha|}  \prod_{\ell=1}^r [u_{i,\ell}]_j^{\alpha_i}$ for $i=1$, and $\prod_{\ell=1}^r [u_{i,\ell}]_j^{\alpha_i}$ for $i > 1$. This yields the desired low-rank factorization of $L$ because
	\[
	\tilde{r} \leq \# \{\alpha \in \N_0^r \; : \; |\alpha| \leq m \} = \binom{r+m}{r}.
	\]
	Finally, since each of the $k\tilde{r}$ vectors $v_{i,\alpha}$ in the factorization of $L$ can be computed efficiently from the closed-form expression above, the desired runtime follows.
\end{proof}

\begin{lemma}[Marginalizing a scaled low-rank tensor]\label{lem:lr-marg}
	Given vectors $d_1, \dots, d_k \in \Rn$ and a tensor $L \in \Rntk$ through a rank $\tilde{r}$ factorization, we can compute $m((\otimes_{i=1}^k d_i) \odot L)$ in $O(nk\tilde{r})$ time. 
\end{lemma}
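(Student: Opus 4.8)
The plan is to exploit the factored form of $L$ together with the fact that both the Hadamard product with a rank-one tensor and the ``total sum'' functional $m(\cdot)$ interact nicely with rank-one decompositions. Concretely, write the given rank-$\tilde r$ factorization as $L = \sum_{\ell=1}^{\tilde r} \bigotimes_{i=1}^k w_{i,\ell}$ with $w_{i,\ell} \in \R^n$. First I would observe the elementary identity that for any vectors $a_1,\dots,a_k,b_1,\dots,b_k \in \R^n$ one has $\left(\bigotimes_{i=1}^k a_i\right) \odot \left(\bigotimes_{i=1}^k b_i\right) = \bigotimes_{i=1}^k (a_i \odot b_i)$, which follows immediately by comparing entries. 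Applying this term-by-term to the factorization of $L$ gives
\[
	\left(\bigotimes_{i=1}^k d_i\right) \odot L = \sum_{\ell=1}^{\tilde r} \bigotimes_{i=1}^k \left(d_i \odot w_{i,\ell}\right),
\]
so the scaled tensor is again explicitly rank-$\tilde r$, with a factorization computable in $O(nk\tilde r)$ time (one entrywise product of length-$n$ vectors for each of the $k\tilde r$ factors).

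Next I would use the fact that $m(\cdot)$, the sum of all $n^k$ entries, is multiplicative on rank-one tensors: $m\!\left(\bigotimes_{i=1}^k v_i\right) = \prod_{i=1}^k \left(\sum_{j=1}^n [v_i]_j\right) = \prod_{i=1}^k \langle v_i, \bone \rangle$. Since $m$ is linear, combining this with the display above yields
\[
	m\!\left(\left(\bigotimes_{i=1}^k d_i\right) \odot L\right) = \sum_{\ell=1}^{\tilde r} \prod_{i=1}^k \left(\sum_{j=1}^n [d_i]_j\,[w_{i,\ell}]_j\right),
\]
which is the closed form that the algorithm evaluates directly.

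Finally I would tally the arithmetic cost of evaluating the right-hand side: each of the $k\tilde r$ inner sums $\sum_{j=1}^n [d_i]_j [w_{i,\ell}]_j$ costs $O(n)$, for $O(nk\tilde r)$ total; forming the $\tilde r$ products of $k$ scalars each costs $O(k\tilde r)$; and summing the $\tilde r$ resulting terms costs $O(\tilde r)$. Hence the overall runtime is $O(nk\tilde r)$, as claimed. I do not anticipate a genuine obstacle here — the only thing to be careful about is stating the two distributive/multiplicative identities precisely and noting that the factored representation of $L$ is what makes $m(\cdot)$ tractable (whereas for a general tensor it would require summing $n^k$ entries); everything else is a routine operation count.
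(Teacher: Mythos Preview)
Your proposal is correct and follows essentially the same approach as the paper: both derive the closed form $m((\otimes_{i=1}^k d_i)\odot L)=\sum_{\ell=1}^{\tilde r}\prod_{i=1}^k\langle d_i,w_{i,\ell}\rangle$ by distributing the Hadamard product and the total-sum functional over the rank-$\tilde r$ decomposition, and then count the $k\tilde r$ inner products at $O(n)$ each. The only difference is presentational---you isolate the two rank-one identities explicitly, whereas the paper compresses them into a single chain of sum/product swaps---but the argument and runtime bookkeeping are identical.
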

\begin{proof}
	Denote the factorization of $L$ by $L = \sum_{\ell=1}^{\tilde{r}} \otimes_{i=1}^k v_{i,\ell}$. Then 
	\begin{align*}
		m((\otimes_{i=1}^k d_i) \odot L )
		&=
		\sum_{\jvec \in [n]^k} \left[ (\otimes_{i=1}^k d_i) \odot L \right]_{\jvec}
		= 
		\sum_{\jvec \in [n]^k} \sum_{\ell=1}^{\tilde{r}} \prod_{i=1}^k [d_i]_{j_i} [v_{i,\ell}]_{j_i}
		= 
		\sum_{\ell=1}^{\tilde{r}} \prod_{i=1}^k \sum_{j=1}^n [d_i]_{j} [v_{i,\ell}]_{j}
		= 
		\sum_{\ell=1}^{\tilde{r}} \prod_{i=1}^k \langle d_i, v_{i,\ell} \rangle,
	\end{align*}
	where the first step is by definition of the $m(\cdot)$ operation that sums over all entries, the second step is by definition of $L$, and the third step is by swapping products and sums. Thus computing the desired quantity amounts to computing $\tilde{r}k$ inner products of $n$-dimensional vectors.  This can be done in $O(nr\tilde{k})$ time.
\end{proof}

\begin{lemma}[Precision of the low-rank approximation]\label{lem:lr:prec}
	Let $\eps \leq 1$. Suppose $L \in \Rntk$ satisfies $\|L - \exp[-\eta R]\|_{\max} \leq \tfrac{\eps}{3} e^{-\eta R_{\max}}$. Then the matrix $\tilde{C} := - \frac{1}{\eta} \log[L] + S$ satisfies
		\begin{align}
			\|\tilde{C} - C\|_{\max} \leq \frac{\eps}{2}.
			\label{eq:lem:lr:prec}
		\end{align}
\end{lemma}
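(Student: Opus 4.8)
The plan is to bound $\|\tilde{C} - C\|_{\max}$ by bounding the perturbation entrywise. Since $\tilde{C} = -\tfrac{1}{\eta}\log[L] + S$ and $C = R + S$, the sparse parts cancel and it suffices to show that $\|{-\tfrac{1}{\eta}\log[L]} - R\|_{\max} \leq \eps/2$; equivalently, for each index $\jvec$, we need $|{-\tfrac{1}{\eta}\log L_{\jvec}} - R_{\jvec}| \leq \eps/2$. Writing $K_{\jvec} := e^{-\eta R_{\jvec}}$, this is the statement that $\tfrac{1}{\eta}|\log L_{\jvec} - \log K_{\jvec}| \leq \eps/2$, i.e. $|\log(L_{\jvec}/K_{\jvec})| \leq \eta\eps/2$.

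First I would record the two elementary facts I will use: (i) by hypothesis $|L_{\jvec} - K_{\jvec}| \leq \tfrac{\eps}{3} e^{-\eta R_{\max}} \leq \tfrac{\eps}{3} K_{\jvec}$, since $K_{\jvec} = e^{-\eta R_{\jvec}} \geq e^{-\eta R_{\max}}$ (here $R_{\max} = \|R\|_{\max}$, so $R_{\jvec} \le R_{\max}$). Hence the multiplicative error satisfies $L_{\jvec}/K_{\jvec} \in [1 - \tfrac{\eps}{3}, 1 + \tfrac{\eps}{3}]$, and in particular $L_{\jvec} > 0$ so the logarithm is well-defined. (ii) For $u \in [-\tfrac13, \tfrac13]$ one has $|\log(1+u)| \leq \tfrac{3}{2}|u|$ — this is a routine one-variable estimate (e.g. $|\log(1+u)| \le |u|/(1-|u|) \le \tfrac32 |u|$ for $|u|\le\tfrac13$). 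Applying (ii) with $u = L_{\jvec}/K_{\jvec} - 1$, whose absolute value is at most $\eps/3$ by (i), gives $|\log(L_{\jvec}/K_{\jvec})| \leq \tfrac{3}{2}\cdot\tfrac{\eps}{3} = \tfrac{\eps}{2}$. Wait — I need this to equal $\eta\eps/2$, not $\eps/2$; let me re-examine. Indeed $-\tfrac1\eta\log L_{\jvec} - R_{\jvec} = -\tfrac1\eta(\log L_{\jvec} - \log K_{\jvec}) = -\tfrac1\eta\log(L_{\jvec}/K_{\jvec})$, and I have just bounded $|\log(L_{\jvec}/K_{\jvec})| \le \eps/2$, so the difference is at most $\eps/(2\eta)$. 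Since $\eta \ge 1$ in all relevant regimes — actually I should check whether the lemma needs $\eta\ge 1$; if not, I instead want to carry the $1/\eta$ factor through, in which case I should rescale: set $u$ so that the bound becomes $\tfrac{1}{\eta}\cdot\tfrac{3}{2}\cdot|u|$ and I need $|u|\le \eta\eps/3$, which is \emph{not} what the hypothesis gives. So the cleanest route is: conclude $|{-\tfrac1\eta\log L_{\jvec}} - R_{\jvec}| \le \tfrac{3}{2\eta}\cdot\tfrac{\eps}{3} = \tfrac{\eps}{2\eta} \le \tfrac{\eps}{2}$, using $\eta\ge 1$ (which holds since $\eta = (2k\log n)/\eps$ with $\eps \le 1$, $k\ge 1$, $n\ge 2$, so $\eta \ge 2\log 2 > 1$; alternatively one may simply absorb this into the statement). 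Taking the maximum over $\jvec$ and adding back the common sparse term $S$ yields $\|\tilde C - C\|_{\max} \le \eps/2$, as claimed.

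The main (and only real) obstacle is the bookkeeping of whether the factor $1/\eta$ helps or hurts: the hypothesis controls the \emph{absolute} error of $L$ against $\exp[-\eta R]$ by $\tfrac\eps3 e^{-\eta R_{\max}}$, which I must first convert to a \emph{relative} error $\le \eps/3$ using the floor $e^{-\eta R_{\max}} \le K_{\jvec}$, and then the logarithm-to-relative-error comparison is the standard $|\log(1+u)| \le \tfrac32|u|$ bound on $[-\tfrac13,\tfrac13]$. I expect the whole argument to be under half a page; the only point requiring care is making explicit that $\eta \ge 1$ (or otherwise stating the lemma with the harmless extra $1/\eta$ factor) so that $\eps/(2\eta)$ can be replaced by $\eps/2$.
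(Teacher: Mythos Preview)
Your proposal is correct and follows essentially the same approach as the paper: cancel the common sparse term $S$, convert the absolute error $\tfrac{\eps}{3}e^{-\eta R_{\max}}$ into a relative error $\le \eps/3$ via the floor $e^{-\eta R_{\jvec}} \ge e^{-\eta R_{\max}}$, apply a logarithm-perturbation inequality to get $|\log L_{\jvec} - \log K_{\jvec}| \le \eps/2$, and then divide by $\eta \ge 1$. The only cosmetic difference is that the paper phrases the log step as $|\log x - \log y| \le |x-y|/\min(x,y)$ with the explicit lower bound $\min(L_{\jvec},K_{\jvec}) \ge \tfrac{2}{3}e^{-\eta R_{\max}}$, whereas you phrase it as $|\log(1+u)| \le \tfrac{3}{2}|u|$ for $|u|\le \tfrac{1}{3}$; these are the same inequality and yield the same constant.
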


\begin{proof}
	Observe that the minimum entry of $L$ is at least 
	\begin{align}
		e^{-\eta R_{\max}} - \tfrac{\eps}{3} e^{-\eta R_{\max}} \geq \tfrac{2}{3}e^{-\eta R_{\max}}.
		\label{eq:lem:lr:prec-min}
	\end{align}
	Since this is strictly positive, the tensor $\tilde{R} := -\eta^{-1} \log[L]$ is well defined. Furthermore, 
	\[
	\|\eta \tilde{R} - \eta R\|_{\max}
	=
	\max_{\jvec \in [n]^k} \abs{\eta \tilde{R}_{\jvec} - \eta R_{\jvec}}
	\leq
	\max_{\jvec \in [n]^k} 
	\frac{\abs{L_{\jvec} - e^{-\eta R_{\jvec}}}}{\min(L_{\jvec}, e^{-\eta R_{\jvec}})
	}
	\leq
	\frac{\tfrac{\eps}{3} e^{-\eta R_{\max}}}{\tfrac{2}{3} e^{-\eta R_{\max}}}
	=
	\frac{\eps}{2},
	\]
	where above the first step is by definition of the max norm; the second step is by the elementary inequality $|\log x - \log y| \leq |x-y|/\min(x,y)$ which holds for positive scalars $x$ and $y$~\citep[Lemma K]{AltBacRud19}; and the third step is by~\eqref{eq:lem:lr:prec-min} and the approximation bound of $L$. Since $\eta \geq 1$, we therefore conclude that $\|\tilde{R} - R\|_{\max} \leq \eps/2$. By adding and subtracting $S$, this implies $\|\tilde{C} - C\|_{\max} = \|\tilde{R} - R\|_{\max} \leq \eps/2$.
\end{proof}

\subsubsection{Proof of Theorem~\ref{thm:lr-oracles}}\label{sssec:lr:alg}

We are now ready to state the algorithm in Theorem~\ref{thm:lr-oracles}. Pseudocode is in Algorithm~\ref{alg:lr}. Note that $\tilde{R} = -\eta^{-1} \log [L]$ and $\tilde{C} = \tilde{R} + S$ are never explicitly computed because in both Lines~\ref{line:lr:a} and~\ref{line:lr:b}, the algorithm performs the relevant operations only through the low-rank tensor $L$ and the sparse tensor $S$.

\begin{algorithm}
	\caption{Polynomial-time algorithm for $\AMinO$ and $\SMinO$ for low-rank + sparse costs (Theorem~\ref{thm:lr-oracles}).}
	\hspace*{\algorithmicindent} \textbf{Input:} 
	Low-rank tensor $R$, sparse tensor $S$, matrix $p \in \Rbar^{n \times k}$,
	accuracy $\eps > 0$  \\
	\hspace*{\algorithmicindent} \textbf{Output:} Solution to both $\AMinO_C(p,\eps)$ on cost tensor $C = R+S$, and also $\SMinO_{\tilde{C}}(p,(2k\log n)/\eps)$ on some approximate cost tensor $\tilde{C}$ satisfying $\|C - \tilde{C}\|_{\max} \leq \eps/2$
	\begin{algorithmic}[1]
		\State $\eta \gets (2 k \log n)/\eps$
		\State Compute low-rank approximation $L$ of $\exp[-\eta R]$ via Lemma~\ref{lem:lr-K}, for precision $\epst = \frac{\eps}{3}e^{-\eta \Rmax}$\label{line:lr:lr}
		\State Compute $a$ in~\eqref{eq:lr:alg:a} directly by enumerating over the polynomially many non-zero entries of $S$\label{line:lr:a}, where $\tilde{R} = -\eta^{-1} \log [L]$
		\State Compute $b$ in~\eqref{eq:lr:alg:b} via Lemma~\ref{lem:lr-marg}, where $\tilde{R} = -\eta^{-1} \log [L]$ \label{line:lr:b}
		\State Return $-\eta^{-1} \log(a + b)$ 
	\end{algorithmic}
	\label{alg:lr}
\end{algorithm}

\begin{proof}[Proof of Theorem~\ref{thm:lr-oracles}]
	\underline{Proof of correctness for $\SMinO$.} 
	Consider any oracle inputs $p = (p_1, \dots, p_k) \in \Rbar^{n \times k}$. 
	By Lemma~\ref{lem:lr:prec}, the tensor $\tilde{C} = \tilde{R}+ S = -\eta^{-1} \log L + S$ satisfies $\|\tilde{C} - C\|_{\max} \leq \eps/2$. Therefore it suffices to show that Algorithm~\ref{alg:lr} correctly computes $\SMinO_{\tilde{C}}(p,\eta)$. This is true because that quantity is equal to $-\eta^{-1} \log(a + b)$ by Lemma~\ref{lem:lr:sparse}.
	
	\par \underline{Proof of correctness for $\AMinO$.} We have just established that Algorithm~\ref{alg:lr} computes $\SMinO_{\tilde{C}}(p,\eta)$. Because $\eta = (2k \log n)/\eps$ and the fact that $\SMinO$ is a special case of $\AMinO$ (Remark~\ref{rem:oracles:smin-amin}), it follows that $\SMinO_{\tilde{C}}(p,\eta)$ is within additive accuracy $\eps/2$ of $\MinO_{\tilde{C}}(p,\eta)$. Therefore, by the triangle inequality, it suffices to show that $\MinO_{\tilde{C}}(p)$ is within $\eps/2$ additive accuracy of $\MinO_{C}(p)$. That is, it suffices to show that
	\[
		\abs{\min_{\jvec \in [n]^k} C_{\jvec} - \sum_{i=1}^k [p_i]_{j_i}
		-
		\min_{\jvec \in [n]^k} \tilde{C}_{\jvec} - \sum_{i=1}^k [p_i]_{j_i}} \leq \eps/2.
	\]
	But this is true because $\|C - \tilde{C}\|_{\max} \leq \eps/2$ by Lemma~\ref{lem:lr:prec}, and thus the quantities $C_{\jvec} - \sum_{i=1}^k [p_i]_{j_i}$ and $\tilde{C}_{\jvec} - \sum_{i=1}^k [p_i]_{j_i}$ are within additive accuracy $\eps/2$ for each $\jvec \in [n]^k$.

	\par \underline{Proof of runtime.} We prove the claimed runtime bound simultaneously for the $\AMinO$ and $\SMinO$ computation because we use the same algorithm for both. To this end, we first bound the rank $\tilde{r}$ of the low-rank approximation $L$ computed in Lemma~\ref{lem:lr-K}. Note that since $\tilde{\eps} = \tfrac{\eps}{3} e^{-\eta R_{\max}}$ and since it is assumed that $R_{\max} = O(C_{\max})$, we have $\log 1/\tilde{\eps} = O(\tfrac{\Cmax}{\eps}k \log n )$. Therefore 
	\[
		\tilde{r} 
		\leq \binom{r + O(\log 1/\epst)}{r}
		= O(\log1/\tilde{\eps})^r = O(\tfrac{\Cmax}{\eps}k \log n )^r = \poly(\log n,k,\Cmax/\eps).
	\]
	Above, the first step is by Lemma~\ref{lem:lr-K}, and the final step is because $r$ is assumed constant.
	
	\par Therefore Line~\ref{line:lr:lr} in Algorithm~\ref{alg:lr} takes polynomial time by Lemma~\ref{lem:lr-K}, Line~\ref{line:lr:a} takes polynomial time by simply enumerating over the $s$ non-zero entries of $S$, and Line~\ref{line:lr:b} takes polynomial time by Lemma~\ref{lem:lr-marg}.
\end{proof}

\subsection{Application vignette: risk estimation}\label{ssec:lr:risk}

Here we consider an application to portfolio risk management. 
For simplicity of exposition, let us first describe the setting of $1$ financial instrument (``stock''). Consider investing in one unit of a stock for $k$ years. For $i \in \{0, \dots, k\}$, let $X_i$ denote the price of the stock at year $i$. Suppose that the return $\rho_i = X_{i}/X_{i-1}$ of the stock between years $i-1$ and $i$ is believed to follow some distribution $\rho_i \sim \mu_i$. A fundamental question about the riskiness of this stock is to compute the investor's expected profit in the worst-case over all joint probability distributions on future returns $(\rho_1,\dots,\rho_k)$ that are consistent with the modeled marginal distributions $(\mu_1,\dots,\mu_k)$. This is an $\MOT$ problem with cost $C$ given by
\[
	C(\rho_1,\dots,\rho_k) = \prod_{i \in [k]} \rho_i,
\]
where here we view $C$ as a function rather than a tensor for notational simplicity. If each return $\rho_i$ has $n$ possible values (e.g., after quantization), then the cost $C$ is equivalently represented as a rank-1 tensor in $\Rntk$ (by assigning an index to each of the $n$ possible values of each $\rho_i$). Therefore our result Corollary~\ref{cor:lr-algs} provides a polynomial-time algorithm for solving this $\MOT$ problem defining the investor's worst-case profit.

\par Rather than formalize this proof for $1$ stock, we directly generalize to the general case of investing in $r$ stocks, $r \geq 1$. 
This is essentially identical to the simple case of $r=1$ stock, modulo additional notation. 

\begin{cor}[Polynomial-time algorithm for expected profit given marginals on the returns]\label{cor:lr-return}
	Suppose an investor holds $1$ unit of $r$ stocks for $k$ years. For each stock $\ell \in [r]$ and each year $i \in [k]$, let $\rho_{i,\ell}$ denote the relative price of stock $\ell$ between years $i$ and $i-1$. Suppose $\rho_{i,\ell}$ has distribution $\mu_{i,\ell}$, and that each $\mu_{i,\ell}$ has at most $n$ atoms. Let $\Rmax = \max_{\{\rho_{i,\ell}\}} \sum_{\ell=1}^r \prod_{i=1}^k \rho_{i,\ell}$ denote the maximal possible return. For any constant number of stocks $r$, there is a $\poly(n,k,\Rmax/\eps)$ time algorithm for $\eps$-approximating the expected profit in the worst-case over all futures that are consistent with the returns' marginal distributions.
\end{cor}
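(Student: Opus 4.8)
The plan is to realize the worst-case expected profit as the optimal value of an $\MOT$ problem with a constant-rank cost tensor, and then invoke Corollary~\ref{cor:lr-algs}. First I would set up the $\MOT$ instance. The underlying random variables are the $rk$ relative prices $\{\rho_{i,\ell}\}_{i \in [k], \ell \in [r]}$, each supported on at most $n$ atoms; write $v_{i,\ell} \in \R^n$ for the vector listing the atoms of $\mu_{i,\ell}$. Since the investor holds one unit of each stock, the portfolio's total return after $k$ years is $\sum_{\ell=1}^r \prod_{i=1}^k \rho_{i,\ell}$, so minimizing the expected profit over all joint laws consistent with the marginals $\{\mu_{i,\ell}\}$ equals $\MOT_C(\{\mu_{i,\ell}\})$, where $C \in (\R^n)^{\otimes rk}$ is indexed by $\jvec = (j_{i,\ell})$ and has entries $C_{\jvec} = \sum_{\ell=1}^r \prod_{i=1}^k [v_{i,\ell}]_{j_{i,\ell}}$. (If one instead defines profit as total return minus the deterministic initial cost $r$, the analysis below is unchanged up to the additive constant $r$.)

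Second, I would exhibit $C$ as a rank-$\le r$ tensor in factored form. For each fixed $\ell$, the summand $\prod_{i=1}^k [v_{i,\ell}]_{j_{i,\ell}}$ depends only on the $k$ coordinates indexed by stock $\ell$, so, viewed as a tensor over all $rk$ modes, it is the rank-one tensor $\bigotimes_{(i,\ell')} w^{(\ell)}_{i,\ell'}$ with $w^{(\ell)}_{i,\ell'} = v_{i,\ell}$ if $\ell' = \ell$ and $w^{(\ell)}_{i,\ell'} = \bone_n$ otherwise. Summing over $\ell \in [r]$ writes $C$ as a sum of $r$ such rank-one tensors, i.e., a rank-$\le r$ tensor in the factored form required by Definition~\ref{def:lr}, with zero sparse component ($s = 0$). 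Moreover $\Cmax = \Rmax$ by the definition of $\Rmax$ in the statement, and the factor entries have $\poly(n,k)$ bit complexity by hypothesis.

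Third, I would apply Corollary~\ref{cor:lr-algs} to this instance, with the rank $r$ constant and $s = 0$. It produces an $\eps$-approximate solution to $\MOT_C$ --- hence an $\eps$-approximation of its value, which is the desired worst-case expected profit --- using $\MWU$ (or $\Sink$) in time $\poly(n, rk, \Cmax/\eps)$. Since $r$ is a fixed constant, the number $rk$ of marginals is $\Theta(k)$, so this runtime is $\poly(n, k, \Rmax/\eps)$, as claimed.

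The only delicate point --- more bookkeeping than genuine obstacle --- is the second step: one must correctly view $C$ as a tensor over all $rk$ modes, padding the modes belonging to the other stocks with all-ones vectors, so that the claimed rank bound $r$ and the explicit factorization match the input format expected by Algorithm~\ref{alg:lr}; and one must check that folding the constant $r$ into $rk \mapsto k$ introduces no hidden super-polynomial dependence. Beyond that, the result is a direct instantiation of the low-rank $\MOT$ machinery already established.
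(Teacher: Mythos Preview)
Your proposal is correct and matches the paper's own proof essentially line for line: both recast the problem as an $\MOT$ instance with $rk$ marginals, exhibit the cost as a sum of $r$ rank-one tensors by padding the non-$\ell$ modes with all-ones vectors (the paper writes this as $\rho_{i,\ell}\cdot\mathds{1}[\ell=\ell']+\mathds{1}[\ell\neq\ell']$, which is your $w^{(\ell)}_{i,\ell'}$), observe $\Cmax=\Rmax$, and invoke Corollary~\ref{cor:lr-algs}.
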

\begin{proof}
	This is the optimization problem
	\[
		\min_{P \in	\cM(\{\mu_{i,\ell}\}_{i \in [k], \ell \in [r]} )}
		 \E_{ \{\rho_{i,\ell}\}_{i \in [k], \ell \in [r]} \sim P} \left[ \sum_{\ell=1}^r \prod_{i=1}^k \rho_{i,\ell} \right]
	\]
	over all joint distributions $P$ on the returns $\{\rho_{i,\ell}\}_{i \in [k], \ell \in [k]}$ that are consistent with the marginal distibutions $\{\mu_{i,\ell}\}_{i \in [k], \ell \in [k]}$. This is an $\MOT$ problem with $k' = rk$ marginals, each over $n$ atoms, with cost function
	\begin{align}
		C\Big(\{ \rho_{i,\ell} \}_{i \in [k], \ell \in [r]}\Big)
		= 
		\sum_{\ell' \in [r]} \prod_{(i,\ell) \in [k] \times [r] \cong [k']} \big(\rho_{i,\ell} \cdot \mathds{1}[\ell = \ell'] + \mathds{1}[\ell \neq \ell']\big).
		\label{eq:lr-return}
	\end{align}
	By viewing this cost function $C$ as a cost tensor in the natural way (i.e., assigning an index to each of the $n$ possible values of $\rho_{i,\ell}$), this representation~\eqref{eq:lr-return} shows that the corresponding cost tensor $C \in (\R^n)^{\otimes k'}$ has rank $r$. Moreover, observe that the maximum entry of the cost is $\Rmax$. Therefore we may appeal to our polynomial-time $\MOT$ algorithms in Corollary~\ref{cor:lr-algs} for costs with constant rank. 
\end{proof}

The algorithm is readily generalized, e.g., if the investor has different units of a stock, or if a stock is held for a different number of years. The former is modeled simply by adding an extra year in which the return of stock $\ell$ is equal to the number of units, with probability $1$. The latter is modeled simply by setting the return of stock $\ell$ to be $1$ for all years after it is held, with probability $1$. 

\par In Figure~\ref{fig:riskestimation}, we provide a numerical illustration comparing our new polynomial-time algorithms for this risk estimation task with the previous fastest algorithms. Previously, the fastest algorithms that apply to this problem are out-of-the-box LP solvers run on $\MOT$, and the brute-force implementation of $\Sink$ which marginalizes over all $n^k$ entries in each iteration. Since both of these previous algorithms have exponential runtime that scales as $n^{\Omega(k)}$, they do not scale beyond tiny input sizes of $n=10$ and $k=8$ even with two minutes of computation time. In contrast, our new polynomial-time algorithms compute high-quality solutions for problems that are orders-of-magnitude larger. For example, our polynomial-time implementation of $\Sink$ takes less than a second to solve an $\MOT$ LP with $n^k = 10^{30}$ variables. 

\par Details for this numerical experiment: we consider $r=1$ stock over $k$ timesteps, where each marginal distribution $\mu_i$ is uniform on $[1,1+1/k]$, discretized with $n = 10$. We implement the $\AMinO$ and $\SMinO$ oracle efficiently by using our above algorithm to exploit the rank-one structure of the cost tensor. In particular, the polynomial approximation we use here to approximate $\exp[-\eta C]$ is the degree-$5$ Taylor approximation (cf., Lemma~\ref{lem:lr-K}).
This lets us run $\Sink$ and $\MWU$ in polynomial time, as described above. In the numerical experiment, we also implement an approximate version of $\COLGEN$ using our polynomial-time implementation of the approximate violation oracle $\AMinO$.
Since the algorithms compute an upper bound, lower value is better in the right plot  of Figure~\ref{fig:riskestimation}. We observe that $\MWU$ yields the loosest approximation for this application, whereas our implementations of $\Sink$ and $\COLGEN$ produce high-quality approximations, as is evident by comparing to the exact LP solver in the regime that the latter is tractable to run.

\begin{figure}
\centering
\begin{tabular}{cc} \includegraphics[scale=0.5]{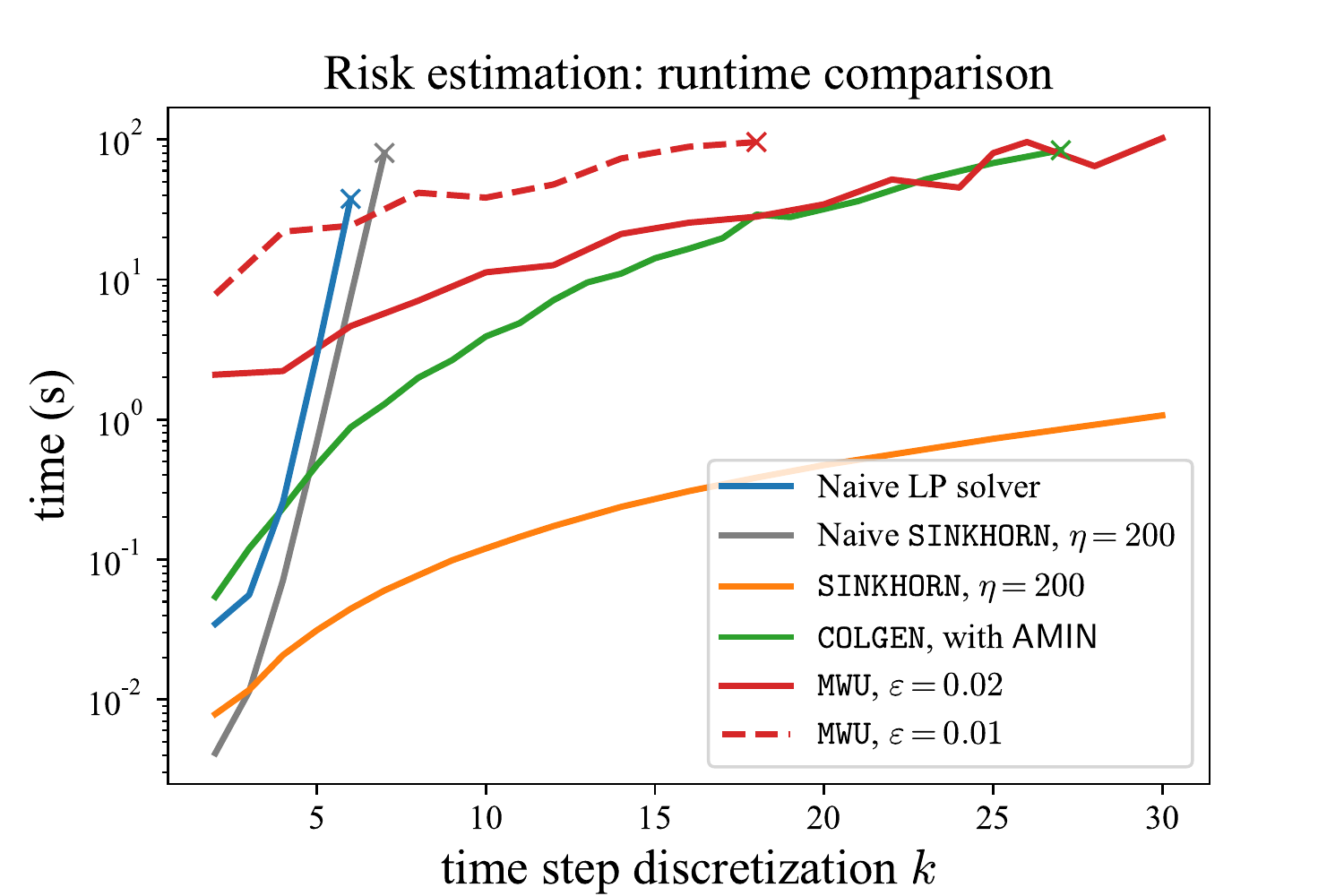} &
\includegraphics[scale=0.5]{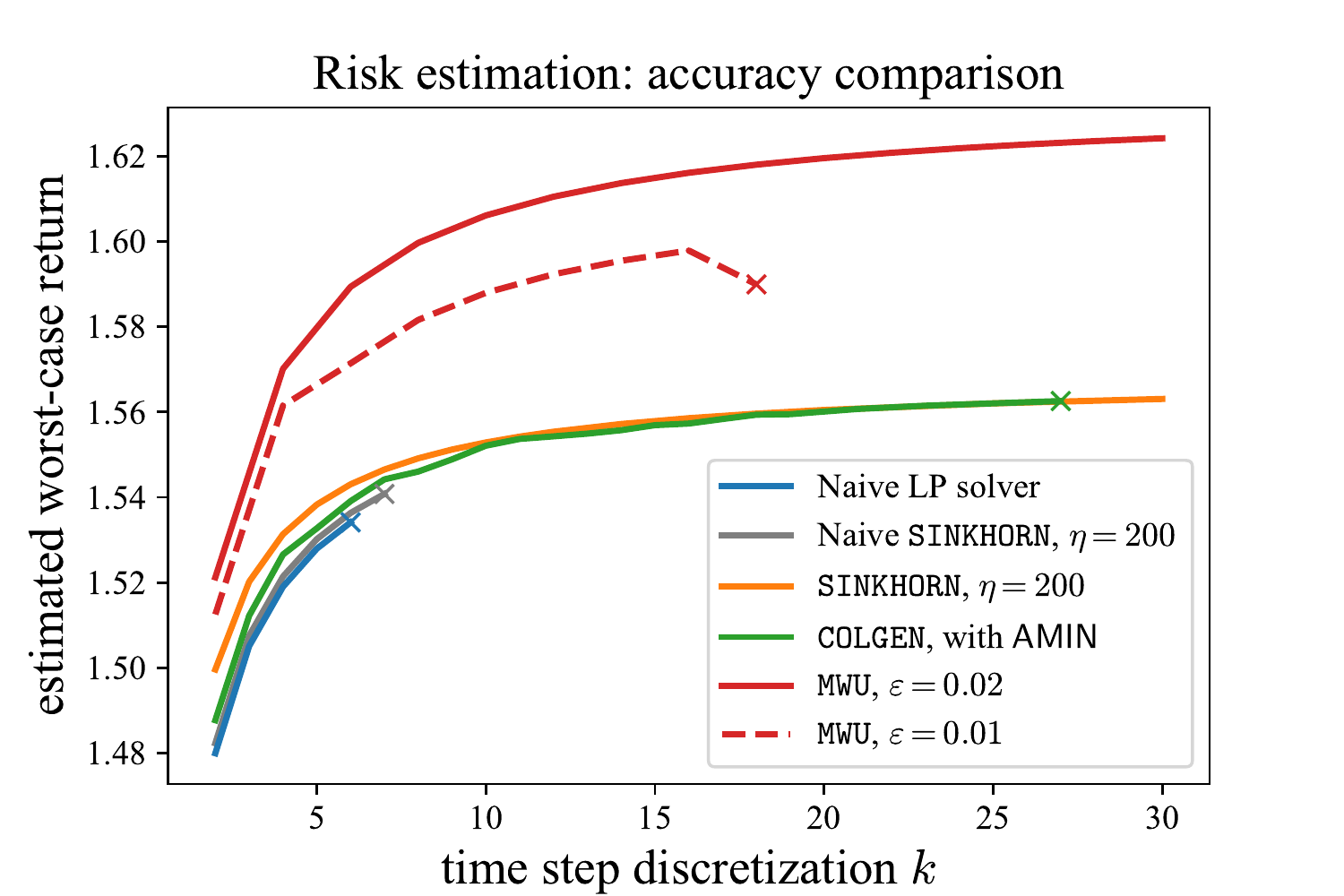}
\end{tabular}
\caption{
	Comparison of the runtime (left) and accuracy (right) of the fastest existing algorithms (naive LP solver and naive $\Sink$ which both have exponential runtimes that scale as $n^{\Omega(k)}$) with our algorithms ($\Sink$, $\MWU$, and $\COLGEN$ and $\MWU$ with polynomial-time implementations of their bottlenecks) for the risk estimation problem described in the main text. The algorithms are cut off at 2 minutes, denoted by an ``x''. Our new polynomial-time implementation of $\Sink$ returns high-quality solutions for problems that are orders-of-magnitude larger than previously possible: e.g., it takes less than a second to solve the problem for $k = 30$, which amounts to an $\MOT$ LP with $10^{30}$ variables.
}
\label{fig:riskestimation}
\end{figure}

\subsection{Application vignette: projection to the transportation polytope}\label{ssec:lr:proj}

Here we consider the fundamental problem of projecting a joint probability distribution $Q$ onto the transportation polytope $\Coup$, i.e.,
\begin{align}
	\argmin_{P \in \Coup} \sum_{\jvec} (P_{\jvec} - Q_{\jvec})^2.
	\label{eq:lr:proj}
\end{align}
We provide the first polynomial-time algorithm for solving this problem in the case where $Q$ is a distribution that decomposes into a low-rank component plus a sparse component. The low-rank component enables modeling mixtures of product distributions (e.g., mixtures of isotropic Gaussians), which arise frequently in statistics and machine learning; see, e.g.,~\citep{feldman2008learning}. In such applications, the number of product distributions in the mixture corresponds to the tensor rank. The sparse component further enables modeling arbitrary corruptions to the distribution in polynomially many entries. 

\par We emphasize that this projection problem~\eqref{eq:lr:proj} is \emph{not} an $\MOT$ problem since the objective is quadratic rather than linear. This illustrates the versatility of our algorithmic results. Our algorithm is based on a reduction from quadratic optimization to linear optimization over $\Coup$ that is tailored to this problem. Crucial to this reduction is the fact that the $\MOT$ algorithms in \S\ref{sec:algs} can compute \emph{sparse} solutions. In particular, this reduction does not work with $\Sink$ because $\Sink$ cannot compute sparse solutions.

\begin{cor}[Efficient projection to the transportation polytope]\label{cor:lr-proj}
	Let $Q = R + S\in \Rpntk$, where $R$ has constant rank and $S$ is polynomially sparse. Suppose that $R_{\max}$ and $S_{\max}$ are $O(1)$. Given $R$ in factored form, $S$ through its non-zero entries, measures $\mu_1, \dots, \mu_k \in \Delta_n$, and accuracy $\eps > 0$, we can compute in $\poly(n,k,1/\eps)$ time a feasible $P \in \Coup$ that has $\eps$-suboptimal cost for the projection problem~\eqref{eq:lr:proj}. This solution $P$ is a sparse tensor output through its $\poly(n,k,1/\eps)$ non-zero entries. 
\end{cor}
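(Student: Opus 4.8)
The plan is to solve the quadratic program~\eqref{eq:lr:proj} by the Frank--Wolfe (conditional gradient) method, which reduces minimizing the smooth convex function $f(P):=\sum_{\jvec}(P_{\jvec}-Q_{\jvec})^2$ over $\Coup$ to a sequence of \emph{linear} minimizations over $\Coup$, i.e.\ to a sequence of $\MOT$ problems. The key observation is that at an iterate $P^{(t)}$ the Frank--Wolfe linear subproblem is $\min_{P\in\Coup}\langle\nabla f(P^{(t)}),P\rangle=\min_{P\in\Coup}\langle P^{(t)}-Q,\,P\rangle$ (the factor $2$ in $\nabla f$ does not affect the minimizer), which is an $\MOT$ problem with cost tensor $C^{(t)}:=P^{(t)}-Q=P^{(t)}-R-S$. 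Since $R$ has constant rank and $S$ is polynomially sparse, this cost has low-rank plus sparse structure \emph{provided $P^{(t)}$ is polynomially sparse}: the rank-$r$ component is $-R$ and the sparse component is $P^{(t)}-S$. Moreover $\|C^{(t)}\|_{\max}=O(1)$, because $P^{(t)}\in\Coup$ has entries in $[0,1]$ and $R_{\max},S_{\max}=O(1)$; and translating/rescaling $C^{(t)}$ into the range $[1,2]$ required by $\MWU$ (Theorem~\ref{thm:mwumotp}) only adds a constant tensor, hence increments the rank by one and keeps the structure. Therefore, by Corollary~\ref{cor:lr-algs}, $\MWU$ solves this $\MOT$ subproblem $\delta$-approximately in $\poly(n,k,1/\delta)$ time and, crucially, returns a solution $V^{(t)}\in\Coup$ with only $\poly(n,k,1/\delta)$ nonzero entries.

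The algorithm is then: initialize $P^{(0)}$ to a polynomially sparse point of $\Coup$ (obtained, e.g., by running $\MWU$ on the zero cost tensor, which trivially has low-rank plus sparse structure, in $\poly(n,k)$ time); for $t=0,1,\dots,T-1$, form $C^{(t)}=P^{(t)}-R-S$ implicitly (low-rank factored $-R$ plus the sparse tensor $P^{(t)}-S$), call $\MWU$ on $C^{(t)}$ with accuracy $\delta$ to obtain the sparse approximate vertex $V^{(t)}$, and set $P^{(t+1)}=(1-\gamma_t)P^{(t)}+\gamma_t V^{(t)}$ with $\gamma_t=2/(t+2)$. Each $P^{(t+1)}$ is a convex combination of points of $\Coup$, hence feasible, and $\nnz(P^{(t+1)})\le\nnz(P^{(t)})+\nnz(V^{(t)})$, so after $T$ iterations $\nnz(P^{(T)})=T\cdot\poly(n,k,1/\delta)$, which is polynomial whenever $T$ is. For the objective, the standard Frank--Wolfe analysis with an \emph{inexact} linear oracle gives $f(P^{(T)})-\min_{P\in\Coup}f(P)\le \tfrac{2C_f}{T+2}+\delta$, where the curvature constant obeys $C_f\le L\cdot\operatorname{diam}_{\|\cdot\|_2}(\Coup)^2\le 2\cdot 2^2=8$ since $f$ is $2$-smooth in $\ell_2$ and $\|P-P'\|_2\le\|P-P'\|_1\le 2$ for $P,P'\in\Coup$. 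Choosing $T=\Theta(1/\eps)$ and $\delta=\Theta(\eps)$ makes $P^{(T)}$ an $\eps$-suboptimal feasible solution, and the total running time is $T\cdot\poly(n,k,1/\delta)=\poly(n,k,1/\eps)$, with the output a sparse tensor given through its $\poly(n,k,1/\eps)$ nonzero entries.

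The step I expect to be the crux --- and the reason the statement explicitly stresses sparsity of $\MOT$ solutions --- is the coupling between iterate sparsity and subproblem structure: $C^{(t)}$ is low-rank-plus-sparse only because $P^{(t)}$ is polynomially sparse, which holds only because every earlier $\MOT$ subproblem was solved by a sparsity-producing algorithm; in particular $\Sink$ cannot be substituted for $\MWU$ here, since its dense output would already destroy the low-rank-plus-sparse structure of $C^{(1)}$ and make subsequent subproblems intractable. The remaining pieces are routine: verifying the Frank--Wolfe descent inequality under an additive-$\delta$ linear oracle (a one-line change of the usual induction, using $\langle\nabla f(P^{(t)}),V^{(t)}-P^{(t)}\rangle\le f^\star-f(P^{(t)})+\delta$ by convexity, together with the $L$-smoothness bound on the step), and the bookkeeping that $R_{\max},S_{\max}=O(1)$ keeps $\|C^{(t)}\|_{\max}$ --- and hence the accuracy parameter and rank fed to $\MWU$ via Corollary~\ref{cor:lr-algs} --- within the polynomial regime throughout all $T$ iterations.
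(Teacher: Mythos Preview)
Your proposal is correct and follows essentially the same approach as the paper: apply Frank--Wolfe with an inexact linear minimization oracle, observe that each linear subproblem $\min_{P\in\Coup}\langle P^{(t)}-Q,P\rangle$ is an $\MOT$ instance whose cost $P^{(t)}-R-S$ has low-rank plus sparse structure precisely because the previous iterates were kept sparse by $\MWU$, and take $T=O(1/\eps)$ steps with per-step accuracy $O(\eps)$. The only cosmetic difference is initialization---the paper takes $P^{(0)}$ to be an arbitrary vertex of $\Coup$ (sparse by Lemma~\ref{lem:mot-sparse}) rather than the output of $\MWU$ on the zero cost---and the paper cites~\citep{Jag13} directly for the inexact Frank--Wolfe guarantee instead of rederiving the curvature bound, but your argument is otherwise the same and your emphasis on why $\Sink$ cannot be substituted here matches the paper's point exactly.
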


\begin{proof}
	We apply the Frank-Wolfe algorithm (a.k.a., Conditional Gradient Descent) to solve~\eqref{eq:lr:proj}, specifically using approximate LP solutions for the descent direction as in~\citep[Algorithm 2]{Jag13}. By the known convergence guarantee of this algorithm~\citep[Theorem 1.1]{Jag13},
	  if each LP is solved to $\eps' = O(\eps)$ accuracy, then $T = O(1/\eps)$ Frank-Wolfe iterations suffice to obtain an $\eps$-suboptimal solution to~\eqref{eq:lr:proj}. 
	\par The crux, therefore, is to show that each Frank-Wolfe iteration can be computed efficiently, and that the final solution is sparse. Initialize $P^{(0)}$ to be an arbitrary vertex of $\Coup$. Then $P^{(0)}$ is feasible and is polynomially sparse (see \S\ref{ssec:prelim:mot}).
	Let $P^{(t)} \in \Rpntk$ denote the $t$-th Frank-Wolfe iterate. Performing the next iteration requires two computations: 
	\begin{enumerate}
		\item Approximately solve the following LP to $\eps'$ accuracy:
		\begin{align}
			D^{(t)} \gets \min_{P \in \Coup} \langle P, P^{(t)} - Q \rangle.
			\label{eq:lr:proj-LP}
		\end{align}
		\item Update $P^{(t+1)} \gets (1 - \gamma_t)P^{(t)} + \gamma_t D^{(t)}$, where $\gamma_t = 2/(t+2)$ is the current stepsize. 
	\end{enumerate}
	For the first iteration $t = 0$, note that the LP~\eqref{eq:lr:proj-LP} is an $\MOT$ problem with cost $C^{(0)} = P^{(0)} - Q = P^{(0)} - R - S$ which decomposes into a polynomially sparse tensor $P^{(0)}-S$ plus a constant-rank tensor $-R$. Therefore the algorithm in Corollary~\ref{cor:lr-algs} can solve the LP~\eqref{eq:lr:proj-LP} to $\eps' = O(\eps)$ additive accuracy in $\poly(n,k,1/\eps)$ time, and it outputs a solution $D^{(0)}$ that is $\poly(n,k,1/\eps)$ sparse. It follows that $P^{(1)}$ can be computed in $\poly(n,k,1/\eps)$ time and moreover is $\poly(n,k,1/\eps)$ sparse since it is a convex combination of the similarly sparse tensors $P^{(0)}$ and $D^{(0)}$. 
	By repeating this argument identically for $T = O(1/\eps)$ iterations, it follows that each iteration takes $\poly(n,k,1/\eps)$ time, and that each iterate $P^{(t)}$ is $\poly(n,k,1/\eps)$ sparse. 
\end{proof}

\section{Discussion}\label{sec:discussion}

In this paper, we investigated what structure enables $\MOT$---an LP with $n^k$ variables---to be solved in $\poly(n,k)$ time. We developed a unified algorithmic framework for $\MOT$ by characterizing what ``structure'' is required to solve $\MOT$ in polynomial time by different algorithms in terms of simple variants of the dual feasibility oracle. On one hand, this enabled us to show that $\ELLIP$ and $\MWU$ solve $\MOT$ in polynomial time whenever any algorithm can, whereas $\Sink$ requires strictly more structure. And on the other hand, this made the design of polynomial-time algorithms for $\MOT$ much simpler, as we illustrated on three general classes of $\MOT$ cost structures.

\par Our results suggest several natural directions for future research. One exciting direction is to identify further tractable classes of $\MOT$ cost structures beyond the three studied in this paper, since this may enable new applications of $\MOT$. Our results help guide this search because they make it significantly easier to identify if an $\MOT$ problem is polynomial-time solvable (see \S\ref{sssec:intro-q4}).

\par Another important direction is practicality. While the focus of this paper is to characterize when $\MOT$ problems can be solved in polynomial time, in practice there is of course a difference between small and large polynomial runtimes. It is therefore a question of practical significance to improve our ``proof of concept'' polynomial-time algorithms by designing algorithms with smaller polynomial runtimes. Our theoretical results help guide this search for practical algorithms because they make it significantly easier to identify if an $\MOT$ problem is polynomial-time solvable in the first place.

\par In order to develop more practical algorithms, recall that, roughly speaking, our approach for designing $\MOT$ algorithms consisted of three parts:
\begin{itemize}
	\item An ``outer loop'' algorithm such as $\ELLIP$, $\MWU$, or $\Sink$ that solves $\MOT$ in polynomial time conditionally on a polynomial-time implementation of a certain bottleneck oracle.
	\item An ``intermediate'' algorithm that reduces this bottleneck oracle to polynomial calls of a variant of the dual feasibility oracle. 
	\item An ``inner loop'' algorithm that solves the relevant variant of the dual feasibility oracle for the structured $\MOT$ problem at hand.
\end{itemize} 
Obtaining a smaller polynomial runtime for any of these three parts immediately implies smaller polynomial runtimes for the overall $\MOT$ algorithm.
Another approach is to design altogether different algorithms that avoid the polynomial blow-up of the runtime that arises from composing these three parts. Understanding how to solve an $\MOT$ problem more ``directly'' in this way is an interesting question.

\begin{acknowledgements}
We are grateful to Jonathan Niles-Weed, Pablo Parrilo, and Philippe Rigollet for insightful conversations; to Frederic Koehler for suggesting a simpler proof of Lemma~\ref{lem:smin-separation}; to Ben Edelman and Siddhartha Jayanti who were involved in the brainstorming stages and provided helpful references; and to Karthik Natarajan for references to the random combinatorial optimization literature.
\end{acknowledgements}

\addtocontents{toc}{\protect\setcounter{tocdepth}{1}}
\appendix

\section{Deferred proof details}\label{app:pfs}

\subsection{Proof of Lemma~\ref{lem:mwu:step1}}\label{app:pf:mwu}

Our proof is based on two helper claims.
\begin{claim}[Lemma~3 of \citep{young2001sequential}]
If $\MWUoracle(P,\mu,\lambda,\eps)$ returns ``null'', then $\MOT_C(\mu) > \lambda$.
\end{claim}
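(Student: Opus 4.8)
The plan is to argue by contrapositive: assume $\MOT_C(\mu) \le \lambda$, equivalently that the mixed packing-covering polytope $K(\lambda) = \{P \in \Coup : \langle C, P\rangle \le \lambda\}$ is nonempty, and deduce that the bottleneck oracle $\MWUoracle(P,\mu,\lambda,\eps)$ cannot return ``null'' at the current iterate $P$. Recall that returning ``null'' certifies $\min_{\jvec \in [n]^k} \pd{}{h}\Phi(P + h\cdot\delta_{\jvec})\mid_{h=0} > 1$, where $\Phi$ is the softmax potential $\Phi(P) = \smax\!\left(\langle C,P\rangle/\lambda,\ m_1(P)/\mu_1,\ldots,m_k(P)/\mu_k\right)$. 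So it suffices to exhibit a single direction $\jvec$ along which the directional derivative of $\Phi$ at $P$ is at most $1$.

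First I would compute the gradient of $\Phi$ explicitly. Writing $w := \exp[\,\cdot\,]$ applied to the $nk+1$ arguments of the softmax and $Z := \sum(\text{those exponentials})$, the partial derivative in direction $\delta_{\jvec}$ is a convex combination, with weights proportional to those exponentials, of the quantities $C_{\jvec}/\lambda$ (from the cost argument) and $\1[t = j_i]/[\mu_i]_t$ summed appropriately (from the marginal arguments); concretely it equals $\frac{1}{Z}\big(e^{\langle C,P\rangle/\lambda}\cdot C_{\jvec}/\lambda + \sum_{i=1}^k e^{[m_i(P)]_{j_i}/[\mu_i]_{j_i}}/[\mu_i]_{j_i}\big)$. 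This is a nonnegative linear functional of the ``direction'' in the sense that $\sum_{\jvec} P^{*}_{\jvec}\,\pd{}{h}\Phi(P+h\delta_{\jvec})\mid_{h=0}$ can be rewritten using $\langle C, P^{*}\rangle$ and $m_i(P^{*})$. Now take $P^{*} \in K(\lambda)$. Since $P^{*} \in \Coup$ we have $m_i(P^{*}) = \mu_i$, so the ``marginal'' contributions sum (over the $nk+1$ softmax coordinates, weighted by the exponentials) to exactly the total of those marginal exponential weights; and since $\langle C, P^{*}\rangle \le \lambda$, the ``cost'' contribution is at most the cost exponential weight. Summing these over all coordinates, one gets $\sum_{\jvec} P^{*}_{\jvec}\,\pd{}{h}\Phi(P+h\delta_{\jvec})\mid_{h=0} \le \frac{1}{Z}\cdot Z = 1$. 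Because $P^{*}$ is a probability distribution (its entries are nonnegative and sum to $m(P^{*}) = 1$, as $P^{*} \in \Coup$), this is a weighted average of the directional derivatives $\pd{}{h}\Phi(P+h\delta_{\jvec})\mid_{h=0}$ over $\jvec \in \supp(P^{*})$, so at least one such $\jvec$ has directional derivative $\le 1$. Hence $\min_{\jvec}\pd{}{h}\Phi(P+h\delta_{\jvec})\mid_{h=0} \le 1$, and the oracle does not return ``null''. This is exactly Lemma~3 of \citep{young2001sequential} specialized to our polytope $K(\lambda)$; I would cite that lemma and give this short self-contained verification.

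The main obstacle is bookkeeping rather than conceptual: one must carefully match the coordinates of the softmax (the single cost coordinate versus the $nk$ marginal coordinates, each normalized by a different $[\mu_i]_t$) against the ``feasibility'' conditions $\langle C,P^*\rangle \le \lambda$ and $m_i(P^*) = \mu_i$, and make sure the normalizations line up so that the weighted sum collapses to $Z/Z = 1$ exactly. A secondary point to be careful about is the edge case where the current iterate $P$ has some zero marginal entry, so that the arguments of the softmax are finite and the derivative formula is valid throughout Step~1; since $P$ starts at $0$ and only has mass added, and the $\smax$ is smooth, this is fine, but it is worth a sentence. Everything else — nonnegativity of $C$, nonnegativity of the entries of $P^*$, $m(P^*)=1$ — is immediate from the setup.
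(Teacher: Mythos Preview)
Your proposal is correct and follows essentially the same approach as the paper: argue by contrapositive, take $P^* \in K(\lambda)$, compute the weighted average $\sum_{\jvec} P^*_{\jvec}\,\pd{}{h}\Phi(P+h\delta_{\jvec})\mid_{h=0}$, use $m_i(P^*)=\mu_i$ and $\langle C,P^*\rangle \le \lambda$ to bound it by $Z/Z=1$, and conclude some $\jvec$ has directional derivative at most $1$. The paper's proof is precisely this computation (without your extra remark on edge cases).
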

\begin{proof}
We prove the contrapositive. Let $P^* \in \Coup$ such that $\langle C, P^* \rangle \leq \lambda$. Then 
\begin{align*}
\sum_{\jvec \in [n]^k} P^*_{\jvec} \pd{}{h} \Phi(P + h \delta_{\jvec}) \mid_{h = 0} &= \frac{\sum_{\jvec \in [n]^k} P^*_{\jvec}  (\frac{C_{\jvec}}{\lambda}\exp(\langle C, P \rangle / \lambda) + \sum_{i=1}^k \frac{1}{[\mu_i]_{j_i}}\exp([m_i(P)]_{j_i} / [\mu_i]_{j_i}))} {\exp(\langle C, P \rangle / \lambda) + \sum_{s=1}^k \sum_{t=1}^n \exp([m_s(P)]_{t} / [\mu_s]_t)}\\
&= \frac{\frac{\langle C, P^*\rangle}{\lambda}\exp(\langle C, P \rangle / \lambda) + \sum_{s=1}^k \sum_{t=1}^n \frac{[m_s(P^*)]_t}{[\mu_s]_{t}}\exp([m_s(P)]_{t} / [\mu_s]_{t})} {\exp(\langle C, P \rangle / \lambda) + \sum_{s=1}^k \sum_{t=1}^n \exp([m_s(P)]_{t} / [\mu_s]_t)} \\
&\leq \frac{\exp(\langle C, P \rangle / \lambda) + \sum_{s=1}^k \sum_{t=1}^n \exp([m_s(P)]_{t} / [\mu_s]_{t})} {\exp(\langle C, P \rangle / \lambda) + \sum_{s=1}^k \sum_{t=1}^n \exp([m_s(P)]_{t} / [\mu_s]_t)} \\
&= 1.
\end{align*}
Since $P^*$ is non-negative and $\sum_{\jvec \in [n]^k} P^*_{\jvec} = m(P^*) = 1$, there must exist a $\jvec \in [n]^k$ satisfying $\pd{}{h} \Phi(P + h \delta_{\jvec}) \mid_{h = 0} \leq 1$.
\end{proof}
This first claim ensures that if the algorithm returns ``infeasible'', then indeed $\MOT_C(\mu) > \lambda$. This proves the first part of the lemma. We now prove a second claim, useful for bounding the running time and the quality of the returned solution when the algorithm does not return ``infeasible''.
\begin{claim}[Lemmas~1 and 4 of \citep{young2001sequential}]
In Lines~\ref{line:step1first} to \ref{line:step1penultimate}, we maintain the invariant that
$\Phi(P) - (1 + \eps)^2 m(P) \leq \log(nk + 1)$.
\end{claim}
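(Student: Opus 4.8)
The claim is the standard potential-function invariant at the heart of the Multiplicative Weights analysis of \citep{young2001sequential}, so the plan is to prove it by induction on the iterations of the while-loop in Lines~\ref{line:step1first}--\ref{line:step1penultimate}. At initialization ($P = 0$), we have $m(P) = 0$ and $\Phi(0) = \smax$ of $nk+1$ values, each equal to $0$, so $\Phi(0) = \log(nk+1)$; thus the invariant $\Phi(P) - (1+\eps)^2 m(P) \le \log(nk+1)$ holds with equality. For the inductive step, suppose the invariant holds at the start of an iteration for the current iterate $P$, and the iteration updates $P$ to $P' = P + \delta_{\jvec}\cdot\epsmass$, where $\epsmass := \eps\cdot\min(\lambda/C_{\jvec}, \min_i [\mu_i]_{j_i})$ and $\jvec$ is the direction returned by $\MWUoracle_C(P,\mu,\lambda,\eps)$ (which is not ``null'', else the loop would have exited). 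It suffices to show that the increase in $\Phi$ is controlled by $(1+\eps)^2$ times the increase in $m(P)$, i.e.
\[
\Phi(P') - \Phi(P) \;\le\; (1+\eps)^2\,\bigl(m(P') - m(P)\bigr) \;=\; (1+\eps)^2\,\epsmass .
\]

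\textbf{Key steps.} The main work is bounding $\Phi(P')-\Phi(P)$. First I would use convexity (or rather, the precise second-order behavior) of the softmax together with the scaling of the step. The clean way is: since $\smax$ is convex and its partial derivatives are what appear in $\MWUoracle$, one has for the one-dimensional function $g(h) := \Phi(P + h\delta_{\jvec})$ that $g$ is convex, and moreover $g'(h) \le e^{h\cdot s}\, g'(0)$ where $s := \max(C_{\jvec}/\lambda, \max_i 1/[\mu_i]_{j_i})$ is an upper bound on the relevant ``rate'' (this is the standard log-sum-exp derivative bound: each summand's logarithmic derivative in direction $\delta_{\jvec}$ is at most $s$). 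Integrating from $0$ to $\epsmass$ and using that $g'(0) = \pd{}{h}\Phi(P+h\delta_{\jvec})\mid_{h=0} \le 1+\eps$ (the defining property of the output of $\MWUoracle$, since it did not return ``null''),
\[
\Phi(P') - \Phi(P) \;=\; \int_0^{\epsmass} g'(h)\,dh \;\le\; (1+\eps)\int_0^{\epsmass} e^{h s}\,dh \;=\; (1+\eps)\,\frac{e^{\epsmass\, s} - 1}{s}.
\]
Now the choice of step size guarantees $\epsmass\, s \le \eps$: indeed $\epsmass \le \eps\lambda/C_{\jvec}$ gives $\epsmass\, C_{\jvec}/\lambda \le \eps$, and $\epsmass \le \eps[\mu_i]_{j_i}$ for every $i$ gives $\epsmass/[\mu_i]_{j_i} \le \eps$, so each of the terms in the max defining $s$, multiplied by $\epsmass$, is at most $\eps$. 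Using the elementary inequality $e^u - 1 \le u(1+\eps)$ for $0 \le u \le \eps \le 1$ (which follows from $e^u \le 1 + u + u^2 \le 1 + u(1+\eps)$ on this range), we get $(e^{\epsmass s}-1)/s \le \epsmass(1+\eps)$. Combining,
\[
\Phi(P') - \Phi(P) \;\le\; (1+\eps)\cdot \epsmass (1+\eps) \;=\; (1+\eps)^2\,\epsmass \;=\; (1+\eps)^2\bigl(m(P')-m(P)\bigr),
\]
which closes the induction.

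\textbf{Anticipated obstacle.} The only delicate point is getting the per-step derivative-growth bound $g'(h) \le e^{hs} g'(0)$ rigorously, rather than appealing to a vague ``convexity'' statement. The clean argument is to write $\Phi(P+h\delta_{\jvec}) = \smax(\ldots)$ with the inputs being affine in $h$ with coefficients $C_{\jvec}/\lambda$ (for the cost term) and $1/[\mu_i]_{j_i}$ (for the marginal terms that $\jvec$ touches) and $0$ otherwise; then $\frac{d}{dh}\smax(a(h)) = \sum_\ell w_\ell(h)\, a_\ell'(h)$ where $w_\ell(h)$ are the softmax weights summing to $1$, and $a_\ell'(h) \le s$ for all $\ell$ with $a_\ell' > 0$, while increasing $h$ only reweights towards the larger-derivative coordinates at a rate bounded by $s$. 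Making this monotonicity precise is exactly the content of Lemmas~1 and~4 of \citep{young2001sequential}; since the excerpt permits citing that paper's analysis, I would state the derivative bound as a short lemma with a one-line proof (differentiate the softmax weights and observe $w_\ell'(h) = w_\ell(h)(a_\ell'(h) - \sum_m w_m(h) a_m'(h))$, so $\frac{d}{dh}\log g'(h) \le s$), then integrate. Everything else is the routine arithmetic above. I expect no genuine difficulty, just care with the step-size bookkeeping and the base case; the induction then propagates the invariant through all of Lines~\ref{line:step1first}--\ref{line:step1penultimate}.
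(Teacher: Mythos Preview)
Your proposal is correct and follows essentially the same inductive approach as the paper: same base case, and for the inductive step you prove the same per-iteration bound $\Phi(P')-\Phi(P)\le (1+\eps)^2\,\epsmass$ that the paper obtains. The only difference is cosmetic: the paper asserts this bound in one line as ``by the smoothness of the softmax'' (deferring to Young's lemmas), whereas you unpack that smoothness via the derivative-growth inequality $g'(h)\le e^{hs}g'(0)$, integrate, and finish with the elementary bound $e^u-1\le u(1+\eps)$ for $u\le\eps$; this is a perfectly valid way to justify the same inequality.
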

\begin{proof}
Note that initially $P = 0$ and $\Phi(0) - (1+\eps)^2 m(0) = \Phi(0) = \log(nk+1)$. So it suffices to prove that $\Phi(P) - (1+\eps)^2 m(P)$ does not increase on each iteration. Indeed, if $\jvec$ is returned by $\MWUoracle$, then $\pd{}{h} \Phi(P + h \delta_{\jvec}) \mid_{h=0} \leq (1+\eps)$. Furthermore, let $\eps' = \eps \cdot \min(\lambda / C_{\jvec}, \min_i [\mu_i]_{j_i})$. By the smoothness of the softmax, it holds that
$$\Phi(P + \eps' \delta_{\jvec}) \leq \Phi(P) + \eps' (1+\eps) \pd{}{h} \Phi(P + h\delta_{\jvec}) \mid_{h=0} \leq \eps' (1+\eps)^2.$$
So, in Line~\ref{line:step1penultimate}, $(1+\eps)^2m(P)$ increases by $(1+\eps)^2\eps'$, and $\Phi(P)$ increases by at most $(1+\eps)^2 \eps'$. This proves the claim.
\end{proof}

We use this second claim to bound the running time of Step 1. Each iteration of the loop from Line~\ref{line:step1first} to Line~\ref{line:step1penultimate} of Algorithm~\ref{alg:mwu}, increases the value of $$\Psi(P) := \langle C, P \rangle / \lambda + \sum_i \|m_i(P) / \mu_i\|_1$$ by at least $\eps$. So after $T$ iterations we must have $$\Phi(P) \geq \Psi(P) / (nk+1) \geq T\eps / (nk+1),$$ where we have used Jensen's inequality to relate $\Phi$ and $\Psi$. By the claim, this means that on any iteration $T \geq (\eta + \log(nk+1))(nk+1)(1+\eps)^2/\eps = \tilde{O}(nk / \eps^2)$, we must have $$m(P) \geq (\Phi(P) - \log(nk+1)) / (1 + \eps)^2 \geq \eta,$$ so the loop must terminate after at most $\tilde{O}(nk / \eps^2)$ iterations. Since each iteration can increase the number of non-zero entries by at most 1, this also proves the sparsity bound on $P$.

We now prove the bounds on the marginals and cost, again using the claim. When Line~\ref{line:step1last} is reached, we must have $m(P) \in [\eta, \eta + \eps]$, because each iteration increases $m(P)$ by at most $\eps$. Therefore, 
\begin{align*}
\max(\langle C, P \rangle / \lambda, m_1(P)/\mu_1,\ldots,m_k(P)/\mu_k) &\leq \Phi(P) &\mbox{by Lemma~\ref{lem:smin} for softmax} \\
&\leq \log(nk+1) + (1+\eps)^2 m(P) &\mbox{by the claim} \\
&\leq \log(nk + 1) + (1 + \eps)^2 (\eta + \eps) & \\
&\leq (1+\eps)^4 \eta &\mbox{by $\eta \geq 2\log(nk+1) \geq 1$}
\end{align*}
Therefore, the rescaling in Line~\ref{line:step1last} yields $P$ satisfying the guarantees of Lemma~\ref{lem:mwu:step1}.

\subsection{Proof of Lemma~\ref{lem:argaminequalsamino}}

It is obvious how the $\AMinO_C$ oracle can be implemented via a single call of the $\ArgAMinO_C$ oracle; we now show the converse. Specifically, given $p_1,\ldots,p_k \in \R^n$, we show how to compute a solution $\jvec = (j_1,\dots,j_k) \in [n]^k$ for $\ArgAMinO_C([p_1, \dots, p_k],\eps)$ using $nk$ calls to the $\AMinO_C$ oracle with accuracy $\eps/(2k)$. As in the proof of Lemma~\ref{lem:minargmin}, we use the first $n$ calls to compute the first index $j_1$ of the solution, the next $n$ calls to compute the next index $j_2$, and so on.
\par Formally, for $s \in [k]$, let us say that $(j_1^*, \dots,j_{s}^*) \in [n]^s$ is a $\delta$-approximate ``partial solution'' of size $s$ if there exists a solution $j \in [n]^k$ for $\ArgAMinO_C([p_1, \dots, p_k],\delta)$ that satisfies $j_i = j_i^*$ for all $i \in [s]$. Then it suffices to show that for every $s \in [k]$, it is possible to compute an $(s\eps/k)$-approximate partial solution $(j_1^*,\dots,j_{s}^*)$ of size $s$ from an $((s-1)\eps/k)$-approximate partial solution $(j_1^*,\dots,j_{s-1}^*)$ of size $s-1$ using $n$ calls to $\AMinO_C$ and polynomial additional time. 
\par Do this by setting $j_s^*$ to be a minimizer of 
\begin{align}
	\min_{j_s' \in [n]} \AMinO_C\Big(\Big[q_{1,j_1^*}, \ldots, q_{s-1,j_{s-1}^*}, q_{s,j_s'}, p_{s+1},\ldots, p_k\Big], \frac{\eps}{2k}\Big),
	\label{eq:lem-pf:aminargamin}
\end{align}
where the $q$ vectors are defined as in the proof of Lemma~\ref{lem:minargmin}. The runtime claim is obvious; it suffices to prove correctness. To this end, 
observe that
\begin{align*}
	\min_{\substack{\jvec \in [n]^k \\ \text{s.t. }j_1 = j_1^*, \dots, j_s = j_s^* }} C_{\jvec} - \sum_{i=1}^k [p_i]_{j_i}
	&=
	\MinO_C\Big(\Big[q_{1,j_1^*}, \dots,q_{s,j_s^*}, p_{s+1},\dots, p_k\Big]\Big)
	\\ &\leq
	\frac{\eps}{2k} + 
	\AMinO_C\Big(\Big[q_{1,j_1^*}, \dots,q_{s,j_s^*}, p_{s+1},\dots, p_k\Big], \frac{\eps}{2k}\Big)
	\\ &=
	\frac{\eps}{2k} + 
	\min_{j_s' \in [n]}
	\AMinO_C\Big(\Big[q_{1,j_1^*}, \dots, q_{s-1,j_{s-1}^*}, q_{s,j_s'}, p_{s+1},\dots, p_k\Big], \frac{\eps}{2k}\Big) 
	\\ &\leq
	\frac{\eps}{k} + 
	\min_{j_s' \in [n]}
	\MinO_C\Big(\Big[q_{1,j_1^*}, \dots, q_{s-1,j_{s-1}^*}, q_{s,j_s'}, p_{s+1},\dots, p_k\Big] \Big) 
	\\ &= 
	\frac{\eps}{k} + 
	\min_{j_s' \in [n]}
	\min_{\substack{\jvec \in [n]^k \\ \text{s.t. }j_1 = j_1^*, \dots j_{s-1} = j_{s-1}^*, j_s = j_s' }} C_{\jvec} - \sum_{i=1}^k [p_i]_{j_i}
	\\ &=
	\frac{\eps}{k} + 
	\min_{\substack{\jvec \in [n]^k \\ \text{s.t. }j_1 = j_1^*, \dots j_{s-1} = j_{s-1}^* }} C_{\jvec} - \sum_{i=1}^k [p_i]_{j_i} 
	\\ &=
	\frac{s\eps}{k} +	\MinO_{C}\Big(\Big[p_1,\dots,p_k\Big]\Big).
\end{align*}
Above, the first and fifth steps are by Observation~\ref{obs:minargmin}, the second and fourth steps are by definition of the $\AMinO$ oracle, the third step is by construction of $j_s^*$, the penultimate step is by simplifying, and the final step is by definition of $(j_1^*,\dots,j_{s-1}^*)$ being an $((s-1)\eps/k)$-approximate partial solution of size $s-1$. We conclude that $(j_1^*,\dots,j_s^*)$ is an $(s\eps/k)$-approximate partial solution of size $s$, as desired.

\section{Additional numerical experiments}\label{app:exp}

Here, we provide additional numerics for the generalized Euler flow application in \S\ref{ssec:graphical:fluid} in order to demonstrate that similar behavior is observed on other standard benchmark inputs in the literature~\citep{Bre08}. 
These instances are identical to Figure~\ref{fig:fluids}, except with different input permutations $\sigma$ between the initial and final positions of the particles.
Note that our algorithm $\COLGEN$ computes an exact, sparse solution with at most $nk-k+1$ non-zero entries (Theorem~\ref{thm:colgen}). In contrast, the $\Sink$ algorithm of~\citep{BenCarCut15} computes approximate, fully dense solutions with $n^k$ non-zero entries, which leads to blurry visualizations.

\begin{figure}[h]
	\centering
	\begin{tabular}{c@{}c@{\hskip 0.5in}c@{}c}
	{\renewcommand{\arraystretch}{1.7}\begin{tabular}{@{}c@{}} t = 1 \\ t = 2 \\ t = 3 \\ t = 4 \\ t = 5 \\ t = 6 \\ t = 7 \end{tabular}}& \begin{tabular}{@{}c@{}} \includegraphics[scale=0.25]{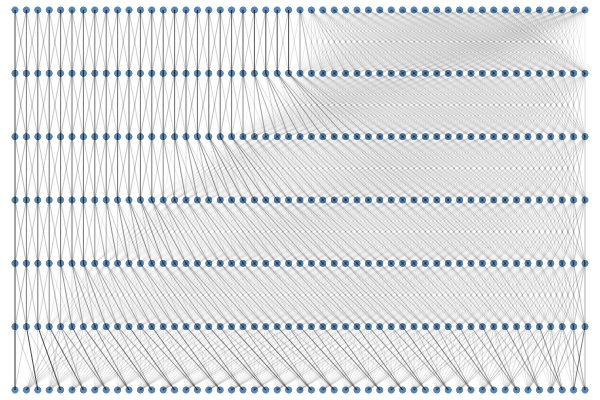} \end{tabular} & {\renewcommand{\arraystretch}{1.7}\begin{tabular}{@{}c@{}} t = 1 \\ t = 2 \\ t = 3 \\ t = 4 \\ t = 5 \\ t = 6 \\ t = 7 \end{tabular}} & \begin{tabular}{@{}c@{}}\includegraphics[scale=0.25]{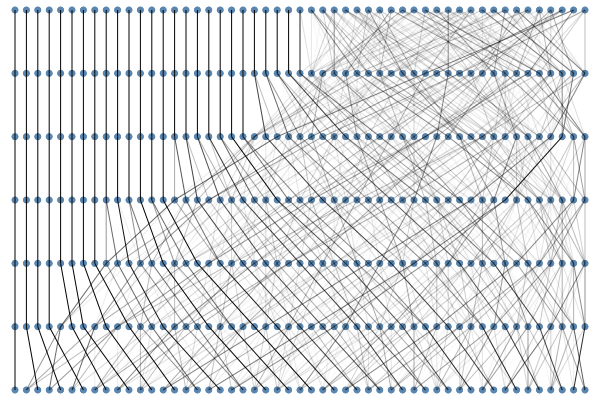}\end{tabular}
	\\
	& $\Sink$ & & $\COLGEN$ 
	\end{tabular}
	\caption{Same as Figure~\ref{fig:fluids}, but now with the permutation $\sigma$ that sends the particle at initial location $x \in [0,1]$ to final location $\sigma(x) = \min(2x, 2 - 2x)$. $\COLGEN$ runs in 7.88 seconds, while $\Sink$ with regularization $\eta = 2000$ runs in 6.97 seconds. In the $\COLGEN$ solution, roughly half the particles have trajectories that never split.}
\label{fig:fluids:diagram2}
\end{figure}

\begin{figure}[h]
	\centering
	\begin{tabular}{c@{}c@{\hskip 0.5in}c@{}c}
	{\renewcommand{\arraystretch}{1.7}\begin{tabular}{@{}c@{}} t = 1 \\ t = 2 \\ t = 3 \\ t = 4 \\ t = 5 \\ t = 6 \\ t = 7 \end{tabular}}& \begin{tabular}{@{}c@{}} \includegraphics[scale=0.25]{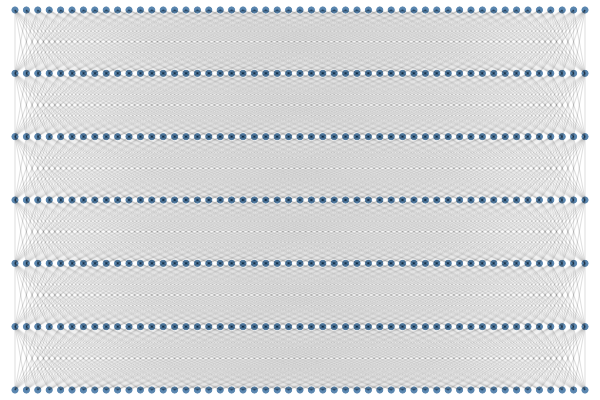} \end{tabular} & {\renewcommand{\arraystretch}{1.7}\begin{tabular}{@{}c@{}} t = 1 \\ t = 2 \\ t = 3 \\ t = 4 \\ t = 5 \\ t = 6 \\ t = 7 \end{tabular}} & \begin{tabular}{@{}c@{}}\includegraphics[scale=0.25]{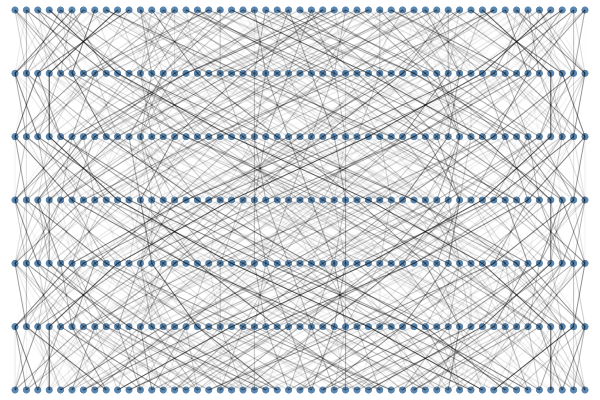}\end{tabular}
	\\
	& $\Sink$ & & $\COLGEN$ 
	\end{tabular}
	\caption{Same as Figure~\ref{fig:fluids}, but now with the permutation $\sigma$ that sends the particle at initial location $x \in [0,1]$ to final location $\sigma(x) = 1 - x$. $\COLGEN$ runs in 10.53 seconds, while $\Sink$ with regularization $\eta = 1500$ runs in 2.10 seconds.
	}\label{fig:fluids:diagram3}
\end{figure}

\footnotesize
\addcontentsline{toc}{section}{References}
\bibliographystyle{abbrv}
\bibliography{mot}{}

\begin{thebibliography}{10}

\bibitem{Agr12}
S.~Agrawal, Y.~Ding, A.~Saberi, and Y.~Ye.
\newblock Price of correlations in stochastic optimization.
\newblock {\em Operations Research}, 60(1):150--162, 2012.

\bibitem{AguCar11}
M.~Agueh and G.~Carlier.
\newblock Barycenters in the {W}asserstein space.
\newblock {\em SIAM Journal on Mathematical Analysis}, 43(2):904--924, 2011.

\bibitem{akagi2020probabilistic}
Y.~Akagi, Y.~Tanaka, T.~Iwata, T.~Kurashima, and H.~Toda.
\newblock Probabilistic optimal transport based on collective graphical models.
\newblock {\em Preprint at arXiv:2006.08866}, 2020.

\bibitem{AltBacRud19}
J.~Altschuler, F.~Bach, A.~Rudi, and J.~Niles-Weed.
\newblock Massively scalable {S}inkhorn distances via the {N}ystr{\"o}m method.
\newblock In {\em Advances in Neural Information Processing Systems}, pages
  4429--4439, 2019.

\bibitem{AltWeeRig17}
J.~Altschuler, J.~Weed, and P.~Rigollet.
\newblock Near-linear time approximation algorithms for optimal transport via
  {S}inkhorn iteration.
\newblock In {\em Advances in Neural Information Processing Systems}, 2017.

\bibitem{AltBoi20hard}
J.~M. Altschuler and E.~Boix{-}Adser\`a.
\newblock Hardness results for {M}ultimarginal {O}ptimal {T}ransport problems.
\newblock {\em Discrete Optimization}, 42:100669, 2021.

\bibitem{AltBoi20bary}
J.~M. Altschuler and E.~Boix{-}Adser\`a.
\newblock Wasserstein barycenters can be computed in polynomial time in fixed
  dimension.
\newblock {\em Journal of Machine Learning Research}, 22:44--1, 2021.

\bibitem{AltBoi20baryhard}
J.~M. Altschuler and E.~Boix{-}Adser\`a.
\newblock Wasserstein barycenters are {NP}-hard to compute.
\newblock {\em SIAM Journal on Mathematics of Data Science}, 2022.

\bibitem{AltPar20bal}
J.~M. Altschuler and P.~A. Parrilo.
\newblock Near-linear convergence of the {R}andom {O}sborne algorithm for
  {M}atrix {B}alancing.
\newblock {\em Mathematical Programming}, pages 1--35, 2022.

\bibitem{anderes2016discrete}
E.~Anderes, S.~Borgwardt, and J.~Miller.
\newblock Discrete {W}asserstein barycenters: Optimal transport for discrete
  data.
\newblock {\em Mathematical Methods of Operations Research}, 84(2):389--409,
  2016.

\bibitem{Bach13}
F.~Bach.
\newblock Learning with submodular functions: a convex optimization
  perspective.
\newblock {\em Foundations and Trends in Machine Learning}, 6(2-3):145--373,
  2013.

\bibitem{ball1986computational}
M.~O. Ball.
\newblock Computational complexity of network reliability analysis: An
  overview.
\newblock {\em IEEE Transactions on Reliability}, 35(3):230--239, 1986.

\bibitem{ball1995network}
M.~O. Ball, C.~J. Colbourn, and J.~S. Provan.
\newblock Network reliability.
\newblock {\em Handbooks in Operations Research and Management Science},
  7:673--762, 1995.

\bibitem{BenCarCut15}
J.-D. Benamou, G.~Carlier, M.~Cuturi, L.~Nenna, and G.~Peyr{\'e}.
\newblock Iterative {B}regman projections for regularized transportation
  problems.
\newblock {\em SIAM Journal on Scientific Computing}, 37(2):A1111--A1138, 2015.

\bibitem{BenCarDi18}
J.-D. Benamou, G.~Carlier, S.~Di~Marino, and L.~Nenna.
\newblock An entropy minimization approach to second-order variational
  mean-field games.
\newblock {\em Mathematical Models and Methods in Applied Sciences},
  29(08):1553--1583, 2019.

\bibitem{BenCarNen16}
J.-D. Benamou, G.~Carlier, and L.~Nenna.
\newblock A numerical method to solve multi-marginal optimal transport problems
  with {C}oulomb cost.
\newblock In {\em Splitting Methods in Communication, Imaging, Science, and
  Engineering}, pages 577--601. Springer, 2016.

\bibitem{BenCarNen19}
J.-D. Benamou, G.~Carlier, and L.~Nenna.
\newblock Generalized incompressible flows, multi-marginal transport and
  {S}inkhorn algorithm.
\newblock {\em Numerische Mathematik}, 142(1):33--54, 2019.

\bibitem{BerTsi97}
D.~Bertsimas and J.~N. Tsitsiklis.
\newblock {\em Introduction to linear optimization}, volume~6.
\newblock Athena Scientific Belmont, MA, 1997.

\bibitem{BlaJamKenSid18}
J.~Blanchet, A.~Jambulapati, C.~Kent, and A.~Sidford.
\newblock Towards optimal running times for optimal transport.
\newblock {\em Preprint at arXiv:1810.07717}, 2018.

\bibitem{blondel2018smooth}
M.~Blondel, V.~Seguy, and A.~Rolet.
\newblock Smooth and sparse optimal transport.
\newblock In {\em International conference on Artificial Intelligence and
  Statistics}, pages 880--889. PMLR, 2018.

\bibitem{bodlaender1996linear}
H.~L. Bodlaender.
\newblock A linear-time algorithm for finding tree-decompositions of small
  treewidth.
\newblock {\em SIAM Journal on computing}, 25(6):1305--1317, 1996.

\bibitem{bodlaender2007treewidth}
H.~L. Bodlaender.
\newblock Treewidth: Structure and algorithms.
\newblock In {\em International Colloquium on Structural Information and
  Communication Complexity}, pages 11--25. Springer, 2007.

\bibitem{Bre89}
Y.~Brenier.
\newblock The least action principle and the related concept of generalized
  flows for incompressible perfect fluids.
\newblock {\em Journal of the American Mathematical Society}, 2(2):225--255,
  1989.

\bibitem{Bre93}
Y.~Brenier.
\newblock The dual least action problem for an ideal, incompressible fluid.
\newblock {\em Archive for Rational Mechanics and Analysis}, 122(4):323--351,
  1993.

\bibitem{Bre99}
Y.~Brenier.
\newblock Minimal geodesics on groups of volume-preserving maps and generalized
  solutions of the {E}uler equations.
\newblock {\em Communications on Pure and Applied Mathematics}, 52(4):411--452,
  1999.

\bibitem{Bre08}
Y.~Brenier.
\newblock Generalized solutions and hydrostatic approximation of the {E}uler
  equations.
\newblock {\em Physica D: Nonlinear Phenomena}, 237(14-17):1982--1988, 2008.

\bibitem{buttazzo2012optimal}
G.~Buttazzo, L.~De~Pascale, and P.~Gori-Giorgi.
\newblock Optimal-transport formulation of electronic density-functional
  theory.
\newblock {\em Physical Review A}, 85(6):062502, 2012.

\bibitem{CarEke10}
G.~Carlier and I.~Ekeland.
\newblock Matching for teams.
\newblock {\em Economic Theory}, 42(2):397--418, 2010.

\bibitem{CarObeOud15}
G.~Carlier, A.~Oberman, and E.~Oudet.
\newblock Numerical methods for matching for teams and {W}asserstein
  barycenters.
\newblock {\em ESAIM: Mathematical Modelling and Numerical Analysis},
  49(6):1621--1642, 2015.

\bibitem{Nat18dro}
L.~Chen, W.~Ma, K.~Natarajan, D.~Simchi-Levi, and Z.~Yan.
\newblock Distributionally robust linear and discrete optimization with
  marginals.
\newblock {\em Operations Research}, 2022.

\bibitem{ChiMccNes10}
P.-A. Chiappori, R.~J. McCann, and L.~P. Nesheim.
\newblock Hedonic price equilibria, stable matching, and optimal transport:
  equivalence, topology, and uniqueness.
\newblock {\em Economic Theory}, 42(2):317--354, 2010.

\bibitem{cotar2013density}
C.~Cotar, G.~Friesecke, and C.~Kl{\"u}ppelberg.
\newblock Density functional theory and optimal transportation with {C}oulomb
  cost.
\newblock {\em Communications on Pure and Applied Mathematics}, 66(4):548--599,
  2013.

\bibitem{flamary2016optimal}
N.~Courty, R.~Flamary, D.~Tuia, and A.~Rakotomamonjy.
\newblock Optimal transport for domain adaptation.
\newblock {\em IEEE Transactions on Pattern Analysis and Machine Intelligence},
  1, 2016.

\bibitem{CoverThomas}
T.~M. Cover and J.~A. Thomas.
\newblock {\em Elements of information theory}.
\newblock John Wiley \& Sons, 2012.

\bibitem{Cut13}
M.~Cuturi.
\newblock Sinkhorn distances: Lightspeed computation of optimal transport.
\newblock In {\em Advances in Neural Information Processing Systems}, 2013.

\bibitem{CutDou14}
M.~Cuturi and A.~Doucet.
\newblock Fast computation of {W}asserstein barycenters.
\newblock In {\em International Conference on Machine Learning}, 2014.

\bibitem{dyer2000relative}
M.~Dyer, L.~A. Goldberg, C.~Greenhill, and M.~Jerrum.
\newblock On the relative complexity of approximate counting problems.
\newblock In {\em International Workshop on Approximation Algorithms for
  Combinatorial Optimization}, pages 108--119. Springer, 2000.

\bibitem{h20partial}
F.~Elvander, I.~Haasler, A.~Jakobsson, and J.~Karlsson.
\newblock Multi-marginal optimal transport using partial information with
  applications in robust localization and sensor fusion.
\newblock {\em Signal Processing}, 171:107474, 2020.

\bibitem{feldman2008learning}
J.~Feldman, R.~O'Donnell, and R.~A. Servedio.
\newblock Learning mixtures of product distributions over discrete domains.
\newblock {\em SIAM Journal on Computing}, 37(5):1536--1564, 2008.

\bibitem{Fri20}
S.~Friedland.
\newblock Optimal transport, distance between sets of measures and tensor
  scaling.
\newblock {\em Preprint at arXiv:2005.00945}, 2020.

\bibitem{gertsbakh2011network}
I.~Gertsbakh and Y.~Shpungin.
\newblock {\em Network reliability and resilience}.
\newblock Springer Science \& Business Media, 2011.

\bibitem{goldberg2007complexity}
L.~A. Goldberg and M.~Jerrum.
\newblock The complexity of ferromagnetic {I}sing with local fields.
\newblock {\em Combinatorics, Probability and Computing}, 16(1):43--61, 2007.

\bibitem{GroLovSch81}
M.~Gr{\"o}tschel, L.~Lov{\'a}sz, and A.~Schrijver.
\newblock The ellipsoid method and its consequences in combinatorial
  optimization.
\newblock {\em Combinatorica}, 1(2):169--197, 1981.

\bibitem{GLSbook}
M.~Gr{\"o}tschel, L.~Lov{\'a}sz, and A.~Schrijver.
\newblock {\em Geometric algorithms and combinatorial optimization}, volume~2.
\newblock Springer Science \& Business Media, 2012.

\bibitem{guo2019polynomial}
H.~Guo and M.~Jerrum.
\newblock A polynomial-time approximation algorithm for all-terminal network
  reliability.
\newblock {\em SIAM Journal on Computing}, 48(3):964--978, 2019.

\bibitem{h19hmc}
I.~Haasler, A.~Ringh, Y.~Chen, and J.~Karlsson.
\newblock Estimating ensemble flows on a hidden {M}arkov chain.
\newblock In {\em IEEE Conference on Decision and Control}, pages 1331--1338.
  IEEE, 2019.

\bibitem{h20tree}
I.~Haasler, A.~Ringh, Y.~Chen, and J.~Karlsson.
\newblock Multi-marginal optimal transport and {S}chr\"odinger bridges on
  trees.
\newblock {\em Preprint at arXiv:2004.06909}, 2020.

\bibitem{haasler2021scalable}
I.~Haasler, A.~Ringh, Y.~Chen, and J.~Karlsson.
\newblock Scalable computation of dynamic flow problems via multi-marginal
  graph-structured optimal transport.
\newblock {\em Preprint at arXiv:2106.14485}, 2021.

\bibitem{h20gm}
I.~Haasler, R.~Singh, Q.~Zhang, J.~Karlsson, and Y.~Chen.
\newblock Multi-marginal optimal transport and probabilistic graphical models.
\newblock {\em IEEE Transactions on Information Theory}, 2021.

\bibitem{Han86}
W.~K. Haneveld.
\newblock Robustness against dependence in {PERT}: An application of duality
  and distributions with known marginals.
\newblock In {\em Stochastic Programming}, pages 153--182. Springer, 1986.

\bibitem{Jag13}
M.~Jaggi.
\newblock Revisiting {F}rank-{W}olfe: Projection-free sparse convex
  optimization.
\newblock In {\em International Conference on Machine Learning}, pages
  427--435, 2013.

\bibitem{karger2001randomized}
D.~R. Karger.
\newblock A randomized fully polynomial time approximation scheme for the
  all-terminal network reliability problem.
\newblock {\em SIAM Review}, 43(3):499--522, 2001.

\bibitem{khachiyan1980polynomial}
L.~G. Khachiyan.
\newblock Polynomial algorithms in linear programming.
\newblock {\em USSR Computational Mathematics and Mathematical Physics},
  20(1):53--72, 1980.

\bibitem{kolda2009tensor}
T.~G. Kolda and B.~W. Bader.
\newblock Tensor decompositions and applications.
\newblock {\em SIAM Review}, 51(3):455--500, 2009.

\bibitem{KolFri09}
D.~Koller and N.~Friedman.
\newblock {\em Probabilistic graphical models: principles and techniques}.
\newblock MIT press, 2009.

\bibitem{kruskal1956shortest}
J.~B. Kruskal.
\newblock On the shortest spanning subtree of a graph and the traveling
  salesman problem.
\newblock {\em Proceedings of the American Mathematical society}, 7(1):48--50,
  1956.

\bibitem{Leo13}
C.~L{\'e}onard.
\newblock A survey of the {S}ch\"rodinger problem and some of its connections
  with optimal transport.
\newblock {\em Preprint at arXiv:1308.0215}, 2013.

\bibitem{LinHoJor19}
T.~Lin, N.~Ho, M.~Cuturi, and M.~I. Jordan.
\newblock On the complexity of approximating multimarginal optimal transport.
\newblock {\em Preprint at arXiv:1910.00152}, 2019.

\bibitem{linial1998deterministic}
N.~Linial, A.~Samorodnitsky, and A.~Wigderson.
\newblock A deterministic strongly polynomial algorithm for matrix scaling and
  approximate permanents.
\newblock In {\em Symposium on Theory of Computing}, pages 644--652, 1998.

\bibitem{makarov1982estimates}
G.~Makarov.
\newblock Estimates for the distribution function of a sum of two random
  variables when the marginal distributions are fixed.
\newblock {\em Theory of Probability and its Applications}, 26(4):803--806,
  1982.

\bibitem{MeiNad79}
I.~Meilijson and A.~N{\'a}das.
\newblock Convex majorization with an application to the length of critical
  paths.
\newblock {\em Journal of Applied Probability}, 16(3):671--677, 1979.

\bibitem{MisNat14}
V.~K. Mishra, K.~Natarajan, D.~Padmanabhan, C.-P. Teo, and X.~Li.
\newblock On theoretical and empirical aspects of marginal distribution choice
  models.
\newblock {\em Management Science}, 60(6):1511--1531, 2014.

\bibitem{moore1956reliable}
E.~F. Moore and C.~E. Shannon.
\newblock Reliable circuits using less reliable relays.
\newblock {\em Journal of the Franklin Institute}, 262(3):191--208, 1956.

\bibitem{muzellec2017tsallis}
B.~Muzellec, R.~Nock, G.~Patrini, and F.~Nielsen.
\newblock Tsallis regularized optimal transport and ecological inference.
\newblock In {\em AAAI Conference on Artificial Intelligence}, volume~31, 2017.

\bibitem{nadas1979probabilistic}
A.~Nadas.
\newblock Probabilistic {PERT}.
\newblock {\em IBM Journal of Research and Development}, 23(3):339--347, 1979.

\bibitem{Nat09}
K.~Natarajan, M.~Song, and C.-P. Teo.
\newblock Persistency model and its applications in choice modeling.
\newblock {\em Management Science}, 55(3):453--469, 2009.

\bibitem{Nen16}
L.~Nenna.
\newblock {\em Numerical methods for multi-marginal optimal transportation}.
\newblock PhD thesis, 2016.

\bibitem{Nat21extremal}
D.~Padmanabhan, S.~Damla~Ahipasaoglu, A.~Ramachandra, and K.~Natarajan.
\newblock Extremal probability bounds in combinatorial optimization.
\newblock {\em Preprint at arXiv:2109.01591}, 2021.

\bibitem{pap08}
C.~H. Papadimitriou and T.~Roughgarden.
\newblock Computing correlated equilibria in multi-player games.
\newblock {\em Journal of the ACM}, 55(3):1--29, 2008.

\bibitem{PeyCut17}
G.~Peyr{\'e} and M.~Cuturi.
\newblock Computational optimal transport.
\newblock {\em Foundations and Trends in Machine Learning}, 2017.

\bibitem{pitie2007automated}
F.~Piti{\'e}, A.~C. Kokaram, and R.~Dahyot.
\newblock Automated colour grading using colour distribution transfer.
\newblock {\em Computer Vision and Image Understanding}, 107(1-2):123--137,
  2007.

\bibitem{provan1983complexity}
J.~S. Provan and M.~O. Ball.
\newblock The complexity of counting cuts and of computing the probability that
  a graph is connected.
\newblock {\em SIAM Journal on Computing}, 12(4):777--788, 1983.

\bibitem{Qua18}
K.~Quanrud.
\newblock Approximating optimal transport with linear programs.
\newblock In {\em Symposium on Simplicity in Algorithms}, 2018.

\bibitem{RabPeyDel11}
J.~Rabin, G.~Peyr{\'e}, J.~Delon, and M.~Bernot.
\newblock Wasserstein barycenter and its application to texture mixing.
\newblock In {\em International Conference on Scale Space and Variational
  Methods in Computer Vision}, pages 435--446. Springer, 2011.

\bibitem{ruschendorf1982random}
L.~R{\"u}schendorf.
\newblock Random variables with maximum sums.
\newblock {\em Advances in Applied Probability}, pages 623--632, 1982.

\bibitem{ruschendorf2002n}
L.~R{\"u}schendorf and L.~Uckelmann.
\newblock On the n-coupling problem.
\newblock {\em Journal of Multivariate Analysis}, 81(2):242--258, 2002.

\bibitem{h20incremental}
R.~Singh, I.~Haasler, Q.~Zhang, J.~Karlsson, and Y.~Chen.
\newblock Incremental inference of collective graphical models.
\newblock {\em IEEE Control Systems Letters}, 2020.

\bibitem{Sin67}
R.~Sinkhorn.
\newblock Diagonal equivalence to matrices with prescribed row and column sums.
\newblock {\em The American Mathematical Monthly}, 74(4):402--405, 1967.

\bibitem{solomon2015convolutional}
J.~Solomon, F.~De~Goes, G.~Peyr{\'e}, M.~Cuturi, A.~Butscher, A.~Nguyen, T.~Du,
  and L.~Guibas.
\newblock Convolutional {W}asserstein distances: Efficient optimal
  transportation on geometric domains.
\newblock {\em ACM Transactions on Graphics}, 34(4):1--11, 2015.

\bibitem{srivastava2018scalable}
S.~Srivastava, C.~Li, and D.~B. Dunson.
\newblock Scalable {B}ayes via barycenter in {W}asserstein space.
\newblock {\em Journal of Machine Learning Research}, 19(1):312--346, 2018.

\bibitem{stoer1997simple}
M.~Stoer and F.~Wagner.
\newblock A simple min-cut algorithm.
\newblock {\em Journal of the ACM}, 44(4):585--591, 1997.

\bibitem{teh2002unified}
Y.~W. Teh and M.~Welling.
\newblock The unified propagation and scaling algorithm.
\newblock In {\em Advances in Neural Information Processing Systems}, pages
  953--960, 2002.

\bibitem{Tre19}
L.~N. Trefethen.
\newblock {\em Approximation theory and approximation practice}, volume 164.
\newblock SIAM, 2019.

\bibitem{tupitsa2020multimarginal}
N.~Tupitsa, P.~Dvurechensky, A.~Gasnikov, and C.~A. Uribe.
\newblock Multimarginal optimal transport by accelerated alternating
  minimization.
\newblock In {\em 2020 IEEE Conference on Decision and Control}, pages
  6132--6137. IEEE, 2020.

\bibitem{valiant1979complexity}
L.~G. Valiant.
\newblock The complexity of enumeration and reliability problems.
\newblock {\em SIAM Journal on Computing}, 8(3):410--421, 1979.

\bibitem{Vil03}
C.~Villani.
\newblock {\em Topics in optimal transportation}.
\newblock Number~58. American Mathematical Society, 2003.

\bibitem{wainwright2003variational}
M.~J. Wainwright and M.~I. Jordan.
\newblock Variational inference in graphical models: The view from the marginal
  polytope.
\newblock In {\em Allerton Conference on Communication, Control, and
  Computing}, volume~41, pages 961--971, 2003.

\bibitem{wainwright2008graphical}
M.~J. Wainwright and M.~I. Jordan.
\newblock {\em Graphical models, exponential families, and variational
  inference}.
\newblock Now Publishers Inc, 2008.

\bibitem{weiss1986stochastic}
G.~Weiss.
\newblock Stochastic bounds on distributions of optimal value functions with
  applications to {PERT}, network flows and reliability.
\newblock {\em Operations Research}, 34(4):595--605, 1986.

\bibitem{Wil69}
A.~G. Wilson.
\newblock The use of entropy maximising models, in the theory of trip
  distribution, mode split and route split.
\newblock {\em Journal of Transport Economics and Policy}, pages 108--126,
  1969.

\bibitem{young2001sequential}
N.~E. Young.
\newblock Sequential and parallel algorithms for mixed packing and covering.
\newblock In {\em Symposium on Foundations of Computer Science}, pages
  538--546. IEEE, 2001.

\bibitem{yudin1976informational}
D.~B. Yudin and A.~S. Nemirovskii.
\newblock Informational complexity and efficient methods for the solution of
  convex extremal problems.
\newblock {\em Matekon}, 13(2):22--45, 1976.

\bibitem{zemel1982polynomial}
E.~Zemel.
\newblock Polynomial algorithms for estimating network reliability.
\newblock {\em Networks}, 12(4):439--452, 1982.

\end{thebibliography}

\end{document}